%% file: main.tex
\documentclass[11pt,a4paper]{article}
\usepackage{graphicx} 
\usepackage{amsmath, amssymb}
\usepackage{algorithm}
\usepackage{algpseudocode}
\algrenewcommand{\algorithmiccomment}[1]{\hskip3em$\triangleright$ #1}
\usepackage{booktabs}    
\usepackage{caption}     
\usepackage{array}       
\usepackage[dvipsnames]{xcolor}   
\usepackage{colortbl} 
\usepackage{subcaption} 
\usepackage{float}

\usepackage[shortlabels,inline]{enumitem}

\usepackage{multirow}

\numberwithin{equation}{section}
\input{preprint-formatting}



\title{Soft-Update MSA for Nonconvex Stochastic Optimal Control: Theory and Algorithms}

\date{}
\author{
Shaolin Ji\thanks{Zhongtai Securities Institute for Financial Studies,
Shandong University, Jinan, Shandong 250100, PR China
(jsl@sdu.edu.cn).}
\qquad
Xinmeng Liu\thanks{Corresponding author. Zhongtai Securities Institute for
Financial Studies, Shandong University, Jinan, Shandong 250100, PR China
(202212046@mail.sdu.edu.cn).}
}
\begin{document}

\maketitle

\begin{abstract}
For the method of successive approximations (MSA), a fundamental challenge is that the
stochastic maximum principle (SMP) provides an optimality condition rather than a descent
rule. 
To overcome this difficulty for nonconvex stochastic control problems, we develop a soft-update MSA (SU-MSA) with a second-order-adjoint correction of the Hamiltonian and a neural realization called DeepSMP.
Both an explicit stepsize rule guaranteeing cost descent and the convergence of the algorithm are established.
Moreover, the global SMP condition is asymptotically satisfied when the corrected Hamiltonian is convex, while in a special case the cost is further proved to converge to the global optimum.
To extend SU-MSA to high dimensions, we develop DeepSMP using a differentiable smooth approximation of the corrected Hamiltonian and analyze the convergence of the resulting objective and its gradient.
Numerical experiments demonstrate the effectiveness of the proposed framework.
\end{abstract}

\begin{quote}
\small
\noindent\textbf{Key words:}
method of successive approximations; stochastic maximum principle;
nonconvex stochastic optimal control; soft update;
high dimension; deep learning.

\par\medskip
\noindent\textbf{2020 Mathematics Subject Classification:}
Primary 49M05, 93E20; Secondary 49K45, 49M20, 68T07.
\end{quote}

\section{Introduction}

The rapid development of deep learning has opened new possibilities for solving high-dimensional stochastic control problems; see, e.g., \cite{Kushner1990,LudwigSirignanoHuangPapanicolaou2012,EHanJentzen2017,CarmonaLauriere2022}. A common approach is to parameterize the control by a neural network and optimize the resulting cost by gradient-based methods. In parallel, the method of successive approximations (MSA) provides an iterative framework built directly upon the Pontryagin maximum principle. At each iteration, MSA solves the state and adjoint equations and updates the control through Hamiltonian minimization. Its central difficulty is that the maximum principle gives an optimality condition rather than a descent rule: a Hamiltonian-improving control also perturbs the state and adjoint processes, and the resulting error may offset the nominal improvement. Consequently, the basic MSA need not converge.

Existing modifications recover stability by restricting or regularizing the control perturbation. In deterministic control, representative approaches include convex-combination updates, short-interval replacement, and augmented-Hamiltonian schemes; see, e.g., \cite{KrylovChernousko1963,Mitter1966,Lyubushin1979,LiChenTaiE2018}.  For controlled diffusions, the perturbation analysis becomes more delicate when the control enters the diffusion coefficient. Kerimkulov, \v{S}i\v{s}ka, and Szpruch \cite{KerimkulovSiskaSzpruch2021} introduced quadratic penalties on variations in the drift, diffusion, and state derivative of the Hamiltonian, and established convergence of the resulting modified MSA. Ji and Xu \cite{JiXu2022} extended this framework to stochastic recursive control with possibly nonconvex control domains, using the second-order adjoint process together with BMO martingale estimates to relax structural assumptions required in the earlier analysis. More recently, they incorporated the second-order adjoint correction directly into a spike-update scheme, yielding an $O(\varepsilon)$ corrected-Hamiltonian contribution with an $O(\varepsilon^{3/2})$ remainder \cite{JiXu2025}.

A further issue concerns the information used to generate the control update. 
Under the differentiability setting of the local stochastic maximum principle, the control derivative of the Hamiltonian represents the first variation of the cost functional. In particular, the gradient flow studied by Sethi and \v{S}i\v{s}ka \cite{SethiSiska2024} is the $L^2$-gradient flow of $J$ in the control space, whereas direct neural methods such as \cite{HanE2016} perform gradient descent after parameterizing the control.
Although the optimization variables are different, both are driven by first-order information of the cost functional.

This limitation is particularly transparent in the proximal MSA of Sethi and \v{S}i\v{s}ka \cite{SethiSiska2024}, whose candidate update minimizes
\begin{equation}\label{eq:intro-proximal-update}
    H_t^u(a)+\frac{\rho}{2}|a-u_t|^2.
\end{equation}
The proximity term stabilizes the iteration, but its fixed isotropic weight $\rho I$ entangles candidate selection with update magnitude control, so conservative choices may over-regularize the candidate update.
If the current control pointwise minimizes the ordinary Hamiltonian, it
also minimizes \eqref{eq:intro-proximal-update}, causing the proximal
iteration to stagnate even though this control need not satisfy the global
stochastic maximum principle involving the second-order diffusion
correction \cite{Peng1990}.
More importantly, near an ordinary-Hamiltonian local minimizer, the positive proximal term further strengthens the local convexity seen by a gradient-based inner solver, making this local
minimum more attractive and causing gradient-based updates to stagnate more easily. Thus, the proximal term controls the update magnitude but cannot introduce the second-order information required to escape the ordinary-Hamiltonian landscape.

Therefore, the main goal of this paper is to use the endogenous second-order diffusion correction in the global stochastic maximum principle directly for candidate selection, while controlling the realized update independently through a soft update. For a reference control \(u\), we introduce the corrected Hamiltonian
\begin{equation}\label{eq:intro-corrected-H}
    \mathcal H_t^u(a)
    :=
    H_t^u(a)
    +
    \frac12
    \operatorname{Tr}\!\left[
        \Delta\sigma_t(a;u)^\top
        P_t^u
        \Delta\sigma_t(a;u)
    \right],
\end{equation}
where \(\Delta\sigma_t(a;u):=\sigma(t,X_t^u,a)-\sigma(t,X_t^u,u_t)\).
Here \(P^u\) is the second-order adjoint generated by the control problem itself. Unlike the fixed positive weight \(\rho I\), the induced quadratic correction varies with time, state, and control direction and may have either sign. It therefore provides a direction-dependent assessment of the diffusion risk generated by a candidate perturbation, which can be viewed as a risk-sensitive weighting mechanism; see Hu et al.~\cite{HuJiXuXue2026}.

The contribution of the second-order adjoint is not merely to modify the update size, but to change the candidate information available to the optimizer. 
The second-order correction preserves the first-order descent information encoded by the Hamiltonian gradient, while modifying the local quadratic structure of the candidate objective.
A sufficiently negative correction can turn an ordinary-Hamiltonian local minimizer into a local maximizer of the corrected Hamiltonian, producing a local convexity--concavity reversal. Finite perturbations around such a point can therefore reveal favorable corrected-Hamiltonian candidates that are invisible to first-order optimization of the ordinary Hamiltonian. Moreover, the second-order correction removes the strict descent restriction of proximal MSA: unlike the proximal criterion requiring \(\Delta H_t^u(a)\leq-\frac{\rho}{2}|a-u_t|^2<0\), the endogenous sign-indefinite term \(\frac12\Delta\sigma_t(a;u)^\top P_t^u\Delta\sigma_t(a;u)\) allows candidates with positive ordinary-Hamiltonian variation.

To convert this candidate improvement into an actual decrease of the cost, we perform the soft update
\begin{equation}\label{eq:intro-soft-update}
    u_t^\eta
    =
    u_t+\eta(v_t-u_t),
    \qquad
    0<\eta\leq1,
\end{equation}
where \(v\) is a corrected-Hamiltonian candidate. We prove
\begin{equation}\label{eq:intro-descent}
    J(u^\eta)-J(u)
    \leq
    \mathbb E\!\int_0^T
    \left[
        \mathcal H_t^u(u_t^\eta)
        -
        \mathcal H_t^u(u_t)
    \right]dt
    +
    C\eta^2.
\end{equation}
Thus, the corrected-Hamiltonian improvement enters at order \(O(\eta)\), whereas the remaining perturbation is of order \(O(\eta^2)\). The second-order correction determines the candidate, while the soft update controls the realized displacement. Under uniform semiconvexity of the corrected Hamiltonian, this balance yields an explicit stepsize rule that guarantees a fixed proportion of the corrected-Hamiltonian improvement is converted into actual cost decrease.

This distinction is also important in numerical implementation. Exact pointwise Hamiltonian minimization can be difficult in high-dimensional problems; Li et al.~\cite{LiChenTaiE2018} used L-BFGS for the inner minimization, which incurs substantial computational costs, while replacing the minimization by a local gradient step again restricts the update to first-order information at the current control. 
We therefore develop DeepSMP, which samples perturbed controls around the current control, evaluates their corrected-Hamiltonian values, and optimizes a smooth soft-min of these values in the network parameter space. The perturbations expose the candidate geometry created by the second-order correction, while gradient descent remains the computational engine. Hence, the essential difference from direct neural control is not whether gradient optimization is used, but what information is supplied to it.

Our main contributions are as follows. First, we propose a soft-update MSA (SU-MSA) for stochastic control problems with control-dependent diffusion. The second-order correction is endogenous, signed, and direction-dependent, while the soft update independently controls the realized displacement. In particular, the correction can alter the local geometry of the candidate objective and produce a convexity--concavity reversal at an ordinary-Hamiltonian local minimizer, revealing favorable finite perturbations where ordinary-Hamiltonian gradient updates stagnate and positive proximal regularization further strengthens the local minimum.

Second, we establish the cost upper bound \eqref{eq:intro-descent}, which separates the corrected-Hamiltonian improvement from the quadratic soft-update remainder. Combined with uniform semiconvexity of the corrected Hamiltonian, this estimate yields an explicit stepsize rule guaranteeing proportional cost descent. The resulting cost values converge, while both the accepted corrected-Hamiltonian improvement and the control increments vanish asymptotically. When the corrected Hamiltonian is convex, the corrected-SMP residual further converges to zero in \(L^1([0,T]\times\Omega)\). For a class of nonconvex stochastic control problems admitting an exact It\^o reduction, this residual coincides with the global optimality gap, so its convergence yields convergence of the cost to the global optimum.

Third, we develop DeepSMP as a neural realization of SU-MSA. DeepSMP combines perturbation-based exploration, corrected-Hamiltonian evaluation, a differentiable smooth-min, and parameter-space soft updates, allowing a gradient-based neural solver to exploit second-order diffusion information without solving the pointwise corrected-Hamiltonian minimization exactly. We further prove that both the sampled smooth-min objective and its actor-parameter gradient converge in probability to their corrected-Hamiltonian counterparts as the action perturbations localize.
Illustrative examples demonstrate ordinary-Hamiltonian stagnation, convexity--concavity reversal, and the time- and direction-dependent adjustment induced by the second-order adjoint, while numerical experiments compare DeepSMP with the direct deep-learning method of Han and E \cite{HanE2016} and modified-MSA baselines.

The remainder of the paper is organized as follows. Section~2 formulates the stochastic control problem and introduces the adjoint equations and the corrected Hamiltonian. Section~3 presents SU-MSA and establishes its descent and convergence properties. Section~4 develops DeepSMP and its theoretical analysis. Section~5 reports the numerical experiments.

\section{Preliminaries}
\label{sec:preliminaries}

\subsection{Basic settings}
\label{subsec:basic-settings}

Fix a finite time horizon \(T>0\). Let \((\Omega,\mathcal F,\mathbb P)\) be a complete probability space on which a \(d\)-dimensional standard
Brownian motion \(W=(W^1,\ldots,W^d)\) is defined. Let
\(\mathbb F=(\mathcal F_t)_{0\le t\le T}\)
be the usual augmentation of the natural filtration generated by \(W\).
All stochastic processes considered in this paper are defined on this
filtered probability space. Unless otherwise specified, adaptedness,
progressive measurability, and stopping times are understood with respect
to \(\mathbb F\).

For positive integers \(n\) and \(m\), let \(\mathbb R^n\) denote the
\(n\)-dimensional Euclidean space, \(\mathbb R^{n\times m}\) the space of
real \(n\times m\) matrices, and \(\mathbb S^n\) the space of real
symmetric \(n\times n\) matrices. For \(x,y\in\mathbb R^n\), we write
\(x\cdot y=x^\top y\). For \(A,B\in\mathbb R^{n\times m}\), set$A:B:=\operatorname{Tr}(AB^\top)$, $|A|:=\sqrt{\operatorname{Tr}(AA^\top)}$.
For higher-order derivative arrays, \(|\cdot|\) denotes the Euclidean
norm of the array of components.
If \(\sigma\in\mathbb R^{n\times d}\), we write it columnwise as $\sigma=(\sigma^1,\ldots,\sigma^d)$, $\sigma^i\in\mathbb R^n, i=1,\ldots,d$.
For \(q=(q^1,\ldots,q^d)\in\mathbb R^{n\times d}\), we use the
notation
\(
q:\sigma:=\sum_{i=1}^d(q^i)^\top\sigma^i
\).
For \(p\geq1\), let
\(L^p_{\mathcal F_T}(\Omega;\mathbb R^n)\) denote the space of all
\(\mathcal F_T\)-measurable random variables
\(\xi:\Omega\to\mathbb R^n\) such that
\(\mathbb E|\xi|^p<\infty\), endowed with the norm
\(\|\xi\|_{L^p}:=(\mathbb E|\xi|^p)^{1/p}\).
We denote by
\(\mathcal S^p_{\mathbb F}(0,T;\mathbb R^n)\) the space of continuous
\(\mathbb F\)-adapted processes \(X\) satisfying
\[
    \|X\|_{\mathcal S^p}
    :=
    \left(
        \mathbb E\left[\sup_{0\leq t\leq T}|X_t|^p\right]
    \right)^{1/p}
    <\infty.
\]
We denote by
\(\mathbb H_{\mathbb F}^{p}(0,T;\mathbb R^m)\), \(p\ge 1\),
the space of \(\mathbb F\)-progressively measurable processes
\(Z:[0,T]\times\Omega\to\mathbb R^m\) satisfying
\[
    \|Z\|_{\mathbb H^p}
    :=
    \left\{
        \mathbb E
        \left[
            \left(
                \int_0^T |Z_t|^2\,dt
            \right)^{p/2}
        \right]
    \right\}^{1/p}
    <\infty.
\]

Throughout the paper, \(C\) denotes a positive constant whose value may
change from line to line. Unless explicitly stated otherwise, \(C\) is
independent of the iteration index, the soft-update parameter \(\eta\), and
the particular admissible controls under consideration. 
When necessary, the
dependence of \(C\) on specific parameters will be indicated explicitly.

\subsection{Problem formulation}
\label{subsec:problem-formulation}

Let \(n\) be the state dimension and \(k\) the control dimension. The
control domain is denoted by $U\subset\mathbb R^k$.
In the theoretical part of this paper, \(U\) is assumed to be a nonempty
compact convex set. Compactness provides uniform boundedness of control
processes and control increments, while convexity guarantees the feasibility
of the convex interpolation used in the soft-update step. We do not impose any convexity assumptions on the
cost functional, the Hamiltonian, or the coefficients with respect to the control variable. 

The admissible control set is defined by
\[
    \mathcal U
    :=
    \left\{
        u:[0,T]\times\Omega\to U
        \ \middle|\
        u \text{ is } \mathbb F\text{-progressively measurable}
    \right\}.
\]
Since \(U\) is compact, there exists a constant \(D_U>0\) such that
\(|a-b|\leq D_U\) for all \(a,b\in U\). Consequently, for any
\(u,v\in\mathcal U\), upon setting \(d_t:=v_t-u_t\), we have
\(|d_t|\leq D_U\), \(dt\otimes d\mathbb P\)-a.e. In particular, this
uniform bound implies that
\(\mathbb E\int_0^T |d_t|^p\,dt<\infty\) for every \(p\geq1\).

For a given \(u\in\mathcal U\), the corresponding state process
\(X^u=(X_t^u)_{0\leq t\leq T}\) is governed by
\begin{equation}
\label{eq:state-equation}
\begin{cases}
    dX_t^u
    =
    b(t,X_t^u,u_t)\,dt
    +
    \sigma(t,X_t^u,u_t)\,dW_t,
    \qquad 0\leq t\leq T,\\[0.15cm]
    X_0^u=x_0\in\mathbb R^n.
\end{cases}
\end{equation}
Here,
\(b:[0,T]\times\mathbb R^n\times U\to\mathbb R^n\) and
\(\sigma:[0,T]\times\mathbb R^n\times U\to\mathbb R^{n\times d}\)
are deterministic measurable functions. Using the column notation
\(\sigma(t,x,a)
=(\sigma^1(t,x,a),\ldots,\sigma^d(t,x,a))\), where
\(\sigma^i(t,x,a)\in\mathbb R^n\) for \(i=1,\ldots,d\), the state equation
can equivalently be written as
\[
    dX_t^u
    =
    b(t,X_t^u,u_t)\,dt
    +
    \sum_{i=1}^d
    \sigma^i(t,X_t^u,u_t)\,dW_t^i.
\]
The present paper focuses on the controlled-diffusion setting, in which
\(\sigma(t,x,a)\) may depend nontrivially on the control variable \(a\).

The running and terminal cost functions are denoted by
\(f:[0,T]\times\mathbb R^n\times U\to\mathbb R\) and
\(\Phi:\mathbb R^n\to\mathbb R\), respectively. For each
\(u\in\mathcal U\), the associated cost functional is defined by
\begin{equation}
\label{eq:cost-functional}
    J(u)
    :=
    \mathbb E\left[
        \Phi(X_T^u)
        +
        \int_0^T f(t,X_t^u,u_t)\,dt
    \right].
\end{equation}
The stochastic optimal control problem is to determine
\(\inf_{u\in\mathcal U}J(u)\). Throughout the paper, we assume that
\(\inf_{u\in\mathcal U}J(u)>-\infty\). If the infimum is attained, we denote by \(u^\ast\in\mathcal U\) an optimal
control satisfying \(J(u^\ast)=\inf_{u\in\mathcal U}J(u)\).

We now introduce the notation for the so-called soft update. Let \(u\in\mathcal U\)
be a reference control and \(v\in\mathcal U\) a candidate control. For
\(\eta\in[0,1]\), define
\begin{equation}
\label{eq:soft-update}
    u_t^\eta
    :=
    u_t+\eta(v_t-u_t)
    =
    (1-\eta)u_t+\eta v_t.
\end{equation}
Let \(X^\eta:=X^{u^\eta}\) be the state process associated with
\(u^\eta\):
\begin{equation}
\label{eq:soft-state-equation}
\begin{cases}
    dX_t^\eta
    =
    b(t,X_t^\eta,u_t^\eta)\,dt
    +
    \sigma(t,X_t^\eta,u_t^\eta)\,dW_t,
    \qquad 0\le t\le T,\\[0.15cm]
    X_0^\eta=x_0.
\end{cases}
\end{equation}
We denote the state perturbation by
\[
    \delta X_t:=X_t^\eta-X_t^u.
\]

For fixed \(u\) and \(v\), the family
\(\{u^\eta\}_{\eta\in[0,1]}\) defines an admissible path in the control
space connecting the reference control and the candidate control. Since
\[
    u_t^\eta-u_t=\eta(v_t-u_t)=\eta d_t,
\]
the control perturbation is proportional to the scalar parameter \(\eta\).
Accordingly, if \(b\) and \(\sigma\) are Lipschitz continuous in the control
variable, then for fixed \(x\),
\[
    |b(t,x,u_t^\eta)-b(t,x,u_t)|
    +
    |\sigma(t,x,u_t^\eta)-\sigma(t,x,u_t)|
    \le
    C\eta |d_t|.
\]
By compactness of \(U\), the right-hand side is uniformly bounded by
\(C\eta D_U\). This observation is used later to separate the first-order
control improvement from higher-order stability remainders in the cost
difference estimate.

We impose the following basic regularity assumptions.

\begin{assumption}[Basic regularity]
\label{ass:basic-regularity}
The following conditions hold.

\begin{enumerate}[label=\textup{(\roman*)},leftmargin=2.5em]
\item
The functions \(b,\sigma,f\), and \(\Phi\) are twice continuously
differentiable with respect to \(x\). Moreover,
\(b,\sigma,f,b_x,\sigma_x,f_x,b_{xx},\sigma_{xx}\), and \(f_{xx}\)
are jointly continuous in \((t,x,a)\), while
\(\Phi,\Phi_x\), and \(\Phi_{xx}\) are continuous in \(x\).

\item
There exists a constant \(L>0\) such that, for every
\((t,x,a)\in[0,T]\times\mathbb R^n\times U\),
\[
\begin{aligned}
    |b(t,x,a)|+|\sigma(t,x,a)|
    &\leq L\bigl(1+|x|+|a|\bigr),\\
    |f(t,x,a)|
    &\leq L\bigl(1+|x|^2+|a|^2\bigr),\\
    |\Phi(x)|
    &\leq L\bigl(1+|x|^2\bigr),\\
    |f_x(t,x,a)|+|\Phi_x(x)|
    &\leq L\bigl(1+|x|\bigr),
\end{aligned}
\]
and
\[
\begin{aligned}
    &|b_x(t,x,a)|+|\sigma_x(t,x,a)|
    +|b_{xx}(t,x,a)|+|\sigma_{xx}(t,x,a)|\\
    &\qquad
    +|f_{xx}(t,x,a)|+|\Phi_{xx}(x)|
    \leq L.
\end{aligned}
\]

\item $b_{xx}$, $\sigma_{xx}$, $f_{xx}$, and $\Phi_{xx}$ are
$L$-Lipschitz continuous in $x$, uniformly in $(t,a)$, i.e.,
\[
\left|
\psi_{xx}(t,x_1,a)-\psi_{xx}(t,x_2,a)
\right|
\leq
L|x_1-x_2|,
\quad
\forall (t,a)\in[0,T]\times U,\quad
x_1,x_2\in\mathbb{R}^n,
\]
where $\psi=b,\sigma,f$,
and
\[
\left|
\Phi_{xx}(x_1)-\Phi_{xx}(x_2)
\right|
\le
L|x_1-x_2|.
\]

\item The coefficients and their relevant first-order state derivatives satisfy the following Lipschitz
conditions with respect to the control variable. More precisely, for every $(t,x)\in[0,T]\times\mathbb{R}^n$
and $a_1,a_2\in U$,

\[
|b(t,x,a_1)-b(t,x,a_2)|+|\sigma(t,x,a_1)-\sigma(t,x,a_2)|
\leq L|a_1-a_2|,
\]
\[
|f(t,x,a_1)-f(t,x,a_2)|
\leq L(1+|x|^2)|a_1-a_2|.
\]
Moreover,
\[
|b_x(t,x,a_1)-b_x(t,x,a_2)|
+
|\sigma_x(t,x,a_1)-\sigma_x(t,x,a_2)|
\leq L|a_1-a_2|,
\]
\[
|f_x(t,x,a_1)-f_x(t,x,a_2)|
\leq L(1+|x|)|a_1-a_2|.
\]

\end{enumerate}
\end{assumption}

\begin{remark}
Since $U$ is compact, $\sup_{a\in U}|a|<\infty$.
Thus, after possibly enlarging $L$, Assumption~\ref{ass:basic-regularity} implies that,
uniformly over $(t,a)\in[0,T]\times U$,
\[
|b(t,x,a)|+|\sigma(t,x,a)|+|f_x(t,x,a)|
\le L(1+|x|),
\]
and
\[
|f(t,x,a)|\le L(1+|x|^2).
\]
Moreover, the boundedness of $b_x$ and $\sigma_x$ yields the
global Lipschitz continuity of $b$ and $\sigma$ with respect to $x$:
\[
|b(t,x_1,a)-b(t,x_2,a)|
+
|\sigma(t,x_1,a)-\sigma(t,x_2,a)|
\le L|x_1-x_2|.
\]
\end{remark}

The following lemma establishes the well-posedness of the controlled state
equation and a uniform moment estimate for its solution.
The proof is given below.

\begin{lemma}[Well-posedness and uniform moment estimate for the state process]
\label{lem:state-moment-estimate}
Let Assumption~\ref{ass:basic-regularity} hold. Then, for any given
\(u\in\mathcal U\), the state equation~\eqref{eq:state-equation} admits a
unique strong solution \(X^u\). Moreover, for every \(m\geq2\), the family
\(\{X^u:u\in\mathcal U\}\) is uniformly bounded in
\(\mathcal S^m_{\mathbb F}(0,T;\mathbb R^n)\); more precisely, there exists
a constant \(C_m>0\), independent of \(u\), such that
\[
    \sup_{u\in\mathcal U}
    \mathbb E\left[
        \sup_{0\leq t\leq T}|X_t^u|^m
    \right]
    \leq
    C_m.
\]
Equivalently,
\[
    \sup_{u\in\mathcal U}
    \|X^u\|_{\mathcal S^m}
    <\infty,
    \qquad m\geq2.
\]
\end{lemma}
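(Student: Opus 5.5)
The plan is to treat the state equation~\eqref{eq:state-equation} as an SDE with random coefficients,
\[
    \tilde b(t,\omega,x):=b(t,x,u_t(\omega)), \qquad \tilde\sigma(t,\omega,x):=\sigma(t,x,u_t(\omega)),
\]
and to apply the classical Lipschitz theory. By the Remark following Assumption~\ref{ass:basic-regularity}, these coefficients are globally Lipschitz in \(x\) with constant \(L\), uniformly in \((t,\omega)\). They also satisfy the linear growth bound \(|\tilde b|+|\tilde\sigma|\le L(1+|x|)\). Here \(L\) does not depend on \(u\), because \(U\) is compact. Since \(u\) is progressively measurable and \(b,\sigma\) are jointly continuous (hence Borel), the maps \((t,\omega)\mapsto \tilde b(t,\omega,x)\) and \(\tilde\sigma(t,\omega,x)\) are progressively measurable for each fixed \(x\).

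For existence and uniqueness, I would run the standard Picard iteration in \(\mathcal S^2_{\mathbb F}(0,T;\mathbb R^n)\), or equivalently a contraction argument under a weighted norm \(\mathbb E\sup_t e^{-\beta t}|X_t|^2\). The key bounds come from the Burkholder--Davis--Gundy (BDG) inequality, the Lipschitz bound, and Gronwall's lemma. Uniqueness follows from the same estimate applied to the difference of two solutions.

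For the moment bound, fix \(m\ge2\) and localize with the stopping times \(\tau_R:=\inf\{t:|X_t^u|\ge R\}\wedge T\). Writing \(X_{t\wedge\tau_R}\) in integral form and applying the elementary inequality \(|a+b+c|^m\le 3^{m-1}(|a|^m+|b|^m+|c|^m)\), I would estimate the three pieces separately:
\begin{itemize}
\item the drift term with H\"older's inequality in time;
\item the stochastic integral with BDG, \(\mathbb E\sup_{s\le t}|\int_0^{s\wedge\tau_R}\tilde\sigma\,dW|^m\le C_m\mathbb E(\int_0^{t\wedge\tau_R}|\tilde\sigma|^2ds)^{m/2}\), followed again by H\"older's inequality.
\end{itemize}
Using linear growth, this gives
\[
    \phi_R(t):=\mathbb E\sup_{s\le t}|X_{s\wedge\tau_R}|^m \le C\Bigl(|x_0|^m+1+\int_0^t\phi_R(s)\,ds\Bigr),
\]
where \(C\) depends only on \(m,T,L\). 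The function \(\phi_R\) is finite, since it is bounded by \(\max(R,|x_0|)^m\), so Gronwall's lemma yields \(\phi_R(T)\le C_m\) independently of \(R\) and \(u\). Letting \(R\to\infty\), with \(\tau_R\to T\) by path continuity, Fatou's lemma gives the claimed bound.

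The only delicate point is keeping \(C_m\) independent of \(u\). This holds because the growth and Lipschitz constants are uniform over \(a\in U\), by compactness of \(U\), so nothing in the estimate depends on the particular control. The localization step is needed to make the Gronwall argument rigorous before integrability is known; it is routine.
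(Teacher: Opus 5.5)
Your proposal is correct and follows essentially the same route as the paper. Both reduce the state equation to an SDE with progressively measurable random coefficients that are Lipschitz and of linear growth uniformly in $u$ (by compactness of $U$), invoke the standard existence--uniqueness theory, and obtain the uniform $\mathcal S^m$ bound by localization, H\"older's inequality for the drift, BDG for the stochastic integral, Gronwall's inequality, and Fatou's lemma, with a constant depending only on $m$, $T$, $L$, $x_0$ and the dimensions.
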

\begin{proof}
Since \(U\) is compact, Assumption~\ref{ass:basic-regularity} implies,
after enlarging \(L\) if necessary, that
\[
    |b(t,x,a)|+|\sigma(t,x,a)|
    \leq
    L(1+|x|),
    \qquad
    (t,x,a)\in[0,T]\times\mathbb R^n\times U.
\]
Moreover, the uniform boundedness of \(b_x\) and \(\sigma_x\) yields
\[
\begin{aligned}
    &|b(t,x,a)-b(t,\bar x,a)|
    +
    |\sigma(t,x,a)-\sigma(t,\bar x,a)| 
    \leq
    L|x-\bar x|,
\end{aligned}
\]
uniformly in \((t,a)\in[0,T]\times U\).

Fix \(u\in\mathcal U\). Since \(u\) is progressively measurable, the
coefficients
\[
    (t,\omega,x)\longmapsto b(t,x,u_t(\omega)),
    \qquad
    (t,\omega,x)\longmapsto \sigma(t,x,u_t(\omega))
\]
are progressively measurable and satisfy the preceding global Lipschitz
and linear-growth conditions. Hence, the standard existence and uniqueness
theorem for stochastic differential equations yields a unique strong
solution \(X^u\) of~\eqref{eq:state-equation}.

For completeness, after localization, H\"older's inequality for the
drift and the Burkholder--Davis--Gundy inequality for the diffusion give
\[
\begin{aligned}
 \mathbb E\sup_{s\leq t}|X_s^u|^m
 &\leq C_m|x_0|^m
   +C_m\mathbb E\left(\int_0^t|b(s,X_s^u,u_s)|\,ds\right)^m\\
 &\quad+C_m\mathbb E\left(\int_0^t|\sigma(s,X_s^u,u_s)|^2\,ds\right)^{m/2}\\
 &\leq C_m(1+|x_0|^m)
   +C_m\int_0^t\mathbb E\sup_{r\leq s}|X_r^u|^m\,ds.
\end{aligned}
\]
The stopped estimates are uniform in the localization level; Gronwall's
inequality and Fatou's lemma remove the localization. Equivalently,
retaining the values of the coefficients at the origin yields
\[
\begin{aligned}
    \mathbb E\left[
        \sup_{0\leq t\leq T}|X_t^u|^m
    \right]
    &\leq
    C_m
    \left(
        1+|x_0|^m
        +
        \mathbb E\int_0^T
        \bigl[
            |b(t,0,u_t)|^m
            +
            |\sigma(t,0,u_t)|^m
        \bigr]dt
    \right).
\end{aligned}
\]
By the compactness of \(U\) and the uniform linear-growth condition,
\[
    \sup_{a\in U}
    \bigl(
        |b(t,0,a)|+|\sigma(t,0,a)|
    \bigr)
    \leq L,
\]
uniformly in \(t\in[0,T]\). Consequently,
\[
    \sup_{u\in\mathcal U}
    \mathbb E\left[
        \sup_{0\leq t\leq T}|X_t^u|^m
    \right]
    \leq
    C_m(1+|x_0|^m),
\]
where \(C_m\) depends only on \(m,T,L\) and the dimensions, and is
independent of \(u\). Since \(x_0\) is fixed, its contribution may be
absorbed into \(C_m\), which proves the assertion.
\end{proof}

The preceding uniform moment estimate, together with the growth conditions on
\(f\) and \(\Phi\), immediately yields the following well-posedness and
boundedness result for the cost functional.

\begin{corollary}[Well-posedness and uniform lower boundedness of the cost]
\label{cor:cost-well-posed}
Let Assumption~\ref{ass:basic-regularity} hold. Then the cost functional
\(J\) is well defined on \(\mathcal U\). Moreover, there exists a constant
\(C_J>0\), independent of \(u\), such that
\[
    \sup_{u\in\mathcal U}|J(u)|
    \leq C_J.
\]
In particular,
\[
    \inf_{u\in\mathcal U}J(u)
    \geq -C_J>-\infty.
\]
\end{corollary}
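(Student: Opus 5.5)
The plan is to combine the quadratic growth conditions on \(f\) and \(\Phi\) from Assumption~\ref{ass:basic-regularity} with the uniform second-moment bound of Lemma~\ref{lem:state-moment-estimate}. There are three steps, carried out in this order.

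\textbf{Measurability.} Fix \(u\in\mathcal U\). Since \(X^u\) is continuous and adapted and \(u\) is progressively measurable, the map \((t,\omega)\mapsto f(t,X_t^u(\omega),u_t(\omega))\) is progressively measurable. This follows from the joint continuity of \(f\) in \((t,x,a)\). Likewise, \(\Phi(X_T^u)\) is \(\mathcal F_T\)-measurable because \(\Phi\) is continuous. Hence the random variable \(\Phi(X_T^u)+\int_0^T f(t,X_t^u,u_t)\,dt\) makes sense, provided the integral is integrable.

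\textbf{Domination.} By the remark following Assumption~\ref{ass:basic-regularity}, after enlarging \(L\) we have \(|f(t,x,a)|\le L(1+|x|^2)\) uniformly in \(a\in U\), and also \(|\Phi(x)|\le L(1+|x|^2)\). Therefore
\[
\Bigl|\Phi(X_T^u)+\int_0^T f(t,X_t^u,u_t)\,dt\Bigr|
\le L(1+T)\Bigl(1+\sup_{0\le t\le T}|X_t^u|^2\Bigr).
\]
Taking expectations and applying Lemma~\ref{lem:state-moment-estimate} with \(m=2\) gives
\[
\mathbb E\Bigl|\Phi(X_T^u)+\int_0^T f(t,X_t^u,u_t)\,dt\Bigr|\le L(1+T)(1+C_2).
\]
The right-hand side is finite, which shows that \(J(u)\) is well defined as a finite real number. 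By Fubini, the time integral is also finite almost surely, so the integral form of \(J\) is legitimate.

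\textbf{Uniformity.} The constant \(C_J:=L(1+T)(1+C_2)\) depends only on \(L\), \(T\), \(x_0\) and the dimensions, and not on \(u\). This yields \(\sup_{u\in\mathcal U}|J(u)|\le C_J\), and hence \(\inf_{u\in\mathcal U} J(u)\ge -C_J\).

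There is no genuine obstacle here. The only point needing care is that the growth bound on \(f\) must be uniform in the control. This is guaranteed by the compactness of \(U\): it lets the \(|a|^2\) term be absorbed into the constant, so the estimate is independent of \(u\).
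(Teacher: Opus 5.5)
Your proposal is correct and follows essentially the same route as the paper: bound \(|J(u)|\) via the quadratic growth of \(f\) and \(\Phi\), then invoke Lemma~\ref{lem:state-moment-estimate} with \(m=2\). The only cosmetic difference is that the paper keeps the \(|a|^2\) term and bounds it by \(M_U^2\) explicitly, whereas you absorb it into \(L\) via the remark; your added measurability check is a harmless extra.
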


\begin{proof}
By the compactness of \(U\), there exists \(M_U>0\) such that
\(|a|\leq M_U\) for all \(a\in U\). Hence, by the growth assumptions on
\(f\) and \(\Phi\),
\[
\begin{aligned}
    |J(u)|
    &\leq
    \mathbb E|\Phi(X_T^u)|
    +
    \mathbb E\int_0^T|f(t,X_t^u,u_t)|\,dt\\
    &\leq
    L\mathbb E\bigl[1+|X_T^u|^2\bigr]
    +
    L\mathbb E\int_0^T
    \bigl(1+|X_t^u|^2+|u_t|^2\bigr)\,dt\\
    &\leq
    L\bigl(1+T+TM_U^2\bigr)
    +
    L(1+T)
    \mathbb E\left[
        \sup_{0\leq t\leq T}|X_t^u|^2
    \right].
\end{aligned}
\]
Applying Lemma~\ref{lem:state-moment-estimate} with \(m=2\) yields a
constant \(C_J>0\), independent of \(u\), such that
\(|J(u)|\leq C_J\). The asserted well-posedness and uniform lower bound
follow immediately.
\end{proof}

Next, we introduce the first- and second-order adjoint
equations associated with a given admissible control \(u\in\mathcal U\).
For notational convenience, we use the following notation:
\[
    b^u(t):=b(t,X_t^u,u_t),
    \qquad
    \sigma^{u,i}(t):=\sigma^i(t,X_t^u,u_t),
    \qquad i=1,\ldots,d.
\]
For \(\psi=b,\sigma^i,f\), we denote
\[
    \psi_x^u(t):=\psi_x(t,X_t^u,u_t),
    \qquad
    \psi_{xx}^u(t):=\psi_{xx}(t,X_t^u,u_t).
\]
Under the minimization convention, the Hamiltonian is defined by
\begin{equation}
\label{eq:Hamiltonian-adjoint}
    H(t,x,p,q,a)
    =
    p^\top b(t,x,a)
    +
    \sum_{i=1}^d (q^i)^\top \sigma^i(t,x,a)
    +
    f(t,x,a),
\end{equation}
where $p\in\mathbb R^n$, $q=(q^1,\ldots,q^d)\in\mathbb R^{n\times d}$.
For \(u\in\mathcal U\), the first-order adjoint process
\((p^u,q^u)\) is defined as the solution of the BSDE
\begin{equation}
\label{eq:first-adjoint}
\begin{cases}
    dp_t^u
    =
    -H_x(t,X_t^u,p_t^u,q_t^u,u_t)\,dt
    +
    \displaystyle\sum_{i=1}^d q_t^{u,i}\,dW_t^i,
    \qquad 0\le t\le T,\\[0.25cm]
    p_T^u=\Phi_x(X_T^u).
\end{cases}
\end{equation}

We next define the second-order adjoint process. Let $H_{xx}^u(t):= H_{xx}(t,X_t^u,p_t^u,q_t^u,u_t)$, 
More explicitly,
\[
    H_{xx}(t,x,p,q,a)
    =
    \sum_{\ell=1}^n p_\ell\, b_{xx}^{\ell}(t,x,a)
    +
    \sum_{i=1}^d\sum_{\ell=1}^n
    q_\ell^i\,\sigma_{xx}^{\ell i}(t,x,a)
    +
    f_{xx}(t,x,a),
\]

The second-order adjoint process \((P^u,Q^u)\) is defined as the solution of
the matrix-valued BSDE
\begin{equation}
\label{eq:second-adjoint}
\begin{cases}
    dP_t^u
    =
    -\Bigg[
        (b_x^u(t))^\top P_t^u
        +
        P_t^u b_x^u(t)
        +
        \displaystyle\sum_{i=1}^d
        (\sigma_x^{u,i}(t))^\top P_t^u\sigma_x^{u,i}(t)
        \\[0.15cm]\qquad\qquad
        +
        \displaystyle\sum_{i=1}^d
        \left\{
            (\sigma_x^{u,i}(t))^\top Q_t^{u,i}
            +
            (Q_t^{u,i})^\top \sigma_x^{u,i}(t)
        \right\}
        +
        H_{xx}^u(t)
    \Bigg]dt
    +
    \displaystyle\sum_{i=1}^d Q_t^{u,i}\,dW_t^i,\\[0.35cm]
    P_T^u=\Phi_{xx}(X_T^u).
\end{cases}
\end{equation}
Here
\[
    P_t^u\in\mathbb S^n,
    \qquad
    Q_t^u=(Q_t^{u,1},\ldots,Q_t^{u,d}),
    \qquad
    Q_t^{u,i}\in \mathbb S^n.
\]

\begin{lemma}[Uniform moment estimate for the first-order adjoint process]
\label{lem:first-adjoint-moment}
Let Assumption~\ref{ass:basic-regularity} hold. For every
\(u\in\mathcal U\), equation~\eqref{eq:first-adjoint} admits a unique solution
\[
    (p^u,q^u)\in
    \mathcal S^4_{\mathbb F}(0,T;\mathbb R^n)
    \times\mathbb H^4_{\mathbb F}(0,T;\mathbb R^{n\times d}).
\]
There exists a constant \(C>0\), independent of \(u\), such that
\[
    \sup_{u\in\mathcal U}
    \left(\|p^u\|_{\mathcal S^4}+\|q^u\|_{\mathbb H^4}\right)
    \leq C.
\]
\end{lemma}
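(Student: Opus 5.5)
The first-order adjoint equation~\eqref{eq:first-adjoint} is a linear BSDE. Writing out $H_x$, the generator is
\[
    H_x(t,X_t^u,p,q,u_t)
    =
    (b_x^u(t))^\top p
    +
    \sum_{i=1}^d (\sigma_x^{u,i}(t))^\top q^i
    +
    f_x^u(t).
\]
This is affine in $(p,q)$. The coefficients $b_x^u$ and $\sigma_x^u$ are progressively measurable and bounded by $L$ uniformly in $u$, by Assumption~\ref{ass:basic-regularity}(ii). The free term $f_x^u(t)$ satisfies $|f_x^u(t)|\le L(1+|X_t^u|)$. The plan is therefore to invoke the standard $L^p$ theory for Lipschitz BSDEs, with $p=4$, and to track the constants so that they depend only on $L$, $T$, $n$, $d$.

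\textbf{Step 1 (data integrability).} Lemma~\ref{lem:state-moment-estimate} with $m=4$ gives
\[
    \sup_{u\in\mathcal U}\mathbb E\Bigl[|\Phi_x(X_T^u)|^4\Bigr]
    \le
    C\Bigl(1+\sup_{u\in\mathcal U}\mathbb E|X_T^u|^4\Bigr)
    \le C .
\]
Combining it with Jensen's inequality gives
\[
    \sup_{u\in\mathcal U}
    \mathbb E\Bigl(\int_0^T|f_x^u(t)|\,dt\Bigr)^4
    \le
    C\Bigl(1+\sup_{u\in\mathcal U}\mathbb E\sup_{t\le T}|X_t^u|^4\Bigr)
    \le C .
\]

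\textbf{Step 2 (existence and uniqueness).} The generator is uniformly Lipschitz in $(p,q)$ with constant $\sqrt{n d}\,L$ or similar, and its value at $(0,0)$ satisfies the $L^4$ integrability of Step~1. The classical result of El Karoui--Peng--Quenez for $L^p$ BSDEs ($p>1$) then yields a unique solution in $\mathcal S^4\times\mathbb H^4$.

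\textbf{Step 3 (uniform a priori estimate).} The same theorem gives
\[
    \|p^u\|_{\mathcal S^4}^4+\|q^u\|_{\mathbb H^4}^4
    \le
    C\,\mathbb E\Bigl[|\Phi_x(X_T^u)|^4+\Bigl(\int_0^T|f_x^u(t)|\,dt\Bigr)^4\Bigr],
\]
where $C$ depends only on the Lipschitz constant, $T$ and $p$, hence not on $u$. Combined with Step~1, this gives the claim.

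The only delicate point is this uniformity of the a priori constant. For completeness I would sketch its derivation. Apply It\^o's formula to $|p_t^u|^2$. Use Young's inequality, $2|p||b_x^\top p+\sigma_x^\top q|\le C|p|^2+\tfrac12|q|^2$, to absorb the $q$ term. This gives
\[
    |p_t|^2+\tfrac12\int_t^T|q_s|^2ds
    \le
    |\Phi_x|^2+C\int_t^T|p_s|^2ds+2\int_t^T|p_s||f_x^u(s)|ds-2\int_t^T p_s^\top q_s\,dW_s .
\]
Next, raise this to the power $2$ and take the supremum in $t$. Use BDG on the martingale term,
\[
    \mathbb E\Bigl(\int_0^T|p_s|^2|q_s|^2ds\Bigr)^{1/2\cdot2}
    \le
    \varepsilon\,\mathbb E\sup_s|p_s|^4+C_\varepsilon\,\mathbb E\Bigl(\int_0^T|q_s|^2ds\Bigr)^2 .
\]
Then work backward on small subintervals, or use an exponential weight $e^{\beta t}$, to close the estimate. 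Localization by stopping times justifies the finiteness of the quantities being absorbed.
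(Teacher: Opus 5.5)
Your proof is essentially the paper's. Both write \eqref{eq:first-adjoint} as a linear BSDE whose generator is Lipschitz in $(p,q)$ uniformly in $u$, bound $\Phi_x(X_T^u)$ and $f_x^u$ in $L^4$ via Lemma~\ref{lem:state-moment-estimate} with $m=4$, and invoke the El Karoui--Peng--Quenez $L^p$ existence/uniqueness theorem and a priori estimate, whose constant depends only on the Lipschitz bound, $T$ and $p$. (The paper checks the free term in the form $\mathbb E(\int_0^T|f_x^u(t)|^2\,dt)^2\le C$, which follows from the same linear-growth bound.)

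Your supplementary It\^o--BDG sketch is therefore unnecessary, and as written it would not close. The BDG step leaves $C_\varepsilon\,\mathbb E(\int_0^T|q_s|^2ds)^2$, and the same It\^o identity controls this only by a non-small multiple of $\mathbb E\sup_t|p_t|^4$. Neither small subintervals nor the weight $e^{\beta t}$ shrinks that cross term. The standard fix is to absorb the linear terms with $e^{\beta t}$, take conditional expectations to remove the martingale, and apply Doob's inequality to bound $\mathbb E\sup_t|p_t|^4$ first; only then estimate $q$.
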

\begin{proof}
For fixed \(u\), denote
\[
    F^u(t,y,z):=(b_x^u(t))^\top y
       +\sum_{i=1}^d(\sigma_x^{u,i}(t))^\top z^i+f_x^u(t).
\]
The coefficients of \(y,z\) are uniformly bounded, so \(F^u\) is
progressively measurable and uniformly Lipschitz in \((y,z)\),
independently of \(u\). The growth assumptions imply
\[
\begin{aligned}
 &\mathbb E\left[|\Phi_x(X_T^u)|^4
       +\left(\int_0^T|F^u(t,0,0)|^2\,dt\right)^2\right]\\
 &\qquad\leq C\left(1+\mathbb E\sup_{t\leq T}|X_t^u|^4\right)\leq C,
\end{aligned}
\]
where Lemma~\ref{lem:state-moment-estimate} gives the last bound.
The Lipschitz BSDE theorem and its a priori estimate
\cite[Theorem~5.1 and Proposition~5.1]{ElKarouiPengQuenez1997}
therefore give a unique solution and
\[
\begin{aligned}
 \|p^u\|_{\mathcal S^4}^4+\|q^u\|_{\mathbb H^4}^4
 &\leq C\mathbb E\left[|\Phi_x(X_T^u)|^4
       +\left(\int_0^T|f_x^u(t)|^2\,dt\right)^2\right]
 \leq C.
\end{aligned}
\]
The constants depend only on \(T\), the coefficient bounds, the dimensions,
the control bound, and \(x_0\). The Brownian filtration has the martingale
representation property, so no additional orthogonal martingale occurs.
\end{proof}

\begin{lemma}[Uniform moment estimate for the second-order adjoint process]
\label{lem:second-adjoint-moment}
Let Assumption~\ref{ass:basic-regularity} hold. For every
\(u\in\mathcal U\), equation~\eqref{eq:second-adjoint} admits a unique solution
\[
    (P^u,Q^u)\in
    \mathcal S^4_{\mathbb F}(0,T;\mathbb S^n)
    \times\mathbb H^4_{\mathbb F}(0,T;(\mathbb S^n)^d).
\]
There exists a constant \(C>0\), independent of \(u\), such that
\[
    \sup_{u\in\mathcal U}
    \left(\|P^u\|_{\mathcal S^4}+\|Q^u\|_{\mathbb H^4}\right)
    \leq C.
\]
\end{lemma}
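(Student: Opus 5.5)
The plan mirrors the proof of Lemma~\ref{lem:first-adjoint-moment}: regard \eqref{eq:second-adjoint} as a linear matrix-valued BSDE in the unknowns \((P,Q)\) with the fixed driver
\[
    G^u(t,P,Q):=(b_x^u)^\top P+Pb_x^u+\sum_{i=1}^d(\sigma_x^{u,i})^\top P\sigma_x^{u,i}
    +\sum_{i=1}^d\bigl\{(\sigma_x^{u,i})^\top Q^i+(Q^i)^\top\sigma_x^{u,i}\bigr\}+H_{xx}^u(t).
\]
Since \(b_x,\sigma_x\) are bounded by \(L\) uniformly in \((t,x,a)\), the map \((P,Q)\mapsto G^u(t,P,Q)-G^u(t,0,0)\) is progressively measurable and Lipschitz with a constant depending only on \(L\) and the dimensions, independent of \(u\). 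Then \cite[Theorem~5.1 and Proposition~5.1]{ElKarouiPengQuenez1997} yield existence, uniqueness and the \(L^4\) a priori estimate
\[
    \|P^u\|_{\mathcal S^4}^4+\|Q^u\|_{\mathbb H^4}^4
    \leq C\,\mathbb E\Bigl[|\Phi_{xx}(X_T^u)|^4+\Bigl(\int_0^T|H_{xx}^u(t)|\,dt\Bigr)^4\Bigr],
\]
once the right-hand side is finite; the constant \(C\) depends only on the Lipschitz bound and \(T\), hence is independent of \(u\).

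\textbf{Terminal term.} This is immediate: \(|\Phi_{xx}|\leq L\).

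\textbf{Inhomogeneous term (the main point).} Here one checks integrability of \(H_{xx}^u\), which is linear in \((p^u,q^u)\):
\[
    |H_{xx}^u(t)|\leq L\bigl(1+|p_t^u|+|q_t^u|\bigr).
\]
By the Cauchy--Schwarz inequality in time,
\[
    \Bigl(\int_0^T|H_{xx}^u|\,dt\Bigr)^4
    \leq C\Bigl(1+\sup_t|p_t^u|^4+\Bigl(\int_0^T|q_t^u|^2dt\Bigr)^2\Bigr).
\]
Taking expectations and invoking Lemma~\ref{lem:first-adjoint-moment} bounds this by a constant independent of \(u\). The delicacy is that only \(\mathbb H^4\) (not \(\mathcal S^8\) or similar) is available for \(q^u\), so the estimate must be set up in the \(L^4\) framework with the driver's free term measured in \(L^4(\Omega;L^1(0,T))\), which is exactly what the cited a priori estimate uses. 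One could alternatively use \((\int|H_{xx}|^2dt)^2\), which works the same way.

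\textbf{Symmetry.} It remains to check that \(P^u\) and \(Q^{u,i}\) take values in \(\mathbb S^n\). Transposing the equation shows that \(((P^u)^\top,(Q^u)^\top)\) solves the same BSDE, because \(G^u(t,P^\top,Q^\top)=G^u(t,P,Q)^\top\) (note \(H_{xx}\) and \(\Phi_{xx}\) are symmetric). Uniqueness then gives symmetry, and the martingale representation property again excludes orthogonal martingale parts.
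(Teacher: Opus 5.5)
Your existence, uniqueness and moment argument is the paper's: the same El Karoui--Peng--Quenez result, with a Lipschitz constant depending only on $L$, and the same control of the free term through $|H_{xx}^u|\le C(1+|p^u|+|q^u|)$ and Lemma~\ref{lem:first-adjoint-moment}. Measuring the free term in $L^4(\Omega;L^1(0,T))$ rather than through $\mathbb E(\int_0^T|H_{xx}^u|^2dt)^2$ makes no difference, since $(\int_0^T|h|\,dt)^4\le T^2(\int_0^T|h|^2dt)^2$.

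The symmetry step, however, rests on an identity that is false for the equation as written. In \eqref{eq:second-adjoint} the $Q$-term is $(\sigma_x^{u,i})^\top Q^i+(Q^i)^\top\sigma_x^{u,i}$, and this is symmetric for every $Q^i$. So the $Q$-part of $G^u(t,P,Q)^\top$ is again $(\sigma_x^{u,i})^\top Q^i+(Q^i)^\top\sigma_x^{u,i}$. The $Q$-part of $G^u(t,P^\top,Q^\top)$, by contrast, is $(\sigma_x^{u,i})^\top (Q^i)^\top+Q^i\sigma_x^{u,i}$. These differ by $Q^i_{\mathrm a}\sigma_x^{u,i}-(\sigma_x^{u,i})^\top Q^i_{\mathrm a}$, where $Q^i_{\mathrm a}:=Q^i-(Q^i)^\top$, and this is generally nonzero unless $Q^i$ is already symmetric, which is what you are trying to prove. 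Hence transposition does not show that $((P^u)^\top,(Q^u)^\top)$ solves the same BSDE. Your identity does hold for Peng's form with $Q^i\sigma_x^{u,i}$, which agrees with the paper's form only on symmetric $Q^i$.

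The repair is easy. The paper poses the BSDE directly on $\mathbb S^n$. For symmetric $P,Q^1,\dots,Q^d$ every term of the generator is symmetric, and $\Phi_{xx}$ and $H_{xx}^u$ are symmetric, so the Lipschitz theory on the vector space $\mathbb S^n$ produces an $\mathbb S^n\times(\mathbb S^n)^d$-valued solution. Uniqueness in the ambient matrix space then identifies it with the unique matrix-valued solution.

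Alternatively, keep your ambient solution and subtract the transposed equation; the $Q$-term cancels because it is already symmetric. Then $D:=P^u-(P^u)^\top$ solves the homogeneous linear BSDE $dD_t=-\bigl[(b_x^u)^\top D_t+D_tb_x^u+\sum_i(\sigma_x^{u,i})^\top D_t\sigma_x^{u,i}\bigr]dt+\sum_i\bigl(Q_t^{u,i}-(Q_t^{u,i})^\top\bigr)dW_t^i$ with $D_T=0$. Uniqueness gives $D\equiv0$ and $Q^{u,i}=(Q^{u,i})^\top$.
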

\begin{proof}
On the finite-dimensional space \(\mathbb S^n\), the generator in
\eqref{eq:second-adjoint} is affine in \((P,Q)\), with uniformly bounded
linear coefficients. Its inhomogeneous term satisfies
\[
    |H_{xx}^u(t)|\leq C(1+|p_t^u|+|q_t^u|),
    \qquad |\Phi_{xx}(X_T^u)|\leq C.
\]
In particular,
\[
    \left(\int_0^T|p_t^u|^2\,dt\right)^2
    \leq T^2\sup_{t\leq T}|p_t^u|^4,
    \qquad
    \mathbb E\left(\int_0^T|q_t^u|^2\,dt\right)^2
    =\|q^u\|_{\mathbb H^4}^4.
\]
Consequently, Lemma~\ref{lem:first-adjoint-moment} gives
\[
\begin{aligned}
 \mathbb E\left(\int_0^T|H_{xx}^u(t)|^2\,dt\right)^2
 &\leq C\left(1+T^2\|p^u\|_{\mathcal S^4}^4
                     +\|q^u\|_{\mathbb H^4}^4\right)\leq C.
\end{aligned}
\]
The Lipschitz BSDE estimate
\cite[Theorem~5.1 and Proposition~5.1]{ElKarouiPengQuenez1997} now yields
\[
\begin{aligned}
 \|P^u\|_{\mathcal S^4}^4+\|Q^u\|_{\mathbb H^4}^4
 &\leq C\mathbb E\left[|\Phi_{xx}(X_T^u)|^4
            +\left(\int_0^T|H_{xx}^u(t)|^2\,dt\right)^2\right]
 \leq C.
\end{aligned}
\]
This application on \(\mathbb S^n\) is legitimate: the terminal Hessian
and \(H_{xx}^u\) are symmetric, and each term of the generator preserves
symmetry when \(P,Q^1,\ldots,Q^d\) are symmetric. Uniqueness in the
ambient matrix space shows that this symmetric solution is the unique
matrix-valued solution. All coefficient bounds are uniform in \(u\).
\end{proof}

\subsection{The corrected \texorpdfstring{\(\mathcal H\)}{H}-function and soft-update stationarity residual}
\label{subsec:H-function-soft-residual}
We introduce the second-order adjoint corrected Hamiltonian used in this
paper and denote it by the \(\mathcal H\)-function. This function contains
two control arguments: a candidate control \(v\) and a reference control
\(u\). The reference control determines the state, the adjoint processes,
and the anchor point for the diffusion variation, whereas the candidate
control is the variable compared or optimized in the Hamiltonian update
step.

For
\[
    (t,x,p,q,P,v,u)
    \in
    [0,T]\times\mathbb R^n\times\mathbb R^n
    \times\mathbb R^{n\times d}\times\mathbb S^n
    \times U\times U,
\]
define the corrected \(\mathcal H\)-function by
\begin{equation}
\label{eq:corrected-H-function}
\begin{aligned}
    \mathcal H(t,x,p,q,P,v,u)
    &:=
    H(t,x,p,q,v)                                                   \\
    &\quad
    +
    \frac12
    \sum_{i=1}^d
    \left[
        \sigma^i(t,x,v)-\sigma^i(t,x,u)
    \right]^\top
    P
    \left[
        \sigma^i(t,x,v)-\sigma^i(t,x,u)
    \right].
\end{aligned}
\end{equation}
In particular,
\[
    \mathcal H(t,x,p,q,P,u,u)
    =
    H(t,x,p,q,u).
\]
Then the stochastic maximum principle
\cite{Peng1990,yong1999stochastic}
can be equivalently rewritten in terms of \(\mathcal H\)-function.

\begin{theorem}[\(\mathcal H\)-form of the stochastic maximum principle]
\label{thm:H-function-SMP}
Let Assumption~\ref{ass:basic-regularity} hold. Let $(X^\ast,u^\ast)$ 
be an optimal state-control pair, and let $(p^\ast,q^\ast)$, $P^\ast$
be the corresponding first- and second-order adjoint processes. Then, for
\(dt\otimes d\mathbb P\)-a.e. \((t,\omega)\),
\begin{equation}
\label{eq:H-function-SMP}
    \mathcal H
    \left(
        t,
        X_t^\ast,
        p_t^\ast,
        q_t^\ast,
        P_t^\ast,
        v,
        u_t^\ast
    \right)
    \ge
    \mathcal H
    \left(
        t,
        X_t^\ast,
        p_t^\ast,
        q_t^\ast,
        P_t^\ast,
        u_t^\ast,
        u_t^\ast
    \right),
    \qquad
    \forall v\in U.
\end{equation}
Equivalently,
\[
    u_t^\ast
    \in
    \arg\min_{v\in U}
    \mathcal H
    \left(
        t,
        X_t^\ast,
        p_t^\ast,
        q_t^\ast,
        P_t^\ast,
        v,
        u_t^\ast
    \right),
    \qquad
    dt\otimes d\mathbb P\text{-a.e.}
\]
\end{theorem}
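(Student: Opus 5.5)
The plan is to reduce the statement to Peng's global stochastic maximum principle \cite{Peng1990} (see also \cite[Chap.~3, Thm.~3.2]{yong1999stochastic}). Under the minimization convention it reads
\[
H(t,X_t^\ast,p_t^\ast,q_t^\ast,v)-H(t,X_t^\ast,p_t^\ast,q_t^\ast,u_t^\ast)
+\tfrac12\operatorname{Tr}\!\big[\Delta\sigma_t(v)^\top P_t^\ast\Delta\sigma_t(v)\big]\ge 0 ,
\]
for all \(v\in U\) and \(dt\otimes d\mathbb P\)-a.e. \((t,\omega)\), where \(\Delta\sigma_t(v)=\sigma(t,X_t^\ast,v)-\sigma(t,X_t^\ast,u_t^\ast)\). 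Writing \(\sigma\) columnwise gives \(\operatorname{Tr}[\Delta\sigma^\top P\Delta\sigma]=\sum_i (\Delta\sigma^i)^\top P\,\Delta\sigma^i\). The left-hand side is therefore exactly \(\mathcal H(\dots,v,u_t^\ast)-\mathcal H(\dots,u_t^\ast,u_t^\ast)\), using \(\mathcal H(\dots,u,u)=H(\dots,u)\). So the theorem is a rewriting once Peng's SMP is available under Assumption~\ref{ass:basic-regularity}. The adjoint equations \eqref{eq:first-adjoint}--\eqref{eq:second-adjoint} are well posed by Lemmas~\ref{lem:first-adjoint-moment} and \ref{lem:second-adjoint-moment}.

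For a self-contained argument I would sketch Peng's spike-variation proof. Fix \(v\in U\), \(\tau\in[0,T)\) and \(\varepsilon>0\), and set \(u^\varepsilon=v\) on \(E_\varepsilon=[\tau,\tau+\varepsilon]\) and \(u^\varepsilon=u^\ast\) otherwise. The steps are as follows.

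First, introduce the first- and second-order variational equations \(y^{1},y^{2}\). They satisfy \(\mathbb E\sup|y^1|^{2}=O(\varepsilon)\) and \(\mathbb E\sup|y^2|^2=O(\varepsilon^2)\), together with \(\mathbb E\sup|X^\varepsilon-X^\ast-y^1-y^2|^2=o(\varepsilon^2)\). These follow from the Lipschitz, growth and \(C^2\) conditions, Lemma~\ref{lem:state-moment-estimate}, and BDG/Gronwall.

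Second, expand \(0\le J(u^\varepsilon)-J(u^\ast)\) to second order in \(x\). The terms involve \(\Phi_x,\Phi_{xx},f_x,f_{xx}\), and the remainder is \(o(\varepsilon)\) by Assumption~\ref{ass:basic-regularity}(iii).

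Third, apply Itô's formula to \(p^\ast\cdot(y^1+y^2)\) and to \(\operatorname{Tr}[P^\ast y^1(y^1)^\top]\). The first-order adjoint absorbs the linear terms. The second-order adjoint absorbs the quadratic terms \(\tfrac12 (y^1)^\top H_{xx}y^1\). The leftover is the \(\varepsilon\)-order term
\[
\mathbb E\int_{E_\varepsilon}\Big[\Delta H_t(v)+\tfrac12\sum_i(\Delta\sigma^i_t)^\top P_t^\ast\Delta\sigma^i_t\Big]dt,
\]
which arises from the \(d\langle y^1\rangle\) term where \(\Delta\sigma\) is not small.

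Fourth, this gives \(0\le \mathbb E\int_{E_\varepsilon}[\mathcal H(\dots,v,u^\ast_t)-\mathcal H(\dots,u^\ast_t,u^\ast_t)]dt+o(\varepsilon)\). Replace the deterministic \(v\) by \(v\mathbf 1_A\) for \(A\in\mathcal F_\tau\). Divide by \(\varepsilon\) and use the Lebesgue differentiation theorem for a.e. \(\tau\); integrability holds by the \(\mathcal S^4/\mathbb H^4\) bounds. This yields the inequality for fixed \(v\) a.e.

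Finally, take a countable dense subset of \(U\) and use continuity of \(\mathcal H\) in \(v\), which comes from the joint continuity of \(b,\sigma,f\). This gives the inequality simultaneously for all \(v\in U\) off a null set, and the argmin form follows immediately.

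The main obstacle is the error bookkeeping in the third step. One must show that the cross terms between \(\Delta\sigma\mathbf 1_{E_\varepsilon}\) and \(y^1\), and the terms involving \(\Delta\sigma_x y^1\mathbf 1_{E_\varepsilon}\) and \(Q^\ast\), are \(o(\varepsilon)\). For this I would use Cauchy--Schwarz with \(\mathbb E\sup|y^1|^4=O(\varepsilon^2)\) and the fourth-moment bounds on \(p^\ast,q^\ast,P^\ast,Q^\ast\) from Lemmas~\ref{lem:first-adjoint-moment}--\ref{lem:second-adjoint-moment}. I would cite Peng's estimates for these rather than redo them.
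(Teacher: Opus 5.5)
Your first paragraph is exactly the paper's proof. The paper invokes Peng's global stochastic maximum principle under the minimization convention and notes that, because the second-order correction vanishes at \(v=u_t^\ast\) and \(\operatorname{Tr}[\Delta\sigma^\top P\Delta\sigma]=\sum_i(\Delta\sigma^i)^\top P\Delta\sigma^i\), the left-hand side is precisely \(\mathcal H(\dots,v,u_t^\ast)-\mathcal H(\dots,u_t^\ast,u_t^\ast)\). The spike-variation sketch is optional; if you keep it, the localized perturbation on \(E_\varepsilon\) should be \(v\mathbf 1_A+u^\ast\mathbf 1_{A^c}\) rather than \(v\mathbf 1_A\), since \(0\) need not lie in \(U\). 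Also take \(A\) from a countable generating family of \(\mathcal F_s\), with \(s<\tau\) rational, so that the Lebesgue-point null set of \(\tau\) does not depend on \(A\).
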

\begin{proof}
By the global stochastic maximum principle under the minimization
convention \cite{Peng1990,yong1999stochastic}, the optimal pair
\((X^\ast,u^\ast)\) satisfies, for
\(dt\otimes d\mathbb P\)-a.e. \((t,\omega)\) and every \(v\in U\),
\[
\begin{aligned}
    &H(t,X_t^\ast,p_t^\ast,q_t^\ast,v)
    -
    H(t,X_t^\ast,p_t^\ast,q_t^\ast,u_t^\ast)\\
    &\quad+
    \frac12\sum_{i=1}^d
    \Big[
        \sigma^i(t,X_t^\ast,v)
        -
        \sigma^i(t,X_t^\ast,u_t^\ast)
    \Big]^\top
    P_t^\ast
    \Big[
        \sigma^i(t,X_t^\ast,v)
        -
        \sigma^i(t,X_t^\ast,u_t^\ast)
    \Big]
    \geq0.
\end{aligned}
\]
By the definition of the corrected Hamiltonian
in~\eqref{eq:corrected-H-function}, the left-hand side equals
\[
\begin{aligned}
    &\mathcal H
    \left(
        t,X_t^\ast,p_t^\ast,q_t^\ast,P_t^\ast,
        v,u_t^\ast
    \right)
    -
    \mathcal H
    \left(
        t,X_t^\ast,p_t^\ast,q_t^\ast,P_t^\ast,
        u_t^\ast,u_t^\ast
    \right),
\end{aligned}
\]
where the second-order correction vanishes at
\(v=u_t^\ast\). Hence~\eqref{eq:H-function-SMP} follows. Since the
inequality holds for every \(v\in U\), it is equivalent to
\[
    u_t^\ast
    \in
    \arg\min_{v\in U}
    \mathcal H
    \left(
        t,X_t^\ast,p_t^\ast,q_t^\ast,P_t^\ast,
        v,u_t^\ast
    \right),
\]
for \(dt\otimes d\mathbb P\)-a.e. \((t,\omega)\).
\end{proof}
This formulation shows that, when the diffusion coefficient depends on the
control, an optimal control satisfies the pointwise minimization condition
for the second-order corrected Hamiltonian. The \(\mathcal H\)-function will
serve as the basis for the control update and the stationarity residuals
introduced below.

Fix a reference control \(u\in\mathcal U\) and define the corrected
Hamiltonian along \(u\) by
\begin{equation}
\label{eq:Hu-density}
    \mathcal H_t^u(a)
    :=
    \mathcal H
    \left(
        t,
        X_t^u,
        p_t^u,
        q_t^u,
        P_t^u,
        a,
        u_t
    \right),
    \qquad a\in U.
\end{equation}
For \(u,v\in\mathcal U\) and \(\eta\in[0,1]\), define the soft-update map by
\begin{equation}
\label{eq:soft-update-map}
    S_\eta(u,v):=u+\eta(v-u).
\end{equation}

\section{Soft-Update MSA}
\label{sec:su-msa}
In this section, we introduce a soft-update MSA for nonconvex stochastic
control problems with control-dependent diffusion, hereafter referred to as the
soft-update MSA (SU-MSA). The method incorporates the second-order
adjoint correction into the Hamiltonian update and controls the resulting control
perturbation through a soft-update step. 

\subsection{The SU-MSA algorithm}
\label{subsec:su-algorithm}

At iteration \(n\), solve the state equation and the two adjoint
equations associated with \(u^n\), and choose a progressively measurable
candidate satisfying
\[
    v_t^n\in\arg\min_{a\in U}\mathcal H_t^{u^n}(a),
    \qquad dt\otimes d\mathbb P\text{-a.e.}
\]
Such a candidate exists: \(\mathcal H_t^{u^n}(a)\) is progressively
measurable in \((t,\omega)\), continuous in \(a\), and \(U\) is compact.
A measurable minimizer map for the continuous coefficient functions on
this finite-dimensional parameter space, composed with
\((t,X_t^{u^n},p_t^{u^n},q_t^{u^n},P_t^{u^n},u_t^n)\), gives a progressively
measurable selector. The moment estimates above and compactness of \(U\)
also ensure that all the Hamiltonian integrals below are finite.
The candidate is determined by the corrected Hamiltonian
\eqref{eq:Hu-density}. Its displacement is controlled independently
through the soft update \eqref{eq:soft-update-map}.

\begin{assumption}[Uniform semiconvexity of the corrected Hamiltonian]
\label{ass:corrected-semiconvexity}
There exists a constant \(A\geq0\), independent of the reference control,
such that, for every \(u\in\mathcal U\), the function
\[
    a\longmapsto\mathcal H_t^u(a)+A|a|^2
\]
is convex on \(U\), for \(dt\otimes d\mathbb P\)-almost every
\((t,\omega)\).
\end{assumption}

This is an additional condition on the corrected-Hamiltonian family.
When the action Hessian exists, a sufficient condition is
\(\nabla_{aa}^2\mathcal H_t^u(a)\succeq-2AI\).
The value \(A=0\) corresponds to convexity of the corrected Hamiltonian;
convexity of the original cost functional is not required.

Fix a valid uniform remainder constant \(C_{\mathrm{ub}}>0\) in
Theorem~\ref{thm:cost-upper-bound-soft-update}, and retain the acceptance
parameter \(\vartheta\in(0,1)\). Denote
\begin{equation}
\label{eq:eta-candidate-statistics}
\begin{aligned}
    r_n&:=\mathbb E\int_0^T
        [\mathcal H_t^{u^n}(u_t^n)-\mathcal H_t^{u^n}(v_t^n)]\,dt,\\
    s_n&:=\mathbb E\int_0^T|v_t^n-u_t^n|^2\,dt,
    &\Delta_4(u,v)&:=\|v-u\|_{\mathbb H^4}^2,\\
    \chi_n&:=\frac{\Delta_4(u^n,v^n)}{s_n},
    &\zeta_n&:=\frac{r_n}{s_n}\qquad(s_n>0).
\end{aligned}
\end{equation}
Thus \(r_n\geq0\) is the corrected-SMP residual, \(s_n\) is the
squared \(L^2(\Omega\times[0,T])\)-distance to the candidate, and
\(\zeta_n\) is the corrected-Hamiltonian gain per unit squared
displacement. The statistic \(\chi_n\geq1\) compares the
\(\mathbb H^4\) and \(L^2\) displacement scales.
The ratios are evaluated only after checking that \(r_n>0\), which
implies \(s_n>0\).

\begin{remark}[Interpretation of the displacement statistic]
\label{rem:eta-path-concentration}
For \(s_n>0\), denote the time-integrated displacement energy by
\[
    Z_n:=\int_0^T|v_t^n-u_t^n|^2\,dt.
\]
Then \(s_n=\mathbb{E}[Z_n]\) and
\[
\begin{aligned}
    \chi_n^2
    &=
    \frac{\mathbb{E}[Z_n^2]}{s_n^2}\\
    &=
    1+\frac{\operatorname{Var}(Z_n)}{s_n^2}
    =
    1+\mathbb{E}\left[
        \left(\frac{Z_n}{s_n}-1\right)^2
    \right].
\end{aligned}
\]
Thus \(s_n\) measures the mean displacement energy, whereas
\(\chi_n^2-1\) measures its variance after normalization to unit
mean. Equivalently, \(\sqrt{\chi_n^2-1}\) is the root-mean-square
deviation from the mean, expressed relative to that mean.
In particular,
\[
    \chi_n\geq1,
    \qquad
    \chi_n=1
    \quad\Longleftrightarrow\quad
    Z_n=s_n\quad\text{almost surely}.
\]
A larger \(\chi_n\) therefore indicates greater relative
variation of the displacement energy across sample paths.
Moreover,
\[
    C_{\mathrm{ub}}\eta^2\Delta_4(u^n,v^n)
    =
    C_{\mathrm{ub}}\eta^2\chi_ns_n,
\]
so \(\chi_n\) quantifies how these relative fluctuations
increase the remainder bound at a fixed mean displacement
energy. This statistic concerns variation across sample paths;
it does not describe the temporal distribution of displacement
within an individual path.
\end{remark}

The stepsize is computed directly as
\begin{equation}
\label{eq:explicit-eta-rule}
\eta_n=
\begin{cases}
\displaystyle
\min\!\left\{1,
\frac{\vartheta(\zeta_n-A)}
     {C_{\mathrm{ub}}\chi_n-\vartheta A}\right\},
& C_{\mathrm{ub}}\chi_n>\vartheta A,\quad\zeta_n>A,\\[1.2em]
1,
& C_{\mathrm{ub}}\chi_n\leq\vartheta A,\quad
  \displaystyle\zeta_n\geq\frac{C_{\mathrm{ub}}\chi_n}{\vartheta},\\[0.7em]
0,&\text{otherwise}.
\end{cases}
\end{equation}
The rule is designed to retain a fixed proportion \(1-\vartheta\) of the
corrected-Hamiltonian improvement as actual cost decrease. Combining
the semiconvex interpolation bound with the cost upper bound gives
the sufficient condition \eqref{eq:eta-linear-certificate}.
If \(C_{\mathrm{ub}}\chi_n>\vartheta A\), this condition imposes an
upper bound on \(\eta\), giving the first branch. If
\(C_{\mathrm{ub}}\chi_n\leq\vartheta A\), every nonempty set of
certified positive steps contains \(\eta=1\), giving the second
branch. Thus, whenever a positive step is certified, the rule chooses
the largest one in \((0,1]\), including regimes with a positive
lower stepsize bound. Otherwise, it returns zero. The proof is given
in Proposition~\ref{prop:explicit-eta-certificate}.

\begin{algorithm}[H]
\caption{SU-MSA with an explicit semiconvexity-based stepsize}
\label{alg:su-msa}
\begin{algorithmic}[1]
\Require \(u^0\in\mathcal U\), \(\vartheta\in(0,1)\),
\(\varepsilon_J\geq0\), and valid constants
\(A\geq0\), \(C_{\mathrm{ub}}>0\).
\For{\(n=0,1,2,\ldots\)}
\State Solve the state and adjoint equations under \(u^n\).
\State Choose a progressively measurable pointwise minimizer \(v^n\)
of \(\mathcal H^{u^n}\).
\State Compute \(r_n\) from \eqref{eq:eta-candidate-statistics}.
\If{\(r_n=0\)}
\State \Return \(u^n\) \Comment{The corrected-SMP minimum condition holds}
\EndIf
\State Compute \(s_n,\Delta_4(u^n,v^n),\chi_n,\zeta_n\), and
\(\eta_n\) from \eqref{eq:explicit-eta-rule}.
\If{\(\eta_n=0\)}
\State \Return \(u^n\) \Comment{No positive step is certified}
\EndIf
\State Form \(\widetilde u^{\,n+1}:=u^n+\eta_n(v^n-u^n)\).
\State Compute \(\mathscr A_n:=J(u^n)-J(\widetilde u^{\,n+1})\).
\If{\(0\leq\mathscr A_n\leq\varepsilon_J\)}
\State \Return \(u^n\) \Comment{Cost improvement is sufficiently small}
\EndIf
\State Accept \(u^{n+1}:=\widetilde u^{\,n+1}\).
\EndFor
\end{algorithmic}
\end{algorithm}

\subsection{Analytical comparisons with proximal MSA}
\label{subsec:analytical-comparisons-proximal-msa}

The following examples demonstrate three advantages of SU-MSA
over the scalar proximal update in \cite{SethiSiska2024}.
First, every control that pointwise minimizes its ordinary
Hamiltonian is a fixed point of the proximal iteration,
including nonoptimal controls. The first example shows that the
second-order-adjoint correction can reverse the local geometry of
such a stagnation point: a local minimum of the ordinary
Hamiltonian becomes a nonoptimal point for the corrected
Hamiltonian, thereby generating an improving direction.
Second, the proximal parameter \(\rho\) assigns
the same weight at every time, state, and sample, whereas the
second-order adjoint adapts the evaluation of diffusion
perturbations to the current trajectory and their future
cost consequences. The second example illustrates this
adaptation through an explicit time-dependent weight.
Third, the correction provides risk-sensitive weighting
across diffusion directions, as demonstrated by the
multidimensional example.
In each case, the corrected-Hamiltonian candidate equals
the optimal control, and the stepsize rule accepts the
full update under the common parameter choice in
Remark~\ref{rem:analytical-examples-acceptance}.

\subsubsection{Escaping an ordinary-Hamiltonian stagnation point}

The stagnation mechanism follows directly from the proximal
candidate rule. Suppose that a reference control \(u\) satisfies
\[
    u_t\in\arg\min_{a\in U}H_t^u(a),
    \qquad dt\otimes d\mathbb P\text{-a.e.}
\]
For every \(\rho>0\),
\[
    H_t^u(a)+\frac{\rho}{2}|a-u_t|^2
    \geq H_t^u(u_t),
\]
with equality only at \(a=u_t\).
Hence the proximal candidate equals \(u\), and every
subsequent soft update leaves the control unchanged.
This holds for every such reference control, including
those that are nonoptimal for \(J\).
The following example shows that the second-order-adjoint
correction identifies an improving candidate at such a control.

Let \(T=1\), \(X_0=0\), and \(U=[-2,2]\). Consider
\[
    dX_t^u=\sigma(u_t)\,dW_t,
    \qquad
    \sigma(a):=\frac{a}{a^2-a+1},
\]
with terminal and running costs
\[
    \Phi(x):=-\sqrt{1+x^2},
    \qquad
    f(x,a):=
    \sigma(a)^2
    \left[
        \frac{1}{2(1+x^2)^{3/2}}-\frac14
    \right].
\]
These coefficients satisfy
Assumption~\ref{ass:basic-regularity}. Since
\[
    \Phi_{xx}(x)=-\frac{1}{(1+x^2)^{3/2}},
\]
the running cost satisfies
\[
    f(x,a)
    =
    -\frac14\sigma(a)^2
    -\frac12\Phi_{xx}(x)\sigma(a)^2.
\]
Itô's formula therefore gives
\[
    J(u)
    =
    -1-\frac14
    \mathbb E\int_0^1\sigma(u_t)^2\,dt.
\]
Moreover,
\[
    1-\sigma(a)^2
    =
    \frac{(a-1)^2(a^2+1)}{(a^2-a+1)^2}
    \geq0,
\]
with equality if and only if \(a=1\).
Thus the unique optimal control is
\[
    u_t^\ast\equiv1,
    \qquad
    J(u^\ast)=-\frac54.
\]

Starting from \(u_t^0\equiv0\), we have
\[
    X_t^0=p_t^0=q_t^0=0,
    \qquad
    P_t^0=\Phi_{xx}(0)=-1.
\]
The ordinary Hamiltonian becomes
\[
    H_t^{u^0}(a)=\frac14\sigma(a)^2,
\]
and hence
\[
    \arg\min_{a\in U}H_t^{u^0}(a)=\{0\}.
\]
For every \(\rho>0\),
\[
    \arg\min_{a\in U}
    \left\{
        \frac14\sigma(a)^2+\frac{\rho}{2}a^2
    \right\}
    =
    \{0\}.
\]
The proximal MSA therefore remains at the nonoptimal control
\(u^0=0\) for every \(\rho>0\) and every subsequent scalar
soft-update stepsize.

The second-order-adjoint correction changes the candidate:
\[
\begin{aligned}
    \mathcal H_t^{u^0}(a)
    &=
    H_t^{u^0}(a)
    +\frac12P_t^0
    \bigl[\sigma(a)-\sigma(0)\bigr]^2\\
    &=-\frac14\sigma(a)^2.
\end{aligned}
\]
Consequently,
\[
    \arg\min_{a\in U}\mathcal H_t^{u^0}(a)=\{1\}.
\]
At this candidate, the ordinary Hamiltonian increases by
\(1/4\), while the diffusion correction contributes \(-1/2\):
\[
\begin{aligned}
    H_t^{u^0}(1)-H_t^{u^0}(0)&=\frac14,\\
    \frac12P_t^0
    \bigl[\sigma(1)-\sigma(0)\bigr]^2&=-\frac12.
\end{aligned}
\]
Their sum gives the exact cost improvement,
\[
    \mathcal H_t^{u^0}(1)-\mathcal H_t^{u^0}(0)
    =
    -\frac14
    =
    J(1)-J(0).
\]
Thus the second-order term identifies an improving control
that the ordinary Hamiltonian and every positive proximal
penalty reject.

The same example exhibits a local curvature reversal:
\[
    \frac{d^2}{da^2}H_t^{u^0}(0)=\frac12,
    \qquad
    \frac{d^2}{da^2}\mathcal H_t^{u^0}(0)=-\frac12.
\]
The reference action is a strict local minimizer of the
ordinary Hamiltonian and a strict local maximizer of the
corrected Hamiltonian. The proximal term instead increases
the curvature at the reference action to \(1/2+\rho\).

For an arbitrary reference control \(u\), the adjoint processes
satisfy
\[
    p_t^u=\Phi_x(X_t^u),
    \qquad
    q_t^u=\Phi_{xx}(X_t^u)\sigma(u_t),
    \qquad
    P_t^u=\Phi_{xx}(X_t^u).
\]
Substitution gives
\[
    \mathcal H_t^u(a)
    =
    -\frac14\sigma(a)^2
    +\frac12\Phi_{xx}(X_t^u)\sigma(u_t)^2.
\]
The second term is independent of \(a\), so the candidate is
\(v_t^u=1\) at every reference control.
With the parameter choice in
Remark~\ref{rem:analytical-examples-acceptance},
SU-MSA accepts the full step from \(u^0=0\) and reaches
the unique optimal control after one update.

\subsubsection{Adaptation to time-dependent diffusion sensitivity}

A constant proximal parameter imposes the same quadratic
weight on control displacements throughout the time horizon
and across all states and samples. The second-order adjoint
provides an adapted weight determined by the state trajectory
and the propagation of future costs.
The following example makes its temporal adaptation explicit.

Let \(T=1\), \(X_0=0\), \(U=[-2,2]\), and denote
\[
    k(t):=5-4t,
    \qquad 1\leq k(t)\leq5.
\]
Consider
\[
    dX_t^u=\tanh(u_t)\,dW_t,
\]
with
\[
    \Phi(x)=\log\cosh x
\]
and
\[
    f(t,x,a)
    =
    4\log\cosh x
    +\frac{k(t)}{2}\tanh^2x\,\tanh^2a
    -\frac12\tanh a.
\]
Denote
\[
    V(t,x):=k(t)\log\cosh x,
    \qquad
    \ell_t(a):=
    \frac{k(t)}{2}\tanh^2a-\frac12\tanh a.
\]
Then \(V(1,x)=\Phi(x)\) and
\[
    f(t,x,a)
    =
    \ell_t(a)-V_t(t,x)
    -\frac12V_{xx}(t,x)\tanh^2a.
\]
Itô's formula yields
\[
    J(u)=\mathbb E\int_0^1\ell_t(u_t)\,dt.
\]

For \(y=\tanh a\), completing the square gives
\[
    \ell_t(a)
    =
    \frac{k(t)}{2}
    \left(y-\frac{1}{2k(t)}\right)^2
    -\frac{1}{8k(t)}.
\]
Since
\[
    \frac{1}{2k(t)}
    \in[0.1,0.5]
    \subset(-\tanh2,\tanh2),
\]
the unique optimal control is
\[
    u_t^\ast
    =
    \operatorname{arctanh}
    \left(\frac{1}{2k(t)}\right),
\]
with
\[
    u_0^\ast\approx0.1003,
    \qquad
    u_1^\ast\approx0.5493,
    \qquad
    J(u^\ast)=-\frac{\log5}{32}.
\]

The adjoint processes are
\[
    p_t^u=V_x(t,X_t^u),
    \qquad
    q_t^u=V_{xx}(t,X_t^u)\tanh(u_t),
    \qquad
    P_t^u=k(t)\operatorname{sech}^2(X_t^u).
\]
Thus \(P_t^u\) depends on both time and the realized state.
At \(u_t^0\equiv0\),
\[
    X_t^0=p_t^0=q_t^0=0,
    \qquad
    P_t^0=k(t).
\]
The ordinary Hamiltonian is
\[
    H_t^{u^0}(a)=-\frac12\tanh a,
\]
whose action dependence is identical at every time.
The corrected Hamiltonian incorporates the time-dependent
weight \(P_t^0\):
\[
    \mathcal H_t^{u^0}(a)
    =
    -\frac12\tanh a+\frac{k(t)}{2}\tanh^2a
    =
    \ell_t(a).
\]
Hence
\[
    \arg\min_{a\in U}\mathcal H_t^{u^0}(a)
    =
    \{u_t^\ast\}.
\]
As the weight decreases from \(5\) to \(1\), the correction
allows progressively larger diffusion changes, recovering
the optimal control increasing from approximately
\(0.1003\) to \(0.5493\).

For a fixed \(\rho>0\), the proximal candidate instead solves
\[
    a_\rho
    =
    \arg\min_{a\in[-2,2]}
    \left\{
        -\frac12\tanh a+\frac{\rho}{2}a^2
    \right\}.
\]
The minimizer is unique and independent of time: negative
actions are inferior to their positive counterparts, and
the objective is strictly convex on \([0,2]\).
Every scalar soft update therefore produces a constant
control
\[
    u_t^{1,\mathrm{prox}}=\eta a_\rho,
    \qquad \eta\in(0,1].
\]

The uniform proximal scale can be quantified using the exact
cost-variation identity
\[
\begin{aligned}
    J(v)-J(u)
    =
    \mathbb E\int_0^1
    \left[
        H_t^u(v_t)-H_t^u(u_t)
        +\frac12P_t^u
        \bigl(\tanh v_t-\tanh u_t\bigr)^2
    \right]dt.
\end{aligned}
\]
Since \(0<P_t^u\leq5\) and \(\tanh\) is \(1\)-Lipschitz,
\[
    J(v)-J(u)
    \leq
    \mathbb E\int_0^1
    \left[
        H_t^u(v_t)-H_t^u(u_t)
        +\frac52|v_t-u_t|^2
    \right]dt.
\]
This estimate yields the uniform sufficient choice
\(\rho\geq5\) for proximal descent, with strict descent
for a nontrivial proximal update when \(\rho>5\).

For \(\rho\geq5\), the candidate is interior and satisfies
\[
    \rho a_\rho
    =
    \frac12\operatorname{sech}^2a_\rho,
    \qquad
    0<a_\rho<\frac{1}{2\rho}\leq0.1.
\]
The candidate decreases as \(\rho\) increases. At \(\rho=5\),
\[
    a_5\approx0.0990258.
\]
Thus every \(\rho\geq5\) and \(\eta\in(0,1]\) satisfy
\[
    0<u_t^{1,\mathrm{prox}}
    =\eta a_\rho\leq a_5.
\]

To compare the update amplitudes, denote
\[
    R_{\rho,\eta}(t)
    :=
    \frac{u_t^{1,\mathrm{prox}}-u_t^0}
         {u_t^\ast-u_t^0}
    =
    \frac{\eta a_\rho}{u_t^\ast}.
\]
Then
\[
    R_{\rho,\eta}(t)\leq R_{5,1}(t),
    \qquad \rho\geq5,\quad \eta\in(0,1],
\]
whereas the accepted SU-MSA update has ratio \(1\)
at every time.
Table~\ref{tab:time-adaptive-update} gives the comparison.

\begin{table}[htbp]
\centering
\caption{First-update comparison from \(u^0=0\).
The proximal values at \(\rho=5\), \(\eta=1\) give the
largest update amplitudes over all \(\rho\geq5\) and
\(\eta\in(0,1]\). SU-MSA reaches \(u_t^\ast\) at every time.}
\label{tab:time-adaptive-update}
\begin{tabular}{ccccc}
\hline
\(t\) &
\(P_t^0\) &
\(u_t^\ast=u_t^{1,\mathrm{SU}}\) &
\(u_t^{1,\mathrm{prox}}\) &
\(R_{5,1}(t)\)\\
\hline
\(0\)   & \(5\) & \(0.100335\) & \(0.099026\) & \(98.69\%\)\\
\(1/4\) & \(4\) & \(0.125657\) & \(0.099026\) & \(78.81\%\)\\
\(1/2\) & \(3\) & \(0.168236\) & \(0.099026\) & \(58.86\%\)\\
\(3/4\) & \(2\) & \(0.255413\) & \(0.099026\) & \(38.77\%\)\\
\(1\)   & \(1\) & \(0.549306\) & \(0.099026\) & \(18.03\%\)\\
\hline
\end{tabular}
\end{table}

The fixed proximal weight is well matched to the initial
scale but becomes increasingly restrictive as \(P_t^0\)
decreases. Even its largest update in this parameter range
covers only \(18.03\%\) of the optimal displacement at the
terminal time. Increasing \(\rho\) or reducing \(\eta\)
further decreases every ratio in the table.
The second-order-adjoint correction adjusts the weight
throughout the horizon and recovers the complete optimal
profile in one update.

This difference also yields an explicit cost comparison.
For any constant control \(w\), denote \(y:=\tanh w\).
Since \(\int_0^1k(t)\,dt=3\),
\[
    J(w)
    =
    \frac32y^2-\frac12y
    =
    \frac32\left(y-\frac16\right)^2-\frac1{24}.
\]
Every first proximal soft update is constant. Therefore,
for every fixed \(\rho>0\) and \(\eta\in(0,1]\),
\[
    J(u^{1,\mathrm{prox}})
    \geq-\frac1{24}
    >
    -\frac{\log5}{32}
    =
    J(u^{1,\mathrm{SU}}).
\]
For the uniform sufficient range \(\rho\geq5\),
\(0<\tanh(\eta a_\rho)<0.1\), so
\[
    J(u^{1,\mathrm{prox}})>-0.035.
\]
At \(\rho=5\) and \(\eta=1\), the costs are
\[
    J(u^{1,\mathrm{prox}})
    \approx-0.0347381,
    \qquad
    J(u^{1,\mathrm{SU}})
    =
    -\frac{\log5}{32}
    \approx-0.0502949.
\]
Thus SU-MSA achieves the optimal cost in one update,
while the fixed proximal weight leaves a substantial
cost gap through its suppression of updates in the
low-sensitivity portion of the time horizon.

\subsubsection{Risk-sensitive weighting across diffusion directions}

The second-order adjoint evaluates diffusion perturbations
according to their second-order contribution to future
expected costs. This induces risk-sensitive weighting:
directions with a larger positive cost contribution receive
a stronger penalty, while directions with a smaller
contribution permit larger adjustments.
The following example shows how this directional sensitivity
determines the optimal allocation between two controls.

Let \(T=1\), \(X_0=0\), and \(U=[-2,2]^2\).
Consider a two-dimensional Brownian motion and
\[
    dX_t^u=\Sigma(u_t)\,dW_t,
    \qquad
    \Sigma(a):=
    \begin{pmatrix}
        \tanh a^1 & 0\\
        0 & \tanh a^2
    \end{pmatrix}.
\]
Denote
\[
    M:=
    \begin{pmatrix}
        100 & 0\\
        0 & 1
    \end{pmatrix},
    \qquad
    c:=
    \begin{pmatrix}
        5\\[1mm]
        \frac12
    \end{pmatrix}.
\]
The terminal and running costs are
\[
    \Phi(x)
    =
    100\log\cosh x^1+\log\cosh x^2
\]
and
\[
\begin{aligned}
    f(x,a)
    ={}&
    50\tanh^2x^1\,\tanh^2a^1
    +\frac12\tanh^2x^2\,\tanh^2a^2\\
    &-5\tanh a^1-\frac12\tanh a^2.
\end{aligned}
\]
Denote
\[
    y(a):=
    \begin{pmatrix}
        \tanh a^1\\
        \tanh a^2
    \end{pmatrix},
    \qquad
    \ell(a):=\frac12y(a)^\top My(a)-c^\top y(a).
\]
Itô's formula gives
\[
    J(u)=\mathbb E\int_0^1\ell(u_t)\,dt.
\]
The quadratic objective in the diffusion variable is minimized
at
\[
    y^\ast=M^{-1}c
    =
    \begin{pmatrix}
        0.05\\
        0.5
    \end{pmatrix}
    \in(-\tanh2,\tanh2)^2.
\]
Hence the unique optimal control is
\[
    u^\ast
    =
    \begin{pmatrix}
        \operatorname{arctanh}(0.05)\\
        \operatorname{arctanh}(0.5)
    \end{pmatrix}
    \approx
    \begin{pmatrix}
        0.05004\\
        0.54931
    \end{pmatrix},
    \qquad
    J(u^\ast)=-\frac14.
\]

For an arbitrary reference control,
\[
    p_t^u=\Phi_x(X_t^u),
    \qquad
    q_t^u=\Phi_{xx}(X_t^u)\Sigma(u_t),
\]
and
\[
    P_t^u=\Phi_{xx}(X_t^u)
    =
    \begin{pmatrix}
        100\operatorname{sech}^2(X_t^{u,1}) & 0\\
        0 & \operatorname{sech}^2(X_t^{u,2})
    \end{pmatrix}.
\]
At \(u_t^0\equiv0\),
\[
    X_t^0=p_t^0=q_t^0=0,
    \qquad P_t^0=M.
\]
The ordinary and corrected Hamiltonians are
\[
    H_t^{u^0}(a)=-c^\top y(a),
    \qquad
    \mathcal H_t^{u^0}(a)
    =
    -c^\top y(a)+\frac12y(a)^\top My(a)
    =
    \ell(a).
\]
Consequently,
\[
    \arg\min_{a\in U}\mathcal H_t^{u^0}(a)=\{u^\ast\}.
\]
The matrix \(P_t^0\) recovers the optimal allocation between
the two control components in one candidate computation.

For state-space diffusion perturbations
\[
    d^{(1)}=(\delta,0)^\top,
    \qquad
    d^{(2)}=(0,\delta)^\top,
\]
the quadratic contributions are
\[
    \frac12(d^{(1)})^\top P_t^0d^{(1)}
    =
    50\delta^2,
    \qquad
    \frac12(d^{(2)})^\top P_t^0d^{(2)}
    =
    \frac12\delta^2.
\]
Equal-magnitude diffusion perturbations are weighted in
the ratio \(100:1\).
This is the risk-sensitive role of the second-order adjoint:
it evaluates diffusion changes according to their
direction-dependent contribution to future expected costs.
The first state component carries a much larger
terminal-cost weight, so SU-MSA restrains its diffusion
while allowing a larger change in the second component.
The resulting optimal diffusion levels are
\[
    y^{\ast,1}=0.05,
    \qquad
    y^{\ast,2}=0.5.
\]
These distinct weights are generated by the adjoint equation
from the original control problem.

The scalar proximal candidate instead minimizes
\[
    -5\tanh a^1-\frac12\tanh a^2
    +\frac{\rho}{2}\bigl[(a^1)^2+(a^2)^2\bigr].
\]
For every \(\rho>0\), its components satisfy
\[
    a_\rho^1\geq a_\rho^2>0.
\]
Indeed, each component minimizes
\[
    -c_j\tanh a+\frac{\rho}{2}a^2
    \qquad\text{over }[-2,2].
\]
The minimizer is positive and nondecreasing in \(c_j\).
An interior minimizer satisfies
\[
    \rho a\cosh^2a=c_j,
\]
whose left-hand side is strictly increasing for \(a\geq0\);
the constrained minimizer is obtained by truncation at \(2\).
Since \(c_1=5>c_2=1/2\), the ordering follows.

Every scalar soft update preserves this ordering,
\[
    \eta a_\rho^1\geq\eta a_\rho^2,
    \qquad \eta\in(0,1],
\]
whereas the optimal control satisfies
\[
    u^{\ast,1}<u^{\ast,2}.
\]
Thus the scalar proximal update preserves a component
ordering opposite to that of the optimal control.
The risk-sensitive second-order correction assigns the
larger penalty to the more costly diffusion direction
and recovers the optimal allocation in one update.

The scale mismatch can also be quantified.
The cost-variation identity and
\(\lambda_{\max}(P_t^u)\leq100\) imply
\[
    J(v)-J(u)
    \leq
    \mathbb E\int_0^1
    \left[
        H_t^u(v_t)-H_t^u(u_t)
        +50|v_t-u_t|^2
    \right]dt.
\]
This estimate yields the uniform sufficient choice
\(\rho\geq100\) for proximal descent, with strict descent
for a nontrivial proximal update when \(\rho>100\).
In this range, the candidate is interior and satisfies
\[
    \rho a_\rho^1
    =
    5\operatorname{sech}^2a_\rho^1,
    \qquad
    \rho a_\rho^2
    =
    \frac12\operatorname{sech}^2a_\rho^2.
\]
Consequently,
\[
    0<a_\rho^1<\frac5\rho\leq0.05,
    \qquad
    0<a_\rho^2<\frac1{2\rho}\leq0.005.
\]
The uniform proximal scale strongly suppresses the second
component, whose optimal value is approximately \(0.54931\).
SU-MSA uses the distinct weights in \(P_t^0\) and identifies
both optimal components simultaneously.

\begin{remark}[Stepsize acceptance in the analytical examples]
\label{rem:analytical-examples-acceptance}
In all three examples, direct calculation gives
\[
    J(u^\eta)-J(u)=-D_\eta(u,v),
    \qquad u^\eta=u+\eta(v-u),
\]
so any \(C_{\mathrm{ub}}>0\) is admissible. Their corrected Hamiltonians
have bounded action Hessians on the compact control domains, uniformly
in the reference control, so a finite semiconvexity constant \(A\) is
available. Set
\[
    \varepsilon_J=0,
    \qquad C_{\mathrm{ub}}=\frac{\vartheta}{8},
    \qquad \vartheta\in(0,1).
\]
Starting from \(u^0=0\), the candidate satisfies \(v^0=u^\ast\), and
\[
    r_0\geq\frac14\Delta_4(u^0,v^0)>0.
\]
In the first example equality holds. In the other two, the optimal
diffusion coordinates have absolute value at most \(1/2\), and
\(|\operatorname{arctanh}y|\leq4|y|/3\) on this interval. Since the
quadratic weights in the reduced objectives are at least one, this gives
the stronger bound \(r_0\geq(9/32)\Delta_4(u^0,v^0)\).
Hence
\[
    C_{\mathrm{ub}}\Delta_4(u^0,v^0)
    \leq\frac{\vartheta}{2}r_0
    <\vartheta r_0.
\]
By Remark~\ref{rem:explicit-full-step}, the explicit rule selects
\(\eta_0=1\), and the algorithm reaches the unique optimal control
after one update in each example.
\end{remark}

\begin{remark}[Compact and unbounded control domains]
\label{rem:examples-unbounded-control-domain}
The coefficients in these examples satisfy
Assumption~\ref{ass:basic-regularity} on the stated compact
control domains. The optimal controls and the proximal
candidates in the displayed sufficient parameter ranges
lie in the interiors of these domains.

The same optimal controls are obtained on
\(U=\mathbb R\) and \(U=\mathbb R^2\), respectively.
The proximal objectives are coercive for every \(\rho>0\).
The stagnation in the first example, the time-independent
proximal candidate in the second, and the ordering of the
proximal components in the third also hold on these
unbounded domains. The comparisons thus reflect the
candidate-selection mechanisms themselves.
\end{remark}

\subsection{Auxiliary estimates under soft update}
\label{subsec:auxiliary-soft-update}
We next collect several estimates for the soft-update perturbation.
Subtracting the state equations associated with \(u^\eta\) and \(u\), we
obtain
\begin{equation}
\label{eq:state-variation-sde}
\begin{cases}
\displaystyle
    d\delta X_t
    =
    \bigl[
        b(t,X_t^\eta,u_t^\eta)-b(t,X_t^u,u_t)
    \bigr]\,dt
    +
    \bigl[
        \sigma(t,X_t^\eta,u_t^\eta)-\sigma(t,X_t^u,u_t)
    \bigr]\,dW_t,\\[0.2cm]
    \delta X_0=0.
\end{cases}
\end{equation}

\begin{lemma}[State perturbation estimate under the soft update]
\label{lem:state-variation}
Let Assumption~\ref{ass:basic-regularity} hold. Then, for every \(p\in[2,\infty)\), there exists a
constant $C_p>0$, independent of $u$, $v$, and $\eta$, such that, for all
$u,v\in\mathcal U$ and $\eta\in(0,1]$,
\[
\mathbb E\left[
    \sup_{0\le t\le T}|\delta X_t|^p
\right]
\le
C_p\eta^p
\mathbb E\left[
    \left(
        \int_0^T |v_t-u_t|^2\,dt
    \right)^{p/2}
\right].
\]
Consequently, since \(U\) is compact,
\[
    \|\delta X\|_{\mathcal S^p}
    \leq
    C_p\eta.
\]
\end{lemma}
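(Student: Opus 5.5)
We follow the standard SDE stability argument, applied to the difference equation \eqref{eq:state-variation-sde}, and split each coefficient increment into a state part and a control part. Write
\[
    b(t,X_t^\eta,u_t^\eta)-b(t,X_t^u,u_t)
    =
    \bigl[b(t,X_t^\eta,u_t^\eta)-b(t,X_t^u,u_t^\eta)\bigr]
    +
    \bigl[b(t,X_t^u,u_t^\eta)-b(t,X_t^u,u_t)\bigr],
\]
and likewise for \(\sigma\). By the global Lipschitz continuity in \(x\) (uniform in the action, from the remark after Assumption~\ref{ass:basic-regularity}), the first bracket is bounded by \(L|\delta X_t|\). By Assumption~\ref{ass:basic-regularity}(iv) and \(u_t^\eta-u_t=\eta(v_t-u_t)\), the second bracket is bounded by \(L\eta|v_t-u_t|\). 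The control part is therefore a forcing term of exact order \(\eta\). Since no moments of \(X^u\) appear in these bounds, the constants are independent of \(u,v,\eta\).

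Next, introduce the stopping times \(\tau_N:=\inf\{t:|\delta X_t|\geq N\}\wedge T\) and set \(\phi(t):=\mathbb E\sup_{s\leq t\wedge\tau_N}|\delta X_s|^p\), which is finite. The drift integral is handled with H\"older's inequality, \(\bigl(\int_0^t|g|\,ds\bigr)^p\leq T^{p/2}\bigl(\int_0^t|g|^2ds\bigr)^{p/2}\). The stochastic integral is handled with the Burkholder--Davis--Gundy inequality. Together these give
\[
    \phi(t)\leq C_p\,\mathbb E\Bigl(\int_0^{t\wedge\tau_N}|\delta X_s|^2ds\Bigr)^{p/2}
    +C_p\eta^p\,\mathbb E\Bigl(\int_0^T|v_s-u_s|^2ds\Bigr)^{p/2}.
\]
The first term is at most \(C_pT^{p/2-1}\int_0^t\phi(s)\,ds\), using \(p\geq2\) and H\"older's inequality in time.

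Gronwall's inequality then yields \(\phi(T)\leq C_p e^{C_pT}\eta^p\,\mathbb E(\int_0^T|v-u|^2dt)^{p/2}\), uniformly in \(N\). Letting \(N\to\infty\) via Fatou's lemma, or monotone convergence since \(\tau_N\uparrow T\) by continuity and the finite moments from Lemma~\ref{lem:state-moment-estimate}, gives the first claim. For the consequence, note that \(|v_t-u_t|\leq D_U\). Hence \(\mathbb E(\int_0^T|v-u|^2dt)^{p/2}\leq (TD_U^2)^{p/2}\), and taking \(p\)-th roots gives \(\|\delta X\|_{\mathcal S^p}\leq C_p\eta\).

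We expect no serious obstacle. The only point needing care is the decomposition: the control increment must be evaluated at the \emph{same} state \(X^u\), so that the bound \(L\eta|v_t-u_t|\) from (iv) applies without extra growth factors. The localization step is also needed so that Gronwall's inequality is applied to a finite function.
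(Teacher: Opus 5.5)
Your proposal is correct and follows essentially the same argument as the paper. Both split each coefficient increment into a state part and a control part evaluated at the same state $X^u$, which gives the bound $C(|\delta X_s|+\eta|v_s-u_s|)$; both then use H\"older for the drift, BDG for the diffusion, Gronwall, and $|v_t-u_t|\leq D_U$ for the $\mathcal S^p$ consequence. The only difference is your explicit localization by $\tau_N$. It is harmless but not needed, because Lemma~\ref{lem:state-moment-estimate}, applied to both $X^{u^\eta}$ (admissible by convexity of $U$) and $X^u$, already makes $\mathbb E\sup_{t\leq T}|\delta X_t|^p$ finite before Gronwall is invoked.
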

\begin{proof}
Set
\[
    d_t:=v_t-u_t,
    \qquad
    u_t^\eta=u_t+\eta d_t.
\]
Since \(U\) is convex, \(u^\eta\in\mathcal U\). Subtracting the state
equations associated with \(u^\eta\) and \(u\), we obtain
\[
    \delta X_t
    =
    \int_0^t \alpha_s\,ds
    +
    \int_0^t \beta_s\,dW_s,
\]
where
\[
\begin{aligned}
    \alpha_s
    &:=
    b(s,X_s^\eta,u_s^\eta)
    -
    b(s,X_s^u,u_s),\\
    \beta_s
    &:=
    \sigma(s,X_s^\eta,u_s^\eta)
    -
    \sigma(s,X_s^u,u_s).
\end{aligned}
\]
For example, the drift splits as
\[
 \alpha_s=
 [b(s,X_s^\eta,u_s^\eta)-b(s,X_s^u,u_s^\eta)]
 +[b(s,X_s^u,u_s^\eta)-b(s,X_s^u,u_s)],
\]
where the second bracket is bounded by \(L\eta|d_s|\).
The diffusion admits the same decomposition. By the uniform Lipschitz
continuity in the state and control variables,
\[
    |\alpha_s|+|\beta_s|
    \leq
    C\bigl(|\delta X_s|+\eta|d_s|\bigr),
\]
where \(C\) is independent of \(u\), \(v\), and \(\eta\).

Fix \(p\geq2\). Hölder's inequality and the
Burkholder--Davis--Gundy inequality give, for every \(t\in[0,T]\),
\[
\begin{aligned}
    \mathbb E\left[
        \sup_{0\leq s\leq t}|\delta X_s|^p
    \right]
    &\leq
    C_p\,
    \mathbb E\left[
        \left(\int_0^t|\alpha_s|\,ds\right)^p
    \right]
    +
    C_p\,
    \mathbb E\left[
        \left(\int_0^t|\beta_s|^2\,ds\right)^{p/2}
    \right]                                                   \\
    &\leq
    C_p\int_0^t
    \mathbb E\left[
        \sup_{0\leq r\leq s}|\delta X_r|^p
    \right]ds
    +
    C_p\eta^p
    \mathbb E\left[
        \left(
            \int_0^t|d_s|^2\,ds
        \right)^{p/2}
    \right].
\end{aligned}
\]
Since
\[
    \int_0^t|d_s|^2\,ds
    \leq
    \int_0^T|d_s|^2\,ds,
\]
Gronwall's inequality yields
\[
    \mathbb E\left[
        \sup_{0\leq t\leq T}|\delta X_t|^p
    \right]
    \leq
    C_p\eta^p
    \mathbb E\left[
        \left(
            \int_0^T|v_t-u_t|^2\,dt
        \right)^{p/2}
    \right],
\]
where \(C_p\) is independent of \(u\), \(v\), and \(\eta\).

Finally, the compactness of \(U\) implies
\[
    |v_t-u_t|
    \leq
    \operatorname{diam}(U),
    \qquad
    dt\otimes d\mathbb P\text{-a.e.}
\]
Therefore,
\[
    \mathbb E\left[
        \left(
            \int_0^T|v_t-u_t|^2\,dt
        \right)^{p/2}
    \right]
    \leq
    T^{p/2}\operatorname{diam}(U)^p,
\]
and hence
\[
    \|\delta X\|_{\mathcal S^p}
    \leq
    C_p\eta.
\]
\end{proof}

To linearize the state perturbation with respect to the state variable, define
\[
A_t:=b_x(t,X_t^u,u_t),
\qquad
C_t^i:=\sigma_x^i(t,X_t^u,u_t),
\qquad i=1,\ldots,d,
\]
\[
\Delta b_t^\eta
:=
b(t,X_t^u,u_t^\eta)-b(t,X_t^u,u_t),
\qquad
\Delta\sigma_t^{\eta,i}
:=
\sigma^i(t,X_t^u,u_t^\eta)-\sigma^i(t,X_t^u,u_t),
\quad i=1,\ldots,d.
\]
The linearized state perturbation $\xi$ is defined as the unique solution of
\[
\begin{cases}
d\xi_t
=
(A_t\xi_t+\Delta b_t^\eta)\,dt
+
\displaystyle\sum_{i=1}^d
(C_t^i\xi_t+\Delta\sigma_t^{\eta,i})\,dW_t^i,
\\[1mm]
\xi_0=0.
\end{cases}
\]
To quantify the linearization error, set
\[
r_t:=\delta X_t-\xi_t,
\]
and define the corresponding remainder terms by
\[
R_t^b
:=
\left[
\int_0^1
b_x(t,X_t^u+\theta\delta X_t,u_t^\eta)\,d\theta
-
b_x(t,X_t^u,u_t)
\right]\delta X_t,
\]
\[
R_t^{\sigma,i}
:=
\left[
\int_0^1
\sigma_x^i(t,X_t^u+\theta\delta X_t,u_t^\eta)\,d\theta
-
\sigma_x^i(t,X_t^u,u_t)
\right]\delta X_t,
\qquad i=1,\ldots,d.
\]

\begin{lemma}[Linearization and remainder estimates for the state perturbation]
\label{lem:linear-variation-residual}
Let Assumption~\ref{ass:basic-regularity} hold. Then, for every
\(p\in[2,\infty)\), there exists a constant \(C_p>0\), independent of
\(u\), \(v\), and \(\eta\), such that, for all \(u,v\in\mathcal U\) and
\(\eta\in(0,1]\),
\[
\begin{aligned}
    \|\xi\|_{\mathcal S^p}
    +
    \left\|
        \left(
            \int_0^T|\xi_t|^2\,dt
        \right)^{1/2}
    \right\|_{L^p}
    \leq
    C_p\eta
    \left\|
        \left(
            \int_0^T|v_t-u_t|^2\,dt
        \right)^{1/2}
    \right\|_{L^p}.
\end{aligned}
\]

Moreover, there exists a constant \(C>0\), independent of
\(u\), \(v\), and \(\eta\), such that
\[
\begin{aligned}
    \mathbb E\int_0^T
    \left(
        |R_t^b|^2
        +
        \sum_{i=1}^d|R_t^{\sigma,i}|^2
    \right)dt
    \leq
    C\eta^4
    \mathbb E\left[
        \left(
            \int_0^T|v_t-u_t|^2\,dt
        \right)^2
    \right],
\end{aligned}
\]
and
\[
    \|r\|_{\mathcal S^2}
    \leq
    C\eta^2\|v-u\|_{\mathbb H^4}^2.
\]
\end{lemma}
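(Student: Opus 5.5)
The plan has three parts: the linear SDE for \(\xi\), the remainder terms \(R^b,R^{\sigma,i}\), and finally the linear SDE satisfied by \(r=\delta X-\xi\).

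\textbf{Estimate for \(\xi\).} The equation for \(\xi\) is linear with uniformly bounded coefficients \(A_t,C_t^i\), since \(b_x,\sigma_x\) are bounded by \(L\). By the control-Lipschitz condition in Assumption~\ref{ass:basic-regularity}(iv), the inhomogeneous terms satisfy
\[
|\Delta b_t^\eta|+|\Delta\sigma_t^{\eta,i}|\le L\eta|d_t|,\qquad d_t:=v_t-u_t .
\]
Repeating the BDG--H\"older--Gronwall argument of Lemma~\ref{lem:state-variation} gives
\[
\mathbb E\sup_{t\le T}|\xi_t|^p\le C_p\eta^p\,\mathbb E\Big(\int_0^T|d_t|^2dt\Big)^{p/2}.
\]
The \(L^p\) norm of \((\int_0^T|\xi_t|^2dt)^{1/2}\) is bounded by \(T^{1/2}\|\xi\|_{\mathcal S^p}\), so the first claim follows.

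\textbf{Remainder terms.} Write the bracket in \(R^b_t\) as
\[
\int_0^1\big[b_x(t,X^u_t+\theta\delta X_t,u^\eta_t)-b_x(t,X^u_t,u^\eta_t)\big]d\theta+\big[b_x(t,X^u_t,u^\eta_t)-b_x(t,X^u_t,u_t)\big].
\]
The first piece is bounded by \(L|\delta X_t|\), because \(b_{xx}\) is bounded. The second is bounded by \(L\eta|d_t|\) by (iv). Hence
\[
|R^b_t|\le C\big(|\delta X_t|^2+\eta|d_t||\delta X_t|\big),
\]
and the same bound holds for \(R^{\sigma,i}\). Square and integrate in time. The first contribution is at most \(T\,\mathbb E\sup_t|\delta X_t|^4\). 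The second is at most \(\eta^2\,\mathbb E\big[\sup_t|\delta X_t|^2\int_0^T|d_t|^2dt\big]\), which Cauchy--Schwarz bounds by \(\eta^2\|\delta X\|_{\mathcal S^4}^2\,\big(\mathbb E(\int|d|^2)^2\big)^{1/2}\). Lemma~\ref{lem:state-variation} with \(p=4\) gives
\[
\|\delta X\|_{\mathcal S^4}^4\le C\eta^4\,\mathbb E\Big(\int_0^T|d_t|^2dt\Big)^2 .
\]
Inserting this, both contributions are at most \(C\eta^4\,\mathbb E(\int_0^T|d_t|^2dt)^2\), which is the second claim.

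\textbf{Linearization error \(r\).} By the mean value theorem in \(x\),
\[
b(t,X^\eta_t,u^\eta_t)-b(t,X^u_t,u_t)=\Delta b^\eta_t+\int_0^1 b_x(t,X^u_t+\theta\delta X_t,u^\eta_t)\,d\theta\;\delta X_t=\Delta b^\eta_t+A_t\delta X_t+R^b_t ,
\]
and similarly for each \(\sigma^i\). Subtracting the equation for \(\xi\) gives the linear SDE
\[
dr_t=(A_tr_t+R^b_t)\,dt+\sum_{i=1}^d(C^i_tr_t+R^{\sigma,i}_t)\,dW^i_t,\qquad r_0=0 .
\]
The standard \(L^2\) estimate for linear SDEs with bounded coefficients gives
\[
\|r\|_{\mathcal S^2}^2\le C\,\mathbb E\int_0^T\Big(|R^b_t|^2+\sum_i|R^{\sigma,i}_t|^2\Big)dt\le C\eta^4\,\mathbb E\Big(\int_0^T|d_t|^2dt\Big)^2 .
\]
Since \(\mathbb E(\int_0^T|d_t|^2dt)^2=\|v-u\|_{\mathbb H^4}^4\), taking square roots yields \(\|r\|_{\mathcal S^2}\le C\eta^2\|v-u\|_{\mathbb H^4}^2\).

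The only delicate point is the cross term \(\eta|d_t||\delta X_t|\) in the remainder. It must be controlled in terms of the \(\mathbb H^4\) norm of \(d\), not by the cruder bound \(\operatorname{diam}(U)\), so that the precise \(\eta^2\|v-u\|_{\mathbb H^4}^2\) form is retained. The pathwise \(\sup\) bound followed by Cauchy--Schwarz, as above, handles it. The drift remainder contributes through the \(ds\)-integral, and Cauchy--Schwarz in time reduces it to the same \(L^2(dt\otimes d\mathbb P)\) quantity.
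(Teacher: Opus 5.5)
Your proposal is correct and follows essentially the same route as the paper's proof: the BDG--H\"older--Gronwall bound for the linear equation for \(\xi\), and the pointwise bound \(|R_t^b|\le C(|\delta X_t|^2+\eta|d_t||\delta X_t|)\) combined with Lemma~\ref{lem:state-variation} at \(p=4\) and Cauchy--Schwarz for the cross term. The final step is likewise the same: the standard \(L^2\) estimate for the linear SDE satisfied by \(r\), using \(\bigl(\int_0^T|R_t^b|\,dt\bigr)^2\le T\int_0^T|R_t^b|^2\,dt\).
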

\begin{proof}
Denote
\[
    d_t:=v_t-u_t,
    \qquad
    D:=\int_0^T|d_t|^2\,dt.
\]
By Assumption~\ref{ass:basic-regularity},
\[
    |A_t|
    +
    \left(
        \sum_{i=1}^d|C_t^i|^2
    \right)^{1/2}
    \leq C,
\]
and
\[
    |\Delta b_t^\eta|
    +
    \left(
        \sum_{i=1}^d|\Delta\sigma_t^{\eta,i}|^2
    \right)^{1/2}
    \leq C\eta|d_t|.
\]
Hence, the standard BDG--Gronwall estimate for linear SDEs yields, for
every \(p\geq2\),
\[
    \|\xi\|_{\mathcal S^p}
    \leq
    C_p\eta\|D^{1/2}\|_{L^p}.
\]
Since
\[
    \left(\int_0^T|\xi_t|^2\,dt\right)^{1/2}
    \leq
    T^{1/2}\sup_{0\leq t\leq T}|\xi_t|,
\]
the first assertion follows.

The boundedness of \(b_{xx}\) and \(\sigma_{xx}\), together with the
Lipschitz continuity of \(b_x\) and \(\sigma_x\) in the control
variable, gives
\[
    |R_t^b|
    +
    \left(
        \sum_{i=1}^d|R_t^{\sigma,i}|^2
    \right)^{1/2}
    \leq
    C\left(
        |\delta X_t|^2
        +
        \eta|d_t|\,|\delta X_t|
    \right).
\]
It follows that
\[
\begin{aligned}
    &\mathbb E\int_0^T
    \left(
        |R_t^b|^2
        +
        \sum_{i=1}^d|R_t^{\sigma,i}|^2
    \right)dt
    \leq
    C\mathbb E\int_0^T|\delta X_t|^4\,dt
    +
    C\eta^2
    \mathbb E\int_0^T|d_t|^2|\delta X_t|^2\,dt.
\end{aligned}
\]
By Lemma~\ref{lem:state-variation} with \(p=4\),
\[
    \mathbb E\int_0^T|\delta X_t|^4\,dt
    \leq
    C\eta^4\mathbb E[D^2].
\]
Moreover, by the Cauchy--Schwarz inequality and the same estimate,
\[
\begin{aligned}
    \mathbb E\int_0^T|d_t|^2|\delta X_t|^2\,dt
    &\leq
    \mathbb E\left[
        D\sup_{0\leq t\leq T}|\delta X_t|^2
    \right]\\
    &\leq
    \bigl(\mathbb E[D^2]\bigr)^{1/2}
    \left(
        \mathbb E\sup_{0\leq t\leq T}|\delta X_t|^4
    \right)^{1/2}\\
    &\leq
    C\eta^2\mathbb E[D^2].
\end{aligned}
\]
Therefore,
\[
    \mathbb E\int_0^T
    \left(
        |R_t^b|^2
        +
        \sum_{i=1}^d|R_t^{\sigma,i}|^2
    \right)dt
    \leq
    C\eta^4\mathbb E[D^2].
\]

Subtracting the equation for \(\xi\) from that for \(\delta X\), we
obtain
\[
\begin{cases}
    dr_t
    =
    \bigl(A_t r_t+R_t^b\bigr)\,dt
    +
    \displaystyle\sum_{i=1}^d
    \bigl(C_t^i r_t+R_t^{\sigma,i}\bigr)\,dW_t^i,\\
    r_0=0.
\end{cases}
\]
Since \(A\) and \(C^i\) are uniformly bounded, the standard
\(L^2\) estimate for this linear SDE gives
\[
\begin{aligned}
    \|r\|_{\mathcal S^2}^2
    &\leq
    C\mathbb E\left[
        \left(\int_0^T|R_t^b|\,dt\right)^2
        +
        \int_0^T\sum_{i=1}^d|R_t^{\sigma,i}|^2\,dt
    \right]\\
    &\leq
    C\mathbb E\int_0^T
    \left(
        |R_t^b|^2
        +
        \sum_{i=1}^d|R_t^{\sigma,i}|^2
    \right)dt\\
    &\leq
    C\eta^4\mathbb E[D^2].
\end{aligned}
\]
Here the second inequality follows from
\[
    \left(\int_0^T|R_t^b|\,dt\right)^2
    \leq T\int_0^T|R_t^b|^2\,dt,
\]
and the last inequality is the squared-forcing estimate proved above.
Taking square roots and using
\[
    \bigl(\mathbb E[D^2]\bigr)^{1/2}
    =
    \|v-u\|_{\mathbb H^4}^2
\]
completes the proof.
\end{proof}
The estimates above show that the soft-update perturbation admits the
decomposition $X_t^\eta-X_t^u=\xi_t+r_t$ 
where \(\xi=O(\eta)\) and \(r=O(\eta^2)\) in the norms stated above.

\subsection{A cost upper bound under soft update}
\label{subsec:cost-upper-bound-soft-update}

We now derive the upper bound for the cost variation induced by a soft
update. This estimate is the key link between the pointwise minimization of
the second-order corrected Hamiltonian and the monotonicity of the original
cost functional.

Let \(u,v\in\mathcal U\) and \(\eta\in(0,1]\). Set
\[
    u_t^\eta:=S_\eta(u,v)_t=u_t+\eta(v_t-u_t).
\]
For notational brevity, define, along the reference trajectory \(X^u\),
\[
    \Delta f_t^\eta
    :=
    f(t,X_t^u,u_t^\eta)-f(t,X_t^u,u_t).
\]
Recall that \((p^u,q^u)\) and \((P^u,Q^u)\) denote the first- and
second-order adjoint processes associated with the reference control \(u\). For later use, we define the corrected-Hamiltonian improvement generated
by the soft update \(u^\eta=u+\eta(v-u)\) as
\[
    D_\eta(u,v)
    :=
    \mathbb E\int_0^T
    \left[
        \mathcal H_t^u(u_t)
        -
        \mathcal H_t^u(u_t^\eta)
    \right]dt.
\]

\begin{theorem}[Cost upper bound under soft update]
\label{thm:cost-upper-bound-soft-update}
Let \(U\subset\mathbb R^k\) be the nonempty compact convex control domain specified in Section~\ref{subsec:problem-formulation}, and let
Assumption~\ref{ass:basic-regularity} hold. Then there exists a constant \(C>0\), independent of \(u\), \(v\), and \(\eta\), such that
\[
\begin{aligned}
    J(u^\eta)-J(u)
    \le
    &\;
    \mathbb E\int_0^T
    \Bigg[
        (p_t^u)^\top \Delta b_t^\eta
        +
        \sum_{i=1}^d
        (q_t^{u,i})^\top \Delta\sigma_t^{\eta,i}
        +
        \Delta f_t^\eta
    \Bigg]dt
    \\
    &\quad
    +
    \frac12
    \mathbb E\int_0^T
    \sum_{i=1}^d
    (\Delta\sigma_t^{\eta,i})^\top
    P_t^u
    \Delta\sigma_t^{\eta,i}\,dt
    +
    C\eta^2\|v-u\|_{\mathbb H^4}^2 .
\end{aligned}
\]
Equivalently,
\[
\begin{aligned}
    J(u^\eta)-J(u)
    \le
    \mathbb E\int_0^T
    \Big[
        \mathcal H_t^u(u_t^\eta)
        -
        \mathcal H_t^u(u_t)
    \Big]dt
    +
    C\eta^2\|v-u\|_{\mathbb H^4}^2,
\end{aligned}
\]
\end{theorem}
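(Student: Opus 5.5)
We prove this by the classical second-order expansion from Peng's maximum principle, adapted to a convex (soft) perturbation where all control increments are $O(\eta)$. The starting point is a Taylor expansion of the cost along $X^\eta=X^u+\delta X$:
\[
\begin{aligned}
J(u^\eta)-J(u)&=\mathbb E\Big[\Phi_x(X_T^u)^\top\delta X_T+\tfrac12\delta X_T^\top\Phi_{xx}(X_T^u)\delta X_T\Big]\\
&\quad+\mathbb E\int_0^T\Big[\Delta f_t^\eta+f_x^u(t)^\top\delta X_t+\tfrac12\delta X_t^\top f_{xx}^u(t)\delta X_t\Big]dt+\mathcal R,
\end{aligned}
\]
where $\mathcal R$ collects the third-order Taylor remainders and the cross terms $[f_x(t,X^u,u^\eta)-f_x^u]\delta X$. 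Assumption~\ref{ass:basic-regularity}(iii)--(iv) (Lipschitz $\Phi_{xx},f_{xx}$ in $x$ and Lipschitz $f_x$ in $a$) together with Lemma~\ref{lem:state-variation} and Lemmas~\ref{lem:state-moment-estimate}--\ref{lem:second-adjoint-moment} bound $\mathcal R$. The terms $|\delta X|^3$ give $O(\eta^3)$, and the cross term is $\le C\eta\,\mathbb E\int(1+|X|)|d||\delta X|$. Naively this cross term is only $O(\eta^2\|d\|^2)$, which is already acceptable since $\mathbb E[D^2]^{1/2}=\|v-u\|_{\mathbb H^4}^2$ and Cauchy--Schwarz with the moment bounds absorbs the $(1+|X|)$ weight.

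The first-order terms are handled by duality. Apply Itô's formula to $(p_t^u)^\top\delta X_t$ using \eqref{eq:first-adjoint} and the exact equation \eqref{eq:state-variation-sde}, written as the linear part plus $\Delta b^\eta,\Delta\sigma^\eta$. This produces
\[
\mathbb E\int_0^T\Big[(p^u)^\top\Delta b^\eta+\textstyle\sum_i(q^{u,i})^\top\Delta\sigma^{\eta,i}\Big]dt.
\]
It also produces second-order terms of the form $p^\top(\tfrac12 b_{xx}\delta X\delta X)$, $q^\top(\tfrac12\sigma_{xx}\delta X\delta X)$, and cross terms $p^\top(b_x(u^\eta)-b_x(u))\delta X$, $q^\top(\sigma_x(u^\eta)-\sigma_x(u))\delta X$, plus cubic remainders. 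To keep the bookkeeping clean, expand $b(t,X^\eta,u^\eta)-b(t,X^u,u)$ to second order in $x$ with the remainder controlled by Assumption~\ref{ass:basic-regularity}(iii). The terms linear in $\delta X$ cancel against $f_x^\top\delta X$ and the terminal term $\Phi_x^\top\delta X_T$. The cross terms are $O(\eta^2)$ after Hölder with $p^u\in\mathcal S^4$ and $q^u\in\mathbb H^4$. The one involving $q$ needs care: $\mathbb E\int|q||d||\delta X|\le \eta\|q\|_{\mathbb H^4}\cdot$(an $\mathbb H^4$/$\mathcal S^4$ bound on $d,\delta X$). This gives the form $C\eta^2\|v-u\|_{\mathbb H^4}^2$ via Lemma~\ref{lem:state-variation} with $p=4$.

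What remains is the quadratic form $\tfrac12\mathbb E[\delta X_T^\top\Phi_{xx}\delta X_T+\int\delta X^\top H_{xx}^u\delta X\,dt]$. Replace $\delta X$ by $\xi$ using Lemma~\ref{lem:linear-variation-residual}. The error is bounded by $\|r\|_{\mathcal S^2}(\|\xi\|+\|\delta X\|)$ weighted by $|H_{xx}^u|\le C(1+|p|+|q|)$, which again requires Hölder with fourth moments and hence the $\mathbb H^4$ norm. Then apply Itô's formula to $\xi_t^\top P_t^u\xi_t$ with \eqref{eq:second-adjoint}. The homogeneous terms cancel by construction of $P$, leaving
\[
\mathbb E\int_0^T\Big[\textstyle\sum_i(\Delta\sigma^{\eta,i})^\top P^u\Delta\sigma^{\eta,i}+2\xi^\top P^u\Delta b^\eta+2\sum_i\xi^\top\big((C^i)^\top P^u+Q^{u,i}\big)\Delta\sigma^{\eta,i}\Big]dt .
\]
The cross terms with $\xi$ are $O(\eta^2)$: $\|\xi\|=O(\eta)$ and $|\Delta b^\eta|,|\Delta\sigma^\eta|\le C\eta|d|$. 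The term with $Q^u$ uses $\|Q\|_{\mathbb H^4}$ from Lemma~\ref{lem:second-adjoint-moment}, and the remaining factors $\xi$ and $d$ are bounded in $\mathcal S^4$ and $\mathbb H^4$-type norms via Lemma~\ref{lem:linear-variation-residual}. The surviving term is $\tfrac12\mathbb E\int\sum_i(\Delta\sigma^{\eta,i})^\top P^u\Delta\sigma^{\eta,i}dt$, which yields the first inequality. The second form follows directly from \eqref{eq:corrected-H-function} and \eqref{eq:Hu-density}, since $\mathcal H_t^u(u_t^\eta)-\mathcal H_t^u(u_t)$ is exactly the integrand.

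\textbf{Main obstacle.} The soft update does not give the $O(\varepsilon)$ smallness in measure that spike variations provide: every remainder is small only through powers of $\eta$ and of $|d|$. So each cross term must produce exactly $\eta^2\|v-u\|_{\mathbb H^4}^2$, and not merely $C\eta^2$. The delicate ones are the products involving $q^u$ and $Q^u$, which are only square-integrable in time. Several terms, for example $\mathbb E\int|Q||\xi||\Delta\sigma|$, need the estimate
\[
\mathbb E\Big[\sup_t|\xi_t|\Big(\int_0^T|Q_t|^2dt\Big)^{1/2}D^{1/2}\Big]\le \|\xi\|_{\mathcal S^4}\|Q\|_{\mathbb H^4}\,\mathbb E[D]^{1/2}.
\]
This is of the form $\eta\|d\|_{\mathbb H^4}\cdot\eta\|d\|_{L^2}\le\eta^2\|d\|_{\mathbb H^4}^2$, so the Hölder exponents must be arranged consistently at every step. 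I would first verify this accounting term by term; if some term resists, I would fall back on the cruder bound $\|d\|\le D_U$, which still gives $C\eta^2$ but in a weaker form.
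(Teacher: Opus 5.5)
Your proposal is correct. It organizes the expansion in the classical Peng fashion rather than the way the paper does.

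\textbf{Your route.} You pair $p^u$ with the exact perturbation $\delta X$, expanding $b$ and $\sigma$ to second order in $x$ so that the $b_{xx}$ and $\sigma_{xx}$ parts of $H^u_{xx}$ appear explicitly. You replace $\delta X$ by $\xi$ only inside the assembled form $\frac12\delta X^\top H^u_{xx}\delta X$, which the $P^u$-equation then cancels exactly.

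\textbf{The paper's route.} The paper instead splits $\delta X=\xi+r$ at the outset.
\begin{enumerate}[label=(\roman*)]
\item $p^u$ is paired only with the linearized $\xi$. The drift of $(p^u)^\top\xi$ therefore contains no second-order dynamics terms, and $q^u$ is never multiplied by a nonlinear remainder.
\item All nonlinearity of the dynamics is carried by $r$. It is controlled by $\|r\|_{\mathcal S^2}\le C\eta^2\|v-u\|_{\mathbb H^4}^2$ from Lemma~\ref{lem:linear-variation-residual} and paired only with $\Phi_x$ and $f_x$.
\item The mismatch $-\frac12\xi^\top(H^u_{xx}-f^u_{xx})\xi$ left by It\^o's formula for $\frac12\xi^\top P^u\xi$ is simply absorbed into the remainder $R_P^\eta$.
\end{enumerate}

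\textbf{What each buys.} The paper's route is shorter and needs only boundedness of $b_{xx},\sigma_{xx}$ for the dynamics. Yours uses Assumption~\ref{ass:basic-regularity}(iii) for the cubic remainders of $b,\sigma$ and generates the extra products with $q^u$ that you list. Each of those does close by H\"older with the fourth-moment bounds exactly as you indicate. Your fallback $|d_t|\le D_U$ is therefore never needed, which is fortunate: it would lose the factor $\|v-u\|_{\mathbb H^4}^2$ and so would not prove the statement.

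\textbf{A remark on both routes.} Every perturbation is $O(\eta|d_t|)$. Hence each quadratic term is already bounded by $C\eta^2\|v-u\|_{\mathbb H^4}^2$, including $\frac12\mathbb E\int_0^T\sum_{i=1}^d(\Delta\sigma_t^{\eta,i})^\top P_t^u\Delta\sigma_t^{\eta,i}\,dt$ itself (compare Corollary~\ref{cor:soft-update-order-separation}). So the inequality as stated already follows from the first-order $p^u$-duality alone. The $P^u$-duality identifies the second-order term exactly, but it is not needed to bound it.
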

\begin{proof}
Denote
\[
    d_t:=v_t-u_t,\qquad
    D:=\int_0^T|d_t|^2\,dt,\qquad
    \rho_4:=\bigl(\mathbb E[D^2]\bigr)^{1/2}
    =\|v-u\|_{\mathbb H^4}^2.
\]
The preceding state perturbation estimates imply
\[
    \|\delta X\|_{\mathcal S^2}+\|\xi\|_{\mathcal S^2}
    \leq C\eta\rho_4^{1/2},
    \qquad
    \|\delta X\|_{\mathcal S^4}^2+\|\xi\|_{\mathcal S^4}^2
    \leq C\eta^2\rho_4.
\]
Moreover, the standard \(L^2\)-estimate for the equation satisfied by
\(r=\delta X-\xi\), together with
Lemma~\ref{lem:linear-variation-residual}, gives
\[
    \|r\|_{\mathcal S^2}\leq C\eta^2\rho_4.
\]
Since \(U\) is compact, \(\rho_4\) is uniformly bounded over
\(u,v\in\mathcal U\).

Taylor expansion with respect to the state variable, followed by the
decomposition \(\delta X=\xi+r\), yields
\[
\begin{aligned}
    J(u^\eta)-J(u)
    &=
    \mathbb E\bigl[\Phi_x(X_T^u)^\top\xi_T\bigr]
    +\mathbb E\int_0^T f_x(t,X_t^u,u_t)^\top\xi_t\,dt
    +\mathbb E\int_0^T\Delta f_t^\eta\,dt\\
    &\quad+
    \frac12\mathbb E\bigl[
        \xi_T^\top\Phi_{xx}(X_T^u)\xi_T
    \bigr]
    +\frac12\mathbb E\int_0^T
        \xi_t^\top f_{xx}(t,X_t^u,u_t)\xi_t\,dt
    +R_0^\eta,
\end{aligned}
\]
where
\[
    |R_0^\eta|\leq C\eta^2\rho_4.
\]
To verify this bound, the linear terms involving \(r\) satisfy
\[
 \left|\mathbb E[\Phi_x(X_T^u)^\top r_T]
       +\mathbb E\int_0^T(f_x^u(t))^\top r_t\,dt\right|
 \leq C(1+\|X^u\|_{\mathcal S^2})\|r\|_{\mathcal S^2}
 \leq C\eta^2\rho_4.
\]
Replacing \(\delta X\) by \(\xi\) in bounded quadratic forms costs at most
\[
 C\bigl(\|\xi\|_{\mathcal S^2}\|r\|_{\mathcal S^2}
             +\|r\|_{\mathcal S^2}^2\bigr)
 \leq C(\eta^3\rho_4^{3/2}+\eta^4\rho_4^2)
 \leq C\eta^2\rho_4.
\]
The terminal and running third-order state Taylor remainders satisfy
\[
    C\mathbb E\left[\sup_{0\leq t\leq T}|\delta X_t|^3\right]
    \leq C\eta^3\rho_4^{3/2}
    \leq C\eta^2\rho_4.
\]
For the change of \(f_x\) in the action argument, H\"older's inequality
with exponents \(4,4,2\) gives
\[
\begin{aligned}
 &\mathbb E\int_0^T
  |f_x(t,X_t^u,u_t^\eta)-f_x(t,X_t^u,u_t)|\,|\delta X_t|\,dt\\
 &\qquad\leq C\eta\,
  \|1+\sup_{t\leq T}|X_t^u|\|_{L^4}
  \|D^{1/2}\|_{L^4}\|\delta X\|_{\mathcal S^2}
  \leq C\eta^2\rho_4.
\end{aligned}
\]
Finally, the change of \(f_{xx}\) in the action argument contributes at
most \(2L\mathbb E\int_0^T|\delta X_t|^2\,dt\leq C\eta^2\rho_4\).
Thus no control-Lipschitz condition on \(f_{xx}\) is needed.

Applying It\^o's formula to \((p_t^u)^\top\xi_t\) and using
the first-order adjoint equation, we obtain
\[
\begin{aligned}
    d\bigl[(p_t^u)^\top\xi_t\bigr]
    ={}&
    \Bigg[
        -(f_x^u(t))^\top\xi_t
        +(p_t^u)^\top\Delta b_t^\eta
        +\sum_{i=1}^d
        (q_t^{u,i})^\top\Delta\sigma_t^{\eta,i}
    \Bigg]dt
    +dM_t^{(1)},
\end{aligned}
\]
where
\[
\begin{aligned}
    M_t^{(1)}
    :=
    \sum_{i=1}^d\int_0^t
    \Big[
        \xi_s^\top q_s^{u,i}
        +(p_s^u)^\top
        \bigl(C_s^i\xi_s+\Delta\sigma_s^{\eta,i}\bigr)
    \Big]\,dW_s^i.
\end{aligned}
\]
Here \(f_x^u(t)=f_x(t,X_t^u,u_t)\). Boundedness of \(C^i\),
the estimate
\[
    \left(
        \sum_{i=1}^d|\Delta\sigma_t^{\eta,i}|^2
    \right)^{1/2}
    \leq C\eta|d_t|,
\]
and the Cauchy--Schwarz inequality imply
\[
\begin{aligned}
    \mathbb E\!\left[[M^{(1)}]_T^{1/2}\right]
    \leq C\Big(
        &\|\xi\|_{\mathcal S^2}\|q^u\|_{\mathbb H^2}\\
        &+\|p^u\|_{\mathcal S^2}
        \bigl[
            \|\xi\|_{\mathcal S^2}
            +\eta\|D^{1/2}\|_{L^2}
        \bigr]
    \Big)
    <\infty,
\end{aligned}
\]
where \([M^{(1)}]\) denotes the quadratic variation.
The BDG inequality therefore gives
\[
    \mathbb E\sup_{t\leq T}|M_t^{(1)}|<\infty,
\]
so
\(M^{(1)}\) is a true martingale with zero expectation.
All drift terms are integrable by the preceding moment estimates.
Integrating, taking expectations, and using
\(\xi_0=0\) and \(p_T^u=\Phi_x(X_T^u)\), we obtain
\[
\begin{aligned}
    &\mathbb E\bigl[\Phi_x(X_T^u)^\top\xi_T\bigr]
    +\mathbb E\int_0^T(f_x^u(t))^\top\xi_t\,dt\\
    &\qquad=
    \mathbb E\int_0^T
    \left[
        (p_t^u)^\top\Delta b_t^\eta
        +\sum_{i=1}^d
        (q_t^{u,i})^\top\Delta\sigma_t^{\eta,i}
    \right]dt.
\end{aligned}
\]

Next, applying It\^o's formula to
\(\frac12\xi_t^\top P_t^u\xi_t\), using the second-order
adjoint equation and the symmetry of \(P^u,Q^{u,i}\), yields
\[
\begin{aligned}
    d\left(\frac12\xi_t^\top P_t^u\xi_t\right)
    ={}&
    \Bigg[
        -\frac12\xi_t^\top H_{xx}^u(t)\xi_t
        +\xi_t^\top P_t^u\Delta b_t^\eta\\
        &\quad+
        \sum_{i=1}^d
        \xi_t^\top
        \bigl[(C_t^i)^\top P_t^u+Q_t^{u,i}\bigr]
        \Delta\sigma_t^{\eta,i}\\
        &\quad+
        \frac12\sum_{i=1}^d
        (\Delta\sigma_t^{\eta,i})^\top
        P_t^u\Delta\sigma_t^{\eta,i}
    \Bigg]dt
    +dM_t^{(2)},
\end{aligned}
\]
where
\[
\begin{aligned}
    M_t^{(2)}
    :=
    \sum_{i=1}^d\int_0^t
    \Bigg[
        \frac12\xi_s^\top Q_s^{u,i}\xi_s
        +\xi_s^\top P_s^u
        \bigl(C_s^i\xi_s+\Delta\sigma_s^{\eta,i}\bigr)
    \Bigg]\,dW_s^i.
\end{aligned}
\]
The cancellation of the terms containing \(A_t\), \(C_t^i\),
and \(Q_t^{u,i}C_t^i\) follows directly from the second-order
adjoint equation. By Cauchy--Schwarz in time and H\"older's
inequality with exponents \(2,2\) and \(2,4,4\),
\[
\begin{aligned}
    \mathbb E\!\left[[M^{(2)}]_T^{1/2}\right]
    \leq C\Big(
        &\bigl[
            \|P^u\|_{\mathcal S^2}
            +\|Q^u\|_{\mathbb H^2}
        \bigr]\|\xi\|_{\mathcal S^4}^2\\
        &+\eta\|P^u\|_{\mathcal S^2}
        \|\xi\|_{\mathcal S^4}
        \|D^{1/2}\|_{L^4}
    \Big)
    <\infty.
\end{aligned}
\]
Thus the BDG inequality also gives
\(\mathbb E\sup_{t\leq T}|M_t^{(2)}|<\infty\), and
\(M^{(2)}\) is a true martingale with zero expectation.

The boundedness of \(b_{xx}\) and \(\sigma_{xx}\) gives
\[
    |H_{xx}^u(t)-f_{xx}^u(t)|
    \leq C\bigl(|p_t^u|+|q_t^u|\bigr),
    \qquad
    f_{xx}^u(t):=f_{xx}(t,X_t^u,u_t).
\]
Together with the preceding moment estimates, this ensures
integrability of all drift terms in the quadratic identity.
Integrating, taking expectations, and using
\(P_T^u=\Phi_{xx}(X_T^u)\), we obtain
\[
\begin{aligned}
    &\frac12\mathbb E\bigl[
        \xi_T^\top\Phi_{xx}(X_T^u)\xi_T
    \bigr]
    +\frac12\mathbb E\int_0^T
        \xi_t^\top f_{xx}^u(t)\xi_t\,dt\\
    &\qquad=
    \frac12\mathbb E\int_0^T
        \sum_{i=1}^d
        (\Delta\sigma_t^{\eta,i})^\top
        P_t^u\Delta\sigma_t^{\eta,i}\,dt
    +R_P^\eta,
\end{aligned}
\]
where
\[
\begin{aligned}
    R_P^\eta
    :=
    \mathbb E\int_0^T\Bigg[
        &\xi_t^\top P_t^u\Delta b_t^\eta
        +\sum_{i=1}^d
        \xi_t^\top
        \bigl[(C_t^i)^\top P_t^u+Q_t^{u,i}\bigr]
        \Delta\sigma_t^{\eta,i}
        -\frac12\xi_t^\top
        \bigl[H_{xx}^u(t)-f_{xx}^u(t)\bigr]\xi_t
    \Bigg]dt.
\end{aligned}
\]

To estimate \(R_P^\eta\), the coefficient-increment bounds,
Cauchy--Schwarz in time, and H\"older's inequality give
\[
\begin{aligned}
    &\mathbb E\int_0^T
    |\xi_t||P_t^u|
    \left(
        |\Delta b_t^\eta|
        +\sum_{i=1}^d|C_t^i|
        |\Delta\sigma_t^{\eta,i}|
    \right)dt
    \leq
    C\eta\|P^u\|_{\mathcal S^2}
    \|\xi\|_{\mathcal S^4}
    \|D^{1/2}\|_{L^4},
\end{aligned}
\]
\[
\begin{aligned}
    &\mathbb E\int_0^T\sum_{i=1}^d
    |\xi_t||Q_t^{u,i}|
    |\Delta\sigma_t^{\eta,i}|\,dt
    \leq
    C\eta\|Q^u\|_{\mathbb H^2}
    \|\xi\|_{\mathcal S^4}
    \|D^{1/2}\|_{L^4},
\end{aligned}
\]
and
\[
\begin{aligned}
    &\mathbb E\int_0^T
    \bigl(|p_t^u|+|q_t^u|\bigr)|\xi_t|^2\,dt
    \leq
    C\bigl(
        \|p^u\|_{\mathcal S^2}
        +\|q^u\|_{\mathbb H^2}
    \bigr)\|\xi\|_{\mathcal S^4}^2.
\end{aligned}
\]
Since
\[
    \|\xi\|_{\mathcal S^4}
    \leq C\eta\rho_4^{1/2},
    \qquad
    \|D^{1/2}\|_{L^4}=\rho_4^{1/2},
\]
the uniform adjoint moment estimates imply
\[
    |R_P^\eta|\leq C\eta^2\rho_4.
\]
Combining the preceding identities, we obtain
\[
\begin{aligned}
    J(u^\eta)-J(u)
    &=
    \mathbb E\int_0^T
    \left[
        (p_t^u)^\top\Delta b_t^\eta
        +\sum_{i=1}^d
        (q_t^{u,i})^\top\Delta\sigma_t^{\eta,i}
        +\Delta f_t^\eta
    \right]dt\\
    &\quad+
    \frac12\mathbb E\int_0^T
        \sum_{i=1}^d
        (\Delta\sigma_t^{\eta,i})^\top
        P_t^u\Delta\sigma_t^{\eta,i}\,dt
    +R_\eta(u,v),
\end{aligned}
\]
where
\[
    |R_\eta(u,v)|
    \leq C\eta^2\rho_4
    =C\eta^2\|v-u\|_{\mathbb H^4}^2.
\]
Taking the upper bound for the remainder proves the first assertion.
The equivalent formulation follows directly from the definition of
\(\mathcal H_t^u\). Every constant used above depends only on \(T,L\),
the dimensions, \(x_0\), and the compact control bound, through the uniform
state and adjoint moment estimates. Thus a single remainder constant is
valid for all admissible \(u,v\) and \(\eta\in[0,1]\).
\end{proof}

The preceding estimate separates the corrected-Hamiltonian variation
from a uniform stability remainder. The following result quantifies
their respective orders under Assumption~\ref{ass:basic-regularity}.

\begin{corollary}[Order separation under the soft update]
\label{cor:soft-update-order-separation}
Let Assumption~\ref{ass:basic-regularity} hold. Then there exists a
constant \(C>0\), independent of \(u\), \(v\), and \(\eta\), such that
\[
    |D_\eta(u,v)|
    \leq
    C\eta\|v-u\|_{L^2(\Omega\times[0,T])}
    +C\eta^2\|v-u\|_{\mathbb H^4}^2.
\]
Moreover,
\[
    J(u^\eta)-J(u)
    \leq
    -D_\eta(u,v)
    +C\eta^2\|v-u\|_{\mathbb H^4}^2.
\]
Thus, the corrected-Hamiltonian variation is bounded by a first-order
term weighted by the \(L^2(\Omega\times[0,T])\)-distance between \(u\)
and \(v\), together with a second-order term weighted by
\(\|v-u\|_{\mathbb H^4}^2\). The stability remainder is of second order
and carries the same \(\mathbb H^4\)-distance factor.
\end{corollary}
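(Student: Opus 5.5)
The second assertion is immediate. By definition, $-D_\eta(u,v)=\mathbb E\int_0^T[\mathcal H_t^u(u_t^\eta)-\mathcal H_t^u(u_t)]\,dt$, so it is exactly the equivalent form of Theorem~\ref{thm:cost-upper-bound-soft-update}. The work is therefore in the bound on $|D_\eta(u,v)|$. I would split the integrand $\mathcal H_t^u(u_t^\eta)-\mathcal H_t^u(u_t)$ into the ordinary-Hamiltonian increment and the quadratic diffusion correction. Using \eqref{eq:corrected-H-function} and $\mathcal H_t^u(u_t)=H(t,X_t^u,p_t^u,q_t^u,u_t)$, this gives
\[
\begin{aligned}
    \mathcal H_t^u(u_t^\eta)-\mathcal H_t^u(u_t)
    ={}&(p_t^u)^\top\Delta b_t^\eta
    +\sum_{i=1}^d(q_t^{u,i})^\top\Delta\sigma_t^{\eta,i}
    +\Delta f_t^\eta\\
    &+\frac12\sum_{i=1}^d(\Delta\sigma_t^{\eta,i})^\top P_t^u\Delta\sigma_t^{\eta,i}.
\end{aligned}
\]

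For the first-order part, Assumption~\ref{ass:basic-regularity}(iv) gives $|\Delta b_t^\eta|+|\Delta\sigma_t^\eta|\le L\eta|d_t|$ and $|\Delta f_t^\eta|\le L(1+|X_t^u|^2)\eta|d_t|$, with $d_t=v_t-u_t$. Hence the absolute value of the first line is at most $C\eta(|p_t^u|+|q_t^u|+1+|X_t^u|^2)|d_t|$. Integrating and applying Cauchy--Schwarz in $(t,\omega)$ bounds its expectation by
\[
    C\eta\Bigl(\|p^u\|_{L^2}+\|q^u\|_{\mathbb H^2}+1+\bigl\|\sup_t|X_t^u|^2\bigr\|_{L^2}\Bigr)\,\|v-u\|_{L^2(\Omega\times[0,T])}.
\]
Lemmas~\ref{lem:state-moment-estimate} (with $m=4$) and \ref{lem:first-adjoint-moment} make the bracket uniformly bounded in $u$. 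This yields the term $C\eta\|v-u\|_{L^2}$ with a constant independent of $u,v,\eta$.

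For the quadratic correction, I would bound it by $C|P_t^u|\eta^2|d_t|^2$. Then H\"older gives
\[
    \mathbb E\Bigl[\sup_t|P_t^u|\int_0^T|d_t|^2\,dt\Bigr]
    \le \|P^u\|_{\mathcal S^2}\bigl(\mathbb E[D^2]\bigr)^{1/2}
    =\|P^u\|_{\mathcal S^2}\|v-u\|_{\mathbb H^4}^2,
\]
where $D=\int_0^T|d_t|^2\,dt$ as before. Lemma~\ref{lem:second-adjoint-moment} then bounds $\|P^u\|_{\mathcal S^2}$ uniformly, so this piece contributes $C\eta^2\|v-u\|_{\mathbb H^4}^2$. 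Summing the two pieces proves the first inequality.

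The only delicate point is this quadratic term. $P^u$ is not bounded, only in $\mathcal S^4$, so I cannot bound the term by $\|v-u\|_{L^2}^2$ directly. Pairing the $\sup_t|P_t^u|$ factor with $D$ through Cauchy--Schwarz in $\omega$ is exactly what forces the $\mathbb H^4$ distance. This also explains why the same $\mathbb H^4$ factor appears as in the remainder of Theorem~\ref{thm:cost-upper-bound-soft-update}. The remaining steps are routine moment bookkeeping. In particular, the $(1+|X|^2)$ growth of the $f$-increment needs fourth moments of $X^u$, which Lemma~\ref{lem:state-moment-estimate} supplies.
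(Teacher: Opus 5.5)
Your proposal is correct and follows essentially the same route as the paper's proof. You split $-D_\eta(u,v)$ into the ordinary-Hamiltonian increment, bounded through the control-Lipschitz estimates and Cauchy--Schwarz against the uniform moments of $X^u,p^u,q^u$, and the $P^u$-weighted diffusion correction, bounded via $\mathbb E\bigl[\sup_t|P_t^u|\int_0^T|v_t-u_t|^2\,dt\bigr]\le\|P^u\|_{\mathcal S^2}\|v-u\|_{\mathbb H^4}^2$, and you read the cost inequality directly off Theorem~\ref{thm:cost-upper-bound-soft-update}.
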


\begin{proof}
By the definition of \(\mathcal H_t^u\),
\[
\begin{aligned}
    -D_\eta(u,v)
    &=
    \mathbb E\int_0^T
    \left[
        (p_t^u)^\top\Delta b_t^\eta
        +\sum_{i=1}^d(q_t^{u,i})^\top\Delta\sigma_t^{\eta,i}
        +\Delta f_t^\eta
    \right]dt\\
    &\quad+
    \frac12\mathbb E\int_0^T
    \sum_{i=1}^d
    (\Delta\sigma_t^{\eta,i})^\top
    P_t^u\Delta\sigma_t^{\eta,i}\,dt.
\end{aligned}
\]
The Lipschitz assumptions in the control variable give
\[
    |\Delta b_t^\eta|
    +\left(\sum_{i=1}^d|\Delta\sigma_t^{\eta,i}|^2\right)^{1/2}
    \leq C\eta|v_t-u_t|,
    \qquad
    |\Delta f_t^\eta|
    \leq C\eta(1+|X_t^u|^2)|v_t-u_t|.
\]
Hence, by the Cauchy--Schwarz inequality and the uniform moment estimates
for \(X^u\), \(p^u\), and \(q^u\),
\[
\begin{aligned}
    &\left|
        \mathbb E\int_0^T
        \left[
            (p_t^u)^\top\Delta b_t^\eta
            +\sum_{i=1}^d(q_t^{u,i})^\top\Delta\sigma_t^{\eta,i}
            +\Delta f_t^\eta
        \right]dt
    \right|
    \leq
    C\eta\|v-u\|_{L^2(\Omega\times[0,T])}.
\end{aligned}
\]
Furthermore, the Lipschitz continuity of \(\sigma\) in the control
variable and the uniform \(\mathcal S^2\)-estimate for \(P^u\) yield
\[
\begin{aligned}
    &\left|
        \frac12\mathbb E\int_0^T
        \sum_{i=1}^d
        (\Delta\sigma_t^{\eta,i})^\top
        P_t^u\Delta\sigma_t^{\eta,i}\,dt
    \right|
   \leq
    C\eta^2
    \mathbb E\left[
        \sup_{0\leq t\leq T}|P_t^u|
        \int_0^T|v_t-u_t|^2\,dt
    \right]
    \leq
    C\eta^2\|v-u\|_{\mathbb H^4}^2.
\end{aligned}
\]
Combining the two estimates proves the bound for \(D_\eta(u,v)\). The
cost estimate follows from
Theorem~\ref{thm:cost-upper-bound-soft-update}.
\end{proof}

\subsection{Explicit stepsizes under corrected-Hamiltonian semiconvexity}
\label{subsec:explicit-eta-analysis}

For the candidate pair at iteration \(n\), denote
\[
    u^{n,\eta}:=S_\eta(u^n,v^n),\qquad
    D_n(\eta):=D_\eta(u^n,v^n),\qquad 0\leq\eta\leq1.
\]
The improvement \(D_\eta\) is defined in
Subsection~\ref{subsec:cost-upper-bound-soft-update}, and
\(D_n(1)=r_n\). The cost upper bound becomes
\begin{equation}
\label{eq:conv-cost-upper-bound}
    J(u^{n,\eta})-J(u^n)
    \leq-D_n(\eta)
    +C_{\mathrm{ub}}\eta^2\Delta_4(u^n,v^n).
\end{equation}

\begin{lemma}[Semiconvex interpolation bound]
\label{lem:semiconvex-interpolation}
Under Assumption~\ref{ass:corrected-semiconvexity}, for every
\(\eta\in[0,1]\),
\begin{equation}
\label{eq:semiconvex-improvement-bound}
    D_n(\eta)
    \geq\eta r_n-A\eta(1-\eta)s_n.
\end{equation}
Consequently, when \(s_n>0\),
\begin{equation}
\label{eq:semiconvex-cost-bound}
    J(u^{n,\eta})-J(u^n)
    \leq-\eta s_n
    \bigl[\zeta_n-A-(C_{\mathrm{ub}}\chi_n-A)\eta\bigr].
\end{equation}
\end{lemma}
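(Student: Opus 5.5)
The plan is to prove the interpolation bound \eqref{eq:semiconvex-improvement-bound} pointwise in \((t,\omega)\) and then integrate it. For \(dt\otimes d\mathbb P\)-a.e. \((t,\omega)\), set \(g(a):=\mathcal H_t^{u^n}(a)+A|a|^2\), which is convex on \(U\) by Assumption~\ref{ass:corrected-semiconvexity}. Since \(U\) is convex, \(u_t^{n,\eta}=(1-\eta)u_t^n+\eta v_t^n\in U\), and convexity gives
\[
    g(u_t^{n,\eta})\leq(1-\eta)g(u_t^n)+\eta g(v_t^n).
\]
Subtracting the quadratic terms, I will use the elementary identity
\[
    (1-\eta)|x|^2+\eta|y|^2-|(1-\eta)x+\eta y|^2=\eta(1-\eta)|x-y|^2 .
\]
This yields
\[
    \mathcal H_t^{u^n}(u_t^{n,\eta})
    \leq(1-\eta)\mathcal H_t^{u^n}(u_t^n)+\eta\mathcal H_t^{u^n}(v_t^n)
    +A\eta(1-\eta)|v_t^n-u_t^n|^2 .
\]
Rearranging gives
\[
    \mathcal H_t^{u^n}(u_t^n)-\mathcal H_t^{u^n}(u_t^{n,\eta})
    \geq\eta\bigl[\mathcal H_t^{u^n}(u_t^n)-\mathcal H_t^{u^n}(v_t^n)\bigr]
    -A\eta(1-\eta)|v_t^n-u_t^n|^2 .
\]

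Next I take \(\mathbb E\int_0^T\cdot\,dt\). Integrability of every term follows from the moment estimates in Lemmas~\ref{lem:state-moment-estimate}, \ref{lem:first-adjoint-moment}, \ref{lem:second-adjoint-moment}, the compactness of \(U\), and the finiteness remark made when the candidate \(v^n\) was introduced. By the definitions of \(D_n(\eta)\), \(r_n\) and \(s_n\), integrating the pointwise inequality gives exactly \eqref{eq:semiconvex-improvement-bound}. The endpoint cases \(\eta=0,1\) are trivial and also covered by the same computation.

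For \eqref{eq:semiconvex-cost-bound}, I plug \eqref{eq:semiconvex-improvement-bound} into \eqref{eq:conv-cost-upper-bound}. This is valid for \(\eta\in(0,1]\) by Theorem~\ref{thm:cost-upper-bound-soft-update} with the fixed constant \(C_{\mathrm{ub}}\); for \(\eta=0\) both sides vanish. The result is
\[
    J(u^{n,\eta})-J(u^n)\leq-\eta r_n+A\eta(1-\eta)s_n+C_{\mathrm{ub}}\eta^2\Delta_4(u^n,v^n).
\]
When \(s_n>0\), I will write \(r_n=\zeta_n s_n\) and \(\Delta_4(u^n,v^n)=\chi_n s_n\), and factor out \(-\eta s_n\). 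This gives
\[
    -\eta s_n\bigl[\zeta_n-A+A\eta-C_{\mathrm{ub}}\chi_n\eta\bigr],
\]
which is the claimed bracket \(\zeta_n-A-(C_{\mathrm{ub}}\chi_n-A)\eta\).

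The only delicate point is justifying the pointwise convexity step. The assumption is stated only \(dt\otimes d\mathbb P\)-a.e. and for the fixed reference control \(u^n\in\mathcal U\), so I will apply it on that full-measure set. I also need to note that \(u^n,v^n\) take values in \(U\) there, so that the convex combination stays in the domain where convexity holds. Everything else is algebra.
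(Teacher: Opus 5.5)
Your proposal is correct and follows essentially the same route as the paper. It applies convexity of $a\mapsto\mathcal H_t^{u^n}(a)+A|a|^2$ pointwise, uses the quadratic interpolation identity, integrates, and then substitutes into the cost upper bound with $r_n=\zeta_ns_n$ and $\Delta_4(u^n,v^n)=\chi_ns_n$. Your added remarks on integrability and on the endpoint case $\eta=0$ are harmless refinements that the paper leaves implicit.
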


\begin{proof}
Convexity of \(a\mapsto\mathcal H_t^{u^n}(a)+A|a|^2\) gives
\[
\begin{aligned}
\mathcal H_t^{u^n}(u_t^{n,\eta})+A|u_t^{n,\eta}|^2
&\leq(1-\eta)
  [\mathcal H_t^{u^n}(u_t^n)+A|u_t^n|^2]\\
&\quad+\eta[\mathcal H_t^{u^n}(v_t^n)+A|v_t^n|^2].
\end{aligned}
\]
Using
\[
(1-\eta)|u_t^n|^2+\eta|v_t^n|^2-|u_t^{n,\eta}|^2
=\eta(1-\eta)|v_t^n-u_t^n|^2
\]
and integrating yields \eqref{eq:semiconvex-improvement-bound}.
Substitution into \eqref{eq:conv-cost-upper-bound}, together with
\(r_n=\zeta_ns_n\) and \(\Delta_4(u^n,v^n)=\chi_ns_n\), proves
\eqref{eq:semiconvex-cost-bound}.
\end{proof}

\begin{proposition}[Explicit stepsize and proportional descent]
\label{prop:explicit-eta-certificate}
Let Assumptions~\ref{ass:basic-regularity} and
\ref{ass:corrected-semiconvexity} hold, and suppose \(r_n>0\).
Every positive step selected by \eqref{eq:explicit-eta-rule} satisfies
\begin{equation}
\label{eq:conv-proportional-acceptance}
    C_{\mathrm{ub}}\eta_n^2\Delta_4(u^n,v^n)
    \leq\vartheta D_n(\eta_n),
\end{equation}
and hence
\begin{equation}
\label{eq:explicit-proportional-descent}
    J(u^n)-J(u^{n,\eta_n})
    \geq(1-\vartheta)D_n(\eta_n)>0.
\end{equation}
Moreover, \(\eta_n\) is the largest step satisfying the sufficient
condition
\begin{equation}
\label{eq:eta-linear-certificate}
    [C_{\mathrm{ub}}\chi_n-\vartheta A]\eta
    \leq\vartheta(\zeta_n-A),\qquad0<\eta\leq1.
\end{equation}
If \eqref{eq:explicit-eta-rule} gives \(\eta_n=0\), this sufficient
condition has no solution in \((0,1]\).
\end{proposition}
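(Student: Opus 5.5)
The plan is to reduce everything to the linear inequality \eqref{eq:eta-linear-certificate}. By Lemma~\ref{lem:semiconvex-interpolation}, $D_n(\eta)\geq\eta r_n-A\eta(1-\eta)s_n=\eta s_n[\zeta_n-A+A\eta]$. Since $\Delta_4(u^n,v^n)=\chi_ns_n$, the target inequality \eqref{eq:conv-proportional-acceptance} is implied by
\[
C_{\mathrm{ub}}\eta^2\chi_ns_n\leq\vartheta\eta s_n[\zeta_n-A+A\eta].
\]
For $\eta>0$ and $s_n>0$ this is equivalent to $[C_{\mathrm{ub}}\chi_n-\vartheta A]\eta\leq\vartheta(\zeta_n-A)$, which is exactly \eqref{eq:eta-linear-certificate}. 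Here $r_n>0$ forces $s_n>0$, because $s_n=0$ gives $v^n=u^n$ a.e. and hence $r_n=0$. So the main step is to check that every positive output of \eqref{eq:explicit-eta-rule} satisfies this linear condition, and that it is the largest such $\eta\in(0,1]$.

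I will do this by case analysis on the sign of $c:=C_{\mathrm{ub}}\chi_n-\vartheta A$ and on $\zeta_n-A$.
\begin{itemize}
\item If $c>0$, the solution set in $(0,1]$ is $(0,\min\{1,\vartheta(\zeta_n-A)/c\}]$. It is nonempty exactly when $\zeta_n>A$, and its maximum is the first branch. When $\zeta_n\leq A$ it is empty, which matches the rule returning $0$.
\item If $c\leq0$, the left side is nonincreasing in $\eta$. Hence the condition holds for some $\eta\in(0,1]$ if and only if it holds at $\eta=1$, i.e. $C_{\mathrm{ub}}\chi_n-\vartheta A\leq\vartheta\zeta_n-\vartheta A$, i.e. $\zeta_n\geq C_{\mathrm{ub}}\chi_n/\vartheta$. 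That is the second branch, with maximal step $1$, and otherwise the set is empty.
\end{itemize}
These cases exhaust the conditions in \eqref{eq:explicit-eta-rule}, which gives the maximality claim and the claim that $\eta_n=0$ means no solution exists.

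Descent then follows from the cost upper bound \eqref{eq:conv-cost-upper-bound} combined with \eqref{eq:conv-proportional-acceptance}:
\[
J(u^{n,\eta_n})-J(u^n)\leq-D_n(\eta_n)+\vartheta D_n(\eta_n).
\]
Strict positivity needs $D_n(\eta_n)>0$. By \eqref{eq:conv-proportional-acceptance}, $\vartheta D_n(\eta_n)\geq C_{\mathrm{ub}}\eta_n^2\chi_ns_n>0$, since $\chi_n\geq1$, $s_n>0$ and $C_{\mathrm{ub}}>0$. This gives \eqref{eq:explicit-proportional-descent}.

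The only delicate point I expect is bookkeeping in the boundary cases. Two need care: $c=0$, which belongs to the second branch, and $A=0$, where the second branch is vacuous because $C_{\mathrm{ub}}\chi_n>0$. I also need to confirm that $\chi_n$ and $\zeta_n$ are well defined, which again rests on $s_n>0$.
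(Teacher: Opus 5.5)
Your proposal is correct and follows the paper's proof. You reduce the acceptance inequality to the linear condition \eqref{eq:eta-linear-certificate} via the semiconvex interpolation bound, then do a case analysis on the sign of $C_{\mathrm{ub}}\chi_n-\vartheta A$, and derive descent from \eqref{eq:conv-cost-upper-bound}. The only difference is that you handle $C_{\mathrm{ub}}\chi_n-\vartheta A\le 0$ in one step by monotonicity in $\eta$, whereas the paper splits it into the zero and negative cases (with subcases $\zeta_n\ge A$ and $\zeta_n<A$); both reach the same criterion $\zeta_n\ge C_{\mathrm{ub}}\chi_n/\vartheta$.
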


\begin{proof}
Since \(r_n>0\), one has \(s_n>0\) and
\(\Delta_4(u^n,v^n)>0\). By
\eqref{eq:semiconvex-improvement-bound},
\[
    D_n(\eta)
    \geq
    \eta s_n(\zeta_n-A+A\eta),
    \qquad 0<\eta\leq1.
\]
Hence, a sufficient condition for proportional acceptance is
\[
    C_{\mathrm{ub}}\eta^2\Delta_4(u^n,v^n)
    \leq
    \vartheta\eta s_n(\zeta_n-A+A\eta).
\]
Using \(\Delta_4(u^n,v^n)=\chi_ns_n\) and dividing by
\(\eta s_n>0\), this condition is equivalent to
\begin{equation}
\label{eq:eta-proof-linear-condition}
    \bigl[C_{\mathrm{ub}}\chi_n-\vartheta A\bigr]\eta
    \leq
    \vartheta(\zeta_n-A),
    \qquad 0<\eta\leq1.
\end{equation}
We determine its solution set according to the sign of
\(C_{\mathrm{ub}}\chi_n-\vartheta A\).

\medskip
\noindent
\textup{(i) \(C_{\mathrm{ub}}\chi_n>\vartheta A\).}
In this case, \eqref{eq:eta-proof-linear-condition} becomes
\[
    0<\eta\leq
    \frac{\vartheta(\zeta_n-A)}
         {C_{\mathrm{ub}}\chi_n-\vartheta A},
    \qquad \eta\leq1.
\]
Thus a positive solution exists if and only if \(\zeta_n>A\).
When this condition holds, the solution set is
\[
    \left(
        0,\,
        \min\!\left\{
            1,\,
            \frac{\vartheta(\zeta_n-A)}
                 {C_{\mathrm{ub}}\chi_n-\vartheta A}
        \right\}
    \right],
\]
whose largest element is the first branch of
\eqref{eq:explicit-eta-rule}. If \(\zeta_n\leq A\), the
solution set is empty.

\medskip
\noindent
\textup{(ii) \(C_{\mathrm{ub}}\chi_n=\vartheta A\).}
Condition \eqref{eq:eta-proof-linear-condition} reduces to
\[
    0\leq\vartheta(\zeta_n-A).
\]
Consequently, the solution set is \((0,1]\) if
\[
    \zeta_n\geq A
    =\frac{C_{\mathrm{ub}}\chi_n}{\vartheta},
\]
and is empty otherwise. In the former case, its largest
element is \(1\).

\medskip
\noindent
\textup{(iii) \(C_{\mathrm{ub}}\chi_n<\vartheta A\).}
If \(\zeta_n\geq A\), then
\[
    \bigl[C_{\mathrm{ub}}\chi_n-\vartheta A\bigr]\eta
    <0
    \leq\vartheta(\zeta_n-A)
\]
for every \(\eta\in(0,1]\). Hence the solution set is
\((0,1]\), and its largest element is \(1\).

Suppose now that \(\zeta_n<A\). Dividing
\eqref{eq:eta-proof-linear-condition} by its negative
coefficient gives
\[
    \eta\geq
    \frac{\vartheta(A-\zeta_n)}
         {\vartheta A-C_{\mathrm{ub}}\chi_n}>0.
\]
A solution in \((0,1]\) exists if and only if
\[
\begin{aligned}
    \frac{\vartheta(A-\zeta_n)}
         {\vartheta A-C_{\mathrm{ub}}\chi_n}
    \leq1
    &\quad\Longleftrightarrow\quad
    \vartheta\zeta_n\geq C_{\mathrm{ub}}\chi_n\\
    &\quad\Longleftrightarrow\quad
    \zeta_n\geq\frac{C_{\mathrm{ub}}\chi_n}{\vartheta}.
\end{aligned}
\]
More precisely, if
\[
    \frac{C_{\mathrm{ub}}\chi_n}{\vartheta}
    <\zeta_n<A,
\]
the solution set is the interval
\[
    \left[
        \frac{\vartheta(A-\zeta_n)}
             {\vartheta A-C_{\mathrm{ub}}\chi_n},
        \,1
    \right],
\]
with a lower endpoint strictly between \(0\) and \(1\).
At the boundary
\[
    \zeta_n=\frac{C_{\mathrm{ub}}\chi_n}{\vartheta}<A,
\]
the interval reduces to \(\{1\}\). If
\(\zeta_n<C_{\mathrm{ub}}\chi_n/\vartheta\), the
solution set is empty.

Combining \textup{(ii)} and \textup{(iii)}, whenever
\(C_{\mathrm{ub}}\chi_n\leq\vartheta A\), a positive
solution exists if and only if
\[
    \zeta_n\geq\frac{C_{\mathrm{ub}}\chi_n}{\vartheta},
\]
and its largest element is \(1\). This gives the second
branch of \eqref{eq:explicit-eta-rule}. The zero branch
corresponds precisely to an empty solution set.

Finally, every selected positive step satisfies
\[
    C_{\mathrm{ub}}\eta_n^2\Delta_4(u^n,v^n)
    \leq\vartheta D_n(\eta_n),
\]
which proves \eqref{eq:conv-proportional-acceptance}.
Moreover,
\[
    D_n(\eta_n)
    \geq
    \frac{C_{\mathrm{ub}}}{\vartheta}
    \eta_n^2\Delta_4(u^n,v^n)>0.
\]
Combining this inequality with
\eqref{eq:conv-cost-upper-bound} yields
\[
\begin{aligned}
    J(u^n)-J(u^{n,\eta_n})
    &\geq
    D_n(\eta_n)
    -C_{\mathrm{ub}}\eta_n^2\Delta_4(u^n,v^n)\\
    &\geq(1-\vartheta)D_n(\eta_n)>0,
\end{aligned}
\]
proving \eqref{eq:explicit-proportional-descent}.
\end{proof}

\begin{remark}[Full-step criterion]
\label{rem:explicit-full-step}
For \(r_n>0\), the rule selects \(\eta_n=1\) if and only if
\[
    \vartheta r_n
    \geq C_{\mathrm{ub}}\Delta_4(u^n,v^n).
\]
Indeed, this is \eqref{eq:eta-linear-certificate} evaluated at
\(\eta=1\). It applies in both positive branches and includes equality.
\end{remark}

\begin{remark}[A certified interval with a positive lower endpoint]
\label{rem:stepsize-acceptance-regimes}
When
\[
    \frac{C_{\mathrm{ub}}\chi_n}{\vartheta}
    \leq\zeta_n<A,
\]
one necessarily has \(C_{\mathrm{ub}}\chi_n<\vartheta A\), and the
steps certified by \eqref{eq:eta-linear-certificate} form the interval
\[
    \frac{\vartheta(A-\zeta_n)}
         {\vartheta A-C_{\mathrm{ub}}\chi_n}
    \leq\eta\leq1.
\]
Its lower endpoint is positive. The interval reduces to \(\{1\}\)
when \(\zeta_n=C_{\mathrm{ub}}\chi_n/\vartheta\).
Thus the global estimates can certify a finite update even when
they do not certify arbitrarily small updates. The rule chooses the
full step throughout this regime. Failure of the sufficient condition
does not imply the absence of an actual cost-decreasing step.
\end{remark}

\subsection{Convergence analysis}
\label{subsec:su-convergence}

Throughout this subsection, Assumptions~\ref{ass:basic-regularity} and
\ref{ass:corrected-semiconvexity} hold, \(U\) is compact and convex,
and Algorithm~\ref{alg:su-msa} uses \eqref{eq:explicit-eta-rule}.
By Corollary~\ref{cor:cost-well-posed},
\(J_{\inf}:=\inf_{u\in\mathcal U}J(u)>-\infty\).

\begin{theorem}[Cost convergence and asymptotic regularity]
\label{thm:accepted-decrease-summability}
Under the standing assumptions of this subsection, suppose that
\(\varepsilon_J=0\) and the algorithm generates an infinite
sequence of successful updates. Recall that
\[
\begin{aligned}
    D_n(\eta_n)
    &=
    \mathbb{E}\int_0^T
    \left[
        \mathcal H_t^{u^n}(u_t^n)
        -\mathcal H_t^{u^n}(u_t^{n+1})
    \right]\,dt.
\end{aligned}
\]
Then there exists \(J_\infty\geq J_{\inf}\) such that
\[
    J(u^n)\downarrow J_\infty,
\]
and
\[
    \sum_{n=0}^{\infty}D_n(\eta_n)<\infty,
    \qquad
    \sum_{n=0}^{\infty}
    \|u^{n+1}-u^n\|_{\mathbb{H}^{4}}^2<\infty.
\]
In particular,
\begin{equation}
\label{eq:accepted-asymptotic-regularity}
    D_n(\eta_n)\longrightarrow0,
    \qquad
    \|u^{n+1}-u^n\|_{\mathbb{H}^{4}}\longrightarrow0.
\end{equation}
Moreover,
\begin{equation}
\label{eq:accepted-density-L1}
    \mathbb{E}\int_0^T
    \left|
        \mathcal H_t^{u^n}(u_t^n)
        -\mathcal H_t^{u^n}(u_t^{n+1})
    \right|\,dt
    \longrightarrow0.
\end{equation}
\end{theorem}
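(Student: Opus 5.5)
The argument is a telescoping one, built on the proportional descent of Proposition~\ref{prop:explicit-eta-certificate}. Since every update is successful and $\varepsilon_J=0$, each accepted step has $r_n>0$ and $\eta_n>0$, and $u^{n+1}=u^{n,\eta_n}$. Proposition~\ref{prop:explicit-eta-certificate} then gives
\[
J(u^n)-J(u^{n+1})\geq(1-\vartheta)D_n(\eta_n)>0 .
\]
Hence $J(u^n)$ is strictly decreasing. It is bounded below by $J_{\inf}\geq -C_J$ (Corollary~\ref{cor:cost-well-posed}), so it converges monotonically to some $J_\infty\geq J_{\inf}$. Summing the inequality from $n=0$ to $N$ and telescoping yields
\[
(1-\vartheta)\sum_{n=0}^N D_n(\eta_n)\leq J(u^0)-J_\infty\leq 2C_J .
\]
Letting $N\to\infty$ gives summability of $D_n(\eta_n)$, and in particular $D_n(\eta_n)\to0$.

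For the increments, note that $u^{n+1}-u^n=\eta_n(v^n-u^n)$. Therefore
\[
\|u^{n+1}-u^n\|_{\mathbb H^4}^2=\eta_n^2\Delta_4(u^n,v^n).
\]
The acceptance inequality \eqref{eq:conv-proportional-acceptance} gives $C_{\mathrm{ub}}\eta_n^2\Delta_4(u^n,v^n)\leq\vartheta D_n(\eta_n)$. Dividing by $C_{\mathrm{ub}}>0$ and summing, we get
\[
\sum_n\|u^{n+1}-u^n\|_{\mathbb H^4}^2\leq\frac{\vartheta}{C_{\mathrm{ub}}}\sum_n D_n(\eta_n)<\infty ,
\]
and hence $\|u^{n+1}-u^n\|_{\mathbb H^4}\to0$.

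The remaining claim \eqref{eq:accepted-density-L1} is the only delicate step. We control an $L^1$ norm of the pointwise difference, whereas $D_n(\eta_n)$ is only the integral of a signed quantity. Semiconvexity (Assumption~\ref{ass:corrected-semiconvexity}) does not give a pointwise sign either, so I would split into positive and negative parts instead of trying to prove positivity. Write
\[
g_t^n:=\mathcal H_t^{u^n}(u_t^n)-\mathcal H_t^{u^n}(u_t^{n+1}).
\]
Then $|g^n|=g^n+2(g^n)^-$, so
\[
\mathbb E\int_0^T|g_t^n|\,dt=D_n(\eta_n)+2\,\mathbb E\int_0^T(g_t^n)^-\,dt .
\]
It suffices to show that the negative-part integral tends to zero.

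The pointwise convexity inequality used in Lemma~\ref{lem:semiconvex-interpolation} holds before integration:
\[
g_t^n\geq\eta_n\bigl[\mathcal H_t^{u^n}(u_t^n)-\mathcal H_t^{u^n}(v_t^n)\bigr]-A\eta_n(1-\eta_n)|v_t^n-u_t^n|^2 .
\]
The bracket is nonnegative because $v^n$ is a pointwise minimizer. It follows that
\[
(g_t^n)^-\leq A\eta_n|v_t^n-u_t^n|^2\leq A\,\eta_n^{-1}|u_t^{n+1}-u_t^n|^2 .
\]
Integrating, the negative part is at most $A\eta_n^{-1}\|u^{n+1}-u^n\|_{L^2}^2$. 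This bound is useless if $\eta_n$ degenerates.

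I would therefore close the gap differently: use the Lipschitz-type bound of Corollary~\ref{cor:soft-update-order-separation} pointwise, combined with the vanishing of the increments. The Lipschitz estimates in the proof of that corollary give
\[
|g_t^n|\leq C\bigl(|p_t^{u^n}|+|q_t^{u^n}|+1+|X_t^{u^n}|^2\bigr)|u_t^{n+1}-u_t^n|+C|P_t^{u^n}|\,|u_t^{n+1}-u_t^n|^2 .
\]
By Cauchy--Schwarz and the uniform moment bounds of Lemmas~\ref{lem:state-moment-estimate}, \ref{lem:first-adjoint-moment} and \ref{lem:second-adjoint-moment}, this yields
\[
\mathbb E\int_0^T|g_t^n|\,dt\leq C\|u^{n+1}-u^n\|_{L^2}+C\|u^{n+1}-u^n\|_{\mathbb H^4}^2 .
\]
The $L^2(\Omega\times[0,T])$ norm is dominated by a constant times the $\mathbb H^4$ norm (Jensen/H\"older), so the right-hand side tends to zero by the second part. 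This direct route bypasses the sign issue entirely, and it is the one I would write up; the positive/negative splitting serves only as a consistency check.
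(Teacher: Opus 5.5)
Your proof is correct. Its first two parts match the paper's: monotone convergence of $J(u^n)$, telescoping to get $\sum_n D_n(\eta_n)<\infty$, and the acceptance inequality \eqref{eq:conv-proportional-acceptance} for the $\mathbb H^4$ increments. The difference lies in \eqref{eq:accepted-density-L1}.

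The paper uses the splitting you relegated to a consistency check. Semiconvexity and the minimizing property of $v^n$ give $g_t^n\geq-A\eta_n(1-\eta_n)|v_t^n-u_t^n|^2$, hence $\mathbb E\int_0^T|g_t^n|\,dt\leq D_n(\eta_n)+2A\eta_n(1-\eta_n)s_n$. The degeneracy of $\eta_n$ that worried you is harmless once you spend only one factor of $|v^n-u^n|$ on compactness. Pointwise this gives $(g_t^n)^-\leq A\operatorname{diam}(U)|u_t^{n+1}-u_t^n|$; at the integrated level, $\eta_ns_n\leq\sqrt T\operatorname{diam}(U)\,\eta_n\Delta_4(u^n,v^n)^{1/2}=\sqrt T\operatorname{diam}(U)\|u^{n+1}-u^n\|_{\mathbb H^4}$. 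So your remark that this bound is ``useless'' is not right.

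Your direct route is nevertheless valid. The pointwise bound follows from the control-Lipschitz conditions in Assumption~\ref{ass:basic-regularity}. After Cauchy--Schwarz, the constants are uniform in $n$ by Lemmas~\ref{lem:state-moment-estimate}, \ref{lem:first-adjoint-moment} and \ref{lem:second-adjoint-moment}.

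The two approaches buy different things. Your argument ignores the algorithmic structure entirely: it needs neither semiconvexity nor exact minimality of $v^n$. It really proves the general continuity estimate $\mathbb E\int_0^T|\mathcal H_t^u(u_t)-\mathcal H_t^u(w_t)|\,dt\leq C\|w-u\|_{L^2(\Omega\times[0,T])}+C\|w-u\|_{\mathbb H^4}^2$ for arbitrary $u,w\in\mathcal U$, so it would survive inexact candidate selection. The paper's argument uses the structure but needs no adjoint moment bounds at this step. It gives the explicit constant $2A\sqrt T\operatorname{diam}(U)$ in terms of statistics the algorithm already computes. It also shows that for $A=0$ the density is pointwise nonnegative, so its $L^1$ norm is exactly $D_n(\eta_n)$.
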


\begin{proof}
By Proposition~\ref{prop:explicit-eta-certificate},
\[
    D_n(\eta_n)>0,
    \qquad
    J(u^n)-J(u^{n+1})
    \geq(1-\vartheta)D_n(\eta_n).
\]
Thus \(J(u^n)\) is strictly decreasing and bounded below by
\(J_{\inf}\), so it converges to a finite limit
\(J_\infty\geq J_{\inf}\). Summing the preceding inequality
and passing to the limit gives
\begin{equation}
\label{eq:accepted-decrease-sum}
    \sum_{n=0}^{\infty}D_n(\eta_n)
    \leq
    \frac{J(u^0)-J_\infty}{1-\vartheta}<\infty.
\end{equation}

Since \(u^{n+1}-u^n=\eta_n(v^n-u^n)\), proportional
acceptance yields
\[
    C_{\mathrm{ub}}
    \|u^{n+1}-u^n\|_{\mathbb{H}^{4}}^2
    =
    C_{\mathrm{ub}}\eta_n^2\Delta_4(u^n,v^n)
    \leq\vartheta D_n(\eta_n).
\]
Consequently,
\begin{equation}
\label{eq:accepted-remainder-sum}
    \sum_{n=0}^{\infty}
    \|u^{n+1}-u^n\|_{\mathbb{H}^{4}}^2
    \leq
    \frac{\vartheta}{(1-\vartheta)C_{\mathrm{ub}}}
    [J(u^0)-J_\infty]<\infty.
\end{equation}
The summability of these nonnegative terms implies
\[
    D_n(\eta_n)\longrightarrow0,
    \qquad
    \|u^{n+1}-u^n\|_{\mathbb{H}^{4}}\longrightarrow0,
\]
proving \eqref{eq:accepted-asymptotic-regularity}.

It remains to prove \eqref{eq:accepted-density-L1}. Denote
\[
    g_n(t,\omega):=
    \mathcal H_t^{u^n}(u_t^n)
    -\mathcal H_t^{u^n}(u_t^{n+1}).
\]
The pointwise semiconvex interpolation inequality and the
minimizing property of \(v^n\) give
\[
    g_n(t,\omega)
    \geq
    -A\eta_n(1-\eta_n)|v_t^n-u_t^n|^2.
\]
Using \(|g_n|=g_n+2(-g_n)_+\) and
\(\mathbb{E}\int_0^Tg_n\,dt=D_n(\eta_n)\), we obtain
\[
    \mathbb{E}\int_0^T|g_n|\,dt
    \leq D_n(\eta_n)+2A\eta_n(1-\eta_n)s_n.
\]
Compactness of \(U\) and Jensen's inequality imply
\[
    s_n\leq T\operatorname{diam}(U)^2,
    \qquad
    s_n\leq\Delta_4(u^n,v^n).
\]
Hence
\[
\begin{aligned}
    \eta_ns_n
    &\leq
    \sqrt{T}\operatorname{diam}(U)\,
    \eta_n\sqrt{\Delta_4(u^n,v^n)}\\
    &=
    \sqrt{T}\operatorname{diam}(U)\,
    \|u^{n+1}-u^n\|_{\mathbb{H}^{4}}.
\end{aligned}
\]
It follows that
\[
\begin{aligned}
    0
    &\leq\mathbb{E}\int_0^T|g_n|\,dt\\
    &\leq
    D_n(\eta_n)+
    2A\sqrt{T}\operatorname{diam}(U)\,
    \|u^{n+1}-u^n\|_{\mathbb{H}^{4}}
    \longrightarrow0,
\end{aligned}
\]
which proves \eqref{eq:accepted-density-L1}.
\end{proof}

\begin{corollary}[$L^2$ asymptotic regularity]
\label{cor:accepted-L2-regularity}
Under the hypotheses of
Theorem~\ref{thm:accepted-decrease-summability},
\[
    \|u^{n+1}-u^n\|_{L^2(\Omega\times[0,T])}
    \longrightarrow0.
\]
\end{corollary}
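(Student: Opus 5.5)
The corollary follows from Theorem~\ref{thm:accepted-decrease-summability} once we compare the \(L^2(\Omega\times[0,T])\)-norm with the \(\mathbb H^4\)-norm. Set \(d^n_t:=u^{n+1}_t-u^n_t\) and \(Z_n:=\int_0^T|d^n_t|^2\,dt\). By definition,
\[
    \|d^n\|_{L^2(\Omega\times[0,T])}^2=\mathbb E[Z_n],
    \qquad
    \|d^n\|_{\mathbb H^4}^2=\bigl(\mathbb E[Z_n^2]\bigr)^{1/2}.
\]
Jensen's (or Cauchy--Schwarz) inequality gives \(\mathbb E[Z_n]\leq(\mathbb E[Z_n^2])^{1/2}\). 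This is the same inequality \(s_n\leq\Delta_4(u^n,v^n)\) used in the proof of the theorem, and equivalently \(\chi_n\geq1\) from Remark~\ref{rem:eta-path-concentration}. Hence
\[
    \|u^{n+1}-u^n\|_{L^2(\Omega\times[0,T])}
    \leq\|u^{n+1}-u^n\|_{\mathbb H^4}.
\]

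The right-hand side tends to zero by \eqref{eq:accepted-asymptotic-regularity}, so the claim follows. The summability \(\sum_n\|u^{n+1}-u^n\|_{\mathbb H^4}^2<\infty\) established in the theorem gives in addition \(\sum_n\|u^{n+1}-u^n\|_{L^2}^2<\infty\).

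There is no real obstacle. The only point needing care is the direction of the norm comparison: the \(L^2\)-norm is dominated by the \(\mathbb H^4\)-norm, not the other way round. No time-integration constant appears, because both norms are built from the same time integral \(Z_n\).
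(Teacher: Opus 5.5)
Your proof is correct and is exactly the paper's argument: Jensen's inequality gives $\|u^{n+1}-u^n\|_{L^2(\Omega\times[0,T])}\leq\|u^{n+1}-u^n\|_{\mathbb H^4}$, and the right-hand side tends to zero by \eqref{eq:accepted-asymptotic-regularity}. Your additional observation that $\sum_n\|u^{n+1}-u^n\|_{L^2(\Omega\times[0,T])}^2<\infty$ is also valid.
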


\begin{proof}
By Jensen's inequality,
\[
    \|u^{n+1}-u^n\|_{L^2(\Omega\times[0,T])}
    \leq
    \|u^{n+1}-u^n\|_{\mathbb{H}^{4}}
    \longrightarrow0.
\]
\end{proof}

\begin{theorem}[Residual limits for full and soft updates]
\label{thm:semiconvex-branch-residuals}
Under the hypotheses of
Theorem~\ref{thm:accepted-decrease-summability}, denote
\[
    \mathcal F:=\{n:\eta_n=1\},
    \qquad
    \mathcal S:=\{n:0<\eta_n<1\}.
\]
Then
\[
    \sum_{n\in\mathcal F}r_n<\infty,
    \qquad
    \sum_{n\in\mathcal S}(r_n-As_n)^2<\infty.
\]
Consequently, whenever the corresponding index set is infinite,
\[
    \lim_{\substack{n\to\infty\\ n\in\mathcal F}}r_n=0,
    \qquad
    \lim_{\substack{n\to\infty\\ n\in\mathcal S}}
    (r_n-As_n)=0.
\]
Moreover,
\begin{equation}
\label{eq:semiconvex-positive-excess}
    [r_n-As_n]_+\longrightarrow0,
    \qquad [x]_+:=\max\{x,0\}.
\end{equation}
\end{theorem}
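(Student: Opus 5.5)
The plan is to read both summability statements off the bound $\sum_n D_n(\eta_n)<\infty$ from Theorem~\ref{thm:accepted-decrease-summability}. The semiconvex interpolation bound \eqref{eq:semiconvex-improvement-bound} then converts $D_n(\eta_n)$ into the residual quantities on each branch. Every successful update has $\eta_n\in(0,1]$, so the indices split into the disjoint sets $\mathcal F$ and $\mathcal S$. Since all terms involved are nonnegative, it suffices to bound each partial sum by a constant multiple of $\sum_n D_n(\eta_n)$.

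\emph{Full steps.} For $n\in\mathcal F$ one has $u^{n+1}=v^n$, so $D_n(\eta_n)=D_n(1)=r_n$. Hence $\sum_{n\in\mathcal F}r_n\le\sum_n D_n(\eta_n)<\infty$, and since $r_n\geq0$, we get $r_n\to0$ along $\mathcal F$ whenever $\mathcal F$ is infinite.

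\emph{Soft steps.} For $n\in\mathcal S$, the second branch of \eqref{eq:explicit-eta-rule} always returns $1$. So $\eta_n<1$ forces the first branch: $C_{\mathrm{ub}}\chi_n>\vartheta A$, $\zeta_n>A$, and
\[
\eta_n=\frac{\vartheta(\zeta_n-A)}{C_{\mathrm{ub}}\chi_n-\vartheta A}.
\]
Using $A\ge0$ and \eqref{eq:semiconvex-improvement-bound},
\[
D_n(\eta_n)\ge\eta_n s_n(\zeta_n-A+A\eta_n)\ge\eta_n(r_n-As_n).
\]
Substituting $\eta_n$ and using $r_n=\zeta_ns_n$, $\Delta_4(u^n,v^n)=\chi_ns_n$, the right-hand side equals
\[
\frac{\vartheta(r_n-As_n)^2}{C_{\mathrm{ub}}\Delta_4(u^n,v^n)-\vartheta As_n}
\ \ge\ \frac{\vartheta(r_n-As_n)^2}{C_{\mathrm{ub}}\Delta_4(u^n,v^n)}.
\]

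The one point to check here is that the denominator stays bounded uniformly in $n$. The ratio $\chi_n$ alone can be large, so one must not bound $\chi_n$ separately; instead the factor $s_n$ is absorbed so that only $\Delta_4$ remains. Compactness of $U$ gives $\Delta_4(u^n,v^n)\le T\operatorname{diam}(U)^2$. Therefore
\[
\sum_{n\in\mathcal S}(r_n-As_n)^2\le\frac{C_{\mathrm{ub}}T\operatorname{diam}(U)^2}{\vartheta}\sum_n D_n(\eta_n)<\infty,
\]
and $r_n-As_n\to0$ along $\mathcal S$ whenever $\mathcal S$ is infinite.

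\emph{Positive excess.} For $n\in\mathcal F$ we have $[r_n-As_n]_+\le r_n$, because $A s_n\ge0$ and $r_n\geq0$. For $n\in\mathcal S$ we have $[r_n-As_n]_+\le|r_n-As_n|$. Both bounds tend to zero along the respective index sets when those are infinite, and a finite index set imposes no tail condition. Since $\mathcal F\cup\mathcal S$ contains all $n$, this yields \eqref{eq:semiconvex-positive-excess}.
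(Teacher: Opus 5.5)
Your proposal is correct and follows the paper's proof essentially line for line. Full steps are handled through $D_n(1)=r_n$ and $\sum_n D_n(\eta_n)<\infty$. Soft steps insert the first-branch formula for $\eta_n$ into the semiconvex interpolation bound, with the denominator controlled by $\Delta_4(u^n,v^n)\le T\operatorname{diam}(U)^2$. The positive-excess limit then follows from the two branchwise bounds $[r_n-As_n]_+\le r_n$ on $\mathcal F$ and $[r_n-As_n]_+\le r_n-As_n$ on $\mathcal S$.
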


\begin{proof}
For \(n\in\mathcal F\), one has
\[
    u^{n+1}=v^n,
    \qquad
    D_n(\eta_n)=D_n(1)=r_n.
\]
Hence, by \eqref{eq:accepted-decrease-sum},
\begin{equation}
\label{eq:full-step-residual-sum}
    \sum_{n\in\mathcal F}r_n
    =
    \sum_{n\in\mathcal F}D_n(\eta_n)
    \leq
    \frac{J(u^0)-J_\infty}{1-\vartheta}.
\end{equation}
Hence, if \(\mathcal F\) is infinite,
\[
    \lim_{\substack{n\to\infty\\ n\in\mathcal F}}r_n=0.
\]

For \(n\in\mathcal S\),
\eqref{eq:explicit-eta-rule} gives
\[
    r_n-As_n>0,
    \qquad
    \eta_n
    =
    \frac{\vartheta(r_n-As_n)}
         {C_{\mathrm{ub}}\Delta_4(u^n,v^n)-\vartheta As_n},
\]
with
\[
    0<
    C_{\mathrm{ub}}\Delta_4(u^n,v^n)-\vartheta As_n
    \leq
    C_{\mathrm{ub}}\Delta_4(u^n,v^n).
\]
By \eqref{eq:semiconvex-improvement-bound},
\[
\begin{aligned}
    D_n(\eta_n)
    &\geq
    \eta_n(r_n-As_n)+A\eta_n^2s_n\\
    &\geq
    \frac{\vartheta(r_n-As_n)^2}
         {C_{\mathrm{ub}}\Delta_4(u^n,v^n)-\vartheta As_n}\\
    &\geq
    \frac{\vartheta(r_n-As_n)^2}
         {C_{\mathrm{ub}}\Delta_4(u^n,v^n)}.
\end{aligned}
\]
Since
\[
    \int_0^T|v_t^n-u_t^n|^2\,dt
    \leq T\operatorname{diam}(U)^2
    \quad\text{almost surely},
\]
it follows that
\[
    \Delta_4(u^n,v^n)
    \leq T\operatorname{diam}(U)^2,
\]
and therefore
\[
    (r_n-As_n)^2
    \leq
    \frac{C_{\mathrm{ub}}T\operatorname{diam}(U)^2}
         {\vartheta}
    D_n(\eta_n),
    \qquad n\in\mathcal S.
\]
Summing and applying \eqref{eq:accepted-decrease-sum} yields
\begin{equation}
\label{eq:soft-step-excess-sum}
\begin{aligned}
    \sum_{n\in\mathcal S}(r_n-As_n)^2
    &\leq
    \frac{C_{\mathrm{ub}}T\operatorname{diam}(U)^2}
         {\vartheta}
    \sum_{n\in\mathcal S}D_n(\eta_n)\\
    &\leq
    \frac{C_{\mathrm{ub}}T\operatorname{diam}(U)^2}
         {\vartheta(1-\vartheta)}
    [J(u^0)-J_\infty].
\end{aligned}
\end{equation}
Hence, if \(\mathcal S\) is infinite,
\[
    \lim_{\substack{n\to\infty\\ n\in\mathcal S}}
    (r_n-As_n)=0.
\]

Finally, every iteration belongs to either
\(\mathcal F\) or \(\mathcal S\), and
\[
    0\leq[r_n-As_n]_+
    \leq
    \begin{cases}
        r_n, & n\in\mathcal F,\\
        r_n-As_n, & n\in\mathcal S.
    \end{cases}
\]
The preceding limits therefore give
\[
    [r_n-As_n]_+\longrightarrow0,
\]
proving \eqref{eq:semiconvex-positive-excess}.
\end{proof}

\begin{corollary}[SMP residual convergence for a convex corrected Hamiltonian]
\label{cor:convex-corrected-SMP}
Under the standing assumptions of this subsection, suppose that
Assumption~\ref{ass:corrected-semiconvexity} holds with \(A=0\),
and take \(\varepsilon_J=0\).
At every iteration with \(r_n>0\), the explicit stepsize is
\begin{equation}
\label{eq:convex-explicit-eta}
    \eta_n
    =
    \min\!\left\{
        1,
        \frac{\vartheta r_n}
             {C_{\mathrm{ub}}\Delta_4(u^n,v^n)}
    \right\}
    >0.
\end{equation}
The algorithm either terminates at a control satisfying the
corrected-SMP minimum condition or generates an infinite sequence
such that
\begin{equation}
\label{eq:convex-SMP-L1}
    \left\|
        \mathcal H_t^{u^n}(u_t^n)
        -\min_{a\in U}\mathcal H_t^{u^n}(a)
    \right\|_{L^1(\Omega\times[0,T])}
    =
    r_n
    \longrightarrow0.
\end{equation}
For an infinite sequence,
\(\sum_{n\in\mathcal F}r_n<\infty\) and
\(\sum_{n\in\mathcal S}r_n^2<\infty\).
\end{corollary}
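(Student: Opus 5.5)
The argument specializes the explicit stepsize rule \eqref{eq:explicit-eta-rule} and Theorem~\ref{thm:semiconvex-branch-residuals} to the case \(A=0\).

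\textbf{Step 1: the stepsize formula.} Fix an iteration with \(r_n>0\), so \(s_n>0\), \(\zeta_n=r_n/s_n>0=A\), and \(\chi_n\geq1\). Since \(C_{\mathrm{ub}}>0\), we have \(C_{\mathrm{ub}}\chi_n>0=\vartheta A\), so the first branch of \eqref{eq:explicit-eta-rule} applies. It gives
\[
\eta_n=\min\Bigl\{1,\frac{\vartheta\zeta_n}{C_{\mathrm{ub}}\chi_n}\Bigr\}.
\]
Substituting \(\zeta_n/\chi_n=r_n/\Delta_4(u^n,v^n)\) yields \eqref{eq:convex-explicit-eta}, and this quantity is strictly positive. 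Consequently the branch \(\eta_n=0\) in Algorithm~\ref{alg:su-msa} never occurs.

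\textbf{Step 2: the termination alternative.} Take \(\varepsilon_J=0\). The algorithm can stop in only two ways.
\begin{enumerate}[label=\textup{(\alph*)},leftmargin=2.5em]
\item It stops with \(r_n=0\). Because \(v^n\) is a pointwise minimizer, the integrand \(\mathcal H_t^{u^n}(u^n_t)-\mathcal H_t^{u^n}(v^n_t)\) is nonnegative and has zero integral. It therefore vanishes \(dt\otimes d\mathbb P\)-a.e., so \(u^n_t\in\arg\min_{a\in U}\mathcal H^{u^n}_t(a)\) a.e. This is the corrected-SMP minimum condition, in the \(\mathcal H\)-form of Theorem~\ref{thm:H-function-SMP}.
\item It stops through the test \(0\leq\mathscr A_n\leq 0\). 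By Proposition~\ref{prop:explicit-eta-certificate}, every step with \(\eta_n>0\) satisfies \(\mathscr A_n\geq(1-\vartheta)D_n(\eta_n)>0\), so this exit cannot be triggered.
\end{enumerate}
Hence, unless the algorithm stops in case (a), it produces an infinite sequence of successful updates.

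\textbf{Step 3: the \(L^1\) identity.} Write \(\min_{a\in U}\mathcal H^{u^n}_t(a)=\mathcal H^{u^n}_t(v^n_t)\). The difference \(\mathcal H^{u^n}_t(u^n_t)-\mathcal H^{u^n}_t(v^n_t)\) is nonnegative pointwise, so its \(L^1(\Omega\times[0,T])\) norm equals its integral, which is \(r_n\) by \eqref{eq:eta-candidate-statistics}. This proves the equality in \eqref{eq:convex-SMP-L1}.

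\textbf{Step 4: summability and convergence.} Apply Theorem~\ref{thm:semiconvex-branch-residuals} with \(A=0\). It gives \(\sum_{n\in\mathcal F}r_n<\infty\) and \(\sum_{n\in\mathcal S}r_n^2<\infty\). It also gives \([r_n]_+=r_n\to0\) from \eqref{eq:semiconvex-positive-excess}, since \(r_n\geq0\).

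The only point that needs care is Step 2: we must confirm that no other exit of the algorithm is possible. This rests on the positivity of \(\eta_n\) from Step 1 together with the strict descent in \eqref{eq:explicit-proportional-descent}.
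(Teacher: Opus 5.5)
Your proposal is correct and follows essentially the same route as the paper. Like the paper, you specialize the explicit rule to $A=0$ to obtain the formula and positivity of $\eta_n$, use the strict descent from Proposition~\ref{prop:explicit-eta-certificate} to rule out every exit except $r_n=0$, and invoke Theorem~\ref{thm:semiconvex-branch-residuals} with $A=0$ for the summability bounds and $r_n=[r_n]_+\to0$, with the $L^1$ identity coming from pointwise nonnegativity of the residual density.
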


\begin{proof}
For \(r_n>0\), one has \(s_n>0\) and
\(\Delta_4(u^n,v^n)>0\).
Setting \(A=0\) in \eqref{eq:explicit-eta-rule} gives
\[
    \eta_n
    =
    \min\!\left\{
        1,
        \frac{\vartheta\zeta_n}
             {C_{\mathrm{ub}}\chi_n}
    \right\}
    =
    \min\!\left\{
        1,
        \frac{\vartheta r_n}
             {C_{\mathrm{ub}}\Delta_4(u^n,v^n)}
    \right\}
    >0.
\]
By Proposition~\ref{prop:explicit-eta-certificate},
\[
\begin{aligned}
    J(u^n)-J(u^{n,\eta_n})
    &\geq
    (1-\vartheta)D_n(\eta_n)\\
    &\geq
    \frac{(1-\vartheta)C_{\mathrm{ub}}}{\vartheta}
    \eta_n^2\Delta_4(u^n,v^n)
    >0.
\end{aligned}
\]
Thus, with \(\varepsilon_J=0\), every iteration with
\(r_n>0\) produces a successful update.
Finite termination therefore occurs only at \(r_n=0\).

Since
\[
    \mathcal H_t^{u^n}(u_t^n)
    -\min_{a\in U}\mathcal H_t^{u^n}(a)
    \geq0,
\]
the definition of \(r_n\) gives
\[
    r_n=0
    \quad\Longleftrightarrow\quad
    \mathcal H_t^{u^n}(u_t^n)
    =
    \min_{a\in U}\mathcal H_t^{u^n}(a)
    \quad dt\otimes d\mathbb P\text{-almost everywhere}.
\]
This proves the finite-termination assertion.

For an infinite sequence,
Theorem~\ref{thm:semiconvex-branch-residuals} with \(A=0\)
yields
\[
    r_n=[r_n-As_n]_+\longrightarrow0.
\]
Finally, nonnegativity of the residual density implies
\[
\begin{aligned}
    \left\|
        \mathcal H_t^{u^n}(u_t^n)
        -\min_{a\in U}\mathcal H_t^{u^n}(a)
    \right\|_{L^1(\Omega\times[0,T])}
    &=
    \mathbb E\int_0^T
    \left[
        \mathcal H_t^{u^n}(u_t^n)
        -\min_{a\in U}\mathcal H_t^{u^n}(a)
    \right]\,dt\\
    &=r_n\longrightarrow0,
\end{aligned}
\]
proving \eqref{eq:convex-SMP-L1}. The two summability assertions follow
from \eqref{eq:full-step-residual-sum} and
\eqref{eq:soft-step-excess-sum} with \(A=0\).
\end{proof}

\begin{remark}[SMP residual convergence when \(A>0\)]
\label{rem:semiconvex-residual-scope}
Under the hypotheses of
Theorem~\ref{thm:accepted-decrease-summability}, let \(A>0\).
If \(s_n\to0\) along \(\mathcal S\), then \(r_n\to0\)
for the entire sequence. Indeed, on \(\mathcal S\),
\[
    0\leq r_n
    \leq [r_n-As_n]_++As_n
    \longrightarrow0,
\]
while convergence on \(\mathcal F\) follows from
Theorem~\ref{thm:semiconvex-branch-residuals}.
A sufficient condition for \(s_n\to0\) along \(\mathcal S\)
is that the accepted steps are uniformly bounded away from
zero on \(\mathcal S\), since
\[
    \eta_n^2s_n
    \leq
    \eta_n^2\Delta_4(u^n,v^n)
    =
    \|u^{n+1}-u^n\|_{\mathbb{H}^{4}}^2
    \longrightarrow0.
\]
\end{remark}

\begin{remark}[Finite termination with a positive cost-decrease tolerance]
\label{rem:smp-positive-tolerance}
If \(\varepsilon_J>0\), every accepted update satisfies
\[
    J(u^n)-J(u^{n+1})>\varepsilon_J.
\]
Since \(J\geq J_{\inf}\), the number \(N\) of accepted
updates is finite and satisfies
\[
    N\varepsilon_J\leq J(u^0)-J_{\inf}.
\]
\end{remark}

The corrected-SMP residual has a direct global optimality interpretation
for the following class of control problems. This is a separate structural
specialization of Assumption~\ref{ass:basic-regularity}. The global
optimality theorem below assumes convergence of the full residual
\(r_n\); for a general semiconvex corrected Hamiltonian with \(A>0\),
that convergence requires an additional argument beyond
Theorem~\ref{thm:semiconvex-branch-residuals}.

For a sufficiently smooth deterministic function
\(V:[0,T]\times\mathbb R^n\to\mathbb R\), denote
\[
    \mathcal L^aV(t,x)
    :=
    V_t(t,x)
    +
    \langle V_x(t,x),b(t,x,a)\rangle
    +
    \frac12\sum_{i=1}^d
    \sigma^i(t,x,a)^\top
    V_{xx}(t,x)
    \sigma^i(t,x,a).
\]

\begin{assumption}[Exact It\^o reduction]
\label{ass:exact-ito-reduction}
In addition to Assumption~\ref{ass:basic-regularity}, suppose
that there exist
\[
    V\in C^{1,4}
    \bigl([0,T]\times\mathbb R^n\bigr),
    \qquad
    c_\Phi\in\mathbb R,
\]
and a continuous function
\[
    \ell:[0,T]\times U\to\mathbb R
\]
such that
\begin{equation}
\label{eq:exact-ito-terminal-cost}
    \Phi(x)=V(T,x)+c_\Phi
\end{equation}
and
\begin{equation}
\label{eq:exact-ito-running-cost}
    f(t,x,a)
    =
    \ell(t,a)-\mathcal L^aV(t,x).
\end{equation}
The function \(\ell\) is continuously differentiable in
\(a\) on a neighborhood of \(U\), and
\[
    L_\ell
    :=
    \sup_{(t,a)\in[0,T]\times U}
    |\ell_a(t,a)|
    <\infty.
\]
Here \(C^{1,4}\) includes the continuous mixed derivatives
\(V_{tx}\) and \(V_{txx}\) used when applying It\^o's formula to the spatial
derivatives. The derivatives appearing below have at most polynomial
growth, uniformly in \(t\), so that It\^o's formula is applicable to
\[
    V(t,X_t^u),\qquad
    V_x(t,X_t^u),\qquad
    V_{xx}(t,X_t^u)
\]
for every \(u\in\mathcal U\).
No convexity of \(a\mapsto\ell(t,a)\) is imposed.
\end{assumption}

\begin{lemma}[Exact corrected-Hamiltonian representation]
\label{lem:exact-corrected-representation}
Let Assumption~\ref{ass:exact-ito-reduction} hold.
For every \(u\in\mathcal U\), the associated adjoint
processes satisfy
\begin{equation}
\label{eq:exact-adjoint-representation}
\begin{aligned}
    p_t^u
    &=
    V_x(t,X_t^u),\\
    q_t^{u,i}
    &=
    V_{xx}(t,X_t^u)
    \sigma^i(t,X_t^u,u_t),\\
    P_t^u
    &=
    V_{xx}(t,X_t^u),
    \qquad i=1,\ldots,d,
\end{aligned}
\end{equation}
and
\[
    Q_t^{u,i}
    =
    D_xV_{xx}(t,X_t^u)
    \left[
        \sigma^i(t,X_t^u,u_t)
    \right].
\]

Moreover, there exists a progressively measurable process
\(C^u\), independent of the candidate action \(a\), such that
\begin{equation}
\label{eq:exact-corrected-density}
    \mathcal H_t^u(a)
    =
    \ell(t,a)+C_t^u,
    \qquad a\in U,
\end{equation}
where
\begin{equation}
\label{eq:exact-candidate-independent-term}
    C_t^u
    =
    -V_t(t,X_t^u)
    +
    \frac12\sum_{i=1}^d
    \sigma^i(t,X_t^u,u_t)^\top
    P_t^u
    \sigma^i(t,X_t^u,u_t).
\end{equation}

Consequently, for every \(u,w\in\mathcal U\),
\begin{equation}
\label{eq:exact-cost-corrected-identity}
    J(w)-J(u)
    =
    \mathbb E\int_0^T
    \left[
        \mathcal H_t^u(w_t)
        -
        \mathcal H_t^u(u_t)
    \right]dt.
\end{equation}

If
\[
    m(t):=\min_{a\in U}\ell(t,a),
\]
then
\begin{equation}
\label{eq:exact-optimal-value}
    J_{\inf}
    =
    V(0,x_0)+c_\Phi+\int_0^Tm(t)\,dt,
\end{equation}
and, for every \(u\in\mathcal U\),
\begin{equation}
\label{eq:residual-exact-gap}
    r(u)
    :=
    \mathbb E\int_0^T
    \left[
        \mathcal H_t^u(u_t)
        -
        \min_{a\in U}\mathcal H_t^u(a)
    \right]dt
    =
    J(u)-J_{\inf}.
\end{equation}
\end{lemma}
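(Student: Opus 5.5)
The plan is to verify the adjoint representation by applying Itô's formula to the candidate processes and invoking uniqueness of the Lipschitz BSDEs (Lemmas~\ref{lem:first-adjoint-moment} and \ref{lem:second-adjoint-moment}). Everything else then follows from algebra and a second application of Itô's formula to \(V(t,X_t^w)\).

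\textbf{Step 1: adjoint representation.} Set \(\hat p_t:=V_x(t,X_t^u)\) and \(\hat q_t^i:=V_{xx}(t,X_t^u)\sigma^i(t,X_t^u,u_t)\). Differentiating \eqref{eq:exact-ito-running-cost} in \(x\) gives
\[
f_x(t,x,a)=-\partial_x\mathcal L^aV(t,x)=-\bigl[V_{tx}+V_{xx}b+b_x^\top V_x+\tfrac12\textstyle\sum_i D_x(\sigma^{i\top}V_{xx}\sigma^i)\bigr].
\]
Itô's formula for \(V_x(t,X_t^u)\) produces the drift \(V_{tx}+V_{xx}b+\frac12\sum_i D_xV_{xx}[\sigma^i,\sigma^i]\) and the martingale part \(\sum_i V_{xx}\sigma^i\,dW^i\). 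Expanding the derivative of the quadratic term,
\[
D_x(\sigma^{i\top}V_{xx}\sigma^i)=D_xV_{xx}[\sigma^i,\sigma^i]+2(\sigma_x^i)^\top V_{xx}\sigma^i,
\]
one checks that this drift equals \(-(b_x^\top\hat p+\sum_i(\sigma_x^i)^\top\hat q^i+f_x)=-H_x\). The terminal value is \(\hat p_T=V_x(T,X_T^u)=\Phi_x(X_T^u)\) by \eqref{eq:exact-ito-terminal-cost}. The polynomial growth hypothesis puts \((\hat p,\hat q)\) in \(\mathcal S^4\times\mathbb H^4\), so uniqueness yields \(p^u=\hat p\) and \(q^u=\hat q\).

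The same procedure applies to \(\hat P_t:=V_{xx}(t,X_t^u)\) with \(\hat Q^i=D_xV_{xx}[\sigma^i]\). Differentiate \eqref{eq:exact-ito-running-cost} twice in \(x\) to express \(f_{xx}\). Then use the identity for \(p,q\) to write \(H_{xx}\), which involves \(V_x b_{xx}\) and \(V_{xx}\sigma^i\sigma_{xx}^i\). Finally compare with the Itô drift of \(V_{xx}(t,X_t^u)\). This computation requires \(V\in C^{1,4}\), since it uses fourth spatial derivatives and \(V_{txx}\). The terms \(\sigma_x^\top P\sigma_x\), \(\sigma_x^\top Q+Q^\top\sigma_x\), \(b_x^\top P+Pb_x\) and the \(H_{xx}\) contributions must cancel exactly against \(-\partial_{xx}\mathcal L^aV\).

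\textbf{This bookkeeping is the main obstacle.} I would organize it by writing \(-\partial_{xx}\mathcal L^aV\) through the Leibniz rule and matching terms by their order of differentiation of \(V\). The terms in \(D^4V[\sigma,\sigma]\) match the Itô correction. The terms in \(D^3V\) match \(Q\sigma_x\) and \(\hat Q\) couplings together with \(V_{xxx}b\). The terms in \(D^2V\) match \(b_x^\top P+Pb_x\), \(\sigma_x^\top P\sigma_x\) and part of \(H_{xx}\). Uniqueness then gives \(P^u=\hat P\) and \(Q^u=\hat Q\).

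\textbf{Step 2: formula \eqref{eq:exact-corrected-density}.} Write \(x=X_t^u\), \(\bar\sigma^i=\sigma^i(t,x,u_t)\) and \(P=V_{xx}\). Then
\[
\mathcal H_t^u(a)=V_x^\top b(a)+\sum_i\bar\sigma^{i\top}P\sigma^i(a)+\ell(t,a)-\mathcal L^aV+\tfrac12\sum_i(\sigma^i(a)-\bar\sigma^i)^\top P(\sigma^i(a)-\bar\sigma^i).
\]
Expanding the square, the \(b\) terms cancel. The \(\frac12\sigma(a)^\top P\sigma(a)\) terms cancel, and so do the cross terms. What remains is \(\ell(t,a)-V_t+\frac12\sum_i\bar\sigma^{i\top}P\bar\sigma^i\), which is \(\ell(t,a)+C_t^u\) as claimed.

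\textbf{Step 3: cost identity and optimal value.} For any \(w\in\mathcal U\), Itô's formula for \(V(t,X_t^w)\), with the stochastic integral a true martingale by the moment bounds of Lemma~\ref{lem:state-moment-estimate}, gives
\[
J(w)=V(0,x_0)+c_\Phi+\mathbb E\int_0^T\ell(t,w_t)\,dt.
\]
Subtracting the same formula for \(u\) and using Step 2 (the term \(C^u\) cancels) yields \eqref{eq:exact-cost-corrected-identity}. The expression for \(J(w)\) gives \(J(w)\ge V(0,x_0)+c_\Phi+\int_0^T m(t)\,dt\). Equality is attained by a measurable selector \(w_t\in\arg\min\ell(t,\cdot)\), which is deterministic and hence admissible; it exists because \(\ell\) is continuous and \(U\) is compact. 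This proves \eqref{eq:exact-optimal-value}.

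\textbf{Step 4: residual identity.} Since \(\min_a\mathcal H_t^u(a)=m(t)+C_t^u\), we get \(r(u)=\mathbb E\int_0^T[\ell(t,u_t)-m(t)]\,dt\). By Step 3 this equals \(J(u)-J_{\inf}\), which is \eqref{eq:residual-exact-gap}.
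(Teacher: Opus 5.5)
Your proposal is correct and follows essentially the same route as the paper: It\^o's formula for \(V_x(t,X_t^u)\) and \(V_{xx}(t,X_t^u)\) combined with uniqueness of the adjoint BSDEs, the algebraic cancellation in \(\mathcal H_t^u(a)\), It\^o's formula for \(V(t,X_t^w)\) giving \(J(w)=V(0,x_0)+c_\Phi+\mathbb E\int_0^T\ell(t,w_t)\,dt\), and a measurable selector for \(J_{\inf}\), with only the order of the steps changed. Your explicit first-order check via \(D_x(\sigma^{i\top}V_{xx}\sigma^i)=D_xV_{xx}[\sigma^i,\sigma^i]+2(\sigma_x^i)^\top V_{xx}\sigma^i\) supplies a detail the paper leaves implicit. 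The second-order term matching you outline does close, and the \(V_x b_{xx}\) contribution of \(H_{xx}\) cancels directly against the corresponding term in \(f_{xx}\).
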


\begin{proof}
The polynomial-growth bounds, the linear growth of \(\sigma\), and
Lemma~\ref{lem:state-moment-estimate} imply square integrability of the
stochastic integrands in It\^o's formulas for \(V,V_x,V_{xx}\).
Those stochastic integrals are therefore true martingales. The same
bounds put the proposed adjoints in \(\mathcal S^4\times\mathbb H^4\).
It\^o's formula and
\eqref{eq:exact-ito-running-cost} give
\[
\begin{aligned}
    J(u)
    &=
    \mathbb E
    \left[
        V(T,X_T^u)+c_\Phi
        +
        \int_0^T
        \bigl(
            \ell(t,u_t)
            -
            \mathcal L^{u_t}V(t,X_t^u)
        \bigr)dt
    \right]\\
    &=
    V(0,x_0)+c_\Phi
    +
    \mathbb E\int_0^T\ell(t,u_t)\,dt.
\end{aligned}
\]
Therefore, for every \(u,w\in\mathcal U\),
\begin{equation}
\label{eq:exact-reduced-cost-difference}
    J(w)-J(u)
    =
    \mathbb E\int_0^T
    [\ell(t,w_t)-\ell(t,u_t)]\,dt.
\end{equation}

Applying It\^o's formula to \(V_x(t,X_t^u)\) and
differentiating
\eqref{eq:exact-ito-running-cost} with respect to \(x\)
show that
\[
\begin{aligned}
    dV_x(t,X_t^u)
    &=
    -H_x
    \left(
        t,X_t^u,
        V_x(t,X_t^u),
        q_t^u,
        u_t
    \right)dt\\
    &\quad+
    \sum_{i=1}^d
    V_{xx}(t,X_t^u)
    \sigma^i(t,X_t^u,u_t)\,dW_t^i,
\end{aligned}
\]
with terminal value
\[
    V_x(T,X_T^u)=\Phi_x(X_T^u).
\]
Uniqueness of the first-order adjoint equation gives
\[
    p_t^u=V_x(t,X_t^u),
    \qquad
    q_t^{u,i}
    =
    V_{xx}(t,X_t^u)
    \sigma^i(t,X_t^u,u_t).
\]

Similarly, applying It\^o's formula to
\(V_{xx}(t,X_t^u)\) and differentiating
\eqref{eq:exact-ito-running-cost} twice with respect to \(x\)
give the following identity. In this calculation denote
\[
\begin{aligned}
 \widehat P_t&:=V_{xx}(t,X_t^u),\\
 \widehat Q_t^i&:=D_xV_{xx}(t,X_t^u)[\sigma^i(t,X_t^u,u_t)].
\end{aligned}
\]
With all action arguments held fixed during state differentiation,
\[
\begin{aligned}
 \mathcal L^{u_t}(V_{xx})(t,X_t^u)
 =-\Bigl[&(b_x^u)^\top\widehat P+\widehat P b_x^u
        +\sum_i(\sigma_x^{u,i})^\top\widehat P\sigma_x^{u,i}\\
        &+\sum_i\{(\sigma_x^{u,i})^\top\widehat Q^i
                       +(\widehat Q^i)^\top\sigma_x^{u,i}\}
        +H_{xx}^u\Bigr]_t.
\end{aligned}
\]
Here \(\mathcal L^a\) acts componentwise on matrix-valued functions.
The terms containing \(b_{xx}\) and \(\sigma_{xx}\) are precisely those
in \(H_{xx}^u\); the cross terms with \(D_xV_{xx}[\sigma^i]\) give the
displayed \(Q\)-terms. This is the second-order adjoint equation with
\[
    P_t^u=V_{xx}(t,X_t^u)
\]
and
\[
    Q_t^{u,i}
    =
    D_xV_{xx}(t,X_t^u)
    \left[
        \sigma^i(t,X_t^u,u_t)
    \right].
\]
The terminal condition follows from
\[
    V_{xx}(T,X_T^u)=\Phi_{xx}(X_T^u).
\]
Uniqueness of the second-order adjoint equation proves
\eqref{eq:exact-adjoint-representation}.

For fixed \(u\), denote
\[
    \sigma_t^i(a):=\sigma^i(t,X_t^u,a).
\]
Substituting the preceding representations and
\eqref{eq:exact-ito-running-cost} into the ordinary
Hamiltonian yields
\[
\begin{aligned}
    H_t^u(a)
    ={}&
    \ell(t,a)-V_t(t,X_t^u)\\
    &+
    \sum_{i=1}^d
    \sigma_t^i(u_t)^\top
    P_t^u\sigma_t^i(a)
    -
    \frac12\sum_{i=1}^d
    \sigma_t^i(a)^\top
    P_t^u\sigma_t^i(a).
\end{aligned}
\]
Adding the second-order correction gives
\[
\begin{aligned}
    \mathcal H_t^u(a)
    ={}&
    H_t^u(a)
    +
    \frac12\sum_{i=1}^d
    [\sigma_t^i(a)-\sigma_t^i(u_t)]^\top
    P_t^u
    [\sigma_t^i(a)-\sigma_t^i(u_t)]\\
    ={}&
    \ell(t,a)-V_t(t,X_t^u)
    +
    \frac12\sum_{i=1}^d
    \sigma_t^i(u_t)^\top
    P_t^u\sigma_t^i(u_t).
\end{aligned}
\]
This proves
\eqref{eq:exact-corrected-density} and
\eqref{eq:exact-candidate-independent-term}.
Combining
\eqref{eq:exact-corrected-density} with
\eqref{eq:exact-reduced-cost-difference} proves
\eqref{eq:exact-cost-corrected-identity}.

Since \(U\) is compact and \(\ell\) is continuous, a
measurable selection
\[
    a^\star(t)\in\arg\min_{a\in U}\ell(t,a)
\]
exists. It defines an admissible deterministic control.
Hence
\[
    J_{\inf}
    =
    V(0,x_0)+c_\Phi+\int_0^Tm(t)\,dt.
\]
Finally,
\[
\begin{aligned}
    r(u)
    &=
    \mathbb E\int_0^T
    [\ell(t,u_t)-m(t)]\,dt\\
    &=
    J(u)-J_{\inf},
\end{aligned}
\]
which proves
\eqref{eq:residual-exact-gap}.
\end{proof}

\begin{theorem}[Global optimality of a convergent SU-MSA sequence]
\label{thm:global-optimality-exact-reduction}
Let Assumption~\ref{ass:exact-ito-reduction} hold, and let
\(\{u^n\}_{n\geq0}\subset\mathcal U\) be a sequence generated
by Algorithm~\ref{alg:su-msa}. Suppose that
\[
    r_n:=r(u^n)\longrightarrow0.
\]
Then
\begin{equation}
\label{eq:exact-optimal-value-convergence}
    J(u^n)\longrightarrow J_{\inf}.
\end{equation}

If, in addition, there exists \(u^\ast\in\mathcal U\) such that
\[
    u^n\longrightarrow u^\ast
    \quad\text{in }
    L^2(\Omega\times[0,T];\mathbb R^k),
\]
then \(u^\ast\) is globally optimal:
\begin{equation}
\label{eq:exact-limit-global-optimality}
    J(u^\ast)=J_{\inf}.
\end{equation}
Moreover,
\begin{equation}
\label{eq:exact-limit-pointwise-optimality}
    \ell(t,u_t^\ast)
    =
    \min_{a\in U}\ell(t,a),
    \qquad
    dt\otimes d\mathbb P\text{-a.e.}
\end{equation}
\end{theorem}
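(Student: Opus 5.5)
The first assertion follows directly from Lemma~\ref{lem:exact-corrected-representation}. Identity \eqref{eq:residual-exact-gap} gives $r(u)=J(u)-J_{\inf}$ for every $u\in\mathcal U$. In particular, $J(u^n)-J_{\inf}=r_n\to0$, which is \eqref{eq:exact-optimal-value-convergence}. This step uses nothing about the algorithm beyond the fact that each $u^n$ is admissible.

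For the limit control, I would use the reduced cost formula established in the proof of Lemma~\ref{lem:exact-corrected-representation}:
\[
    J(u)=V(0,x_0)+c_\Phi+\mathbb E\int_0^T\ell(t,u_t)\,dt .
\]
It then suffices to show $\mathbb E\int_0^T\ell(t,u^n_t)\,dt\to\mathbb E\int_0^T\ell(t,u^\ast_t)\,dt$.

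\begin{itemize}
\item By Assumption~\ref{ass:exact-ito-reduction}, $\ell$ is $C^1$ in $a$ on a neighborhood of the convex set $U$, with $|\ell_a|\le L_\ell$ on $[0,T]\times U$.
\item The mean value theorem along segments in $U$ gives $|\ell(t,a)-\ell(t,b)|\le L_\ell|a-b|$ for $a,b\in U$.
\item Cauchy--Schwarz then yields
\[
    |J(u^n)-J(u^\ast)|\le L_\ell\sqrt{T}\,\|u^n-u^\ast\|_{L^2(\Omega\times[0,T])}\to0 .
\]
\end{itemize}

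Alternatively, one could avoid Lipschitz continuity: pass to an a.e.\ convergent subsequence, then apply dominated convergence using continuity and boundedness of $\ell$ on the compact set $[0,T]\times U$. Combining either route with the first part gives $J(u^\ast)=\lim J(u^n)=J_{\inf}$, which is \eqref{eq:exact-limit-global-optimality}.

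For \eqref{eq:exact-limit-pointwise-optimality}, I would apply \eqref{eq:residual-exact-gap} to $u^\ast$:
\[
    0=J(u^\ast)-J_{\inf}=\mathbb E\int_0^T\bigl[\ell(t,u^\ast_t)-m(t)\bigr]\,dt .
\]
The integrand is nonnegative by the definition of $m$, so it vanishes $dt\otimes d\mathbb P$-a.e. The only point I expect to require any care is admissibility of $u^\ast$, i.e.\ that $u^\ast$ takes values in $U$ so that \eqref{eq:residual-exact-gap} applies to it. This is already part of the hypothesis $u^\ast\in\mathcal U$. Even without it, an $L^2$ limit of $U$-valued controls lies in the closed convex set $U$ a.e., so there is no real obstacle. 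Measurability of $m$ follows from continuity of $\ell$ and compactness of $U$.
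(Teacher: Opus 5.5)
Your proposal is correct and follows the paper's proof essentially step for step. It uses the identity $J(u^n)-J_{\inf}=r_n$ from Lemma~\ref{lem:exact-corrected-representation}, then the $L_\ell$-Lipschitz bound on $\ell$ with Cauchy--Schwarz to obtain $J(u^\ast)=J_{\inf}$, and finally the nonnegativity of $\ell(t,u_t^\ast)-m(t)$ for the pointwise statement; your dominated-convergence alternative and your remarks on admissibility of $u^\ast$ and measurability of $m$ are valid but not needed.
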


\begin{proof}
By Lemma~\ref{lem:exact-corrected-representation},
\[
    J(u^n)-J_{\inf}=r_n.
\]
Thus \(r_n\to0\) directly yields
\[
    J(u^n)\longrightarrow J_{\inf}.
\]

Since \(\ell_a\) is uniformly bounded on
\([0,T]\times U\),
\[
    |\ell(t,a)-\ell(t,\bar a)|
    \leq L_\ell|a-\bar a|,
    \qquad a,\bar a\in U.
\]
Using the exact cost representation, we obtain
\[
\begin{aligned}
    |J(u^n)-J(u^\ast)|
    &\leq
    L_\ell
    \mathbb E\int_0^T
    |u_t^n-u_t^\ast|\,dt\\
    &\leq
    L_\ell\sqrt T
    \|u^n-u^\ast\|_
    {L^2(\Omega\times[0,T])}
    \longrightarrow0.
\end{aligned}
\]
It follows that
\[
    J(u^\ast)=J_{\inf}.
\]

Finally,
\[
\begin{aligned}
    0
    &=
    J(u^\ast)-J_{\inf}\\
    &=
    \mathbb E\int_0^T
    [\ell(t,u_t^\ast)-m(t)]\,dt.
\end{aligned}
\]
The integrand is nonnegative and must therefore vanish
\(dt\otimes d\mathbb P\)-almost everywhere. This proves
\eqref{eq:exact-limit-pointwise-optimality}.
\end{proof}

\subsection{Local extremum reversal induced by the second-order adjoint}
\label{subsec:local-extremum-reversal}

We next examine how the second-order adjoint correction changes the
local extrema of the candidate objective. Under additional assumptions,
we show that the current action can be a strict local minimizer of the
proximal objective of Sethi and \v{S}i\v{s}ka
\cite{SethiSiska2024}, while being a strict local maximizer of the
corrected Hamiltonian. We first establish this reversal on a set with
a quantitative measure bound and then obtain an almost-everywhere
version under stronger conditions.

Throughout this subsection, we restrict attention to $n=k=d=1$
and retain Assumption~\ref{ass:basic-regularity}. The additional
assumptions below are used only in this subsection.
We retain the notation $\mathcal H_t^u$ from
\eqref{eq:Hu-density} and denote
$H_t^u(a):=H(t,X_t^u,p_t^u,q_t^u,a)$.
Unless an action argument is displayed, coefficient derivatives
are evaluated at $(t,X_t^u,u_t)$.
We denote $\mathbb E_t[\cdot]:=\mathbb E[\cdot\mid\mathcal F_t]$.

The following additional boundedness condition strengthens the
moment estimate in Lemma~\ref{lem:first-adjoint-moment}
to an essential bound for $p^u$ and a BMO estimate for
$q^u\cdot W$.

\begin{assumption}[Additional regularity]
\label{ass:cr-additional}
There exist deterministic constants $F,G\ge0$ such that
\[
    |f_x(t,x,a)|\le F,
    \qquad
    |\Phi_x(x)|\le G
\]
for all $(t,x,a)\in[0,T]\times\mathbb R\times U$.
Moreover, $b,\sigma,f$ are twice continuously differentiable in the
action variable on a neighborhood of each interior action under
consideration, with measurable action derivatives.
\end{assumption}

Denote the state-derivative bounds supplied by
Assumption~\ref{ass:basic-regularity} by
\[
    |b_x|\le L_b,
    \qquad
    |\sigma_x|\le L_\sigma,
    \qquad
    |\sigma_{xx}|\le L_{xx}.
\]
Neither $\sigma_{xx}$ nor $\sigma_{aa}$ is required to vanish.
For a continuous Brownian stochastic integral, we use
\[
    \|q\cdot W\|_{\mathrm{BMO}}^2
    :=
    \sup_{\tau\le T}
    \left\|
        \mathbb E\left[
            \int_\tau^T|q_s|^2\,ds
            \,\middle|\,\mathcal F_\tau
        \right]
    \right\|_\infty,
\]
where the supremum is over stopping times.

\begin{proposition}[Uniform adjoint and BMO estimates]
\label{prop:cr-bmo}
Let Assumptions~\ref{ass:basic-regularity} and
\ref{ass:cr-additional} hold, and define
\[
    M_p=e^{L_bT}(G+TF),
    \qquad
    M_q^2
    =
    2G^2+4T(L_b+L_\sigma^2)M_p^2+4TFM_p.
\]
Then, for every admissible control $u$,
\[
    \left\|
        \sup_{0\le t\le T}|p_t^u|
    \right\|_{L^\infty(\Omega)}
    \le M_p,
    \qquad
    \|q^u\cdot W\|_{\mathrm{BMO}}\le M_q.
\]
\end{proposition}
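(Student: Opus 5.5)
Since $n=k=d=1$, the first-order adjoint equation \eqref{eq:first-adjoint} is the scalar linear BSDE
\[
    dp_t=-\bigl(b_x p_t+\sigma_x q_t+f_x\bigr)dt+q_t\,dW_t,\qquad p_T=\Phi_x(X_T^u),
\]
with $|b_x|\le L_b$, $|\sigma_x|\le L_\sigma$, $|f_x|\le F$ and $|\Phi_x|\le G$. I will first prove the $L^\infty$ bound on $p$ by a Girsanov/explicit-solution argument, and then the BMO bound on $q\cdot W$ by applying It\^o's formula to $|p_t|^2$ between a stopping time and $T$.

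\emph{Step 1 ($L^\infty$ bound).} Let $\Gamma_{t,s}=\exp(\int_t^s b_x\,dr)$. Let $\mathbb Q$ be the measure with density $\mathcal E(\int\sigma_x\,dW)_T$; this is a true martingale because $\sigma_x$ is bounded, so Novikov applies. Under $\mathbb Q$, $\widetilde W=W-\int\sigma_x\,dr$ is a Brownian motion, and the BSDE becomes $dp_t=-(b_xp_t+f_x)dt+q_t\,d\widetilde W_t$. The standard linear representation then gives
\[
    p_t=\mathbb E^{\mathbb Q}\Bigl[\Gamma_{t,T}\Phi_x(X_T^u)+\int_t^T\Gamma_{t,s}f_x\,ds\,\Big|\,\mathcal F_t\Bigr].
\]
Justifying this requires the local martingale part of $\Gamma_{0,t}p_t$ to be a $\mathbb Q$-martingale. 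That follows from the $\mathcal S^4\times\mathbb H^4$ bounds of Lemma~\ref{lem:first-adjoint-moment}, H\"older's inequality, and the fact that the density has all moments. Since $\Gamma_{t,s}\le e^{L_bT}$, we get $|p_t|\le e^{L_bT}(G+TF)=M_p$ for all $t$ almost surely. By continuity of $p$, the supremum over $t$ is also bounded by $M_p$.

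\emph{Step 2 (BMO bound).} Fix a stopping time $\tau\le T$. It\^o's formula gives
\[
    |p_\tau|^2+\int_\tau^T|q_s|^2ds=|\Phi_x|^2+2\int_\tau^T p_s\bigl(b_xp_s+\sigma_xq_s+f_x\bigr)ds-2\int_\tau^Tp_sq_s\,dW_s.
\]
The stochastic integral has zero conditional expectation because $p$ is bounded and $q\in\mathbb H^2$. We drop $|p_\tau|^2\ge0$ and estimate the drift terms pointwise:
\[
    2p_s b_x p_s\le 2L_bM_p^2,\qquad 2p_sf_x\le 2FM_p,\qquad 2p_s\sigma_xq_s\le 2L_\sigma^2M_p^2+\tfrac12|q_s|^2 .
\]
Taking $\mathbb E[\cdot\mid\mathcal F_\tau]$ then yields
\[
    \tfrac12\,\mathbb E_\tau\int_\tau^T|q_s|^2ds\le G^2+2T(L_b+L_\sigma^2)M_p^2+2TFM_p .
\]
Multiplying by $2$ gives exactly $M_q^2$. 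Taking the essential supremum over $\omega$ and the supremum over $\tau$ proves the claim. The shape of the constant $M_q^2$, with the factors $2$ and $4$, confirms that Young's inequality with weight $\tfrac12$ is the intended bookkeeping.

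\emph{Main obstacle.} The estimates themselves are elementary. The delicate points are the integrability checks: that the Girsanov density and the representation in Step 1 are legitimate, and that the stochastic integral in Step 2 is a true martingale when conditioned at a stopping time. For Step 1 I would first try the direct route: apply It\^o's formula to $\Gamma_{0,t}\mathcal E_t\,p_t$ under $\mathbb P$, where $\mathcal E_t$ is the density process. This avoids a change of measure, and its martingale property follows from boundedness of the coefficients together with the $\mathcal S^4$ and $\mathbb H^4$ moments.
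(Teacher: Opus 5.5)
Your proposal is correct and essentially follows the paper's proof. Your Girsanov representation is the paper's representation $p_t=\mathbb E_t[\Xi_{t,T}\Phi_x(X_T^u)+\int_t^T\Xi_{t,s}f_{x,s}\,ds]$ with $\Xi_{t,s}=\Gamma_{t,s}\,\mathcal E(\int\sigma_x\,dW)_{t,s}$ written under $\mathbb Q$, and the ``direct route'' you mention at the end is exactly the paper's product-formula argument. Your Step~2, namely It\^o's formula for $|p|^2$ from a stopping time together with Young's inequality with weight $\tfrac12$, is identical to the paper's.
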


\begin{proof}
For fixed $t$, let $\Xi_{t,t}=1$ and
\[
    d\Xi_{t,s}
    =
    b_{x,s}\Xi_{t,s}\,ds
    +
    \sigma_{x,s}\Xi_{t,s}\,dW_s,
    \qquad s\in[t,T].
\]
This process is strictly positive and has finite maximal moments of
every finite order. Applying the product formula to
$\Xi_{t,s}p_s^u$ and using \eqref{eq:first-adjoint} yields
\[
    p_t^u
    =
    \mathbb E_t\left[
        \Xi_{t,T}\Phi_x(X_T^u)
        +
        \int_t^T\Xi_{t,s}f_{x,s}\,ds
    \right].
\]
The stochastic integral in the product formula is a true martingale:
its expected quadratic-variation square root is bounded by
\[
    \mathbb E\left[
        \sup_{t\le s\le T}\Xi_{t,s}
        \left\{
            \left(\int_t^T|q_s^u|^2\,ds\right)^{1/2}
            +
            L_\sigma\sqrt T
            \sup_{t\le s\le T}|p_s^u|
        \right\}
    \right]
    <\infty,
\]
by Cauchy--Schwarz and
Lemma~\ref{lem:first-adjoint-moment}.
Since $\sigma_x$ is bounded, the stochastic exponential in
\[
    \Xi_{t,s}
    =
    \exp\left(\int_t^s b_{x,r}\,dr\right)
    \exp\left(
        \int_t^s\sigma_{x,r}\,dW_r
        -
        \frac12\int_t^s\sigma_{x,r}^2\,dr
    \right)
\]
has conditional expectation one. Consequently,
$\mathbb E_t\Xi_{t,s}\le e^{L_b(s-t)}$, and
\[
    |p_t^u|
    \le
    Ge^{L_b(T-t)}
    +
    F\int_t^T e^{L_b(s-t)}\,ds
    \le M_p.
\]
Continuity gives the asserted uniform bound.

For any stopping time $\tau\le T$, It\^o's formula gives
\begin{align*}
    |p_\tau^u|^2
    +
    \mathbb E_\tau\int_\tau^T|q_s^u|^2\,ds
    &=
    \mathbb E_\tau|\Phi_x(X_T^u)|^2\\
    &\quad+
    2\mathbb E_\tau\int_\tau^T
    p_s^u
    \bigl(
        b_{x,s}p_s^u
        +
        \sigma_{x,s}q_s^u
        +
        f_{x,s}
    \bigr)\,ds.
\end{align*}
The stochastic integral has zero conditional expectation because
$p^u$ is bounded and $q^u$ is square integrable.
Using
\[
    2|p_s^u\sigma_{x,s}q_s^u|
    \le
    \frac12|q_s^u|^2+2L_\sigma^2M_p^2,
\]
we obtain
\[
    \frac12\mathbb E_\tau\int_\tau^T|q_s^u|^2\,ds
    \le
    G^2+2T(L_b+L_\sigma^2)M_p^2+2TFM_p.
\]
Taking the essential supremum over stopping times proves the result.
\end{proof}

Fix a reference control $u$, and let $\Xi_{t,s}$ denote the
propagation process introduced in the preceding proof.
Set $m=dt\otimes d\mathbb P$ and define
\begin{align*}
    K_t^u
    &:=
    \mathbb E_t\left[
        \Xi_{t,T}^2\bigl(-\Phi_{xx}(X_T^u)\bigr)
        +
        \int_t^T\Xi_{t,s}^2
        \bigl(-f_{xx,s}-p_s^ub_{xx,s}\bigr)\,ds
    \right],\\
    c_\Xi
    &:=
    4L_b+6L_\sigma^2,\\
    \delta_t
    &:=
    M_qL_{xx}
    \left(
        \int_t^T e^{c_\Xi(s-t)}\,ds
    \right)^{1/2},\\
    A_t^u
    &:=
    (K_t^u-\delta_t)|\sigma_{a,t}|^2.
\end{align*}
Conditional-expectation processes are taken in progressively
measurable versions. For $\rho\ge0$, denote the proximal objective by
\[
    \Psi_{\rho,t}^u(a)
    :=
    H_t^u(a)+\frac{\rho}{2}|a-u_t|^2.
\]

\begin{corollary}
[Strict local extremum reversal on a large-measure set]
\label{cor:cr-large-measure}
Let Assumptions~\ref{ass:basic-regularity} and
\ref{ass:cr-additional} hold. Assume that
\[
    f_{x,t}\ge0,
    \qquad
    \Phi_x(X_T^u)\ge0,
\]
and
\[
    f_{xx,t}\le0,
    \qquad
    b_{xx,t}\le0,
    \qquad
    \Phi_{xx}(X_T^u)\le0,
\]
with the terminal inequalities understood almost surely and the
process inequalities $m$-almost everywhere. Suppose also that
\[
    u_t\in\operatorname{int}U,
    \qquad
    H_{a,t}^u(u_t)=0
    \quad m\text{-a.e.},
\]
and that, for deterministic constants $\mu\in\mathbb R$ and
$L_a,B_2\ge0$,
\[
    \mu\le f_{aa,t}\le L_a,
    \qquad
    0\le b_{aa,t}\le B_2
    \quad m\text{-a.e.}
\]
Assume
\[
    N_{aa}
    :=
    \mathbb E\left(
        \int_0^T|\sigma_{aa,t}|^2\,dt
    \right)^{1/2}
    <\infty.
\]
Fix $\rho\ge0$. If there exists $R>0$ such that
\begin{equation}
\label{eq:cr-margins}
    \mu+\rho>R,
    \qquad
    A_t^u>L_a+B_2M_p+R
    \quad m\text{-a.e.},
\end{equation}
then, for $m$-almost every $(t,\omega)$ in
\[
    E_R^u
    :=
    \bigl\{
        (t,\omega):|q_t^u\sigma_{aa,t}|\le R
    \bigr\},
\]
the current action $u_t(\omega)$ is a strict local minimizer of the
Sethi--\v{S}i\v{s}ka proximal objective $\Psi_{\rho,t}^u$ and a
strict local maximizer of the corrected Hamiltonian
$\mathcal H_t^u$.

More precisely, at each such point there exists $r>0$ such that
\begin{equation}
\label{eq:cr-local-reversal}
\begin{aligned}
    \Psi_{\rho,t}^u(a)
    &>
    \Psi_{\rho,t}^u(u_t),\\
    \mathcal H_t^u(a)
    &<
    \mathcal H_t^u(u_t),
\end{aligned}
\qquad
\text{whenever }a\in U,\quad 0<|a-u_t|<r.
\end{equation}
Moreover,
\begin{equation}
\label{eq:cr-measure}
    m(E_R^u)
    \ge
    \max\left\{
        0,\,
        T-\frac{C_FM_qN_{aa}}{R}
    \right\},
\end{equation}
where $C_F$ is a universal Fefferman constant. In particular,
$C_FM_qN_{aa}\le\varepsilon TR$, with $\varepsilon\in(0,1)$,
implies $m(E_R^u)/T\ge1-\varepsilon$.
\end{corollary}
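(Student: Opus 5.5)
The plan is to reduce everything to the signs of the second action-derivatives at the interior point $u_t$. Since $H^u_{a,t}(u_t)=0$ and the correction term $\frac12P_t^u(\sigma(a)-\sigma(u_t))^2$ has zero first derivative at $a=u_t$, both $\Psi^u_{\rho,t}$ and $\mathcal H_t^u$ have a critical point at $u_t$. Their second derivatives there are
\[
\partial_a^2\Psi^u_{\rho,t}(u_t)=H^u_{aa,t}+\rho,\qquad
\partial_a^2\mathcal H^u_t(u_t)=H^u_{aa,t}+P_t^u|\sigma_{a,t}|^2,
\]
with $H^u_{aa,t}=p_t^ub_{aa,t}+q_t^u\sigma_{aa,t}+f_{aa,t}$. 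By Assumption~\ref{ass:cr-additional}, $b,\sigma,f$ are $C^2$ in $a$ near $u_t$. A second-order Taylor expansion with continuous second derivatives will then turn strict positivity (respectively negativity) of these numbers into the strict local inequalities \eqref{eq:cr-local-reversal}, for some radius $r>0$ depending on $(t,\omega)$.

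\textbf{Step 1 (sign of $p$).} From the representation in the proof of Proposition~\ref{prop:cr-bmo},
\[
p_t^u=\mathbb E_t\Bigl[\Xi_{t,T}\Phi_x(X_T^u)+\int_t^T\Xi_{t,s}f_{x,s}\,ds\Bigr].
\]
Since $\Xi>0$, $\Phi_x\ge0$ and $f_x\ge0$, this gives $0\le p_t^u\le M_p$. Hence $0\le p_t^ub_{aa,t}\le B_2M_p$.

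\textbf{Step 2 (proximal minimizer).} On $E_R^u$ we have $q_t^u\sigma_{aa,t}\ge -R$. Combined with Step 1 and $f_{aa,t}\ge\mu$, this gives
\[
H^u_{aa,t}+\rho\ge \mu+\rho-R>0.
\]
In the same way, on $E_R^u$ we get the upper bound $H^u_{aa,t}\le L_a+B_2M_p+R$.

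\textbf{Step 3 (bound on $P$).} This is the key step. I will represent $P^u$ by the dual process $\Xi^2$. By Itô's formula,
\[
d\Xi_{t,s}^2=\Xi_{t,s}^2(2b_x+\sigma_x^2)\,ds+2\sigma_x\Xi_{t,s}^2\,dW_s,
\]
which exactly absorbs the terms $2b_xP$, $\sigma_x^2P$ and $2\sigma_xQ$ of \eqref{eq:second-adjoint} in the one-dimensional case. The product rule for $\Xi^2_{t,s}P_s^u$ then yields
\[
P_t^u=\mathbb E_t\Bigl[\Xi_{t,T}^2\Phi_{xx}(X_T^u)+\int_t^T\Xi^2_{t,s}\bigl(f_{xx,s}+p_s^ub_{xx,s}+q_s^u\sigma_{xx,s}\bigr)ds\Bigr]
=-K_t^u+\mathbb E_t\int_t^T\Xi^2_{t,s}q_s^u\sigma_{xx,s}\,ds .
\]
For the last term I apply conditional Cauchy--Schwarz together with $|\sigma_{xx}|\le L_{xx}$ and $\mathbb E_t\int_t^T|q_s|^2ds\le M_q^2$. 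It remains to bound $\mathbb E_t\Xi^4_{t,s}$. Since $\Xi^4$ has drift coefficient $4b_x+6\sigma_x^2\le c_\Xi$ and a martingale part that is a stochastic exponential, $\mathbb E_t\Xi_{t,s}^4\le e^{c_\Xi(s-t)}$. This gives $|\mathbb E_t\int\Xi^2q\sigma_{xx}|\le\delta_t$, hence
\[
P_t^u\le -(K_t^u-\delta_t).
\]
Consequently, on $E_R^u$,
\[
\partial_a^2\mathcal H^u_t(u_t)\le L_a+B_2M_p+R-A_t^u<0
\]
by \eqref{eq:cr-margins}.

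I expect the main technical obstacle to be justifying that the local martingale in the product $\Xi^2P$ is a true martingale, so that the conditional representation holds. I would first localize by stopping times. I would then pass to the limit using the $\mathcal S^4\times\mathbb H^4$ bounds of Lemma~\ref{lem:second-adjoint-moment} together with the finite moments of $\sup_s\Xi_{t,s}$ of every order, via Hölder's inequality and dominated convergence. One further point to check is that the conditional-expectation versions can be chosen progressively measurable.

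\textbf{Step 4 (measure bound).} By Chebyshev's inequality,
\[
m\bigl((E_R^u)^c\bigr)\le R^{-1}\,\mathbb E\int_0^T|q_t^u||\sigma_{aa,t}|\,dt .
\]
Fefferman's inequality (the $H^1$--BMO duality for continuous martingales) bounds the right-hand side by
\[
R^{-1}C_F\|q^u\cdot W\|_{\mathrm{BMO}}\,\mathbb E\Bigl(\int_0^T|\sigma_{aa,t}|^2dt\Bigr)^{1/2}\le R^{-1}C_FM_qN_{aa},
\]
using Proposition~\ref{prop:cr-bmo}. Since $m([0,T]\times\Omega)=T$, this yields \eqref{eq:cr-measure}. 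The final claim, $m(E_R^u)/T\ge1-\varepsilon$ under $C_FM_qN_{aa}\le\varepsilon TR$, is then immediate.
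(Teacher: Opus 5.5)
Your proposal is correct and follows essentially the same route as the paper. Both arguments use the sign of $p^u$ from the $\Xi$-representation, and both use the duality identity $P_t^u=-K_t^u+\mathbb E_t\int_t^T\Xi_{t,s}^2q_s^u\sigma_{xx,s}\,ds$ controlled by conditional Cauchy--Schwarz, the BMO bound and $\mathbb E_t\Xi_{t,s}^4\le e^{c_\Xi(s-t)}$; the remaining steps, the second-derivative sign test at the interior critical point and the Markov--Fefferman measure estimate, also coincide with the paper's.
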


\begin{proof}
The representation in the proof of
Proposition~\ref{prop:cr-bmo} and the sign assumptions imply
$0\le p_t^u\le M_p$, hence $K_t^u\ge0$.
Applying the product formula to $\Xi_{t,s}^2P_s^u$ and using
\eqref{eq:second-adjoint} yields
\[
    P_t^u
    =
    -K_t^u
    +
    \mathbb E_t\int_t^T
    \Xi_{t,s}^2q_s^u\sigma_{xx,s}\,ds.
\]
Indeed, the stochastic integrand is
$\Xi_{t,s}^2(Q_s^u+2\sigma_{x,s}P_s^u)$.
The maximal fourth moment of $\Xi$ and the moment estimates in
Lemma~\ref{lem:second-adjoint-moment} make its stochastic integral
a true martingale, by the same argument as above.
Boundedness of $b_x,\sigma_x$ gives
\[
    \mathbb E_t\Xi_{t,s}^4
    \le
    e^{(4L_b+6L_\sigma^2)(s-t)}.
\]
Conditional Cauchy--Schwarz and
Proposition~\ref{prop:cr-bmo} imply
\begin{align*}
    |P_t^u+K_t^u|
    &\le
    \left(
        \mathbb E_t\int_t^T|q_s^u|^2\,ds
    \right)^{1/2}
    \left(
        \mathbb E_t\int_t^T
        \Xi_{t,s}^4|\sigma_{xx,s}|^2\,ds
    \right)^{1/2}\\
    &\le\delta_t.
\end{align*}
In particular,
\[
    P_t^u|\sigma_{a,t}|^2\le-A_t^u.
\]

At the reference action, differentiation of
\eqref{eq:corrected-H-function} gives
\begin{align*}
    \Psi_{\rho,a,t}^u(u_t)
    &=
    \partial_a\mathcal H_t^u(u_t)
    =
    H_{a,t}^u(u_t)=0,\\
    \Psi_{\rho,aa,t}^u(u_t)
    &=
    H_{aa,t}^u(u_t)+\rho,\\
    \partial_{aa}\mathcal H_t^u(u_t)
    &=
    H_{aa,t}^u(u_t)+P_t^u|\sigma_{a,t}|^2.
\end{align*}
Since
\[
    H_{aa,t}^u(u_t)
    =
    f_{aa,t}+p_t^ub_{aa,t}+q_t^u\sigma_{aa,t},
\]
on $E_R^u$ we have
\[
    \Psi_{\rho,aa,t}^u(u_t)
    \ge
    \mu+\rho-R>0,
\]
and
\[
    \partial_{aa}\mathcal H_t^u(u_t)
    \le
    L_a+B_2M_p+R-A_t^u<0.
\]
The second-order Taylor expansion at the interior stationary action
proves \eqref{eq:cr-local-reversal}, with a radius that may depend
on $(t,\omega)$.

Finally, the martingale $q^u\cdot W$ belongs to BMO, and
$\sigma_{aa}\cdot W$ belongs to the continuous-martingale Hardy
space with norm
\[
    \mathbb E\left(
        \int_0^T|\sigma_{aa,t}|^2\,dt
    \right)^{1/2}
    =
    N_{aa}.
\]
Fefferman's inequality gives
\[
    \mathbb E\int_0^T
    |q_t^u\sigma_{aa,t}|\,dt
    \le
    C_FM_qN_{aa}.
\]
Consequently,
\[
    m((E_R^u)^c)
    \le
    \frac{C_FM_qN_{aa}}{R},
\]
which proves \eqref{eq:cr-measure} and the final assertion.
\end{proof}

\begin{lemma}[Almost-everywhere strict local extremum reversal]
\label{lem:cr-ae}
Under the assumptions of Corollary~\ref{cor:cr-large-measure},
suppose further that \(f\) and \(\sigma\) are twice continuously
differentiable in the action variable on a neighborhood of \(U\).
Assume that there exist deterministic constants
\(F_a,L_{aa}\geq0\) and \(s_0>0\) such that
\[
    |f_a(t,x,a)|\leq F_a,
    \qquad
    |\sigma_a(t,x,a)|\geq s_0,
    \qquad
    |\sigma_{aa}(t,x,a)|\leq L_{aa},
\]
for all \((t,x,a)\in[0,T]\times\mathbb R\times U\), and
\[
    L_{aa}\frac{F_a+M_pL}{s_0}\leq R,
\]
where \(L\) is the control-Lipschitz constant in
Assumption~\ref{ass:basic-regularity}.
Then, \(m\)-almost everywhere, the current action \(u_t\) is a
strict local minimizer of \(\Psi_{\rho,t}^u\) and a strict local
maximizer of \(\mathcal H_t^u\). In particular,
\eqref{eq:cr-local-reversal} holds \(m\)-almost everywhere on
\([0,T]\times\Omega\).
\end{lemma}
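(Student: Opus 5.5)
The idea is to reduce the lemma to Corollary~\ref{cor:cr-large-measure} by showing that the exceptional set $(E_R^u)^c$ is $m$-null. Under the extra hypotheses, $|q_t^u\sigma_{aa,t}|\le R$ should hold $m$-almost everywhere, so $E_R^u$ has full measure. The pointwise reversal \eqref{eq:cr-local-reversal} then holds $m$-a.e. on $[0,T]\times\Omega$. The Fefferman bound \eqref{eq:cr-measure} is not needed; we only need a pointwise bound on $q_t^u$.

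The key step extracts that bound from the stationarity condition $H_{a,t}^u(u_t)=0$. Since $n=k=d=1$ and $u_t\in\operatorname{int}U$, we have
\[
    0=H^u_{a,t}(u_t)=f_{a,t}+p_t^u b_{a,t}+q_t^u\sigma_{a,t}
    \qquad m\text{-a.e.}
\]
Hence $|q_t^u|\,|\sigma_{a,t}|\le |f_{a,t}|+|p_t^u|\,|b_{a,t}|$. The bound $|f_a|\le F_a$ is assumed. Proposition~\ref{prop:cr-bmo} gives $|p_t^u|\le M_p$. The control-Lipschitz constant of $b$ in Assumption~\ref{ass:basic-regularity} gives $|b_a|\le L$; this uses differentiability of $b$ at interior actions from Assumption~\ref{ass:cr-additional}, since the derivative of a Lipschitz function is bounded by its Lipschitz constant. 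Dividing by $|\sigma_{a,t}|\ge s_0>0$ yields
\[
    |q_t^u|\le \frac{F_a+M_pL}{s_0}\qquad m\text{-a.e.}
\]
Multiplying by $|\sigma_{aa,t}|\le L_{aa}$ and using $L_{aa}(F_a+M_pL)/s_0\le R$ gives $|q_t^u\sigma_{aa,t}|\le R$ $m$-a.e. Thus $m((E_R^u)^c)=0$.

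With $E_R^u$ of full measure, Corollary~\ref{cor:cr-large-measure} applies at $m$-almost every $(t,\omega)$. Concretely, we have $\Psi_{\rho,aa,t}^u(u_t)\ge \mu+\rho-R>0$ and $\partial_{aa}\mathcal H_t^u(u_t)\le L_a+B_2M_p+R-A_t^u<0$, together with vanishing first derivatives. By a Taylor expansion (using the $C^2$ regularity of $f,\sigma$ near $U$, so that the second derivatives are continuous near $u_t$), $u_t$ is a strict local minimizer of $\Psi^u_{\rho,t}$ and a strict local maximizer of $\mathcal H_t^u$.

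The main care point is measure-theoretic bookkeeping. The $m$-null sets involved are: where stationarity fails, where the bound on $p^u$ fails, and where the margin \eqref{eq:cr-margins} fails. There are finitely many of them, so their union is still null. One should also check that the $N_{aa}<\infty$ hypothesis is automatically satisfied, since $|\sigma_{aa}|\le L_{aa}$ gives $N_{aa}\le L_{aa}\sqrt{T}$, so the corollary's hypotheses hold verbatim. I expect no genuine obstacle beyond justifying $|b_a|\le L$ from the Lipschitz assumption at interior actions.
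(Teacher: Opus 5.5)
Your proposal is correct and follows the paper's proof essentially line by line. You combine the interior stationarity identity $f_{a,t}+p_t^u b_{a,t}+q_t^u\sigma_{a,t}=0$ with $|b_{a,t}|\le L$, $|p_t^u|\le M_p$, $|f_a|\le F_a$ and $|\sigma_a|\ge s_0$ to get $|q_t^u\sigma_{aa,t}|\le R$ $m$-a.e., so $m((E_R^u)^c)=0$ and Corollary~\ref{cor:cr-large-measure} yields the conclusion, exactly as the paper does; your side remark that $N_{aa}\le L_{aa}\sqrt{T}$ is harmless but unnecessary, since $N_{aa}<\infty$ is already among the inherited hypotheses.
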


\begin{proof}
The control-Lipschitz continuity of \(b\), its action
differentiability, and \(u_t\in\operatorname{int}U\) imply
\(|b_{a,t}|\leq L\).
The stationarity condition gives
\[
    f_{a,t}+p_t^ub_{a,t}+q_t^u\sigma_{a,t}=0.
\]
Evaluating the global bounds along \((t,X_t^u,u_t)\) and using
\(|p_t^u|\leq M_p\), we obtain
\[
    q_t^u
    =
    -\frac{f_{a,t}+p_t^ub_{a,t}}{\sigma_{a,t}},
    \qquad
    |q_t^u|
    \leq
    \frac{F_a+M_pL}{s_0}
    \quad m\text{-a.e.}
\]
Consequently,
\[
    |q_t^u\sigma_{aa,t}|
    \leq
    L_{aa}\frac{F_a+M_pL}{s_0}\leq R
    \quad m\text{-a.e.}
\]
Thus \(m((E_R^u)^c)=0\), and the conclusion follows from
Corollary~\ref{cor:cr-large-measure}.
\end{proof}

\section{DeepSMP: A Neural Realization of the Soft-Update MSA}
\label{sec:deepsmp}

In this section, we develop DeepSMP, a neural implementation of the soft-update MSA introduced in Section 3. The continuous-time scheme is generally not directly computable: the state and adjoint equations rarely admit closed-form solutions, and the pointwise corrected-Hamiltonian minimization must itself be solved numerically and may be nonconvex. Moreover, minimizing actions obtained at finitely many simulated time--state pairs do not by themselves define a feedback law that can be evaluated at unseen states.

DeepSMP addresses these issues through time discretization, Monte Carlo sampling, and neural approximation. At each outer iteration, the reference control network is frozen, the state dynamics are simulated, and the first- and second-order adjoint processes are approximated using the Deep BSDE methodology. A differentiable actor objective is then constructed from a smooth surrogate of the sampled corrected-Hamiltonian minimum. Optimizing this objective yields candidate control-network parameters, which are incorporated into the reference network through a parameter-space soft update. The resulting scheme avoids spatial state grids and produces a reusable feedback law.

The distinction from a direct method lies in the optimization objective. A direct method minimizes a Monte Carlo approximation of the original cost functional \(J(\pi_\theta)\) over the control-network parameters, whereas DeepSMP constructs its actor objective from the state and adjoint variables and the corrected Hamiltonian. Monte Carlo samples are therefore used to estimate this maximum-principle-based objective and its gradient, rather than merely to provide a finite collection of pathwise minimizing actions. Although the resulting actor problem may remain nonconvex, its updates are guided by the action-level structure encoded in the corrected Hamiltonian rather than by the end-to-end cost alone.

\subsection{Deep BSDE approximation of the state and adjoint processes}
\label{subsec:deepsmp-adjoint-approximation}

Let \(0=t_0<t_1<\cdots<t_N=T\) be a uniform partition of \([0,T]\), with
\(\Delta t=T/N\), and denote
\[
    \Delta W_k:=W_{t_{k+1}}-W_{t_k}
    \sim\mathcal N(0,\Delta t\,I_d),
    \qquad k=0,\ldots,N-1.
\]

At the \(l\)-th training iteration, let \(\bar u^l\) denote the frozen
reference control. The corresponding state process is approximated by the
Euler--Maruyama scheme
\[
    X_{k+1}^{l}
    =
    X_k^{l}
    +
    b(t_k,X_k^{l},\bar u_k^{l})\Delta t
    +
    \sigma(t_k,X_k^{l},\bar u_k^{l})\Delta W_k,
    \qquad
    X_0^l=x_0.
\]
The first-order adjoint process associated with the frozen reference control
\(\bar u^l\) satisfies
\[
    dp_t
    =
    -H_x(t,X_t,p_t,q_t,\bar u_t^l)\,dt
    +
    q_t\,dW_t,
    \qquad
    p_T=\Phi_x(X_T).
\]
Following the Deep BSDE method \cite{HanJentzenE2018}, we treat the unknown initial value \(p_0\)
as a trainable parameter and parameterize the martingale integrand by
time-indexed neural networks. Specifically, we set
\[
    p_0^l=\theta_p^l,
    \qquad
    q_k^l=q_{\theta_q,k}(X_k^l),
    \qquad
    k=0,\ldots,N-1,
\]
where \(\theta_p^l\) and the network parameters
\(\{\theta_{q,k}\}_{k=0}^{N-1}\) are trainable. The adjoint process is then
propagated forward according to
\[
    p_{k+1}^{l}
    =
    p_k^{l}
    -
    H_x(t_k,X_k^{l},p_k^{l},q_k^{l},\bar u_k^{l})\Delta t
    +
    q_k^{l}\Delta W_k,
    \qquad
    k=0,\ldots,N-1.
\]
Similarly, the second-order adjoint process satisfies
\[
    dP_t
    =
    -\mathcal F(t,X_t,p_t,q_t,P_t,Q_t,\bar u_t^l)\,dt
    +
    Q_t\,dW_t,
    \qquad
    P_T=\Phi_{xx}(X_T),
\]
where \(\mathcal F\) denotes the driver of the second-order adjoint equation.
We treat \(P_0\) as a trainable parameter and parameterize \(Q\) by
time-indexed neural networks:
\[
    P_0^l=\theta_P^l,
    \qquad
    Q_k^l=Q_{\theta_Q,k}(X_k^l),
    \qquad
    k=0,\ldots,N-1.
\]
The corresponding forward propagation is
\[
    P_{k+1}^{l}
    =
    P_k^{l}
    -
    \mathcal F(t_k,X_k^{l},p_k^{l},q_k^{l},
    P_k^{l},Q_k^{l},\bar u_k^{l})\Delta t
    +
    Q_k^{l}\Delta W_k,
    \qquad
    k=0,\ldots,N-1.
\]
The trainable initial values and network parameters are optimized by
minimizing the terminal matching loss
\[
    \mathcal{L}_{\mathrm{adj}}
    :=
    \mathbb{E}_{\mathrm{batch}}
    \left[
        \lambda_1
        \left|p_N^l-\Phi_x(X_N^l)\right|^2
        +
        \lambda_2
        \left|P_N^l-\Phi_{xx}(X_N^l)\right|^2
    \right],
\]
where \(\lambda_1,\lambda_2>0\) are weighting parameters that balance the
terminal matching errors of the first- and second-order adjoint processes,
respectively.

\subsection{Smooth corrected-Hamiltonian policy improvement}
\label{subsec:deepsmp-policy-improvement}

In the neural notation, \(d_x=n\) and \(d_u=k\) denote the state
and control dimensions, respectively. At time \(t_k\), the actor
produces a raw action \(r_{\theta_u,k}(x)\in\mathbb R^{d_u}\) and
the admissible control
\[
    u_{\theta_u,k}(x):=\Psi_U\bigl(r_{\theta_u,k}(x)\bigr).
\]
For \(U=[u_{\min},u_{\max}]^{d_u}\), we use
\[
    \Psi_U(r):=\frac{u_{\max}+u_{\min}}2
    +\frac{u_{\max}-u_{\min}}2\tanh(r),
\]
with \(\tanh\) applied componentwise.

At iteration \(l\), the reference control \(\bar u^l\) and its state
and adjoint samples are frozen during the actor update. Define
\[
\begin{aligned}
    \mathcal H_k^{l,\mathrm{corr}}(a)
    :={}&H(t_k,X_k^l,p_k^l,q_k^l,a)\\
    &+\frac12\sum_{i=1}^d
    \bigl[\sigma^i(t_k,X_k^l,a)
          -\sigma^i(t_k,X_k^l,\bar u_k^l)\bigr]^\top P_k^l\\
    &\hspace{2.5em}\times
    \bigl[\sigma^i(t_k,X_k^l,a)
          -\sigma^i(t_k,X_k^l,\bar u_k^l)\bigr],
    \qquad a\in U.
\end{aligned}
\]
With \(\zeta_k^{(0)}=0\) and independent perturbations
\(\zeta_k^{(j)}\sim\mathcal N(0,\sigma_{\mathrm{act}}^2I_{d_u})\),
\(j=1,\ldots,K-1\), sampled independently of the reference data, set
\[
\begin{aligned}
    u_k^{l,(j)}
    &:=\Psi_U\bigl(r_{\theta_u,k}(X_k^l)+\zeta_k^{(j)}\bigr),\\
    \mathcal H_k^{l,(j)}
    &:=\mathcal H_k^{l,\mathrm{corr}}\bigl(u_k^{l,(j)}\bigr),
    \qquad j=0,\ldots,K-1.
\end{aligned}
\]
The sample \(j=0\) is the current actor output. The sampled smooth
minimum and its weights are
\[
\begin{aligned}
    S_{\tau,k}^l(\theta_u)
    &:=-\tau\log\left[
    \frac1K\sum_{j=0}^{K-1}e^{-\mathcal H_k^{l,(j)}/\tau}\right],\\
    w_k^{l,(j)}
    &:=\frac{e^{-\mathcal H_k^{l,(j)}/\tau}}
    {\sum_{r=0}^{K-1}e^{-\mathcal H_k^{l,(r)}/\tau}},
    \qquad \tau>0.
\end{aligned}
\]
They satisfy
\[
    \min_j\mathcal H_k^{l,(j)}
    \leq S_{\tau,k}^l
    \leq\min_j\mathcal H_k^{l,(j)}+\tau\log K.
\]
The actor objective is
\[
    \mathcal L_{\mathrm{smin}}^l(\theta_u)
    :=\mathbb E_{\mathrm{batch}}\!\left[
    \sum_{k=0}^{N-1}\Delta t\,S_{\tau,k}^l(\theta_u)\right],
\]
where \(\mathbb E_{\mathrm{batch}}\) denotes the empirical average
over a finite batch of reference trajectories, each with its associated
perturbation set.
For sampled values that are \(C^1\) in \(\theta_u\), differentiation
with the reference samples, perturbations, \(\tau\), and
\(\sigma_{\mathrm{act}}\) held fixed gives
\begin{equation}
\label{eq:deepsmp-smin-actor-gradient}
    \nabla_{\theta_u}S_{\tau,k}^l
    =\sum_{j=0}^{K-1}w_k^{l,(j)}
      \nabla_{\theta_u}\mathcal H_k^{l,(j)}.
\end{equation}
To interpret the update direction, fix a reference sample and take
\(\sigma_{\mathrm{act}}>0\). Let \(\rho_r\) be the density of
\(\mathcal N(r,\sigma_{\mathrm{act}}^2I_{d_u})\), and define
\[
    \widetilde{\mathcal H}(z)
    :=\mathcal H_k^{l,\mathrm{corr}}\bigl(\Psi_U(z)\bigr).
\]
An idealized Gaussian population counterpart is
\[
\begin{aligned}
    S_\tau(r)
    &:=-\tau\log\int_{\mathbb R^{d_u}}
       e^{-\widetilde{\mathcal H}(z)/\tau}\rho_r(z)\,dz,\\
    \widetilde\pi_{\tau,r}(dz)
    &:=\frac{e^{-\widetilde{\mathcal H}(z)/\tau}\rho_r(z)}
    {\int_{\mathbb R^{d_u}}
       e^{-\widetilde{\mathcal H}(y)/\tau}\rho_r(y)\,dy}\,dz.
\end{aligned}
\]
Continuity of \(\mathcal H_k^{l,\mathrm{corr}}\) on compact \(U\)
makes \(\widetilde{\mathcal H}\) bounded. This boundedness and the
Gaussian tails justify differentiation under the integral sign.
Using \(\nabla_r\rho_r(z)
=\sigma_{\mathrm{act}}^{-2}(z-r)\rho_r(z)\) gives
\[
    -\nabla_r S_\tau(r)
    =\frac{\tau}{\sigma_{\mathrm{act}}^2}
      \left(\mathbb E_{\widetilde\pi_{\tau,r}}[z]-r\right).
\]
The exponential factor
\(e^{-\widetilde{\mathcal H}(z)/\tau}\) gives greater relative
weight to raw actions with lower corrected-Hamiltonian values.
Thus, the negative gradient pulls the current raw-action center
\(r\) toward the Gibbs-weighted mean, favoring raw actions whose
admissible images \(\Psi_U(z)\) have lower
corrected-Hamiltonian values.

In implementation, we use
\[
    \widetilde{\mathcal L}_H^l(\theta_u)
    :=\mathbb E_{\mathrm{batch}}\!\left[
    \sum_{k=0}^{N-1}\Delta t\sum_{j=0}^{K-1}
    \operatorname{sg}(w_k^{l,(j)})\mathcal H_k^{l,(j)}\right],
\]
where \(\operatorname{sg}\) preserves its argument in the forward
evaluation and blocks its derivative during backpropagation. By
\eqref{eq:deepsmp-smin-actor-gradient}, the resulting backpropagated
gradient equals \(\nabla_{\theta_u}\mathcal L_{\mathrm{smin}}^l\).
Starting from \(\bar\theta_u^l\), actor-gradient steps
\[
    \theta_u\leftarrow\theta_u
    -\alpha_u\nabla_{\theta_u}\mathcal L_{\mathrm{smin}}^l(\theta_u)
\]
produce candidate parameters \(\widehat\theta_u^{\,l+1}\). The
reference parameters and control are then updated by
\[
\begin{aligned}
    \bar\theta_u^{\,l+1}
    &:=(1-\eta_l)\bar\theta_u^l
       +\eta_l\widehat\theta_u^{\,l+1},\qquad\eta_l\in(0,1],\\
    \bar u_k^{l+1}(x)&:=u_{\bar\theta_u^{\,l+1},k}(x).
\end{aligned}
\]

The cost analysis of this parameter-space update is given in
Subsection~\ref{subsec:deepsmp-parameter-soft-update}.

We next establish consistency of the sampled objective and its
parameter gradient. Fix \(l,k\) and the values of the reference samples
\((X_k^l,p_k^l,q_k^l,P_k^l,\bar u_k^l)\), and use the same
perturbations for all actor parameters under comparison. Probabilities
are conditional on these reference data. Limits are taken along
deterministic sequences of \((K,\tau,\sigma_{\mathrm{act}})\), with
the reference data, network architecture, and \(d_u\) fixed.

\begin{theorem}[Consistency of the sampled smooth-minimum values]
\label{thm:deepsmp-smin-value-consistency}
Suppose that \(U\) is compact and Assumption~\ref{ass:basic-regularity}
holds. Assume also that \(\Psi_U:\mathbb R^{d_u}\to U\) is globally Lipschitz, and that the
raw actor output is continuous in \(\theta_u\). Let \(K\geq2\) and set
\[
    R_K:=\max_{0\leq j\leq K-1}|\zeta_k^{(j)}|.
\]
There exists a finite constant \(C\), depending on the frozen reference
data and \(\Psi_U\), but independent of
\(\theta_u,K,\tau,\sigma_{\mathrm{act}}\), such that, simultaneously
for every \(\theta_u\) and \(\tau>0\),
\begin{equation}
\label{eq:deepsmp-smin-value-pathwise}
\begin{aligned}
    -CR_K
    &\leq S_{\tau,k}^l(\theta_u)
       -\mathcal H_k^{l,\mathrm{corr}}
        \bigl(u_{\theta_u,k}(X_k^l)\bigr)\\
    &\leq\tau\log K.
\end{aligned}
\end{equation}
Moreover, for every \(\delta\in(0,1)\), with probability at least
\(1-\delta\),
\begin{equation}
\label{eq:deepsmp-smin-radius-probability}
    R_K\leq
    \sigma_{\mathrm{act}}
    \left(\sqrt{d_u}+\sqrt{2\log\frac{K-1}{\delta}}\right).
\end{equation}
Consequently, along any such sequence satisfying
\begin{equation}
\label{eq:deepsmp-smin-localization-schedule}
    \tau\log K\longrightarrow0,
    \qquad
    \sigma_{\mathrm{act}}\sqrt{\log K}\longrightarrow0,
\end{equation}
one has
\begin{equation}
\label{eq:deepsmp-smin-value-probability}
    \sup_{\theta_u}
    \left|S_{\tau,k}^l(\theta_u)
    -\mathcal H_k^{l,\mathrm{corr}}
      \bigl(u_{\theta_u,k}(X_k^l)\bigr)\right|
    \xrightarrow{\mathbb P}0.
\end{equation}
\end{theorem}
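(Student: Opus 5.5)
The plan has three parts: a deterministic sandwich bound for the soft-min, a Gaussian maximal inequality for $R_K$, and a final assembly.

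\emph{Step 1: the sandwich bound.} Write $h(a):=\mathcal H_k^{l,\mathrm{corr}}(a)$ for the frozen reference data. The upper bound in \eqref{eq:deepsmp-smin-value-pathwise} is immediate. Since $\zeta_k^{(0)}=0$, the sample $j=0$ gives $\mathcal H_k^{l,(0)}=h(u_{\theta_u,k}(X_k^l))$. Hence
\[
    S_{\tau,k}^l\le\min_j\mathcal H_k^{l,(j)}+\tau\log K\le\mathcal H_k^{l,(0)}+\tau\log K .
\]
For the lower bound I use $S_{\tau,k}^l\ge\min_j\mathcal H_k^{l,(j)}$. It then suffices to show that $h$ is Lipschitz on $U$ with a constant $C_h$ depending only on the frozen data. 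Granting this, the Lipschitz property of $\Psi_U$ gives
\[
    |\mathcal H_k^{l,(j)}-\mathcal H_k^{l,(0)}|
    \le C_hL_\Psi|\zeta_k^{(j)}|\le C_hL_\Psi R_K
\]
for every $j$, uniformly in $\theta_u$ and $\tau$. The Lipschitz property of $h$ follows from Assumption~\ref{ass:basic-regularity}(iv) at the fixed state $X_k^l$. The map $a\mapsto p^\top b+\sum_i(q^i)^\top\sigma^i+f$ is Lipschitz with constant $L(|p_k^l|+|q_k^l|+1+|X_k^l|^2)$. The quadratic correction is Lipschitz on compact $U$ because $\sigma$ is $L$-Lipschitz and bounded on $U$ at fixed $x$. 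This yields a constant of order $L\,|P_k^l|\,\sup_{a\in U}|\sigma(t_k,X_k^l,a)-\sigma(t_k,X_k^l,\bar u_k^l)|$, which is at most $L^2|P_k^l|D_U$. Neither constant involves $\theta_u$, $K$, $\tau$ or $\sigma_{\mathrm{act}}$. Continuity of the raw output in $\theta_u$ is needed only for measurability, since all bounds are pathwise in the perturbations.

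\emph{Step 2: the tail bound \eqref{eq:deepsmp-smin-radius-probability}.} Each $|\zeta_k^{(j)}|/\sigma_{\mathrm{act}}$ for $j\ge1$ is the norm of a standard Gaussian vector in $\mathbb R^{d_u}$. This norm is a $1$-Lipschitz function with mean at most $\sqrt{d_u}$. Gaussian concentration (Borell--TIS) therefore gives
\[
    \mathbb P\bigl(|\zeta|\ge\sigma_{\mathrm{act}}(\sqrt{d_u}+s)\bigr)\le e^{-s^2/2}.
\]
Take a union bound over the $K-1$ nonzero perturbations, using that $\zeta^{(0)}=0$ contributes nothing. Setting $(K-1)e^{-s^2/2}=\delta$ gives $s=\sqrt{2\log((K-1)/\delta)}$, which is exactly the stated radius.

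\emph{Step 3: assembly.} Combining Steps 1 and 2, with probability at least $1-\delta$,
\[
    \sup_{\theta_u}\bigl|S_{\tau,k}^l-h(u_{\theta_u,k}(X_k^l))\bigr|
    \le\tau\log K+C\sigma_{\mathrm{act}}\bigl(\sqrt{d_u}+\sqrt{2\log(1/\delta)}+\sqrt{2\log K}\bigr),
\]
using $\sqrt{x+y}\le\sqrt x+\sqrt y$ and $\log(K-1)\le\log K$. Fix $\delta$. Under \eqref{eq:deepsmp-smin-localization-schedule}, each term tends to $0$; note that $\sigma_{\mathrm{act}}\to0$ follows because $\log K\ge\log2$. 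So for any $\epsilon>0$ the probability of exceeding $\epsilon$ is eventually at most $\delta$. Since $\delta$ is arbitrary, this proves \eqref{eq:deepsmp-smin-value-probability}.

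The supremum over $\theta_u$ causes no measurability or uniformity issue, because the bound is deterministic given $R_K$. The only step needing care is therefore Step 1, specifically the uniform Lipschitz bound on the quadratic correction term. It reduces to the compactness of $U$ and the fixed reference data, so I do not expect a serious obstacle.
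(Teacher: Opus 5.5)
Your proposal is correct and follows the paper's proof essentially step for step. It uses the same Lipschitz estimate for $\mathcal H_k^{l,\mathrm{corr}}\circ\Psi_U$, the same sandwich through the unperturbed sample $j=0$, and the same Gaussian concentration with a union bound over the $K-1$ nonzero perturbations, and your splitting $\sqrt{2\log((K-1)/\delta)}\le\sqrt{2\log(1/\delta)}+\sqrt{2\log K}$ simply spells out the final limit step that the paper states in one line.
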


\begin{proof}
For the fixed reference data, the control Lipschitz bounds in
Assumption~\ref{ass:basic-regularity} imply that
\(H(t_k,X_k^l,p_k^l,q_k^l,\cdot)\) is Lipschitz on \(U\).
The diffusion increments in the second-order correction are bounded
and Lipschitz on the compact set \(U\). The inequality
\[
    |v^\top P_k^l v-w^\top P_k^l w|
    \leq\|P_k^l\|_{\mathrm{op}}(|v|+|w|)|v-w|
\]
therefore shows that \(\mathcal H_k^{l,\mathrm{corr}}\) is Lipschitz
on \(U\). Composing with the globally Lipschitz map \(\Psi_U\) gives
a constant \(C<\infty\) such that
\[
\begin{aligned}
    &\left|\mathcal H_k^{l,\mathrm{corr}}(\Psi_U(z))
      -\mathcal H_k^{l,\mathrm{corr}}(\Psi_U(y))\right|
      \leq C|z-y|,\qquad z,y\in\mathbb R^{d_u}.
\end{aligned}
\]
In particular, every sampled value differs from the unperturbed value
\(\mathcal H_k^{l,(0)}(\theta_u)\) by at most \(CR_K\), uniformly
in \(\theta_u\).

Since the unperturbed sample is included, the sampled hard-minimum
bound yields
\[
\begin{aligned}
    \mathcal H_k^{l,(0)}-CR_K
    &\leq\min_j\mathcal H_k^{l,(j)}
    \leq S_{\tau,k}^l\\
    &\leq\min_j\mathcal H_k^{l,(j)}+\tau\log K
    \leq\mathcal H_k^{l,(0)}+\tau\log K.
\end{aligned}
\]
This proves \eqref{eq:deepsmp-smin-value-pathwise}.
For a standard Gaussian vector \(\xi\in\mathbb R^{d_u}\), Gaussian
concentration gives
\[
    \mathbb P\bigl(|\xi|>\sqrt{d_u}+t\bigr)
    \leq e^{-t^2/2},\qquad t\geq0.
\]
Applying this inequality to each
\(\zeta_k^{(j)}/\sigma_{\mathrm{act}}\) and taking a union bound gives,
for \(\sigma_{\mathrm{act}}>0\),
\[
    \mathbb P\!\left(
    R_K>\sigma_{\mathrm{act}}(\sqrt{d_u}+t)\right)
    \leq(K-1)e^{-t^2/2}.
\]
Setting \(t=\sqrt{2\log((K-1)/\delta)}\) proves
\eqref{eq:deepsmp-smin-radius-probability}. The case
\(\sigma_{\mathrm{act}}=0\) is immediate.
Since \(d_u\) is fixed and \(K\geq2\), the perturbation limit in
\eqref{eq:deepsmp-smin-localization-schedule} makes this radius bound
tend to zero for every fixed \(\delta\in(0,1)\). The arbitrariness
of \(\delta\) gives \(R_K\to0\) in probability. Finally,
\eqref{eq:deepsmp-smin-value-pathwise} implies
\[
    \sup_{\theta_u}
    \left|S_{\tau,k}^l(\theta_u)
    -\mathcal H_k^{l,\mathrm{corr}}
      \bigl(u_{\theta_u,k}(X_k^l)\bigr)\right|
    \leq CR_K+\tau\log K.
\]
Both terms vanish under
\eqref{eq:deepsmp-smin-localization-schedule}, proving
\eqref{eq:deepsmp-smin-value-probability}.
\end{proof}

\begin{theorem}[Consistency of the actor-parameter gradients]
\label{thm:deepsmp-smin-gradient-consistency}
Retain the regularity hypotheses of
Theorem~\ref{thm:deepsmp-smin-value-consistency} and let \(K\geq2\).
Suppose that
\[
    z\longmapsto
    \mathcal H_k^{l,\mathrm{corr}}\bigl(\Psi_U(z)\bigr)
\]
is \(C^1\) on \(\mathbb R^{d_u}\) with globally Lipschitz gradient.
Fix \(R>0\), and assume that
\(\theta_u\mapsto r_{\theta_u,k}(X_k^l)\) belongs to \(C^1\) on
an open neighborhood of the closed parameter ball
\(\{\theta_u:|\theta_u|\leq R\}\).
There exists a finite constant \(C\), which may additionally depend
on \(R\) and the network, but is independent of
\(K,\tau,\sigma_{\mathrm{act}}\), such that
\begin{equation}
\label{eq:deepsmp-smin-gradient-pathwise}
\begin{aligned}
    \sup_{|\theta_u|\leq R}
    \Bigl|\nabla_{\theta_u}S_{\tau,k}^l(\theta_u)
    -\nabla_{\theta_u}\!\left[
       \mathcal H_k^{l,\mathrm{corr}}
       \bigl(u_{\theta_u,k}(X_k^l)\bigr)\right]\Bigr|
    \leq CR_K .
\end{aligned}
\end{equation}
If \(\sigma_{\mathrm{act}}\sqrt{\log K}\to0\), then,
for arbitrary positive temperatures along the sequence,
\begin{equation}
\label{eq:deepsmp-smin-gradient-probability}
\begin{aligned}
    \sup_{|\theta_u|\leq R}
    \Bigl|\nabla_{\theta_u}S_{\tau,k}^l(\theta_u)
    -\nabla_{\theta_u}\!\left[
       \mathcal H_k^{l,\mathrm{corr}}
       \bigl(u_{\theta_u,k}(X_k^l)\bigr)\right]\Bigr|
    \xrightarrow{\mathbb P}0.
\end{aligned}
\end{equation}
\end{theorem}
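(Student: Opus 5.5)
We will prove the pathwise bound \eqref{eq:deepsmp-smin-gradient-pathwise} first; the convergence \eqref{eq:deepsmp-smin-gradient-probability} then follows as in Theorem~\ref{thm:deepsmp-smin-value-consistency}.

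Write $g(z):=\mathcal H_k^{l,\mathrm{corr}}(\Psi_U(z))$, and let $L_g$ be the Lipschitz constant of $\nabla g$. Set $r(\theta_u):=r_{\theta_u,k}(X_k^l)$ and $J(\theta_u):=\nabla_{\theta_u}r(\theta_u)$. By the chain rule, since the perturbations are held fixed,
\[
    \nabla_{\theta_u}\mathcal H_k^{l,(j)}
    =J(\theta_u)^\top\nabla g\bigl(r(\theta_u)+\zeta_k^{(j)}\bigr),
\]
and the target gradient is $J(\theta_u)^\top\nabla g(r(\theta_u))$, which is the $j=0$ term because $\zeta_k^{(0)}=0$. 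The smooth-min gradient formula \eqref{eq:deepsmp-smin-actor-gradient} expresses $\nabla_{\theta_u}S_{\tau,k}^l$ as a convex combination with weights $w_k^{l,(j)}\ge0$ summing to one. Hence
\[
    \nabla_{\theta_u}S_{\tau,k}^l-J^\top\nabla g(r)
    =\sum_{j=0}^{K-1}w_k^{l,(j)}\,J^\top\bigl[\nabla g(r+\zeta_k^{(j)})-\nabla g(r)\bigr].
\]

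Each bracket is bounded by $L_g|\zeta_k^{(j)}|\le L_gR_K$. Since the weights sum to one, the total is bounded by $\|J(\theta_u)\|\,L_gR_K$. This estimate holds for every $\tau>0$, with no dependence on the temperature and no need to control how the weights behave. It then remains to bound $\|J\|$ uniformly on the closed ball $\{|\theta_u|\le R\}$. Since $r$ is $C^1$ on an open neighborhood of this compact ball, $J$ is continuous there and therefore bounded. We take $C:=L_g\sup_{|\theta_u|\le R}\|J(\theta_u)\|$, which depends on $R$, the network and the frozen data, but not on $K$, $\tau$ or $\sigma_{\mathrm{act}}$.

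One point needs justification: formula \eqref{eq:deepsmp-smin-actor-gradient} requires the sampled values to be $C^1$ in $\theta_u$. This holds because each sampled value is a composition of the $C^1$ map $g$ with the $C^1$ map $\theta_u\mapsto r(\theta_u)+\zeta_k^{(j)}$, and log-sum-exp is smooth. This, together with the compactness bound on $J$, is the only delicate step; everything else is bookkeeping.

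For \eqref{eq:deepsmp-smin-gradient-probability}, the radius bound \eqref{eq:deepsmp-smin-radius-probability} holds for each fixed $\delta$. Under $\sigma_{\mathrm{act}}\sqrt{\log K}\to0$, with $d_u$ fixed and $\log(K-1)\le\log K$, the right-hand side tends to zero. Hence $R_K\to0$ in probability, and so does $CR_K$. Because the pathwise bound is uniform in $\tau$, no condition on the temperatures along the sequence is needed.
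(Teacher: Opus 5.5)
Your proposal is correct and follows essentially the same route as the paper. Both arguments use the chain rule with the additive perturbations held fixed, a uniform Jacobian bound on the compact parameter ball from the $C^1$ hypothesis, and the Lipschitz lifted gradient to get $|\nabla_{\theta_u}\mathcal H_k^{l,(j)}-\nabla_{\theta_u}\mathcal H_k^{l,(0)}|\le C|\zeta_k^{(j)}|$; the nonnegative weights summing to one then give a bound $CR_K$ uniform in $\tau$, and the Gaussian radius bound finishes the proof. One detail you leave implicit: the $\sqrt{d_u}$ term vanishes because $\sigma_{\mathrm{act}}\sqrt{\log K}\to0$ forces $\sigma_{\mathrm{act}}\to0$, since $\log K\ge\log 2$ for $K\geq2$.
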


\begin{proof}
The \(C^1\) regularity of the actor on a neighborhood of the compact
parameter ball implies
\[
    \sup_{|\theta_u|\leq R}
    \left\|D_{\theta_u}r_{\theta_u,k}(X_k^l)\right\|_{\mathrm{op}}
    <\infty.
\]
Since the perturbations are additive and fixed during differentiation,
the chain rule gives
\[
\begin{aligned}
    \nabla_{\theta_u}\mathcal H_k^{l,(j)}(\theta_u)
    ={}&\bigl(D_{\theta_u}r_{\theta_u,k}(X_k^l)\bigr)^\top\\
    &\times\left.
    \nabla_z\left[\mathcal H_k^{l,\mathrm{corr}}(\Psi_U(z))\right]
    \right|_{z=r_{\theta_u,k}(X_k^l)+\zeta_k^{(j)}}.
\end{aligned}
\]
Using the Jacobian bound and the assumed Lipschitz continuity of the
lifted gradient, and subtracting the \(j=0\) formula, gives a finite
constant \(C\) such that
\[
    \sup_{|\theta_u|\leq R}
    \left|\nabla_{\theta_u}\mathcal H_k^{l,(j)}(\theta_u)
    -\nabla_{\theta_u}\mathcal H_k^{l,(0)}(\theta_u)\right|
    \leq C|\zeta_k^{(j)}|.
\]
The weights in \eqref{eq:deepsmp-smin-actor-gradient} are
nonnegative and sum to one. It follows that
\[
\begin{aligned}
    &\left|\nabla_{\theta_u}S_{\tau,k}^l
      -\nabla_{\theta_u}\mathcal H_k^{l,(0)}\right|\\
    &\qquad\leq\sum_{j=0}^{K-1}w_k^{l,(j)}
      \left|\nabla_{\theta_u}\mathcal H_k^{l,(j)}
            -\nabla_{\theta_u}\mathcal H_k^{l,(0)}\right|
      \leq CR_K,
\end{aligned}
\]
uniformly on the parameter ball and for all \(\tau>0\).
Since \(\mathcal H_k^{l,(0)}(\theta_u)
=\mathcal H_k^{l,\mathrm{corr}}
(u_{\theta_u,k}(X_k^l))\), this proves
\eqref{eq:deepsmp-smin-gradient-pathwise}.
Convergence follows from
\eqref{eq:deepsmp-smin-radius-probability} and
\(\sigma_{\mathrm{act}}\sqrt{\log K}\to0\).
\end{proof}

For a fixed finite batch and time grid, the corresponding value and
gradient convergence statements hold uniformly on \(|\theta_u|\leq R\), under their
respective sampling limits, for \(\mathcal L_{\mathrm{smin}}^l\) and
its unperturbed counterpart
\[
    \mathbb E_{\mathrm{batch}}\!\left[
    \sum_{k=0}^{N-1}\Delta t\,
    \mathcal H_k^{l,(0)}(\theta_u)\right].
\]
Indeed, a finite union bound controls all perturbation radii; averaging
the pathwise bounds with the largest of their finitely many constants
and using \(\sum_k\Delta t=T\) proves the assertions. Differentiation
commutes with the finite empirical average. For random reference data held fixed along the sequence and satisfying
these hypotheses almost surely, integrating the conditional error
probabilities and applying bounded convergence gives the same convergence
under the joint law of reference data and perturbations.

\subsection{DeepSMP and parameter-space soft update analysis}
\label{subsec:deepsmp-parameter-soft-update}

DeepSMP jointly trains the first- and second-order adjoint networks and
the control networks within a single iterative procedure. This differs
from classical MSA and many of its modified numerical implementations,
which typically employ a nested inner--outer structure: for each outer
control update, the state and adjoint systems are first solved, or the
corresponding inner problem is optimized, to a prescribed accuracy.
Instead, DeepSMP updates the adjoint-related and control parameters
simultaneously, while separating their coupled evolution through a
two-time-scale learning rule, with faster updates for the adjoint-related
parameters and slower updates for the control parameters. The reference
control parameters are further stabilized through parameter-space soft
interpolation. This simultaneous scheme avoids repeatedly solving the
inner problem to high accuracy before each control update, thereby
reducing the computational burden of nested iterations, particularly in
high-dimensional settings. The complete procedure is summarized in
Algorithm~\ref{alg:deepsmp}.

\begin{algorithm}[tp]
\caption{Deep SMP Policy Iteration with Joint Two-Time-Scale Training}
\label{alg:deepsmp}
\small
\begin{algorithmic}[1]

\Require Initial state $X_0 = x_0$, terminal time $T$, time intervals $N$, step size $\Delta t = T/N$; batch size $M$, total iterations $K$, candidate size $J$, temperature $\tau$, learning rates $\alpha_{\text{adj}}$ and $\alpha_u$ ($\alpha_{\text{adj}} > \alpha_u$), loss weights $\lambda_1, \lambda_2, \lambda_H$, soft-update rate $\eta\in(0,1]$.

\State \textbf{Initialize Adjoint System:} Initial adjoints $p_0, P_0$ and point-wise networks $\theta_q = \{\theta_q^i\}_{i=0}^{N-1}$, $\theta_Q = \{\theta_Q^i\}_{i=0}^{N-1}$.
\State \textbf{Initialize Online Policy:} Point-wise online policy networks $\theta_u = \{\theta_u^i\}_{i=0}^{N-1}$.
\State \textbf{Initialize Anchor Policy:} Point-wise anchor policy networks $\bar{\theta}_u \leftarrow \theta_u$.

\For{$k = 1$ \textbf{to} $K$}
    \State Sample a mini-batch of $M$ independent Brownian increment paths $\{\Delta W_i^{(m)}\}_{i=0}^{N-1}$.
    \State Set $X_0^{(m)} = x_0$, $p_0^{(m)} = p_0$, $P_0^{(m)} = P_0$ for all $m = 1, \dots, M$.

    \For{$i = 0$ \textbf{to} $N - 1$}
        \Statex \hfill $\triangleright$ \textit{1. Network Forward Passes}
        \State $r_i = \text{Net}_u^i(X_i \mid \theta_u^i), \quad u_i=\Psi_U(r_i), \quad \bar r_i = \text{Net}_u^i(X_i \mid \bar{\theta}_u^i), \quad \bar u_i=\Psi_U(\bar r_i), \quad q_i = \text{Net}_q^i(X_i \mid \theta_q^i), \quad Q_i = \text{Net}_Q^i(X_i \mid \theta_Q^i)$.

        \Statex \hfill $\triangleright$ \textit{2. Gradient Detachment for Physical Isolation}
        \State $\bar{u}_i \leftarrow \text{stop\_gradient}(\bar{u}_i), \quad \bar{p}_i \leftarrow \text{stop\_gradient}(p_i), \quad \bar{q}_i \leftarrow \text{stop\_gradient}(q_i), \quad \bar{P}_i \leftarrow \text{stop\_gradient}(P_i)$.

        \Statex \hfill $\triangleright$ \textit{3. Randomized Action Candidates \& Corrected Hamiltonian}
        \State Generate action candidates $a_i^{(j)} = \Psi_U(r_i + \sigma_{\text{act}}\zeta_j)$ with $\zeta_0=0$ and $\zeta_j \sim \mathcal{N}(0, I)$ for $j=1,\dots,J-1$ ($a_i^{(0)} = u_i$).
        \State $\mathcal{H}_i(a_i^{(j)}) = \bar{p}_i^\top b(t_i, X_i, a_i^{(j)}) + \operatorname{Tr}\!\left[\bar{q}_i^\top \sigma(t_i, X_i, a_i^{(j)})\right] + f(t_i, X_i, a_i^{(j)})$
        \Statex \hfill $+ \frac{1}{2}\operatorname{Tr}\!\left(\left[\sigma(t_i, X_i, a_i^{(j)}) - \sigma(t_i, X_i, \bar{u}_i)\right]^\top \bar{P}_i \left[\sigma(t_i, X_i, a_i^{(j)}) - \sigma(t_i, X_i, \bar{u}_i)\right]\right)$.
        \State $w_i^{(j)} = \frac{\exp(-\mathcal{H}_i(a_i^{(j)}) / \tau)}{\sum_{r=0}^{J-1} \exp(-\mathcal{H}_i(a_i^{(r)}) / \tau)}, \qquad \bar{w}_i^{(j)} \leftarrow \text{stop\_gradient}(w_i^{(j)})$.

        \Statex \hfill $\triangleright$ \textit{4. Forward and Adjoint Advancements Driven by Anchor Policy $\bar{u}_i$}
        \State $X_{i+1} = X_i + b(t_i, X_i, \bar{u}_i)\Delta t + \sigma(t_i, X_i, \bar{u}_i)\Delta W_i$.
        \State $p_{i+1} = p_i - \nabla_x H(t_i, X_i, p_i, q_i, \bar{u}_i)\Delta t + q_i \Delta W_i$.
        \State $P_{i+1} = P_i - \mathcal F(t_i, X_i, p_i, q_i, P_i, Q_i, \bar{u}_i)\Delta t + Q_i \Delta W_i$.
    \EndFor

    \Statex \hfill $\triangleright$ \textit{5. Loss Aggregation}
    \State $\mathcal{L}_{\text{BSDE}}
    = \frac{1}{M}\sum_{m=1}^M
    \left[
        \lambda_1
        |p_N^{(m)}-\Phi_x(X_N^{(m)})|^2
        +
        \lambda_2
        |P_N^{(m)}-\Phi_{xx}(X_N^{(m)})|^2
    \right]$.
    \State $\mathcal{L}_H = \frac{\Delta t}{M}\sum_{m=1}^M \sum_{i=0}^{N-1} \sum_{j=0}^{J-1} \bar{w}_{i,m}^{(j)} \mathcal{H}_{i,m}(a_{i,m}^{(j)})$.
    \State $\mathcal{L}_{\text{Total}} = \mathcal{L}_{\text{BSDE}} + \lambda_H \mathcal{L}_H$.

    \Statex \hfill $\triangleright$ \textit{6. Two-Time-Scale Update Rule \& Anchor Policy Soft Update}
    \State $\theta_{\text{adj}} \leftarrow \theta_{\text{adj}} - \alpha_{\text{adj}} \nabla_{\theta_{\text{adj}}} \mathcal{L}_{\text{Total}}, \qquad \text{where } \theta_{\text{adj}} = (p_0, P_0, \theta_q, \theta_Q)$.
    \State $\theta_u \leftarrow \theta_u - \alpha_u \nabla_{\theta_u} \mathcal{L}_{\text{Total}}$.
    \State $\bar{\theta}_u \leftarrow (1-\eta)\bar{\theta}_u + \eta\theta_u$. \hfill $\triangleright$ Parameter-space soft update
\EndFor

\Ensure Optimized anchor policy networks $\{\text{Net}_u^i(\cdot \mid \bar{\theta}_u^i)\}_{i=0}^{N-1}$ and initial adjoints $p_0, P_0$.

\end{algorithmic}
\end{algorithm}

We now turn to the theoretical role of the parameter-space soft update in
DeepSMP. In the process-level analysis of
Section~\ref{sec:su-msa}, the soft update is performed directly
in the admissible control space. In the neural-network implementation,
however, a literal process-level update is not computationally attractive.
Indeed, if one updates the feedback control itself by
\[
    u_{k}^{n+1}(\cdot)
    =
    (1-\eta_n)u_k^n(\cdot)
    +
    \eta_n v_k^n(\cdot),
    \qquad
    k=0,\ldots,N-1,
\]
with \(u_k^n=\pi_{\theta_{u,k}^n}\) and
\(v_k^n=\pi_{\theta_{u,k}^{\mathrm c,n}}\), then repeated substitution
leads to an ensemble-type representation
\[
    u_k^{n+1}(\cdot)
    =
    \omega_{0,n}\pi_{\theta_{u,k}^{0}}(\cdot)
    +
    \sum_{r=0}^{n}
    \omega_{r,n}\pi_{\theta_{u,k}^{\mathrm c,r}}(\cdot),
    \qquad
    k=0,\ldots,N-1,
\]
for suitable weights \(\omega_{r,n}\) determined by the previous
soft-update stepsizes. Hence, an exact evaluation of \(u_k^{n+1}\) would
require storing and evaluating all historical control networks at each
time level \(t_k\). This cost increases with the number of iterations, the
number of time intervals, and the dimension of the underlying state
space.

Moreover, applying the soft update only to the realized control outputs
along simulated trajectories does not define a feedback map on the whole
state space. If a new Monte Carlo batch visits states that did not appear
in the previous trajectories, the updated control value at those states is
not specified. By contrast, a parameterized control network represents a
feedback map
\[
    x\longmapsto \pi_{\theta_{u,k}}(t_k,x)
\]
on the state space, and this parametric representation is precisely what
allows the learned control to generalize across newly simulated
trajectories.

For these reasons, DeepSMP performs the soft update in parameter space:
\[
    \bar\theta_{u,k}^{\,n+1}
    =
    (1-\eta_n)\bar\theta_{u,k}^{\,n}
    +
    \eta_n\theta_{u,k}^{n+1},
    \qquad
    k=0,\ldots,N-1.
\]
Since a convex interpolation of neural-network parameters does not in
general coincide with a pointwise convex interpolation of the corresponding
feedback controls, the process-level descent estimate cannot be applied
verbatim. We therefore establish a parameter-space counterpart of the
corrected-Hamiltonian upper bound. The analysis is carried out in an
exact-adjoint oracle setting: for a given reference control, the first- and
second-order adjoint processes are assumed to be the exact solutions of
their corresponding adjoint equations. Under this idealized setting, the
result below shows that the cost variation along a parameter interpolation
path is controlled by the variation of the corrected Hamiltonian, up to a
second-order remainder measured in a weighted discrete \(L^2\)-norm of the
parameter displacement. In the practical DeepSMP algorithm, these adjoints
are replaced by learned neural-network approximations, whose effect can be
viewed as an additional inexactness term.

Let \(0=t_0<t_1<\cdots<t_N=T\) be the time grid, with
\(\Delta t_k=t_{k+1}-t_k\). Since DeepSMP employs one control network at
each time level, denote the reference and candidate parameter collections
by
\[
\begin{aligned}
    \bar\Theta_u
    &:=
    (\bar\theta_{u,0},\ldots,\bar\theta_{u,N-1}),&
    \Theta_u^{\mathrm c}
    &:=
    (\theta_{u,0}^{\mathrm c},\ldots,\theta_{u,N-1}^{\mathrm c}),\\
    \delta\Theta
    &:=(\delta\Theta_0,\ldots,\delta\Theta_{N-1}),\\
    \delta\Theta_k
    &:=
    \theta_{u,k}^{\mathrm c}-\bar\theta_{u,k},
    &&k=0,\ldots,N-1.
\end{aligned}
\]
Let \(\bar\theta_u(\cdot)\) and \(\delta\Theta(\cdot)\) be their piecewise constant
interpolations, namely,
\[
\begin{aligned}
    \bar\theta_u(t)&:=\bar\theta_{u,k},&
    \delta\Theta(t)&:=\delta\Theta_k,
    &&t\in[t_k,t_{k+1}),\\
    |\delta\Theta|_\Delta^2
    &:=\sum_{k=0}^{N-1}\Delta t_k|\delta\Theta_k|^2
    =\int_0^T|\delta\Theta(t)|^2\,dt,&
    \theta_u^\eta(t)
    &:=\bar\theta_u(t)+\eta \delta\Theta(t),
    &&0\le\eta\le1.
\end{aligned}
\]
For the population analysis below, the reference and candidate
parameter collections are fixed, and expectations are taken
with respect to the Brownian motion driving the state equation.
The family \(\theta_u^\eta\) defines the parameter-space soft-update path
joining the reference and candidate control-network parameters.
For any piecewise constant parameter path
\(\vartheta(t)=\vartheta_k\) on \([t_k,t_{k+1})\), we define the
corresponding continuous-time feedback law by
\[
    \pi_{\vartheta(t)}(t,x):=\Pi_{\vartheta_k}(x),
    \qquad
    t\in[t_k,t_{k+1}),\quad k=0,\ldots,N-1.
\]

\begin{assumption}[Neural-network architecture and bounded parameter paths]
\label{ass:nn-architecture}
For each time level \(t_k\), \(k=0,\ldots,N-1\), the feedback control is
parameterized by a feedforward network
\(\pi_{\vartheta}(t_k,x)=\Pi_{\vartheta}(x)\), where
\(\Pi_{\vartheta}:\mathbb R^{d_x}\to\mathbb R^{d_u}\). The following
conditions hold.

\textup{(NN.1)} The network architecture is fixed. In particular, the
number of hidden layers \(L\), the hidden-layer widths
\(n_1,\ldots,n_L\), the raw output dimension \(d_z\), and the control
dimension \(d_u\) are independent of the iteration index \(n\) and the
time index \(k\). Writing \(n_0=d_x\), the network takes the form
\[
\begin{aligned}
    h_0&=x,\\
    z_\ell&=W_\ell h_{\ell-1}+b_\ell,
    & h_\ell&=\varphi_\ell(z_\ell),
    &&\ell=1,\ldots,L,\\
    z_{L+1}&=W_{L+1}h_L+b_{L+1},
    &\Pi_{\vartheta}(x)&=\psi(z_{L+1}),
\end{aligned}
\]
where
\(\vartheta=(W_1,b_1,\ldots,W_{L+1},b_{L+1})\),
\(W_\ell\in\mathbb R^{n_\ell\times n_{\ell-1}}\),
\(b_\ell\in\mathbb R^{n_\ell}\) for \(\ell=1,\ldots,L\), and
\(W_{L+1}\in\mathbb R^{d_z\times n_L}\),
\(b_{L+1}\in\mathbb R^{d_z}\).

\textup{(NN.2)} For each \(\ell=1,\ldots,L\), the hidden-layer
activation map
\(\varphi_\ell:\mathbb R^{n_\ell}\to\mathbb R^{n_\ell}\) belongs to
\(C^1\), and there exists \(M_\ell<\infty\) such that
\[
    \sup_{z\in\mathbb R^{n_\ell}}
    \|D\varphi_\ell(z)\|_{\mathrm{op}}
    \le M_\ell,
    \qquad \ell=1,\ldots,L.
\]

\textup{(NN.3)} The output map
\(\psi:\mathbb R^{d_z}\to\mathbb R^{d_u}\) belongs to \(C^1\), has a
globally bounded Jacobian, and enforces the control constraint; namely,
for some \(M_{\mathrm{out}}<\infty\),
\[
    \sup_{z\in\mathbb R^{d_z}}
    \|D\psi(z)\|_{\mathrm{op}}
    \le M_{\mathrm{out}},
    \qquad
    \psi(\mathbb R^{d_z})\subseteq U.
\]

\textup{(NN.4)} The reference and candidate control-network parameters
generated by the algorithm are uniformly bounded. More precisely, there
exists a constant \(R>0\), independent of the iteration index \(n\) and
the time index \(k\), such that
\[
    \sup_{n\geq0}\max_{0\leq k\leq N-1}
    \left(
        |\bar\vartheta_{n,k}|
        +
        |\vartheta_{n,k}^{\mathrm c}|
    \right)
    \leq R,
\]
where the parameter norm is defined by
\[
    |\vartheta|^2
    :=
    \sum_{\ell=1}^{L+1}
    \left(
        \|W_\ell\|_{\mathrm F}^2+|b_\ell|^2
    \right).
\]
\end{assumption}

\begin{remark}[Scope of Assumption~\ref{ass:nn-architecture}]
\label{rem:scope-nn-assumption}
Assumption~\ref{ass:nn-architecture} covers a broad class of standard
feedforward networks. In particular, condition \textup{(NN.2)} is
satisfied by commonly used smooth activations with globally bounded
derivatives, including Swish/SiLU, \(\tanh\), sigmoid, softplus, and ELU
with its standard shape parameter equal to one,
while condition \textup{(NN.3)} is satisfied by standard smooth
squashing maps, such as affine transformations of \(\tanh\), that enforce
the control constraint.

The uniform boundedness condition in \textup{(NN.4)} is a standard
stability requirement in neural-network and stochastic-approximation
analysis; related bounded-iterate, compact-parameter, or projected-update
conditions are used, for example, in
\cite{heusel2017gans,senencerda2025dropout,
oymak2018compact,leimkuhler2021constrained}.
It can be enforced through compact parameter constraints, projected
updates, or explicit weight and bias clipping. Moreover, \textup{(NN.4)}
automatically controls every parameter-space soft-update path by convexity
of the parameter norm. Hence, it is a conventional and algorithmically
enforceable stability condition rather than a restrictive assumption on
the network architecture.
\end{remark}

For \(n\geq0\), \(k=0,\ldots,N-1\), and \(\eta\in[0,1]\), denote
\[
    \vartheta_{n,k}^{\eta}
    :=
    (1-\eta)\bar\vartheta_{n,k}
    +\eta\vartheta_{n,k}^{\mathrm c}.
\]
By Assumption~\ref{ass:nn-architecture}\textup{(NN.4)},
\[
    |\vartheta_{n,k}^{\eta}|
    \leq
    (1-\eta)|\bar\vartheta_{n,k}|
    +\eta|\vartheta_{n,k}^{\mathrm c}|
    \leq R.
\]

\begin{proposition}[Derivative bounds for fixed feedforward control networks]
\label{prop:network-derivative-bounds}
Suppose that Assumption~\ref{ass:nn-architecture} holds. Then there exist
constants \(K_x,K_\vartheta>0\), depending only on the fixed network
architecture, the activation-derivative bounds
\(\{M_\ell\}_{\ell=1}^{L}\), the output-map bound
\(M_{\mathrm{out}}\), the uniform parameter bound \(R\), and the values
\(\{|\varphi_\ell(0)|\}_{\ell=1}^{L}\), but independent of the iteration
index \(n\), the time index \(k\), the interpolation parameter
\(\eta\in[0,1]\), and the state \(x\), such that, for every
\(n\geq0\), \(k=0,\ldots,N-1\), \(\eta\in[0,1]\), and
\(x\in\mathbb R^{d_x}\),
\[
\begin{aligned}
    \bigl\|
        D_x\Pi_{\vartheta_{n,k}^{\eta}}(x)
    \bigr\|_{\mathrm{op}}
    &\leq K_x,
    &
    \bigl\|
        D_\vartheta\Pi_{\vartheta_{n,k}^{\eta}}(x)
    \bigr\|_{\mathrm{op}}
    &\leq K_\vartheta(1+|x|).
\end{aligned}
\]

Here \(D_\vartheta\Pi_\vartheta(x)\) denotes the Fr\'echet derivative of
the network output with respect to the full parameter vector
\(\vartheta=(W_1,b_1,\ldots,W_{L+1},b_{L+1})\), regarded as a linear
operator from the product parameter space into \(\mathbb R^{d_u}\).
For a parameter perturbation
\(\delta\vartheta
=(\delta W_1,\delta b_1,\ldots,
\delta W_{L+1},\delta b_{L+1})\), the corresponding operator and parameter
norms are defined by
\[
\begin{aligned}
    \bigl\|
        D_\vartheta\Pi_\vartheta(x)
    \bigr\|_{\mathrm{op}}
    &:=
    \sup_{\delta\vartheta\neq0}
    \frac{
        \bigl|
            D_\vartheta\Pi_\vartheta(x)
            [\delta\vartheta]
        \bigr|
    }{
        |\delta\vartheta|
    },
    &
    |\delta\vartheta|^2
    &:=
    \sum_{\ell=1}^{L+1}
    \left(
        \|\delta W_\ell\|_{\mathrm F}^{2}
        +
        |\delta b_\ell|^{2}
    \right).
\end{aligned}
\]
\end{proposition}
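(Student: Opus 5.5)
The plan is to differentiate the layer recursion in (NN.1) directly and bound each factor with (NN.2)--(NN.4) and the convex-combination bound \(|\vartheta_{n,k}^{\eta}|\le R\). Fix \(n,k,\eta\) and write \(\vartheta=\vartheta_{n,k}^{\eta}=(W_1,b_1,\ldots,W_{L+1},b_{L+1})\). Since each \(\|W_\ell\|_{\mathrm{op}}\le\|W_\ell\|_{\mathrm F}\le|\vartheta|\le R\), all constants will depend only on \(R\), \(\{M_\ell\}\), \(M_{\mathrm{out}}\), \(\{|\varphi_\ell(0)|\}\) and the architecture. This makes them uniform in \(n,k,\eta\); uniformity in \(x\) comes from the global derivative bounds.

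\emph{State derivative.} The chain rule gives
\[
D_x\Pi_\vartheta(x)=D\psi(z_{L+1})\,W_{L+1}\,D\varphi_L(z_L)\,W_L\cdots D\varphi_1(z_1)\,W_1 .
\]
Submultiplicativity of the operator norm then yields
\[
\|D_x\Pi_\vartheta(x)\|_{\mathrm{op}}\le M_{\mathrm{out}}R^{L+1}\prod_{\ell=1}^L M_\ell=:K_x,
\]
and this bound does not depend on \(x\).

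\emph{Forward activations.} Before bounding the parameter derivative, I will show by induction that \(|h_\ell|\le c_\ell(1+|x|)\). Because \(\varphi_\ell\) has derivative bounded by \(M_\ell\), the mean value inequality gives \(|\varphi_\ell(z)|\le|\varphi_\ell(0)|+M_\ell|z|\). Combined with \(|z_\ell|\le R|h_{\ell-1}|+R\) and \(h_0=x\), this gives the recursion
\[
c_\ell=\max\{|\varphi_\ell(0)|,0\}+M_\ell R(c_{\ell-1}+1),\qquad c_0=1 .
\]
This is where the values \(|\varphi_\ell(0)|\) enter the constants.

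\emph{Parameter derivative.} For a perturbation \(\delta\vartheta\), the tangent recursion is \(\delta h_0=0\) and
\[
\delta z_\ell=\delta W_\ell h_{\ell-1}+\delta b_\ell+W_\ell\,\delta h_{\ell-1},\qquad \delta h_\ell=D\varphi_\ell(z_\ell)\delta z_\ell,
\]
with \(D_\vartheta\Pi_\vartheta(x)[\delta\vartheta]=D\psi(z_{L+1})\delta z_{L+1}\). Using \(\|\delta W_\ell\|_{\mathrm{op}}\le\|\delta W_\ell\|_{\mathrm F}\le|\delta\vartheta|\) and \(|\delta b_\ell|\le|\delta\vartheta|\), induction gives
\[
|\delta z_\ell|\le\bigl(c_{\ell-1}(1+|x|)+1\bigr)|\delta\vartheta|+RM_{\ell-1}|\delta z_{\ell-1}|,
\]
so \(|\delta z_\ell|\le e_\ell(1+|x|)|\delta\vartheta|\) for constants \(e_\ell\). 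Multiplying by \(M_{\mathrm{out}}\) at the output and taking the supremum over \(\delta\vartheta\ne0\) gives \(K_\vartheta:=M_{\mathrm{out}}e_{L+1}\).

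The only step needing care is the linear growth of \(h_\ell\) in \(x\), which is why the factor \(1+|x|\) appears in the second bound but not the first. It requires bounded activation derivatives together with finite \(\varphi_\ell(0)\), and it goes through directly. The Fréchet differentiability of \(\Pi_\vartheta\) in \(\vartheta\) follows from the \(C^1\) hypotheses on \(\varphi_\ell\) and \(\psi\) and the polynomial (bilinear) dependence of each \(z_\ell\) on the parameters.
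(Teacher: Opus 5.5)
Your proposal is correct and follows essentially the same argument as the paper. Both bound \(D_x\Pi_\vartheta\) by the chain-rule product \(M_{\mathrm{out}}R^{L+1}\prod_\ell M_\ell\), obtain linear growth \(|h_\ell|\lesssim 1+|x|\) from the mean value inequality and \(|\varphi_\ell(0)|\), and then induct on the tangent recursion to get \(K_\vartheta(1+|x|)\). The only difference is that you induct on \(\delta z_\ell\) instead of \(\delta h_\ell\), which is immaterial.
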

\begin{proof}
Assumption~\ref{ass:nn-architecture}\textup{(NN.4)} implies, uniformly
in \(n\), \(k\), and \(\eta\),
\[
    \|W_\ell\|_{\mathrm{op}}
    \leq\|W_\ell\|_{\mathrm F}\leq R,
    \qquad
    |b_\ell|\leq R,
    \qquad \ell=1,\ldots,L+1.
\]
Moreover, Assumption~\ref{ass:nn-architecture}\textup{(NN.2)} and the
mean value theorem give
\[
    |\varphi_\ell(z)|
    \leq
    |\varphi_\ell(0)|+M_\ell|z|,
    \qquad \ell=1,\ldots,L.
\]
It follows inductively from the forward propagation equations that there
exist constants \(A_\ell>0\), depending only on the quantities specified
in the statement, such that
\[
    |h_\ell|\leq A_\ell(1+|x|),
    \qquad \ell=1,\ldots,L.
\]

The chain rule yields
\[
    D_x\Pi_\vartheta(x)
    =
    D\psi(z_{L+1})W_{L+1}
    D\varphi_L(z_L)W_L\cdots
    D\varphi_1(z_1)W_1.
\]
Consequently,
\[
    \|D_x\Pi_\vartheta(x)\|_{\mathrm{op}}
    \leq
    M_{\mathrm{out}}R^{L+1}
    \prod_{\ell=1}^L M_\ell
    =:K_x.
\]

It remains to estimate the derivative with respect to the parameters.
For a perturbation
\(\delta\vartheta
=(\delta W_1,\delta b_1,\ldots,\delta W_{L+1},\delta b_{L+1})\),
denote the corresponding first-order variations of the hidden variables
by \(\delta h_\ell\) and \(\delta z_\ell\). They satisfy
\[
\begin{aligned}
    \delta z_\ell
    &=
    \delta W_\ell h_{\ell-1}
    +W_\ell\delta h_{\ell-1}
    +\delta b_\ell,
    \qquad
    \delta h_\ell
    =
    D\varphi_\ell(z_\ell)\delta z_\ell,
    \quad \ell=1,\ldots,L,
\end{aligned}
\]
with \(\delta h_0=0\), and
\[
    D_\vartheta\Pi_\vartheta(x)[\delta\vartheta]
    =
    D\psi(z_{L+1})
    \bigl(
        \delta W_{L+1}h_L
        +W_{L+1}\delta h_L
        +\delta b_{L+1}
    \bigr).
\]
Since
\[
    \|\delta W_\ell\|_{\mathrm{op}}
    \leq\|\delta W_\ell\|_{\mathrm F}
    \leq|\delta\vartheta|,
    \qquad
    |\delta b_\ell|\leq|\delta\vartheta|,
\]
the hidden-state bounds above and induction over the layers give
\[
    |\delta h_\ell|
    \leq B_\ell(1+|x|)|\delta\vartheta|,
    \qquad \ell=1,\ldots,L,
\]
for constants \(B_\ell\) depending only on the fixed architecture and
the bounds in the statement. Hence
\[
    \bigl|
        D_\vartheta\Pi_\vartheta(x)[\delta\vartheta]
    \bigr|
    \leq
    K_\vartheta(1+|x|)|\delta\vartheta|.
\]
Taking the supremum over all nonzero \(\delta\vartheta\) proves
\[
    \|D_\vartheta\Pi_\vartheta(x)\|_{\mathrm{op}}
    \leq K_\vartheta(1+|x|).
\]
Since the parameter bound is uniform along all interpolation paths, the
constants \(K_x\) and \(K_\vartheta\) are independent of \(n\), \(k\),
\(\eta\), and \(x\).
\end{proof}

By Proposition~\ref{prop:network-derivative-bounds}, the uniform state-
and parameter-derivative estimates required below follow from
Assumption~\ref{ass:nn-architecture}.

\begin{theorem}[Cost upper bound under parameter-space soft update]
\label{thm:parameter-space-soft-update-upper-bound}
Suppose that Assumptions~\ref{ass:basic-regularity} and
\ref{ass:nn-architecture} hold. Fix an iteration index and suppress it
from the notation. Let
\[
    u_t:=\pi_{\bar\theta_u(t)}(t,X_t),
    \qquad
    u_t^\eta:=\pi_{\theta_u^\eta(t)}(t,X_t^\eta),
    \qquad 0\leq\eta\leq1,
\]
where \(X\) and \(X^\eta\) are the corresponding closed-loop state
processes. Let \((p,q)\) and \((P,Q)\) be the exact first- and
second-order adjoint processes associated with the reference pair
\((X,u)\). Along the reference trajectory, denote the corrected Hamiltonian by
\(\mathcal H_t^u\), as in \eqref{eq:Hu-density}.
Then there exists a constant \(C>0\), independent of the interpolation
parameter \(\eta\), the parameter displacement \(\delta\Theta\), and the iteration
index, such that, for every \(\eta\in[0,1]\),
\[
\begin{aligned}
    J\!\left(\pi_{\bar\Theta_u+\eta\delta\Theta}\right)
    -
    J\!\left(\pi_{\bar\Theta_u}\right)
    \leq
    \mathbb E\int_0^T
    \left[
        \mathcal H_t^u(u_t^\eta)
        -
        \mathcal H_t^u(u_t)
    \right]dt
    +
    C\eta^2|\delta\Theta|_\Delta^2 .
\end{aligned}
\]
\end{theorem}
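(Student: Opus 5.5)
The plan is to reduce the statement to the process-level estimate of Theorem~\ref{thm:cost-upper-bound-soft-update}. The key observation is that the perturbed closed-loop control
\[
    w_t:=u_t^\eta=\pi_{\theta_u^\eta(t)}(t,X_t^\eta)
\]
is itself an admissible open-loop control. It is progressively measurable because \(X^\eta\) is adapted and \(\Pi_\vartheta\) is continuous, and it is \(U\)-valued by (NN.3). The closed-loop state \(X^\eta\) then coincides with the open-loop state \(X^{w}\). Before that, we note that the closed-loop equations are well posed: \(x\mapsto b(t,x,\Pi_\vartheta(x))\) and \(x\mapsto\sigma(t,x,\Pi_\vartheta(x))\) are globally Lipschitz with constant \(L(1+K_x)\), by Proposition~\ref{prop:network-derivative-bounds} and Assumption~\ref{ass:basic-regularity}(iv). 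Therefore \(J(\pi_{\bar\Theta_u+\eta\delta\Theta})=J(w)\) and \(J(\pi_{\bar\Theta_u})=J(u)\). The right-hand side of the claim is exactly \(\mathbb E\int_0^T[\mathcal H_t^u(w_t)-\mathcal H_t^u(u_t)]dt\).

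\emph{Step 1 (general-increment version of the cost bound).} I would verify that the proof of Theorem~\ref{thm:cost-upper-bound-soft-update} never used the specific form \(u^\eta-u=\eta(v-u)\). It only used the increment \(e_t:=w_t-u_t\) through the bounds \(|\Delta b|+|\Delta\sigma|\le L|e_t|\) and \(|\Delta f|\le L(1+|X^u_t|^2)|e_t|\), and through the estimates of Lemmas~\ref{lem:state-variation} and~\ref{lem:linear-variation-residual}. Those lemmas, rerun with \(\eta d\) replaced by \(e\), give \(\|\delta X\|_{\mathcal S^p}\le C_p\|(\int_0^T|e_t|^2dt)^{1/2}\|_{L^p}\) and \(\|r\|_{\mathcal S^2}\le C\|e\|_{\mathbb H^4}^2\). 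The same Taylor expansion and the same It\^o-formula computations with \(p^u\) and \(P^u\) then yield
\[
    J(w)-J(u)\le\mathbb E\int_0^T[\mathcal H_t^u(w_t)-\mathcal H_t^u(u_t)]dt+C\|w-u\|_{\mathbb H^4}^2,
\]
with \(C\) depending only on \(T,L\), the dimensions, \(x_0\) and the bound on \(U\). This step I expect to be mostly bookkeeping: every \(\eta^j\rho_4^{j/2}\) in that proof simply becomes a power of \(\|e\|_{\mathbb H^4}\), and boundedness of \(U\) absorbs the higher powers.

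\emph{Step 2 (closed-loop increment estimate).} This is the main obstacle, because \(e\) depends on \(X^\eta\), which in turn depends on \(e\). I would split
\[
    e_t=\bigl[\Pi_{\theta^\eta_u(t)}(X^\eta_t)-\Pi_{\theta^\eta_u(t)}(X_t)\bigr]+\bigl[\Pi_{\theta^\eta_u(t)}(X_t)-\Pi_{\bar\theta_u(t)}(X_t)\bigr].
\]
By Proposition~\ref{prop:network-derivative-bounds} and the mean value theorem along the segment \(\bar\theta_u(t)+s\eta\,\delta\Theta(t)\), which stays in the \(R\)-ball, this gives
\[
    |e_t|\le K_x|\delta X_t|+K_\vartheta\eta(1+|X_t|)|\delta\Theta(t)|.
\]
Substituting this into the equation for \(\delta X=X^\eta-X\) shows that its drift and diffusion differences are bounded by \(C(|\delta X_t|+\eta(1+|X_t|)|\delta\Theta(t)|)\). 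The BDG--Gronwall argument of Lemma~\ref{lem:state-variation} with \(p=4\) then gives
\[
    \mathbb E\sup_{t\le T}|\delta X_t|^4\le C\eta^4\,\mathbb E\Bigl(\int_0^T(1+|X_t|)^2|\delta\Theta(t)|^2dt\Bigr)^2\le C\eta^4|\delta\Theta|_\Delta^4.
\]
The last inequality holds because \(\delta\Theta\) is deterministic, so the integral is bounded by \((1+\sup_t|X_t|)^2|\delta\Theta|_\Delta^2\), and Lemma~\ref{lem:state-moment-estimate} (with \(m=4\)) controls the remaining moment.

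\emph{Step 3 (conclusion).} Feeding Step~2 back into the splitting of \(e\) gives \(\mathbb E(\int_0^T|e_t|^2dt)^2\le C\eta^4|\delta\Theta|_\Delta^4\), that is, \(\|w-u\|_{\mathbb H^4}^2\le C\eta^2|\delta\Theta|_\Delta^2\). Inserting this into Step~1 proves the claim. All constants depend only on the coefficient bounds, on \(K_x,K_\vartheta\) (which are uniform by (NN.4)), and on the uniform state moments. They are therefore independent of \(\eta\), \(\delta\Theta\) and the iteration index.
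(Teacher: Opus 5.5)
Your proposal is correct. Its two analytic ingredients are the same as the paper's. Your Step~2 is exactly the paper's opening estimate: the mean-value bound $|\delta u_t|\le K_x|\delta X_t|+K_\vartheta\eta(1+|X_t|)|\delta\Theta(t)|$, followed by BDG--Gronwall, gives $\mathbb E\sup_t|\delta X_t|^p+\mathbb E(\int_0^T|\delta u_t|^2dt)^{p/2}\le C_p\eta^p|\delta\Theta|_\Delta^p$. Your Step~1 is the second-order expansion with adjoint duality.

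The difference is in how the argument is organized. The paper does not cite Theorem~\ref{thm:cost-upper-bound-soft-update}. Instead it re-runs the entire argument inline for the closed-loop increment: the linearization $\xi$, the remainder $r=\delta X-\xi$, the state Taylor expansion, and the two It\^o dualities with $(p,q)$ and $(P,Q)$. Every bound is expressed through $s_{\eta,\delta\Theta}=\eta|\delta\Theta|_\Delta$, and higher powers of $s$ are absorbed using the boundedness of $|\delta\Theta|_\Delta$ that (NN.4) provides.

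You instead factor the proof through the process-level theorem. In fact your Step~1 needs no re-audit of that proof. Since $w\in\mathcal U$ and the remainder constant in Theorem~\ref{thm:cost-upper-bound-soft-update} is uniform in $u$, $v$ and $\eta$, you can take $v:=w$ and $\eta=1$ there. This gives $J(w)-J(u)\le\mathbb E\int_0^T[\mathcal H_t^u(w_t)-\mathcal H_t^u(u_t)]\,dt+C\|w-u\|_{\mathbb H^4}^2$ verbatim, and it also shows that the process-level theorem is equivalent to its own $\eta=1$ case.

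What your route buys is modularity. The network structure enters only through the single estimate $\|u^\eta-u\|_{\mathbb H^4}\le C\eta|\delta\Theta|_\Delta$. Higher-order remainders are absorbed by the compactness of $U$ inside the cited theorem, so (NN.4) is used only to make $K_x$ and $K_\vartheta$ uniform. What the paper's inline version buys is a self-contained proof that does not ask the reader to check that nothing in the earlier argument depended on the convex-interpolation form of the increment. The cost is that it duplicates most of the process-level computation.
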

\begin{proof}
Set
\[
    \delta X_t:=X_t^\eta-X_t,
    \qquad
    \delta u_t:=u_t^\eta-u_t,
    \qquad
    s_{\eta,\delta\Theta}:=\eta|\delta\Theta|_\Delta.
\]
By Proposition~\ref{prop:network-derivative-bounds} and the mean value
theorem,
\[
    |\delta u_t|
    \leq
    K_x|\delta X_t|
    +K_\vartheta\eta(1+|X_t|)|\delta\Theta(t)|.
\]
Combining this with the model Lipschitz bounds, BDG and H\"older give,
for \(p=2,4\),
\[
\begin{aligned}
 \mathbb E\sup_{r\leq t}|\delta X_r|^p
 &\leq C_p\int_0^t\mathbb E\sup_{r\leq s}|\delta X_r|^p\,ds\\
 &\quad+C_p\eta^p\mathbb E\left[
    (1+\sup_{s\leq T}|X_s|)^p
    \left(\int_0^T|\delta\Theta(s)|^2\,ds\right)^{p/2}\right].
\end{aligned}
\]
The parameter direction is deterministic in this population estimate;
Lemma~\ref{lem:state-moment-estimate} therefore bounds the second line by
\(C_p\eta^p|\delta\Theta|_\Delta^p\). Gronwall's inequality and the
preceding bound on \(\delta u\) yield
\[
    \mathbb E\left[\sup_{0\leq t\leq T}|\delta X_t|^p\right]
    +
    \mathbb E\left[\left(\int_0^T|\delta u_t|^2\,dt\right)^{p/2}\right]
    \leq C_p s_{\eta,\delta\Theta}^p.
\]
Moreover, Assumption~\ref{ass:nn-architecture}\textup{(NN.4)} implies
that \(|\delta\Theta|_\Delta\) is uniformly bounded. Hence \(s_{\eta,\delta\Theta}\leq C\)
uniformly in the iteration index, and consequently
\(s_{\eta,\delta\Theta}^3+s_{\eta,\delta\Theta}^4\leq Cs_{\eta,\delta\Theta}^2\).

Define the coefficient increments along the reference state by
\[
\begin{aligned}
    \Delta b_t&:=b(t,X_t,u_t^\eta)-b(t,X_t,u_t),\\
    \Delta\sigma_t^i&:=\sigma^i(t,X_t,u_t^\eta)
    -\sigma^i(t,X_t,u_t),\qquad i=1,\ldots,d,\\
    \Delta f_t&:=f(t,X_t,u_t^\eta)-f(t,X_t,u_t).
\end{aligned}
\]
Then, for \(p=2,4\),
\[
    \mathbb E\left[
        \left(
            \int_0^T
            \left(
                |\Delta b_t|^2
                +\sum_{i=1}^d|\Delta\sigma_t^i|^2
            \right)dt
        \right)^{p/2}
    \right]
    \leq C_p s_{\eta,\delta\Theta}^p.
\]

Let
\[
    A_t:=b_x(t,X_t,u_t),
    \qquad
    C_t^i:=\sigma_x^i(t,X_t,u_t),
\]
and let \(\xi\) solve
\[
\begin{cases}
    d\xi_t
    =
    (A_t\xi_t+\Delta b_t)\,dt
    +
    \displaystyle\sum_{i=1}^d
    (C_t^i\xi_t+\Delta\sigma_t^i)\,dW_t^i,\\
    \xi_0=0.
\end{cases}
\]
The linear SDE estimate gives
\[
    \mathbb E\left[\sup_{0\leq t\leq T}|\xi_t|^p\right]
    \leq C_p s_{\eta,\delta\Theta}^p,
    \qquad p=2,4.
\]
Furthermore, \(r:=\delta X-\xi\) satisfies a linear SDE whose forcing
terms are bounded by
\[
    C\bigl(|\delta X_t|^2+|\delta u_t|\,|\delta X_t|\bigr).
\]
For the mixed forcing term, Cauchy--Schwarz gives
\[
 \mathbb E\int_0^T|\delta u_t|^2|\delta X_t|^2\,dt
 \leq \|\delta u\|_{\mathbb H^4}^2\|\delta X\|_{\mathcal S^4}^2
 \leq Cs_{\eta,\delta\Theta}^4.
\]
The pure state term is bounded by
\(T\|\delta X\|_{\mathcal S^4}^4\).
The \(L^2\) estimate for the linear remainder equation consequently gives
\[
    \mathbb E\left[\sup_{0\leq t\leq T}|r_t|^2\right]
    \leq C s_{\eta,\delta\Theta}^4.
\]

Taylor expansion with respect to the state variable, together with
\(\delta X=\xi+r\), now gives
\[
\begin{aligned}
    J\!\left(\pi_{\bar\Theta_u+\eta\delta\Theta}\right)
    -J\!\left(\pi_{\bar\Theta_u}\right)
    &=
    \mathbb E\!\left[\Phi_x(X_T)^\top\xi_T\right]
    +\mathbb E\int_0^T
    \left[f_x(t,X_t,u_t)^\top\xi_t+\Delta f_t\right]dt\\
    &\quad
    +\frac12\mathbb E\!\left[
        \xi_T^\top\Phi_{xx}(X_T)\xi_T
    \right]
    +\frac12\mathbb E\int_0^T
        \xi_t^\top f_{xx}(t,X_t,u_t)\xi_t\,dt
    +R_0^{\eta,\delta\Theta},
\end{aligned}
\]
where
\[
\begin{aligned}
    R_0^{\eta,\delta\Theta}
    ={}&
    \mathbb E\Big[
        \Phi(X_T^\eta)-\Phi(X_T)
        -\Phi_x(X_T)^\top\xi_T
        -\frac12\xi_T^\top\Phi_{xx}(X_T)\xi_T
    \Big]\\
    &+\mathbb E\int_0^T\Big[
        f(t,X_t^\eta,u_t^\eta)-f(t,X_t,u_t^\eta)
        -f_x(t,X_t,u_t)^\top\xi_t\\
    &\hspace{38mm}
        -\frac12\xi_t^\top f_{xx}(t,X_t,u_t)\xi_t
    \Big]dt
\end{aligned}
\]
satisfies
\[
    |R_0^{\eta,\delta\Theta}|
    \leq Cs_{\eta,\delta\Theta}^2.
\]
The terms linear in \(r\), the replacement of \(\delta X\) by \(\xi\)
in the quadratic forms, and the state Taylor remainders are bounded by
\[
\begin{aligned}
    C\Bigl(
        \|r\|_{\mathcal S^2}
        +\|\xi\|_{\mathcal S^2}\|r\|_{\mathcal S^2}
        +\|r\|_{\mathcal S^2}^2
        +\|\delta X\|_{\mathcal S^4}^3
    \Bigr)
    &\leq
    C\bigl(
        s_{\eta,\delta\Theta}^2
        +s_{\eta,\delta\Theta}^3
        +s_{\eta,\delta\Theta}^4
    \bigr)\\
    &\leq Cs_{\eta,\delta\Theta}^2.
\end{aligned}
\]
Here the linear terms use the linear-growth bounds on \(\Phi_x\)
and \(f_x\), together with the uniform state moment estimate.

For the change of \(f_x\) in the control variable, timewise
Cauchy--Schwarz and H\"older's inequality with exponents \(4,4,2\)
give
\[
\begin{aligned}
    &\mathbb E\int_0^T
    \bigl|
        f_x(t,X_t,u_t^\eta)-f_x(t,X_t,u_t)
    \bigr|\,|\delta X_t|\,dt\\
    &\qquad\leq
    C\left\|
        1+\sup_{0\leq t\leq T}|X_t|
    \right\|_{L^4}
    \|\delta u\|_{\mathbb H^4}
    \|\delta X\|_{\mathcal S^2}
    \leq Cs_{\eta,\delta\Theta}^2.
\end{aligned}
\]
The corresponding change of \(f_{xx}\) contributes at most
\[
    2L\mathbb E\int_0^T|\delta X_t|^2\,dt
    \leq Cs_{\eta,\delta\Theta}^2.
\]
These estimates establish the stated bound on
\(R_0^{\eta,\delta\Theta}\).

Applying It\^o's formula to \(p_t^\top\xi_t\) and using the
first-order adjoint equation gives
\[
\begin{aligned}
    d(p_t^\top\xi_t)
    &=
    \left[
        -f_x(t,X_t,u_t)^\top\xi_t
        +p_t^\top\Delta b_t
        +\sum_{i=1}^d(q_t^i)^\top\Delta\sigma_t^i
    \right]dt\\
    &\quad+
    \sum_{i=1}^d
    \left[
        (q_t^i)^\top\xi_t
        +p_t^\top(C_t^i\xi_t+\Delta\sigma_t^i)
    \right]dW_t^i.
\end{aligned}
\]
The stochastic integrand satisfies
\[
\begin{aligned}
    &\mathbb E\left[
        \left(
            \int_0^T\sum_{i=1}^d
            \left|
                (q_t^i)^\top\xi_t
                +p_t^\top(C_t^i\xi_t+\Delta\sigma_t^i)
            \right|^2dt
        \right)^{1/2}
    \right]\\
    &\qquad\leq
    C\bigl(
        \|q\|_{\mathbb H^2}+\|p\|_{\mathcal S^2}
    \bigr)\|\xi\|_{\mathcal S^2}\\
    &\qquad\quad+
    C\|p\|_{\mathcal S^2}
    \left(
        \mathbb E\int_0^T
        \sum_{i=1}^d|\Delta\sigma_t^i|^2\,dt
    \right)^{1/2}
    \leq Cs_{\eta,\delta\Theta}<\infty.
\end{aligned}
\]
By the BDG inequality, the stochastic integral has an integrable
supremum and is therefore a true martingale with zero expectation.
Integrating from \(0\) to \(T\), using
\(p_T=\Phi_x(X_T)\) and \(\xi_0=0\), and adding the term involving
\(\Delta f\), we obtain
\[
\begin{aligned}
    &\mathbb E\!\left[\Phi_x(X_T)^\top\xi_T\right]
    +\mathbb E\int_0^T
        \left[
            f_x(t,X_t,u_t)^\top\xi_t+\Delta f_t
        \right]dt\\
    &\qquad=
    \mathbb E\int_0^T
    \left[
        H(t,X_t,p_t,q_t,u_t^\eta)
        -H(t,X_t,p_t,q_t,u_t)
    \right]dt.
\end{aligned}
\]

Next, applying It\^o's formula to
\(\frac12\xi_t^\top P_t\xi_t\), using the second-order adjoint
equation and the symmetry of \(P_t\) and \(Q_t^i\), gives
\[
\begin{aligned}
    d\left(\frac12\xi_t^\top P_t\xi_t\right)
    &=
    \Bigg[
        -\frac12\xi_t^\top
        H_{xx}(t,X_t,p_t,q_t,u_t)\xi_t
        +\xi_t^\top P_t\Delta b_t\\
    &\qquad+
        \sum_{i=1}^d
        \xi_t^\top\bigl[(C_t^i)^\top P_t+Q_t^i\bigr]
        \Delta\sigma_t^i\\
    &\qquad+
        \frac12\sum_{i=1}^d
        (\Delta\sigma_t^i)^\top P_t\Delta\sigma_t^i
    \Bigg]dt\\
    &\quad+
    \sum_{i=1}^d
    \left[
        \frac12\xi_t^\top Q_t^i\xi_t
        +\xi_t^\top P_t(C_t^i\xi_t+\Delta\sigma_t^i)
    \right]dW_t^i.
\end{aligned}
\]
Timewise Cauchy--Schwarz and H\"older's inequality give
\[
\begin{aligned}
    &\mathbb E\left[
        \left(
            \int_0^T\sum_{i=1}^d
            \left|
                \frac12\xi_t^\top Q_t^i\xi_t
                +\xi_t^\top P_t(C_t^i\xi_t+\Delta\sigma_t^i)
            \right|^2dt
        \right)^{1/2}
    \right]\\
    &\qquad\leq
    C\bigl(
        \|P\|_{\mathcal S^2}+\|Q\|_{\mathbb H^2}
    \bigr)\|\xi\|_{\mathcal S^4}^2\\
    &\qquad\quad+
    C\|P\|_{\mathcal S^2}\|\xi\|_{\mathcal S^4}
    \left[
        \mathbb E\left(
            \int_0^T
            \sum_{i=1}^d|\Delta\sigma_t^i|^2\,dt
        \right)^2
    \right]^{1/4}
    \leq Cs_{\eta,\delta\Theta}^2<\infty.
\end{aligned}
\]
The BDG inequality therefore also makes this stochastic integral a
true martingale with zero expectation.
Integrating and taking expectations, using
\(P_T=\Phi_{xx}(X_T)\) and \(\xi_0=0\), yields
\[
\begin{aligned}
    &\frac12\mathbb E\!\left[
        \xi_T^\top\Phi_{xx}(X_T)\xi_T
    \right]
    +\frac12\mathbb E\int_0^T
        \xi_t^\top f_{xx}(t,X_t,u_t)\xi_t\,dt\\
    &\qquad=
    \frac12\mathbb E\int_0^T
        \sum_{i=1}^d
        (\Delta\sigma_t^i)^\top P_t\Delta\sigma_t^i\,dt
    +R_P^{\eta,\delta\Theta},
\end{aligned}
\]
where
\[
\begin{aligned}
    R_P^{\eta,\delta\Theta}
    =\mathbb E\int_0^T\Bigg[
        &\xi_t^\top P_t\Delta b_t
        +\sum_{i=1}^d
        \xi_t^\top\bigl[(C_t^i)^\top P_t+Q_t^i\bigr]
        \Delta\sigma_t^i\\
        &-\frac12\xi_t^\top
        \bigl[
            H_{xx}(t,X_t,p_t,q_t,u_t)
            -f_{xx}(t,X_t,u_t)
        \bigr]\xi_t
    \Bigg]dt.
\end{aligned}
\]

The uniform boundedness of \(C_t^i\), timewise
Cauchy--Schwarz, and H\"older's inequality with exponents \(2,4,4\)
give
\[
\begin{aligned}
    &\mathbb E\int_0^T
    \Bigg[
        |\xi_t||P_t|
        \left(
            |\Delta b_t|
            +\sum_{i=1}^d|C_t^i||\Delta\sigma_t^i|
        \right)
        +\sum_{i=1}^d
        |\xi_t||Q_t^i||\Delta\sigma_t^i|
    \Bigg]dt\\
    &\qquad\leq
    C\bigl(
        \|P\|_{\mathcal S^2}+\|Q\|_{\mathbb H^2}
    \bigr)\|\xi\|_{\mathcal S^4}\\
    &\qquad\quad\times
    \left[
        \mathbb E\left(
            \int_0^T
            \left[
                |\Delta b_t|^2
                +\sum_{i=1}^d|\Delta\sigma_t^i|^2
            \right]dt
        \right)^2
    \right]^{1/4}
    \leq Cs_{\eta,\delta\Theta}^2.
\end{aligned}
\]
Moreover, the uniform boundedness of \(b_{xx}\) and
\(\sigma_{xx}\) implies
\[
    \left|
        H_{xx}(t,X_t,p_t,q_t,u_t)
        -f_{xx}(t,X_t,u_t)
    \right|
    \leq C\bigl(|p_t|+|q_t|\bigr).
\]
Consequently,
\[
\begin{aligned}
    &\mathbb E\int_0^T
    \left|
        \xi_t^\top
        \bigl[
            H_{xx}(t,X_t,p_t,q_t,u_t)
            -f_{xx}(t,X_t,u_t)
        \bigr]\xi_t
    \right|dt\\
    &\qquad\leq
    C\bigl(
        \|p\|_{\mathcal S^2}+\|q\|_{\mathbb H^2}
    \bigr)\|\xi\|_{\mathcal S^4}^2
    \leq Cs_{\eta,\delta\Theta}^2.
\end{aligned}
\]
Thus
\[
    |R_P^{\eta,\delta\Theta}|
    \leq Cs_{\eta,\delta\Theta}^2.
\]

Substituting the two adjoint identities into the cost expansion
and using the definition of \(\mathcal H_t^u\), we obtain
\[
\begin{aligned}
    J\!\left(\pi_{\bar\Theta_u+\eta\delta\Theta}\right)
    -J\!\left(\pi_{\bar\Theta_u}\right)
    &=
    \mathbb E\int_0^T
    \left[
        \mathcal H_t^u(u_t^\eta)
        -\mathcal H_t^u(u_t)
    \right]dt
    +R^{\eta,\delta\Theta},
\end{aligned}
\]
where
\[
\begin{aligned}
    R^{\eta,\delta\Theta}
    &=R_0^{\eta,\delta\Theta}+R_P^{\eta,\delta\Theta},\\
    |R^{\eta,\delta\Theta}|
    &\leq Cs_{\eta,\delta\Theta}^2
    =C\eta^2|\delta\Theta|_\Delta^2.
\end{aligned}
\]
The constant \(C\) depends only on the model constants, \(T\), the
fixed network architecture, and the uniform parameter bound in
Assumption~\ref{ass:nn-architecture}, and is therefore independent of
\(\eta\), \(\delta\Theta\), and the iteration index.
Taking the upper bound for \(R^{\eta,\delta\Theta}\) proves the result.
\end{proof}

Theorem~\ref{thm:parameter-space-soft-update-upper-bound} shows that the
cost variation along a parameter-space interpolation is controlled by a
corrected-Hamiltonian variation plus a second-order remainder. We next
make the order separation in this estimate explicit. This is important
because the soft-update parameter \(\eta\) enters the estimate through
different mechanisms: the Hamiltonian variation contains the leading
first-order response along the parameter direction, whereas both the
second-order correction term and the residual term are of quadratic order
in the parameter displacement. 
The result below makes this order separation explicit.

\begin{corollary}[Order separation under parameter-space soft update]
\label{cor:parameter-space-order-separation}
Suppose that Assumptions~\ref{ass:basic-regularity} and
\ref{ass:nn-architecture} hold, and retain the notation of
Theorem~\ref{thm:parameter-space-soft-update-upper-bound}.
Define
\[
\begin{aligned}
    \Delta_{\mathcal H}^{\eta}
    &:=
    \mathbb E\int_0^T
    \left[
        \mathcal H_t^u(u_t^\eta)
        -
        \mathcal H_t^u(u_t)
    \right]dt,\\
    \Delta_H^{\eta}
    &:=
    \mathbb E\int_0^T
    \left[
        H(t,X_t,p_t,q_t,u_t^\eta)
        -
        H(t,X_t,p_t,q_t,u_t)
    \right]dt,\\
    \Delta_P^{\eta}
    &:=
    \frac12
    \mathbb E\int_0^T
    \sum_{\ell=1}^{d}
    \bigl[
        \sigma^\ell(t,X_t,u_t^\eta)
        -
        \sigma^\ell(t,X_t,u_t)
    \bigr]^\top
    P_t
    \bigl[
        \sigma^\ell(t,X_t,u_t^\eta)
        -
        \sigma^\ell(t,X_t,u_t)
    \bigr]dt .
\end{aligned}
\]
Then
\[
    \Delta_{\mathcal H}^{\eta}
    =
    \Delta_H^{\eta}
    +
    \Delta_P^{\eta}.
\]
Moreover, there exists a constant \(C>0\), independent of
\(\eta\), \(\delta\Theta\), and the iteration index, such that
\[
    |\Delta_H^{\eta}|
    \le
    C\eta |\delta\Theta|_\Delta,
    \qquad
    |\Delta_P^{\eta}|
    \le
    C\eta^2 |\delta\Theta|_\Delta^2 .
\]
Consequently,
\[
    |\Delta_{\mathcal H}^{\eta}|
    \le
    C\eta |\delta\Theta|_\Delta
    +
    C\eta^2 |\delta\Theta|_\Delta^2 .
\]
Combining this with
Theorem~\ref{thm:parameter-space-soft-update-upper-bound}, the cost
variation admits the scale separation
\[
\begin{aligned}
    J\!\left(\pi_{\bar\Theta_u+\eta\delta\Theta}\right)
    -
    J\!\left(\pi_{\bar\Theta_u}\right)
    \le
    \Delta_{\mathcal H}^{\eta}
    +
    C\eta^2|\delta\Theta|_\Delta^2,
\end{aligned}
\]
where the leading Hamiltonian variation is of order
\(O(\eta |\delta\Theta|_\Delta)\), while the corrected-Hamiltonian quadratic term and
the upper-bound remainder are both of order
\(O(\eta^2 |\delta\Theta|_\Delta^2)\).
\end{corollary}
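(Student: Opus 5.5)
The identity \(\Delta_{\mathcal H}^{\eta}=\Delta_H^{\eta}+\Delta_P^{\eta}\) follows at once from the definition \eqref{eq:corrected-H-function}. At the reference action the correction vanishes, so \(\mathcal H_t^u(u_t)=H(t,X_t,p_t,q_t,u_t)\). At \(u_t^\eta\) the corrected Hamiltonian is the ordinary one plus the quadratic diffusion term. Taking \(\mathbb E\int_0^T\) of the difference gives the decomposition, and integrability is guaranteed by the moment estimates of Lemmas~\ref{lem:state-moment-estimate}--\ref{lem:second-adjoint-moment} together with compactness of \(U\). The final cost inequality is then just Theorem~\ref{thm:parameter-space-soft-update-upper-bound}, combined with the triangle inequality applied to \(\Delta_H^\eta\) and \(\Delta_P^\eta\). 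All the work therefore lies in the two bounds.

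Both bounds rest on a pointwise estimate of the control increment \(\delta u_t:=u_t^\eta-u_t\) along the reference trajectory. Here I will take \(u_t^\eta\) to be evaluated at the reference state \(X_t\), i.e. \(u_t^\eta=\pi_{\theta_u^\eta(t)}(t,X_t)\). This is the reading under which \(\mathcal H_t^u(u_t^\eta)\) appears with the reference adjoints; otherwise the same argument goes through after adding the \(K_x|\delta X_t|\) term from the proof of Theorem~\ref{thm:parameter-space-soft-update-upper-bound}. By the mean value theorem in the parameter variable along the segment \(\bar\theta_{u,k}+s\eta\,\delta\Theta_k\), \(s\in[0,1]\), which stays in the ball of radius \(R\) by convexity, Proposition~\ref{prop:network-derivative-bounds} gives
\[
|\delta u_t|\le K_\vartheta\,\eta\,(1+|X_t|)\,|\delta\Theta(t)|.
\]
Because \(\delta\Theta\) is deterministic, integrating in time and using Lemma~\ref{lem:state-moment-estimate} yields \(\mathbb E\int_0^T|\delta u_t|^2(1+|X_t|)^{m}\,dt\le C\eta^2|\delta\Theta|_\Delta^2\) for any fixed \(m\). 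This follows since \(\mathbb E[(1+|X_t|)^{m+2}]\le C\) uniformly in \(t\). The same argument with fourth powers controls \(\mathbb E(\int_0^T|\delta u_t|^2dt)^2\le C\eta^4|\delta\Theta|_\Delta^4\).

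For \(\Delta_H^\eta\), I will use the control-Lipschitz conditions of Assumption~\ref{ass:basic-regularity}(iv). These give
\[
|H(t,X_t,p_t,q_t,u^\eta_t)-H(t,X_t,p_t,q_t,u_t)|\le C\bigl(|p_t|+|q_t|+1+|X_t|^2\bigr)|\delta u_t|.
\]
Cauchy--Schwarz over \(\Omega\times[0,T]\) then bounds \(|\Delta_H^\eta|\) by \(C(\|p\|_{\mathcal S^2}+\|q\|_{\mathbb H^2}+1+\|X\|_{\mathcal S^4}^2)\) times a weighted \(L^2\) norm of \(\delta u\). The main technical point is that the weight \((1+|X_t|)\) in \(\delta u\) multiplies \(p\), \(q\), and \(|X|^2\). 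I would first handle the \(q\)-term, which is the only one not controlled in \(\mathcal S\)-norm, by Hölder with exponents \(2,4,4\). This uses \(\|q\|_{\mathbb H^4}\) from Lemma~\ref{lem:first-adjoint-moment}, \(\|1+\sup_t|X_t|\|_{L^8}\), and the deterministic \(|\delta\Theta|_\Delta\). The \(p\)- and \(X\)-terms follow by the same Hölder pattern with \(\mathcal S^4\) and \(\mathcal S^8\) bounds. The result is \(|\Delta_H^\eta|\le C\eta|\delta\Theta|_\Delta\).

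For \(\Delta_P^\eta\), the Lipschitz bound on \(\sigma\) gives \(|\Delta_P^\eta|\le C\,\mathbb E\int_0^T|P_t|\,|\delta u_t|^2\,dt\), and hence
\[
|\Delta_P^\eta|\le C\eta^2\,\mathbb E\Bigl[\sup_t|P_t|\int_0^T(1+|X_t|)^2|\delta\Theta(t)|^2dt\Bigr].
\]
Splitting by Hölder as \(\|P\|_{\mathcal S^2}\) times the \(L^2\)-norm of \((1+\sup_t|X_t|)^2\), and pulling out the deterministic \(\int_0^T|\delta\Theta(t)|^2dt\), gives \(C\eta^2|\delta\Theta|_\Delta^2\). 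Lemmas~\ref{lem:state-moment-estimate} and \ref{lem:second-adjoint-moment} are used here. All constants depend only on the model data, \(T\), the architecture and \(R\), so they are independent of \(\eta\), \(\delta\Theta\) and the iteration index. Adding the two bounds and invoking Theorem~\ref{thm:parameter-space-soft-update-upper-bound} completes the argument.
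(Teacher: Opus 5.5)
Your skeleton matches the paper's proof. You get the identity from the definition of \(\mathcal H_t^u\), bound \(\Delta_H^\eta\) by control-Lipschitz estimates plus Cauchy--Schwarz, bound \(\Delta_P^\eta\) by \(\|P\|_{\mathcal S^2}\) paired with \(\bigl(\mathbb E(\int_0^T|\delta u_t|^2dt)^2\bigr)^{1/2}\), and quote the cost bound from Theorem~\ref{thm:parameter-space-soft-update-upper-bound}. The one thing that must be corrected is your primary reading \(u_t^\eta=\pi_{\theta_u^\eta(t)}(t,X_t)\). The statement retains the Theorem's notation, where \(u_t^\eta:=\pi_{\theta_u^\eta(t)}(t,X_t^\eta)\) is the control actually applied along the perturbed closed loop. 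The reference state and adjoints enter only through \(\mathcal H_t^u\), which is then evaluated at this applied action. The Theorem's proof genuinely needs this, because \(u^\eta\) must be the control driving \(X^\eta\): it appears in \(f(t,X_t^\eta,u_t^\eta)-f(t,X_t,u_t^\eta)\) and in the forcing of the remainder equation for \(r\). Under your reading, \(\Delta_{\mathcal H}^\eta\) differs from the Theorem's quantity by \(\mathbb E\int_0^T[\mathcal H_t^u(\pi_{\theta_u^\eta(t)}(t,X_t^\eta))-\mathcal H_t^u(\pi_{\theta_u^\eta(t)}(t,X_t))]\,dt\). For action-differentiable coefficients, its leading part is the feedback term \(\mathbb E\int_0^T\partial_aH_t^u(u_t)\,D_x\pi_{\bar\theta_u(t)}(t,X_t)\,\delta X_t\,dt\). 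This is in general of order \(\eta|\delta\Theta|_\Delta\), not \(\eta^2|\delta\Theta|_\Delta^2\). So your claim that ``the final cost inequality is then just the Theorem'' would fail for your \(\Delta_{\mathcal H}^\eta\).

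The fix is the one you mention in passing, and it is what the paper uses. With the correct definition, \(|\delta u_t|\le K_x|\delta X_t|+K_\vartheta\eta(1+|X_t|)|\delta\Theta(t)|\). The extra term is controlled by \(\|\delta X\|_{\mathcal S^4}\le C\eta|\delta\Theta|_\Delta\) from the Theorem's proof. This yields \(\mathbb E\int_0^T|\delta u_t|^2dt\le C\eta^2|\delta\Theta|_\Delta^2\) and \(\mathbb E(\int_0^T|\delta u_t|^2dt)^2\le C\eta^4|\delta\Theta|_\Delta^4\), which the paper simply quotes from that proof. The rest of your argument then goes through unchanged. Your weighted H\"older refinement for the \(q\)-term is unnecessary. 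Plain Cauchy--Schwarz already pairs \(\|p\|_{\mathcal S^2}+\|q\|_{\mathbb H^2}+1+\|X\|_{\mathcal S^4}^2\) with the unweighted \(L^2(\Omega\times[0,T])\)-norm of \(\delta u\), and that is how the paper concludes.
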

\begin{proof}
The identity
\[
    \Delta_{\mathcal H}^{\eta}
    =
    \Delta_H^{\eta}+\Delta_P^{\eta}
\]
follows immediately from the definition of \(\mathcal H_t^u\). Denote
\(\delta u_t:=u_t^\eta-u_t\). The state--control estimate established in
the proof of Theorem~\ref{thm:parameter-space-soft-update-upper-bound}
gives
\[
    \mathbb E\int_0^T|\delta u_t|^2\,dt
    \leq C\eta^2|\delta\Theta|_\Delta^2,
    \qquad
    \mathbb E\left[
        \left(\int_0^T|\delta u_t|^2\,dt\right)^2
    \right]
    \leq C\eta^4|\delta\Theta|_\Delta^4.
\]

By the control-Lipschitz assumptions on \(b\), \(\sigma\), and \(f\),
\[
\begin{aligned}
    &|b(t,X_t,u_t^\eta)-b(t,X_t,u_t)|
    +|\sigma(t,X_t,u_t^\eta)-\sigma(t,X_t,u_t)|
    \leq C|\delta u_t|,\\
    &|f(t,X_t,u_t^\eta)-f(t,X_t,u_t)|
    \leq C(1+|X_t|^2)|\delta u_t|.
\end{aligned}
\]
Consequently, the Cauchy--Schwarz inequality and the uniform moment
estimates for \(X\), \(p\), and \(q\) yield
\[
\begin{aligned}
    |\Delta_H^\eta|
    &\leq
    C\mathbb E\int_0^T
    \bigl(1+|X_t|^2+|p_t|+|q_t|\bigr)|\delta u_t|\,dt\\
    &\leq
    C\left(
        \mathbb E\int_0^T|\delta u_t|^2\,dt
    \right)^{1/2}
    \leq C\eta|\delta\Theta|_\Delta.
\end{aligned}
\]

For the second-order correction, the control-Lipschitz continuity of
\(\sigma\) gives
\[
\begin{aligned}
    |\Delta_P^\eta|
    &\leq
    C\mathbb E\left[
        \sup_{0\leq t\leq T}|P_t|
        \int_0^T|\delta u_t|^2\,dt
    \right]\\
    &\leq
    C\left(
        \mathbb E\sup_{0\leq t\leq T}|P_t|^2
    \right)^{1/2}
    \left\{
        \mathbb E\left[
            \left(\int_0^T|\delta u_t|^2\,dt\right)^2
        \right]
    \right\}^{1/2}\\
    &\leq C\eta^2|\delta\Theta|_\Delta^2.
\end{aligned}
\]
Thus,
\[
    |\Delta_{\mathcal H}^{\eta}|
    \leq
    C\eta|\delta\Theta|_\Delta+C\eta^2|\delta\Theta|_\Delta^2.
\]
The final cost estimate follows directly from
Theorem~\ref{thm:parameter-space-soft-update-upper-bound}.
\end{proof}

\section{Numerical experiments}
\label{sec:experiments}

\subsection{General experimental setup}
\label{subsec:numerical-general-setting}

In this section, we evaluate the proposed second-order corrected DeepSMP method on
a sequence of nonconvex stochastic control problems. The numerical experiments are
organized in two layers. The first layer compares DeepSMP with the direct method,
where the control network is trained by directly minimizing a Monte Carlo
approximation of the cost functional. The second layer compares DeepSMP with a
modified-MSA-type baseline, in order to isolate the effect of the second-order
adjoint correction.

All continuous-time problems are discretized by the Euler--Maruyama scheme
on the uniform grid \(0=t_0<t_1<\cdots<t_N=T\), with
\(\Delta t=T/N\). Unless otherwise stated, we set \(T=1\), \(N=40\), and
\(U=[-2,2]^{d_u}\), and enforce the control constraint through the
componentwise tanh-box parameterization
\(u_{\theta,k}(x)=2\tanh(r_{\theta,k}(x))\). All algorithms are implemented
in Python~3.10.20 with TensorFlow and run in double precision on an NVIDIA
GeForce RTX~4090 GPU. The default training batch size is \(256\), and the
reported costs are evaluated on a fixed validation sample of \(50{,}000\)
Brownian trajectories. Unless otherwise specified, results are reported as
the mean and standard deviation over five independent runs; within each
comparison, all methods use the same random seeds and validation paths.

In DeepSMP, the control functions and the martingale-integrand processes
\(q\) and \(Q\) are represented by separate time-indexed fully connected
networks, one for each time level, while \(p_0\) and \(P_0\) are trainable
parameters. Each network has two hidden layers with Swish activations, with
the layer widths specified for each test problem. The \(q\)- and
\(Q\)-networks have linear outputs, whereas the control-network outputs are
mapped through the tanh-box transformation above. No batch normalization is
used. 
The adjoint-related and control-network parameters are updated by separate
Adam optimizers with learning rates \(10^{-3}\) and \(2\times10^{-4}\),
respectively, thereby implementing a practical two-time-scale training
scheme in which the adjoint-related parameters evolve on the faster scale.
The control network is trained through
a candidate-based smooth approximation to the pointwise minimization of the
second-order corrected Hamiltonian. The Swish--tanh architecture satisfies
the activation and output-map conditions of
Assumption~\ref{ass:nn-architecture}; under its bounded-parameter-path
condition, Proposition~\ref{prop:network-derivative-bounds} applies.

Following Han and E~\cite{HanE2016}, the direct method represents the
feedback control at each time level by a separate neural network, propagates
the state through the Euler--Maruyama dynamics, and jointly trains all
control-network parameters by directly minimizing the Monte Carlo
approximation of the discretized cost functional. It therefore introduces
neither adjoint equations nor a Hamiltonian-based control update. The direct
baseline uses the same control-network architecture and tanh-box output
parameterization as DeepSMP. In the main comparison, batch normalization is
additionally inserted after each hidden affine transformation and before the
Swish activation.

For the comparison with modified MSA, we construct a modified-MSA-type
neural baseline in the same computational framework as DeepSMP. The
second-order adjoint system and the corresponding matrix-weighted
Hamiltonian correction are replaced by the scalar-weighted isotropic
\(\rho\)-augmentation of modified MSA, while the remaining neural
implementation and training protocol are unchanged. This comparison is
designed to isolate the effect of the second-order adjoint correction.

\begin{remark}[Two-time-scale training]
\label{rem:two-time-scale-training}
The use of different update speeds is motivated by the classical
two-time-scale stochastic-approximation principle
\cite{Borkar1997,KondaBorkar1999,KondaTsitsiklis2003}. The
adjoint-related parameters are assigned a larger learning rate than the
control-network parameters. On the faster adjoint time scale, the
reference control varies slowly and may therefore be regarded as
approximately frozen over a short training window, allowing the first-
and second-order adjoint networks to track the adjoint processes
associated with the current reference control. Conversely, on the slower
control time scale, the adjoint variables adapt more rapidly, so that the
corrected-Hamiltonian update is based on adjoint estimates that are better
adapted to the current reference control. This separation mitigates the
feedback instability caused by simultaneously changing the control and
the adjoint approximations. Since fixed learning rates are used in the
present implementation, the term ``two-time-scale'' refers to a practical
separation of update speeds rather than to the classical asymptotic
stochastic-approximation condition requiring the ratio of the slow and
fast step sizes to vanish.
\end{remark}

\subsection{Comparison with the direct method}
\label{subsec:comparison-direct-method}
We first compare DeepSMP with the direct method. The direct method optimizes
the mapping
\[
    \theta \longmapsto J(u^\theta)
\]
directly in the neural-network parameter space, without introducing adjoint
equations or a Hamiltonian-based control update. Although this approach is
straightforward and does not require a spatial grid, its end-to-end
optimization may be sensitive to nonconvex parameter landscapes containing
locally flat regions or multiple competing low-cost regions.

By contrast, DeepSMP exploits the state--adjoint--Hamiltonian structure
induced by the stochastic maximum principle and performs the control update
through a candidate-based smooth minimization of the second-order corrected
Hamiltonian in the action space. Rather than relying solely on an
infinitesimal descent direction of the Monte Carlo cost in parameter space,
the method compares finitely perturbed candidate actions according to their
corrected-Hamiltonian values. This action-space comparison may provide a
more discriminative update signal when the end-to-end parameter gradient is
weak or locally flat, and offers a mechanism for moving between separated
low-value action regions. 

\subsubsection{A one-dimensional nonconvex double-actuator problem}
We first consider a one-dimensional nonconvex stochastic control problem
with two competing actuator modes. The state process satisfies
\[
    dX_t=b(X_t,u_t)\,dt+\sigma(u_t)\,dW_t,
    \qquad X_0=0,\qquad u_t\in[-2,2],
\]
where
\[
\begin{aligned}
    b(x,u)
    &=
    -0.7x
    +1.45\exp\{-12(u-0.9)^2\}
    +1.25\exp\{-12(u+1.1)^2\}
    -0.05,\\
    \sigma(u)
    &=
    0.12
    +0.75\exp\{-12(u-0.9)^2\}
    +0.08\exp\{-12(u+1.1)^2\}.
\end{aligned}
\]
The cost functional is
\[
    J(u)
    =
    \mathbb E\left[
        4(X_T-1)^2
        +
        \int_0^1
        \left\{
            0.35(X_t-1)^2+0.015u_t^2
        \right\}dt
    \right].
\]

The two Gaussian actuator profiles generate two separated action modes.
Near \(u=0.9\), the control-induced drift increment is approximately
\(1.45\), while the diffusion coefficient is approximately \(0.87\).
Near \(u=-1.1\), the corresponding drift increment is slightly smaller,
approximately \(1.25\), whereas the diffusion coefficient is reduced to
approximately \(0.20\). The negative mode therefore provides a more
favorable drift--diffusion tradeoff for the quadratic state-tracking
objective. 
Moreover, away from the two actuator modes, the Gaussian
profiles are nearly flat in the action variable. This problem thus tests
whether the candidate-based corrected-Hamiltonian update can identify the
favorable action mode in a nonconvex landscape containing separated modes
and locally weak action sensitivity.

Since the two relevant actuator modes lie on opposite sides of the origin,
we measure their selection through the negative-region occupation rate
\[
    \rho_-
    :=
    \frac{1}{MN}
    \sum_{m=1}^{M}
    \sum_{k=0}^{N-1}
    \mathbf 1_{\{u_k^{(m)}<0\}},
\]
where \(u_k^{(m)}\) denotes the learned reference control evaluated along
the \(m\)-th validation trajectory. A value of \(\rho_-\) close to one
indicates predominant selection of the negative actuator mode. This
quantity is used as an auxiliary branch-selection diagnostic, while the
validation cost remains the primary performance criterion.

As a reference, under the constant control \(u_t\equiv-1.1\), the state
equation reduces to an Ornstein--Uhlenbeck-type process, and the
corresponding objective value can be evaluated explicitly as
\(J(u_t\equiv-1.1)\approx0.2996\). This value serves only as a
constant-control benchmark and is not asserted to be the optimal value of
the full feedback-control problem.

For this experiment, the control networks in both methods, as well as the
\(q\)- and \(Q\)-networks in DeepSMP, have hidden-layer width \(32\).
DeepSMP uses \(16\) randomized action candidates at each time level,
consisting of the current action and \(15\) candidates generated by
Gaussian perturbations of the raw network output with standard deviation
\(0.8\). The smooth candidate-aggregation temperature is \(0.15\).

To mitigate the possibility that the observed performance gap is merely
caused by insufficient optimization of the direct method, we allow the
direct method to train for $10^{5}$ iterations, whereas DeepSMP is
evaluated after $3\times10^{4}$ iterations, at which point its validation
cost has already stabilized. Thus, the direct method is given more than
three times as many optimization iterations, providing a deliberately
favorable training budget for the baseline. The optimization trajectories
of the two methods over the first $3\times10^{4}$ iterations are presented
in Figure~\ref{fig:comparison_1d_curves}, and the corresponding
final-iteration statistics across the five random seeds are summarized in
Table~\ref{tab:comparison_1d_results}.

\begin{figure}[!htbp]
    \centering

    \begin{minipage}{0.98\textwidth}
        \centering

        \captionsetup[subfigure]{
            font=footnotesize,
            skip=2pt
        }

        \begin{subfigure}[t]{0.48\linewidth}
            \centering
            \includegraphics[width=0.98\linewidth]
            {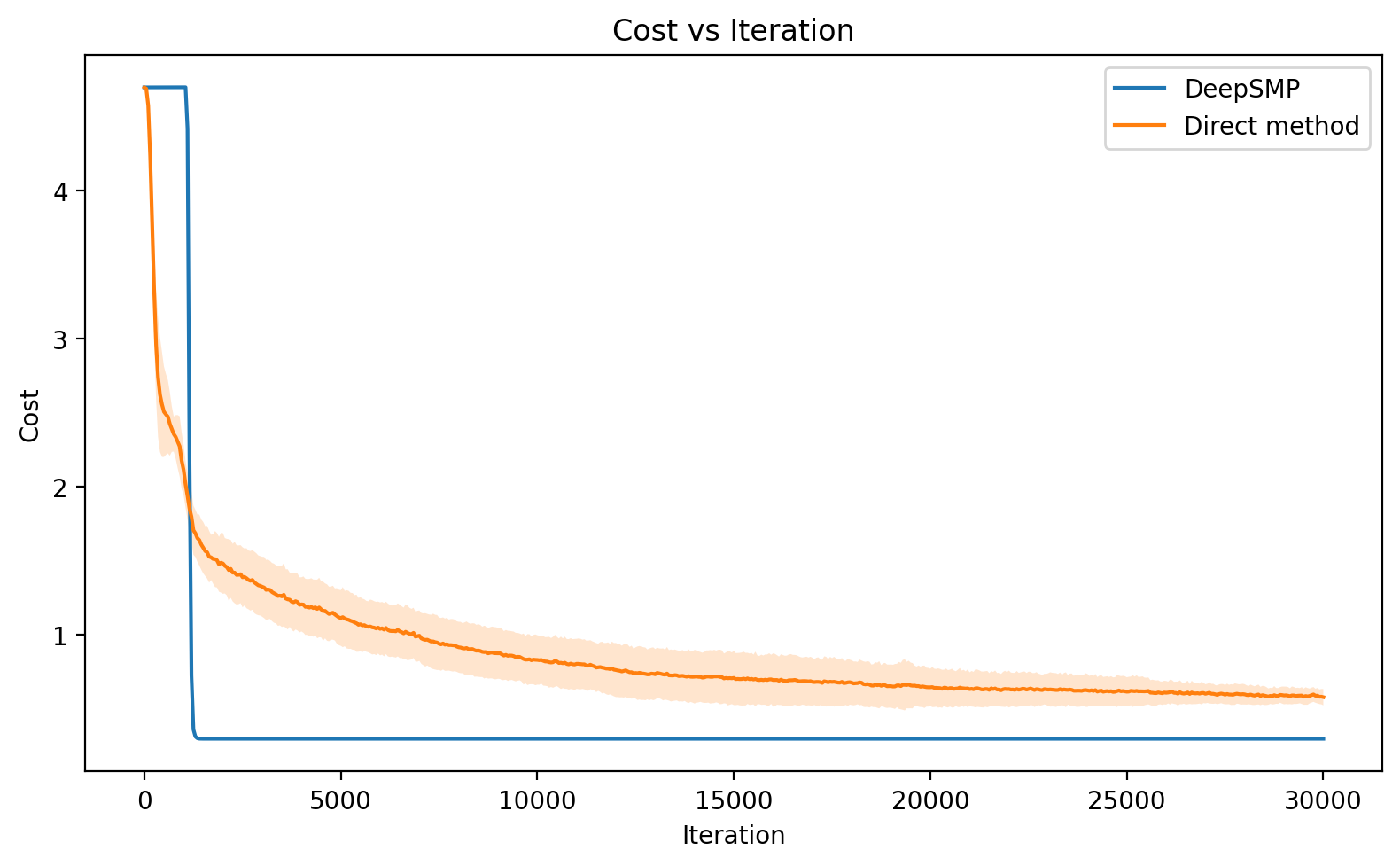}
            \caption{Cost.}
            \label{fig:comparison_1d_cost}
        \end{subfigure}
        \hfill
        \begin{subfigure}[t]{0.48\linewidth}
            \centering
            \includegraphics[width=0.98\linewidth]
            {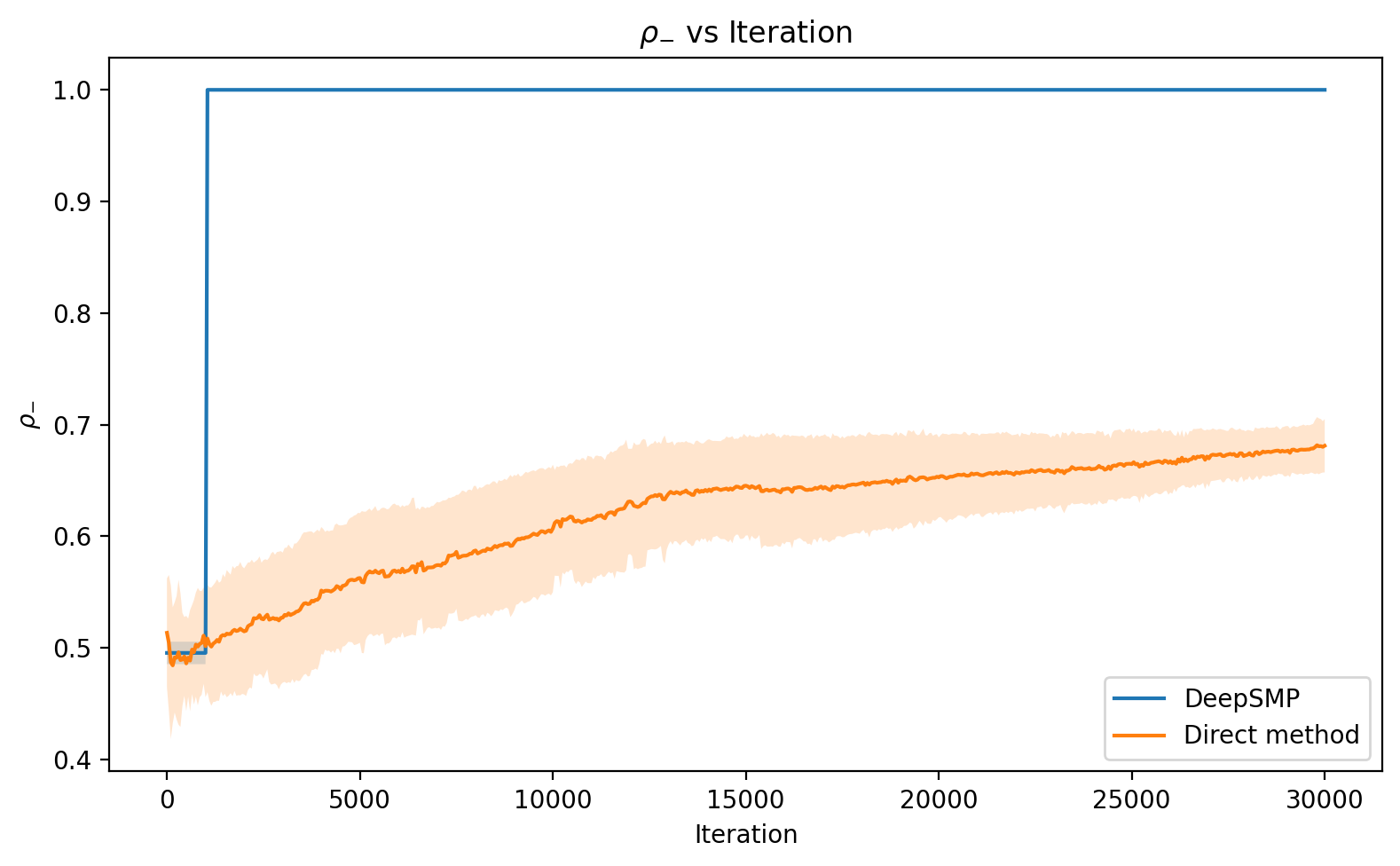}
            \caption{Negative-region proportion $\rho_{-}$.}
            \label{fig:comparison_1d_rho}
        \end{subfigure}

        \vspace{-0.05cm}

        \begin{subfigure}[t]{0.48\linewidth}
            \centering
            \includegraphics[width=0.98\linewidth]
            {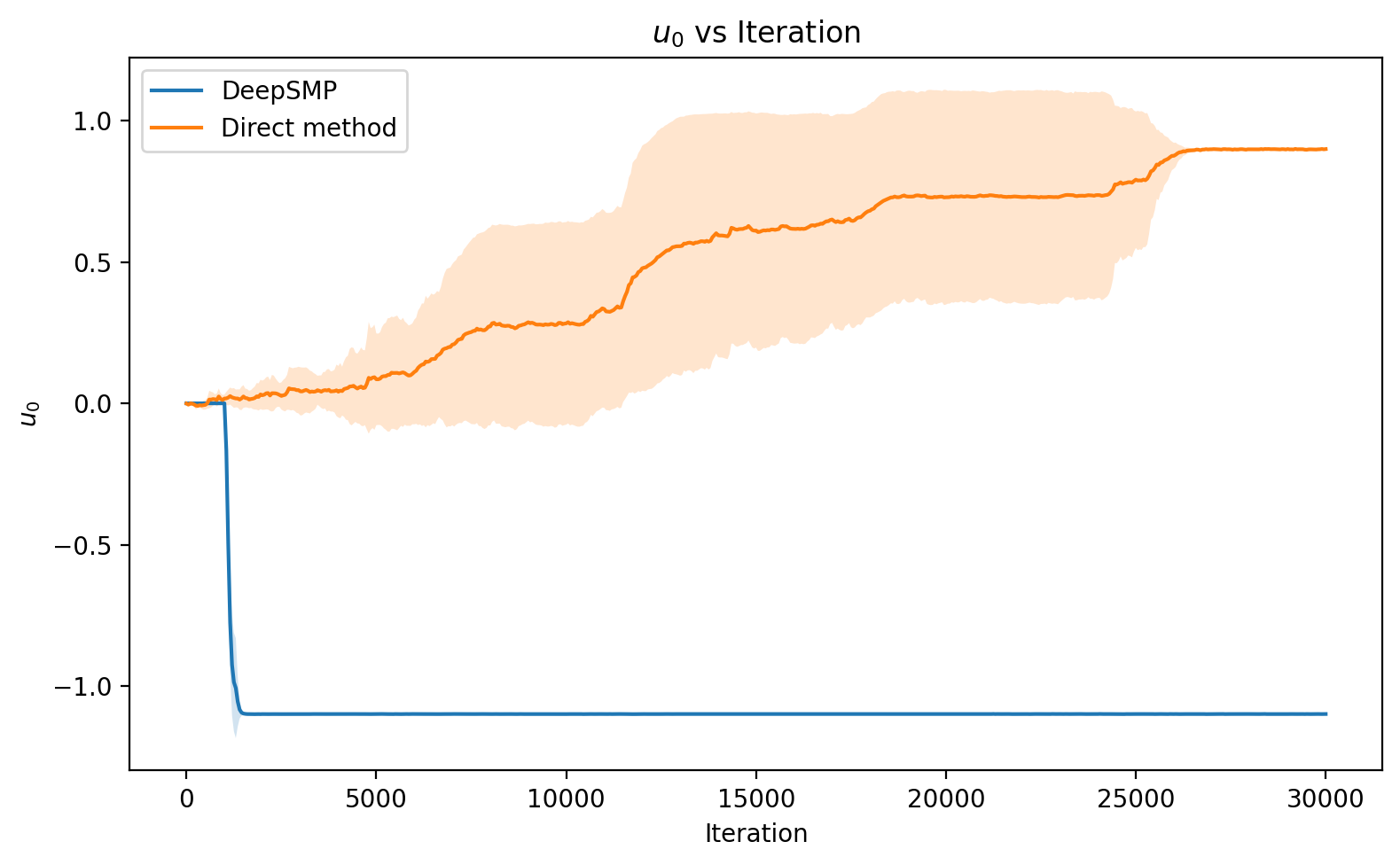}
            \caption{Initial control $u_0$.}
            \label{fig:comparison_1d_u0}
        \end{subfigure}
        \hfill
        \begin{subfigure}[t]{0.48\linewidth}
            \centering
            \includegraphics[width=0.98\linewidth]
            {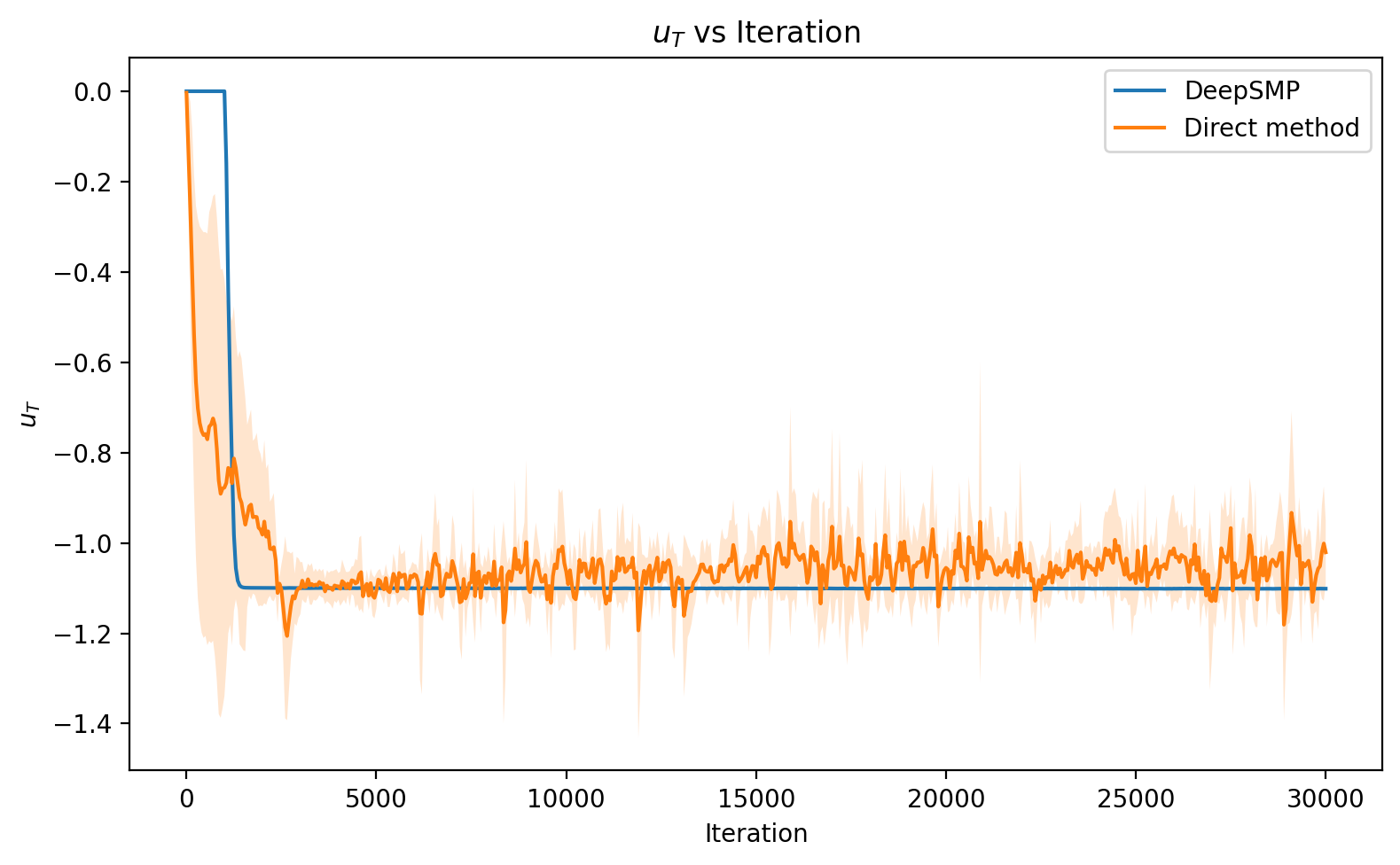}
            \caption{Terminal control $u_T$.}
            \label{fig:comparison_1d_uT}
        \end{subfigure}

    \end{minipage}

    \captionsetup{
        font=small,
        width=0.88\textwidth,
        skip=4pt
    }
    \caption{
        Optimization trajectories of DeepSMP and the direct method in the
        one-dimensional setting. The solid curves represent the means over
        five random seeds, while the shaded regions represent one standard
        deviation. For DeepSMP, all quantities are evaluated using the target
        control network. Both methods are displayed over the first
        $3\times10^{4}$ training iterations.
    }
    \label{fig:comparison_1d_curves}
\end{figure}

\begin{table}[htbp]
    \centering
    \caption{Performance comparison of DeepSMP and the direct method under
    five random seeds in the one-dimensional setting.
    They are evaluated using the control network at the
    final iteration.
    }
    \label{tab:comparison_1d_results}

    \setlength{\tabcolsep}{4pt}
    \renewcommand{\arraystretch}{1.15}

    \resizebox{\textwidth}{!}{%
    \begin{tabular}{l c cc cc cc cc}
        \toprule
        \multirow{2}{*}{Method}
        & \multirow{2}{*}{Iter.}
        & \multicolumn{2}{c}{Cost}
        & \multicolumn{2}{c}{$u_0$}
        & \multicolumn{2}{c}{$u_T$}
        & \multicolumn{2}{c}{$\rho_{-}$} \\
        \cmidrule(lr){3-4}
        \cmidrule(lr){5-6}
        \cmidrule(lr){7-8}
        \cmidrule(lr){9-10}
        &
        & Mean & Std.
        & Mean & Std.
        & Mean & Std.
        & Mean & Std. \\
        \midrule
        DeepSMP
        & $3\times10^{4}$
        & $0.2989$
        & $3.3164\times10^{-6}$
        & $-1.0984$
        & $1.6254\times10^{-4}$
        & $-1.1014$
        & $4.3017\times10^{-4}$
        & $1.0000$
        & $0.0000$ \\
        Direct method
        & $10^{5}$
        & $0.5164$
        & $0.0178$
        & $0.8982$
        & $0.0012$
        & $-1.0095$
        & $0.1676$
        & $0.7078$
        & $0.0238$ \\
        \bottomrule
    \end{tabular}%
    }
\end{table}

Figure~\ref{fig:comparison_1d_curves} shows a clear difference in the
optimization behavior of the two methods under the same iteration horizon.
DeepSMP rapidly enters a low-cost regime and stabilizes near $0.30$, while
the associated standard-deviation band becomes almost negligible. At the
same time, both $u_0$ and $u_T$ approach approximately $-1.1$, and
$\rho_{-}$ quickly reaches one. These observations indicate that DeepSMP
consistently identifies the negative low-cost control region across all
five random seeds. In contrast, the direct method decreases the cost much
more gradually and exhibits a mixed control pattern: its mean $u_0$ moves
toward the positive region, whereas $u_T$ remains negative, and
$\rho_{-}$ stays substantially below one.

The final statistics in Table~\ref{tab:comparison_1d_results} confirm that
this difference is not merely transient. DeepSMP attains a mean cost of
$0.2989$ with a standard deviation of only
$3.3164\times10^{-6}$ after $3\times10^{4}$ iterations. By comparison,
the direct method obtains a higher mean cost of $0.5164$ with a standard
deviation of $0.0178$, even after $10^{5}$ iterations. Moreover, DeepSMP
produces highly consistent endpoint controls, with
$u_0=-1.0984$, $u_T=-1.1014$, and $\rho_{-}=1.0000$, whereas the direct
method yields $u_0=0.8982$, $\rho_{-}=0.7078$, and a markedly larger
standard deviation of $0.1676$ for $u_T$. Hence, DeepSMP achieves not only
a lower control cost within a substantially smaller training budget, but
also a considerably more stable and coherent selection of the low-cost
control region.

The direct method is trained with hidden-layer batch normalization and for a
longer horizon than DeepSMP. It partially moves toward the negative
low-diffusion action region, as reflected by \(u_T\approx -1.01\) and
\(\rho_-\approx0.71\). However, its initial-time control remains close to the
positive high-diffusion region, \(u_0\approx0.90\), and the overall occupation
rate of the negative region remains far from one. In contrast, DeepSMP, even
without batch normalization in the control network, identifies the negative
low-diffusion region throughout the time interval and achieves a substantially
smaller validation cost.

\subsubsection{A five-dimensional coupled double-actuator problem}
\label{subsubsec:5d-double-actuator}

We next consider a genuinely five-dimensional coupled double-actuator stochastic
control problem. In this example,
\[
    X_t\in\mathbb R^5,\qquad
    u_t\in[-2,2]^5,\qquad
    W_t\in\mathbb R^5 .
\]
The state process satisfies
\[
    dX_t = b(X_t,u_t)\,dt+\Sigma(u_t)\,dW_t,
    \qquad X_0=0.
\]
For \(u=(u_1,\ldots,u_5)^\top\), define the componentwise actuator functions
\[
    \phi_+(u_i)=\exp\{-12(u_i-0.9)^2\},
    \qquad
    \phi_-(u_i)=\exp\{-12(u_i+1.1)^2\},
\]
and write
\[
    \phi_+(u)
    =
    \bigl(\phi_+(u_1),\ldots,\phi_+(u_5)\bigr)^\top,
    \qquad
    \phi_-(u)
    =
    \bigl(\phi_-(u_1),\ldots,\phi_-(u_5)\bigr)^\top .
\]
The drift is given by
\[
    b(x,u)
    =
    A x
    +
    B_+\phi_+(u)
    +
    B_-\phi_-(u)
    +
    b_0,
\]
where
\[
A=
\begin{pmatrix}
-0.70&0.10&0&0&0.04\\
0.06&-0.75&0.08&0&0\\
0&0.07&-0.68&0.06&0\\
0&0&0.09&-0.72&0.05\\
0.03&0&0&0.07&-0.74
\end{pmatrix},
\]
\[
B_+=
\begin{pmatrix}
1.45&0.12&0&0&0.05\\
0.10&1.35&0.10&0&0\\
0&0.12&1.42&0.10&0\\
0&0&0.08&1.38&0.10\\
0.06&0&0&0.12&1.40
\end{pmatrix},
\qquad
B_-=
\begin{pmatrix}
1.25&0.08&0&0&0.03\\
0.06&1.18&0.08&0&0\\
0&0.08&1.24&0.06&0\\
0&0&0.06&1.20&0.08\\
0.04&0&0&0.08&1.22
\end{pmatrix},
\]
and
\[
    b_0=(-0.05,-0.03,-0.04,-0.02,-0.05)^\top .
\]

The diffusion coefficient is a \(5\times5\) matrix depending on the control. It is
defined by
\[
    \Sigma(u)
    =
    D(u)
    +
    \operatorname{Diag}(\phi_+(u))S_+
    +
    \operatorname{Diag}(\phi_-(u))S_-,
\]
where
\[
    D(u)
    =
    \operatorname{Diag}
    \bigl(
    0.12+0.75\phi_+(u_i)+0.08\phi_-(u_i)
    \bigr)_{i=1}^5,
\]
and the off-diagonal diffusion patterns are
\[
S_+=
\begin{pmatrix}
0&0.05&0&0&0.03\\
0.04&0&0.04&0&0\\
0&0.04&0&0.04&0\\
0&0&0.05&0&0.04\\
0.03&0&0&0.05&0
\end{pmatrix},
\]
\[
S_-=
\begin{pmatrix}
0&0.015&0&0&0.010\\
0.012&0&0.012&0&0\\
0&0.012&0&0.012&0\\
0&0&0.015&0&0.012\\
0.010&0&0&0.015&0
\end{pmatrix}.
\]
Thus, each control component affects not only the corresponding diagonal
diffusion intensity but also the allocation of the Brownian noise among
different state coordinates through the off-diagonal entries of
\(S_+\) and \(S_-\).

The target vector is
\[
    x_{\rm tar}=(1.00,0.80,1.20,0.90,1.10)^\top .
\]
The running and terminal costs are coupled quadratic tracking costs:
\[
    f(x,u)
    =
    0.35(x-x_{\rm tar})^\top M_{\rm run}(x-x_{\rm tar})
    +
    0.015|u|^2,
\]
and
\[
    \Phi(x)
    =
    4(x-x_{\rm tar})^\top M_{\rm term}(x-x_{\rm tar}),
\]
where
\[
M_{\rm run}=
\begin{pmatrix}
1&0.06&0&0&0.03\\
0.06&1&0.06&0&0\\
0&0.06&1&0.06&0\\
0&0&0.06&1&0.06\\
0.03&0&0&0.06&1
\end{pmatrix},
\]
and
\[
M_{\rm term}=
\begin{pmatrix}
1&0.10&0&0&0.05\\
0.10&1&0.10&0&0\\
0&0.10&1&0.10&0\\
0&0&0.10&1&0.10\\
0.05&0&0&0.10&1
\end{pmatrix}.
\]
The objective is therefore
\[
    J(u)
    =
    \mathbb E
    \left[
    \Phi(X_T)
    +
    \int_0^T f(X_t,u_t)\,dt
    \right].
\]
This problem is a genuinely coupled multidimensional extension of the
one-dimensional two-mode actuator example. For each control coordinate,
the positive mode near \(u_i=0.9\) produces a stronger control-induced drift
effect but a substantially larger diagonal diffusion intensity, whereas
the negative mode near \(u_i=-1.1\) yields a slightly weaker drift effect
together with considerably smaller diffusion. The matrices \(S_+\) and
\(S_-\) additionally introduce off-diagonal noise coupling across the state
coordinates. The example therefore tests whether an algorithm can
consistently identify the favorable negative actuator mode across time,
sample paths, and control coordinates in a coupled nonconvex action
landscape.

For this experiment, all time-indexed control and adjoint-related networks
have two hidden layers of width $64$. The direct method is trained for
$10^{5}$ iterations, whereas DeepSMP is reported after $3\times10^{4}$
iterations, by which point its validation performance has stabilized.
The optimization trajectories of the two methods over the first
$3\times10^{4}$ iterations are shown in
Figure~\ref{fig:comparison_5d_curves}, while the corresponding
final-iteration statistics across the five random seeds are summarized in
Table~\ref{tab:comparison_5d_results}.

\begin{figure}[H]
    \centering

    \begin{minipage}{0.98\textwidth}
        \centering

        \captionsetup[subfigure]{
            font=footnotesize,
            skip=2pt
        }

        \begin{subfigure}[t]{0.48\linewidth}
            \centering
            \includegraphics[width=0.98\linewidth]
            {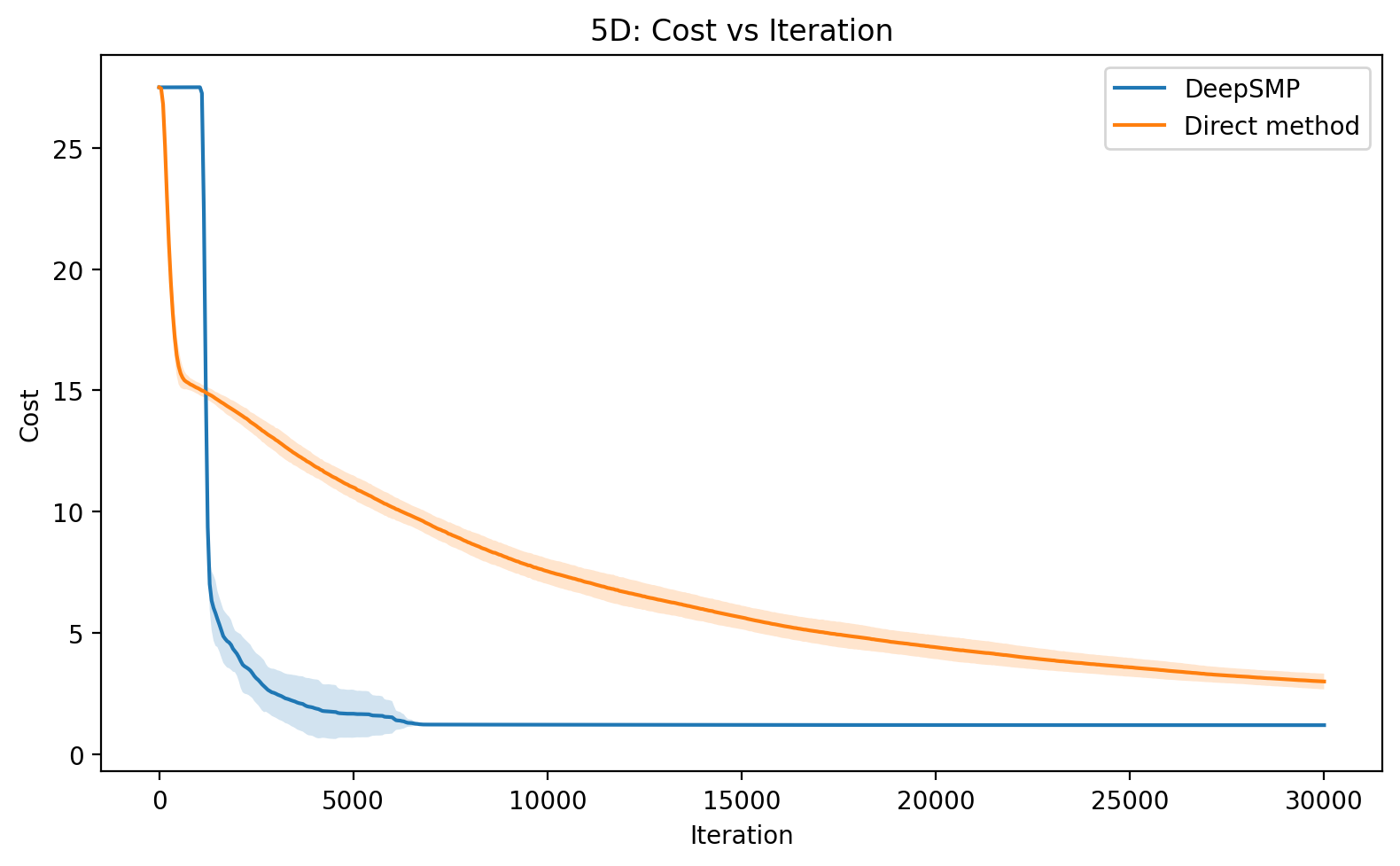}
            \caption{Cost.}
            \label{fig:comparison_5d_cost}
        \end{subfigure}
        \hfill
        \begin{subfigure}[t]{0.48\linewidth}
            \centering
            \includegraphics[width=0.98\linewidth]
            {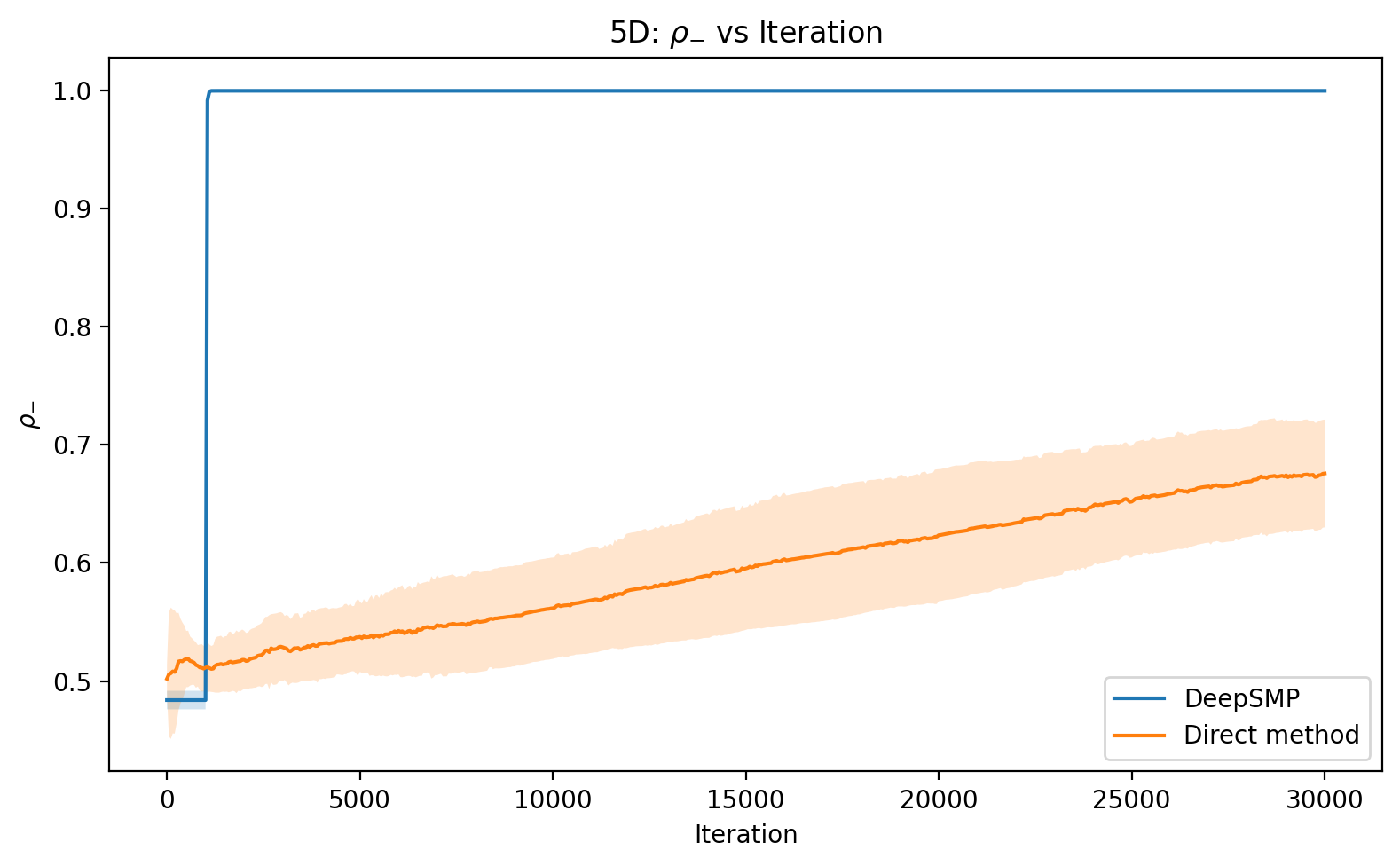}
            \caption{Negative-region proportion $\rho_{-}$.}
            \label{fig:comparison_5d_rho}
        \end{subfigure}

        \vspace{-0.05cm}

        \begin{subfigure}[t]{0.48\linewidth}
            \centering
            \includegraphics[width=0.98\linewidth]
            {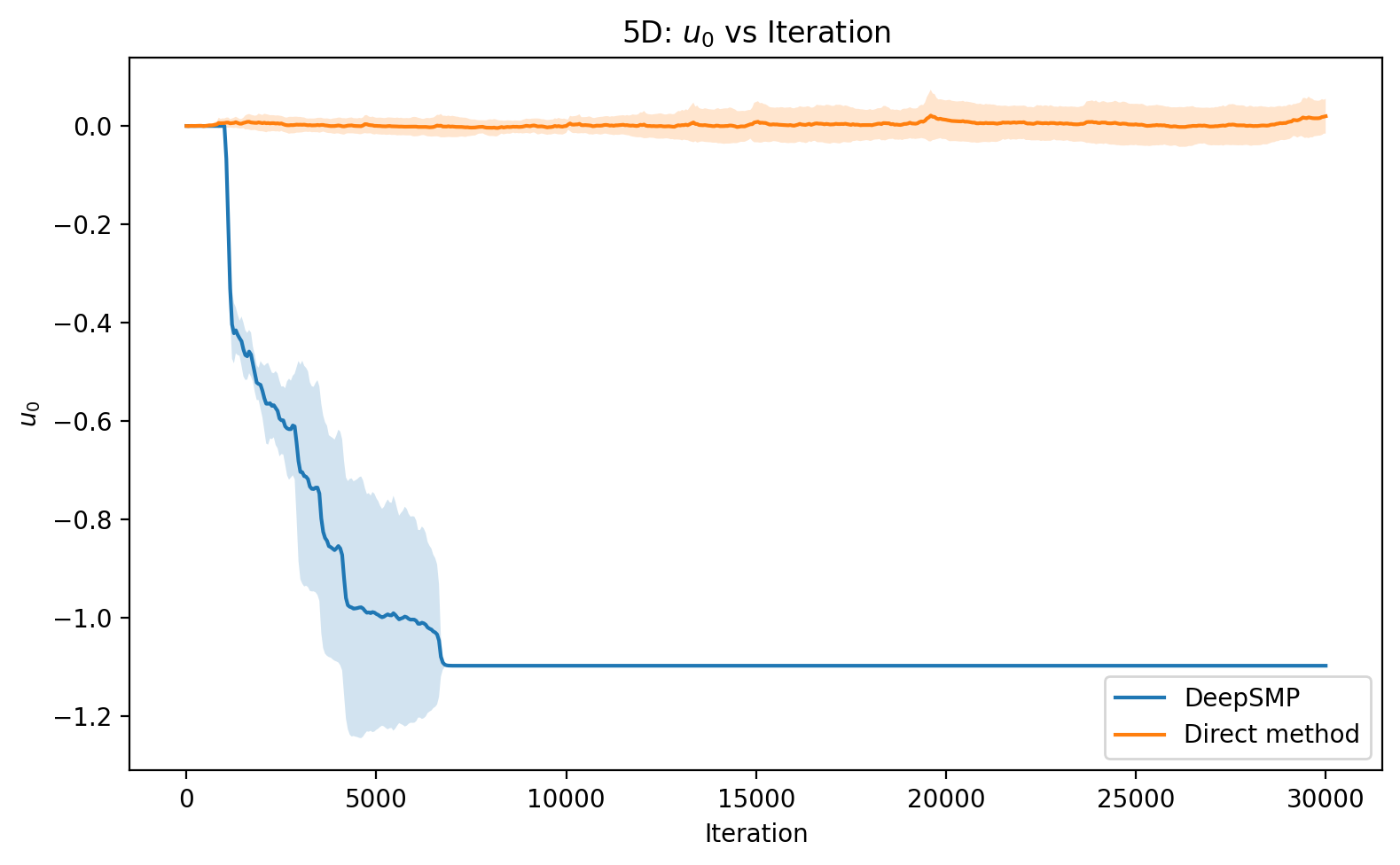}
            \caption{Initial control $u_0$.}
            \label{fig:comparison_5d_u0}
        \end{subfigure}
        \hfill
        \begin{subfigure}[t]{0.48\linewidth}
            \centering
            \includegraphics[width=0.98\linewidth]
            {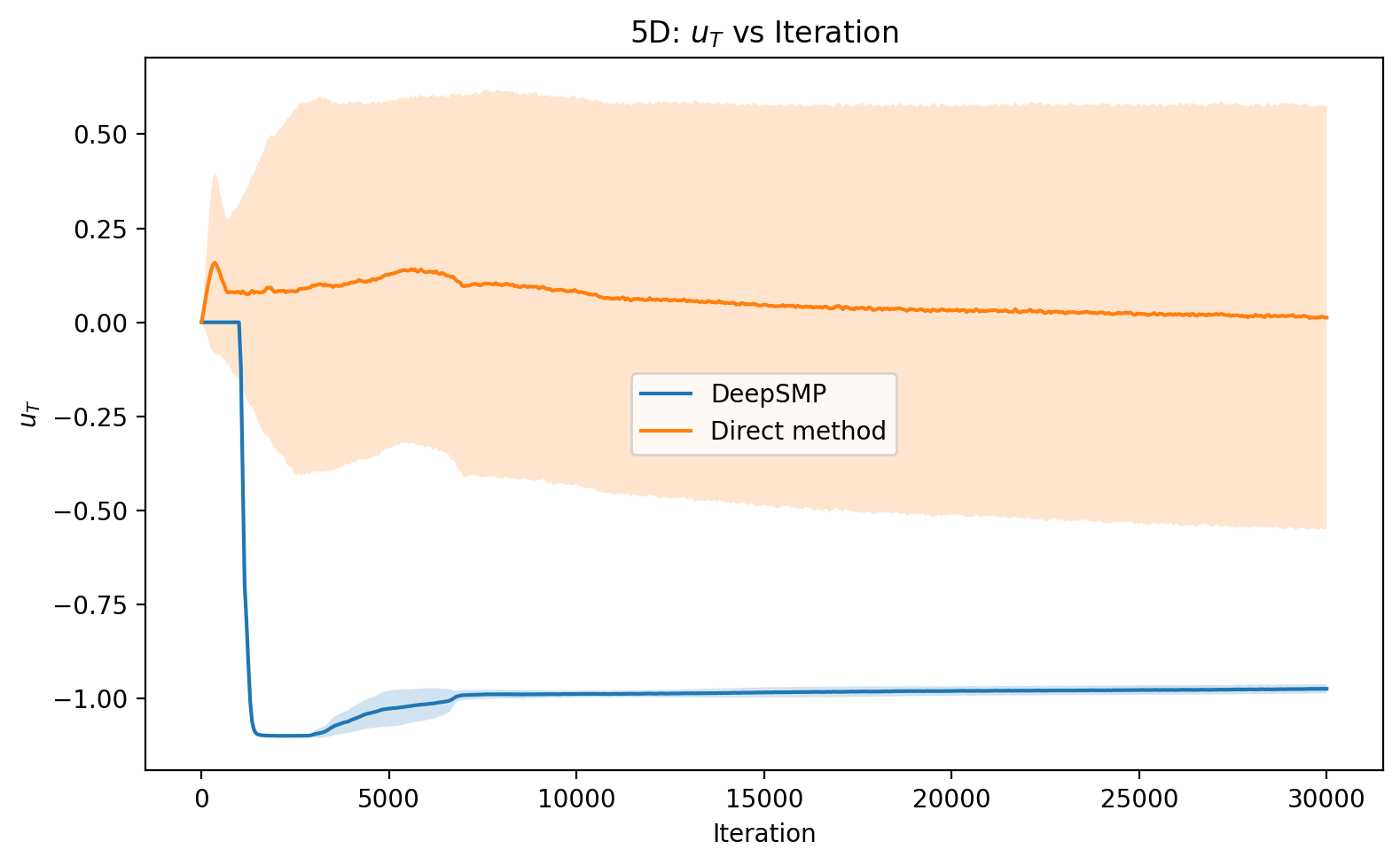}
            \caption{Terminal control $u_T$.}
            \label{fig:comparison_5d_uT}
        \end{subfigure}

    \end{minipage}

    \captionsetup{
        font=small,
        width=0.88\textwidth,
        skip=4pt
    }
    \caption{
        Optimization trajectories of DeepSMP and the direct method in the
        five-dimensional setting. 
    }
    \label{fig:comparison_5d_curves}
\end{figure}

\begin{table}[H]
    \centering
    \caption{Performance comparison of DeepSMP and the direct method under
    five random seeds in the five-dimensional setting.
    }
    \label{tab:comparison_5d_results}

    \setlength{\tabcolsep}{4pt}
    \renewcommand{\arraystretch}{1.15}

    \resizebox{\textwidth}{!}{%
    \begin{tabular}{l c cc cc cc cc}
        \toprule
        \multirow{2}{*}{Method}
        & \multirow{2}{*}{Iter.}
        & \multicolumn{2}{c}{Cost}
        & \multicolumn{2}{c}{$u_0$}
        & \multicolumn{2}{c}{$u_T$}
        & \multicolumn{2}{c}{$\rho_{-}$} \\
        \cmidrule(lr){3-4}
        \cmidrule(lr){5-6}
        \cmidrule(lr){7-8}
        \cmidrule(lr){9-10}
        &
        & Mean & Std.
        & Mean & Std.
        & Mean & Std.
        & Mean & Std. \\
        \midrule
        DeepSMP
        & $3\times10^{4}$
        & $1.2032$
        & $6.1521\times10^{-4}$
        & $-1.0982$
        & $1.6501\times10^{-6}$
        & $-0.9743$
        & $0.0118$
        & $1.0000$
        & $3.3783\times10^{-6}$ \\
        Direct method
        & $10^{5}$
        & $2.1420$
        & $0.1855$
        & $0.8975$
        & $8.6597\times10^{-4}$
        & $0.0046$
        & $0.5794$
        & $0.6942$
        & $0.0518$ \\
        \bottomrule
    \end{tabular}%
    }
\end{table}

Figure~\ref{fig:comparison_5d_curves} shows that the distinction observed
in the one-dimensional example persists in the coupled five-dimensional
setting. Over the same $3\times10^{4}$-iteration horizon, DeepSMP rapidly
enters a low-cost regime and subsequently exhibits negligible cross-seed
variation. Its target control is simultaneously driven toward the negative
actuator mode: the initial and terminal control statistics approach the
neighborhood of $-1.1$ and $-1.0$, respectively, while $\rho_{-}$ rapidly
converges to one. By contrast, the cost of the direct method decreases much
more gradually. Its initial control remains close to zero during the first
$3\times10^{4}$ iterations, and the large standard-deviation band associated
with $u_T$ indicates substantial heterogeneity across random seeds. Moreover,
$\rho_{-}$ remains well below one, showing that the learned policies retain
a mixed allocation between the two actuator modes.

The final statistics in Table~\ref{tab:comparison_5d_results} further confirm
that this discrepancy is not a transient optimization effect. DeepSMP attains
a mean cost of $1.2032$ with a standard deviation of
$6.1521\times10^{-4}$ after $3\times10^{4}$ iterations, whereas the direct
method yields a mean cost of $2.1420$ with a standard deviation of $0.1855$
even after $10^{5}$ iterations. This corresponds to a reduction of
approximately $43.8\%$ in the mean cost. DeepSMP also produces highly
consistent control statistics, with $u_0=-1.0982$, $u_T=-0.9743$, and
$\rho_{-}=1.0000$. In contrast, the direct method gives $u_0=0.8975$,
$u_T=0.0046$, and $\rho_{-}=0.6942$, together with a particularly large
standard deviation of $0.5794$ for $u_T$. Thus, despite using a substantially
smaller optimization budget, DeepSMP consistently selects the favorable
negative low-diffusion mode across control dimensions and random seeds,
while the direct method remains sensitive to initialization and converges
to a markedly less coherent control configuration.

\subsubsection{A twenty-dimensional stress test}
\label{subsubsec:20d-stress-test}

We finally consider a twenty-dimensional stress test. The state, control,
and Brownian motion are all twenty-dimensional. The drift consists of a
stable linear component with cyclic coupling and nonlinear two-mode actuator
terms, while the diffusion contains both controlled diagonal intensities
and off-diagonal noise coupling. The complete problem specification is
provided in Appendix~\ref{app:20d-setting}.

For this benchmark, the second-order adjoint equation admits an exact
structural simplification. Indeed, \(b_x=A\) is constant,
\(b_{xx}=0\), the diffusion coefficient is independent of the state, and
the state Hessians of the running and terminal costs are constant.
Consequently, the second-order adjoint equation reduces to a deterministic
matrix-valued backward equation, and its martingale-integrand process
satisfies
\[
    Q_t\equiv0.
\]
We therefore impose this structural identity exactly rather than
parameterizing \(Q\) by a neural network. In a generic twenty-dimensional
problem,
\[
    Q_k\in\mathbb R^{20\times20\times20},
\]
so this simplification avoids the redundant approximation of an identically
zero \(8000\)-dimensional tensor at each time level. The nontrivial
matrix-valued process \(P\) is nevertheless fully retained: \(P_0\) remains
a trainable parameter, and the subsequent values of \(P\) are generated
through the discretized second-order adjoint dynamics and used in the
second-order corrected Hamiltonian.

The twenty-dimensional action space also makes candidate exploration more
challenging. A finite candidate family generated solely by independent
local perturbations may provide insufficient coverage of correlated
low-Hamiltonian directions. We therefore combine independent local
perturbations with structured common-shift perturbations in the raw-action
space. Suppressing the batch-sample index, the local and structured
candidates are generated as
\[
\begin{aligned}
    \widetilde r_k^{(j),\mathrm{loc}}
    &=
    r_{\theta,k}(X_k)+\zeta_k^{(j)},\\
    \widetilde r_k^{(j),\mathrm{str}}
    &=
    r_{\theta,k}(X_k)
    +\gamma_k^{(j)}\mathbf 1_{20}
    +\widetilde\zeta_k^{(j)},
\end{aligned}
\]
where
\[
\begin{aligned}
    \zeta_k^{(j)}
    &\sim
    \mathcal N
    \bigl(0,\sigma_{\mathrm{loc}}^2I_{20}\bigr),\\
    \gamma_k^{(j)}
    &\sim
    \mathcal N
    \bigl(0,\sigma_{\mathrm{str}}^2\bigr),\\
    \widetilde\zeta_k^{(j)}
    &\sim
    \mathcal N
    \bigl(0,\sigma_{\mathrm{str,loc}}^2I_{20}\bigr).
\end{aligned}
\]
These random variables are sampled independently for each time level,
batch trajectory, and candidate. Thus, \(\gamma_k^{(j)}\) is a random
scalar common-shift coefficient, whereas
\(\sigma_{\mathrm{str}}\), \(\sigma_{\mathrm{str,loc}}\), and
\(\sigma_{\mathrm{loc}}\) are fixed exploration hyperparameters. The term
\(\gamma_k^{(j)}\mathbf 1_{20}\) produces a common-mode displacement whose
covariance has rank one, while \(\widetilde\zeta_k^{(j)}\) introduces a
smaller coordinatewise fluctuation around that displacement.

In addition to the randomized
candidates, the unperturbed raw action \(r_{\theta,k}(X_k)\) is retained as
one candidate. In the implementation, we use \(96\) structured candidates
and \(31\) independent local candidates, resulting in \(128\) candidates
after including the unperturbed raw action. The corresponding perturbation
scales are
\[
    \sigma_{\mathrm{str}}=1.35,
    \qquad
    \sigma_{\mathrm{str,loc}}=0.15,
    \qquad
    \sigma_{\mathrm{loc}}=0.60.
\]

To stabilize the evolution of the neural raw-action center, we further
introduce an elite-candidate fitting term. Let
\(\mathscr H_k^{(j)}\) denote the corrected-Hamiltonian value of the
\(j\)-th candidate at time \(t_k\), and let \(\mathcal E_k\) contain the
indices of the \(K_{\mathrm{elite}}\) best candidates according to the
corrected-Hamiltonian criterion. The corresponding elite raw-action average
is defined by
\[
    \bar r_k^{\mathrm{elite}}
    =
    \frac{1}{K_{\mathrm{elite}}}
    \sum_{j\in\mathcal E_k}
    \widetilde r_k^{(j)}.
\]
The auxiliary fitting loss is
\[
    \mathcal L_{\mathrm{fit}}
    =
    \mathbb E_{\mathrm{batch}}
    \sum_{k=0}^{N-1}
    \Delta t
    \left\|
        r_{\theta,k}(X_k)
        -
        \operatorname{sg}
        \bigl(\bar r_k^{\mathrm{elite}}\bigr)
    \right\|^2,
\]
where \(\operatorname{sg}(\cdot)\) denotes the stop-gradient operation. The
control-network loss in this experiment is therefore
\[
    \mathcal L_{\mathrm{policy}}
    =
    \lambda_H\mathcal L_H
    +
    \lambda_{\mathrm{fit}}\mathcal L_{\mathrm{fit}}.
\]
The elite-fitting term transfers the location information carried by the
best sampled candidates to the neural raw-action center. It neither replaces
the corrected-Hamiltonian smooth objective nor changes the candidate-ranking
criterion. Rather, it provides an auxiliary mechanism for retaining
high-quality candidate information and stabilizing the candidate center
during high-dimensional search.

In this experiment, we set
\[
    K_{\mathrm{elite}}=8,
    \qquad
    \lambda_H=1,
    \qquad
    \lambda_{\mathrm{fit}}=5,
\]
and use a smoothing temperature of \(0.18\) in the corrected-Hamiltonian
candidate objective. The time-indexed control and \(q\)-networks have two
hidden layers of width \(64\), while \(p_0\) and \(P_0\) are trainable
parameters. The training batch size is \(64\).

\begin{remark}
The structured perturbations and the elite fitting term are used in the
20-dimensional stress test to improve the stability of high-dimensional
candidate search. The underlying policy improvement principle remains the
candidate-based smooth minimization of the second-order corrected Hamiltonian.
\end{remark}

\begin{figure}[H]
    \centering

    \begin{minipage}{0.98\textwidth}
        \centering

        \captionsetup[subfigure]{
            font=footnotesize,
            skip=2pt
        }

        \begin{subfigure}[t]{0.48\linewidth}
            \centering
            \includegraphics[width=0.98\linewidth]
            {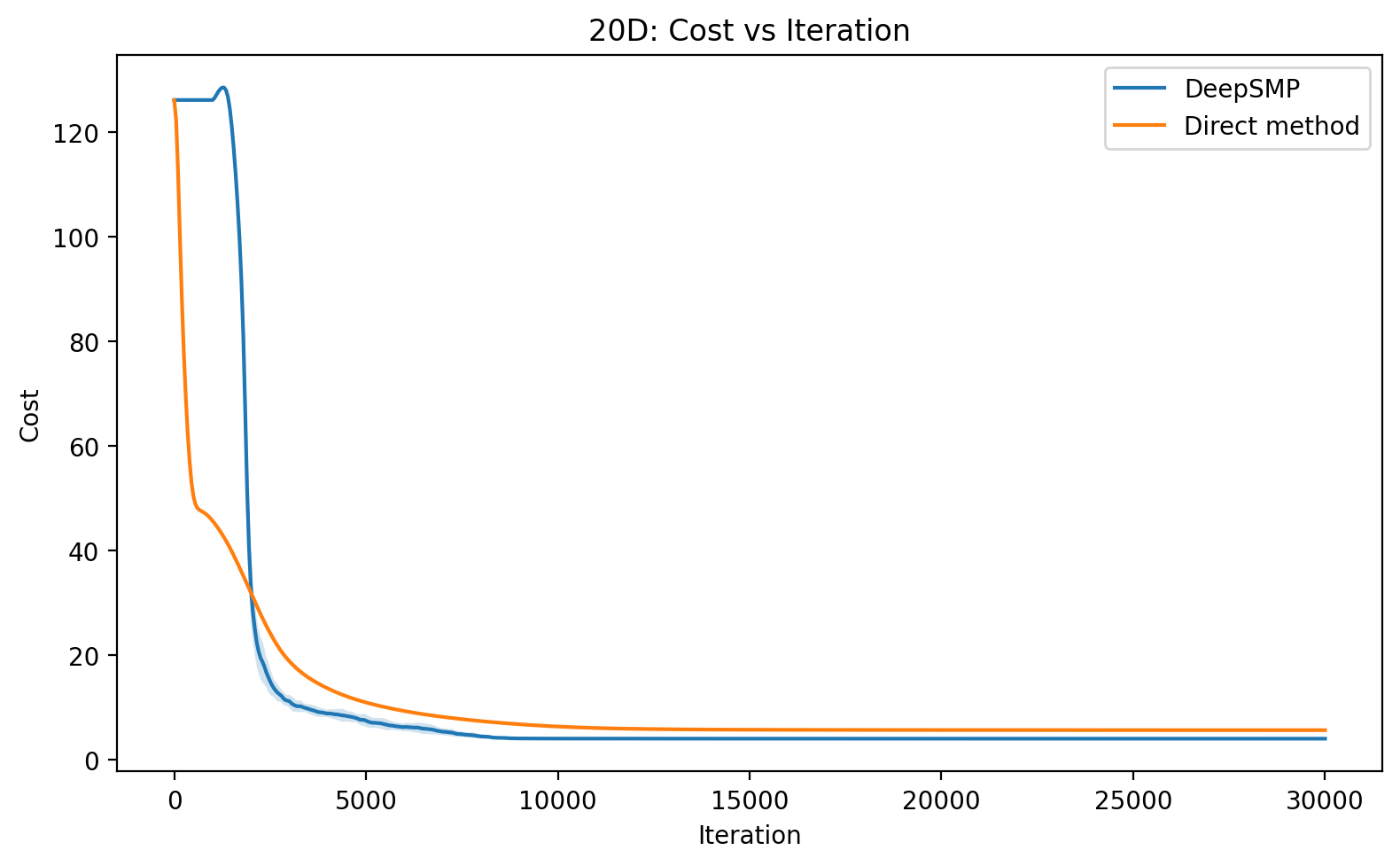}
            \caption{Cost.}
            \label{fig:comparison_20d_cost}
        \end{subfigure}
        \hfill
        \begin{subfigure}[t]{0.48\linewidth}
            \centering
            \includegraphics[width=0.98\linewidth]
            {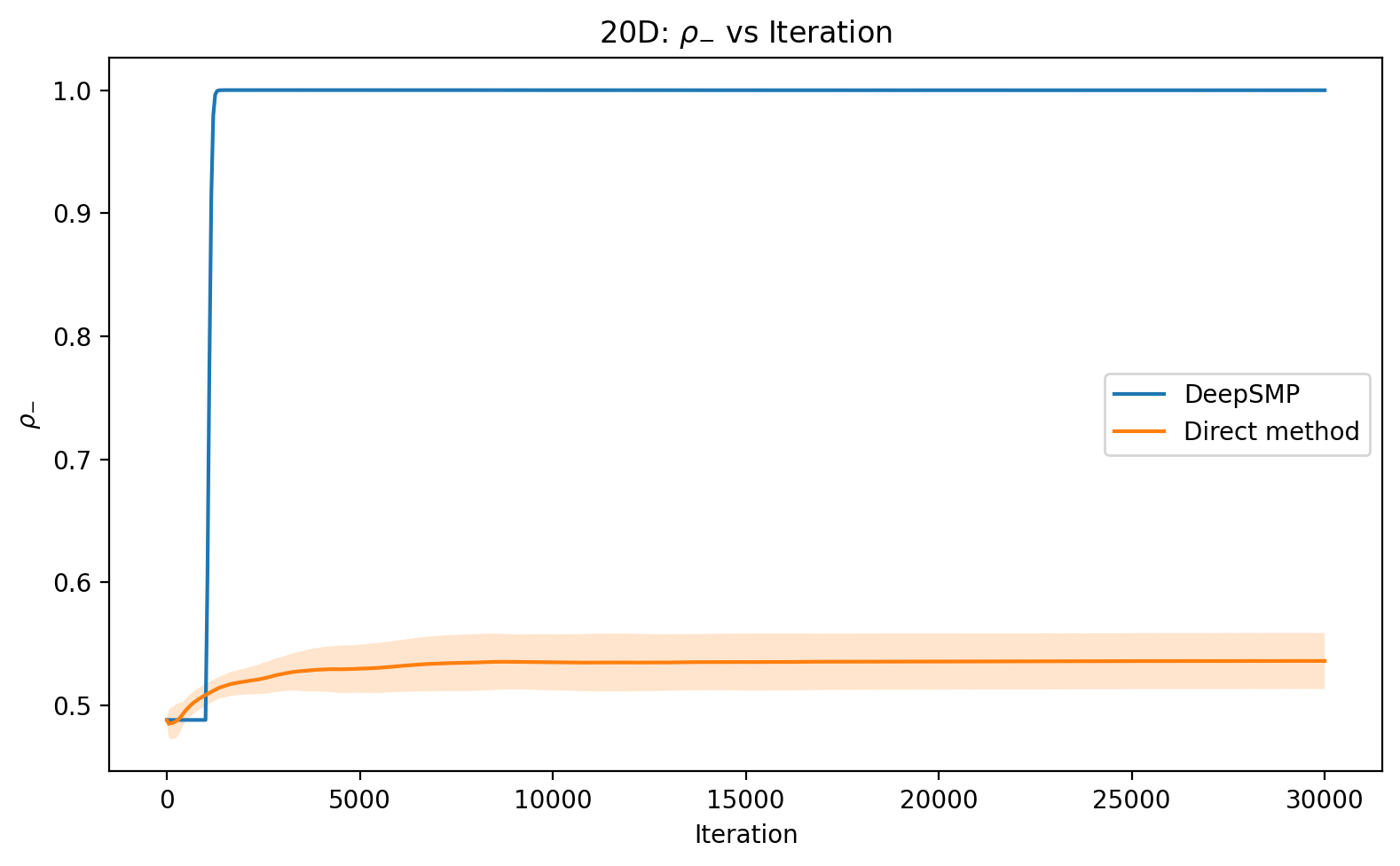}
            \caption{Negative-region proportion $\rho_{-}$.}
            \label{fig:comparison_20d_rho}
        \end{subfigure}

        \vspace{-0.05cm}

        \begin{subfigure}[t]{0.48\linewidth}
            \centering
            \includegraphics[width=0.98\linewidth]
            {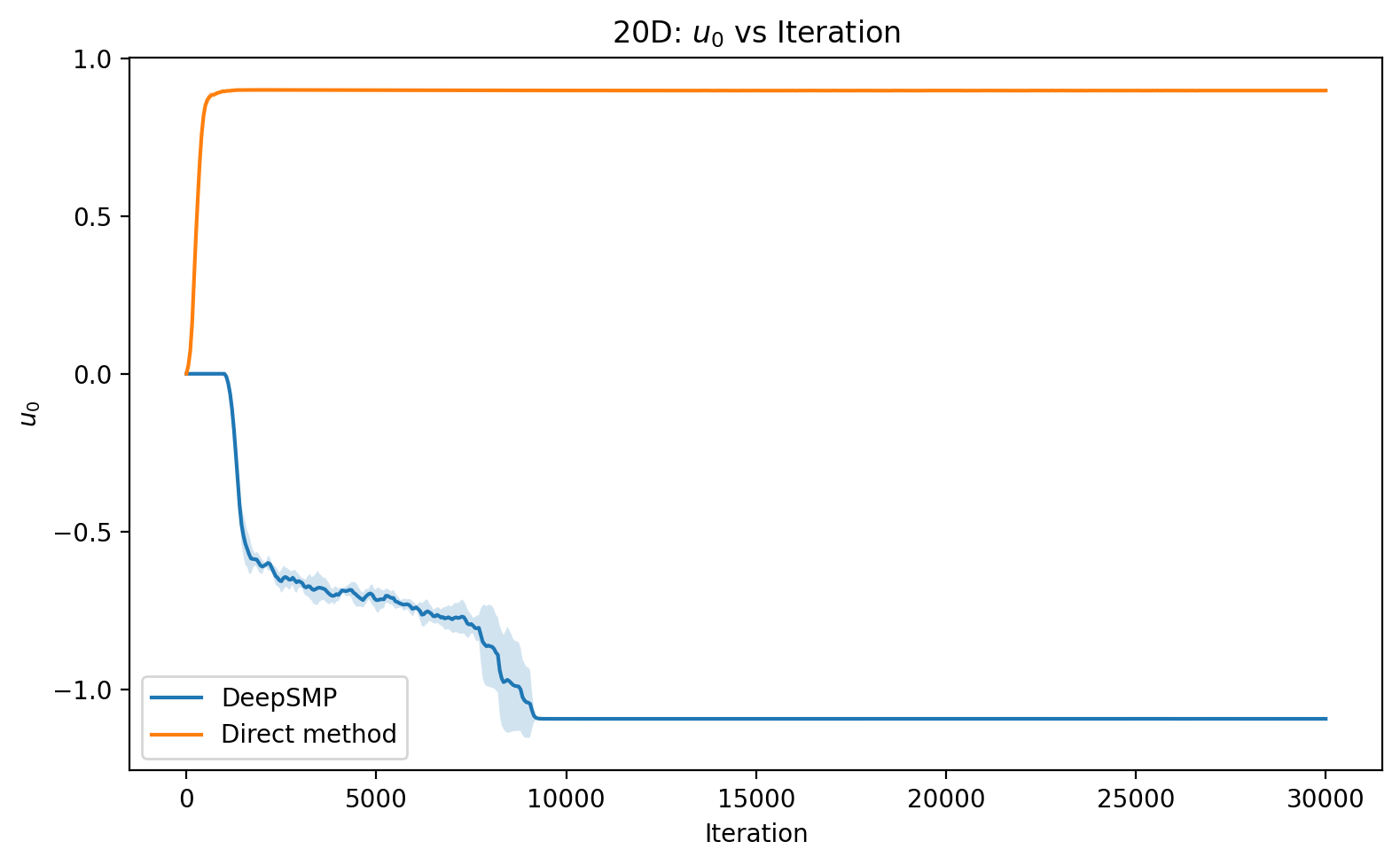}
            \caption{Initial control $u_0$.}
            \label{fig:comparison_20d_u0}
        \end{subfigure}
        \hfill
        \begin{subfigure}[t]{0.48\linewidth}
            \centering
            \includegraphics[width=0.98\linewidth]
            {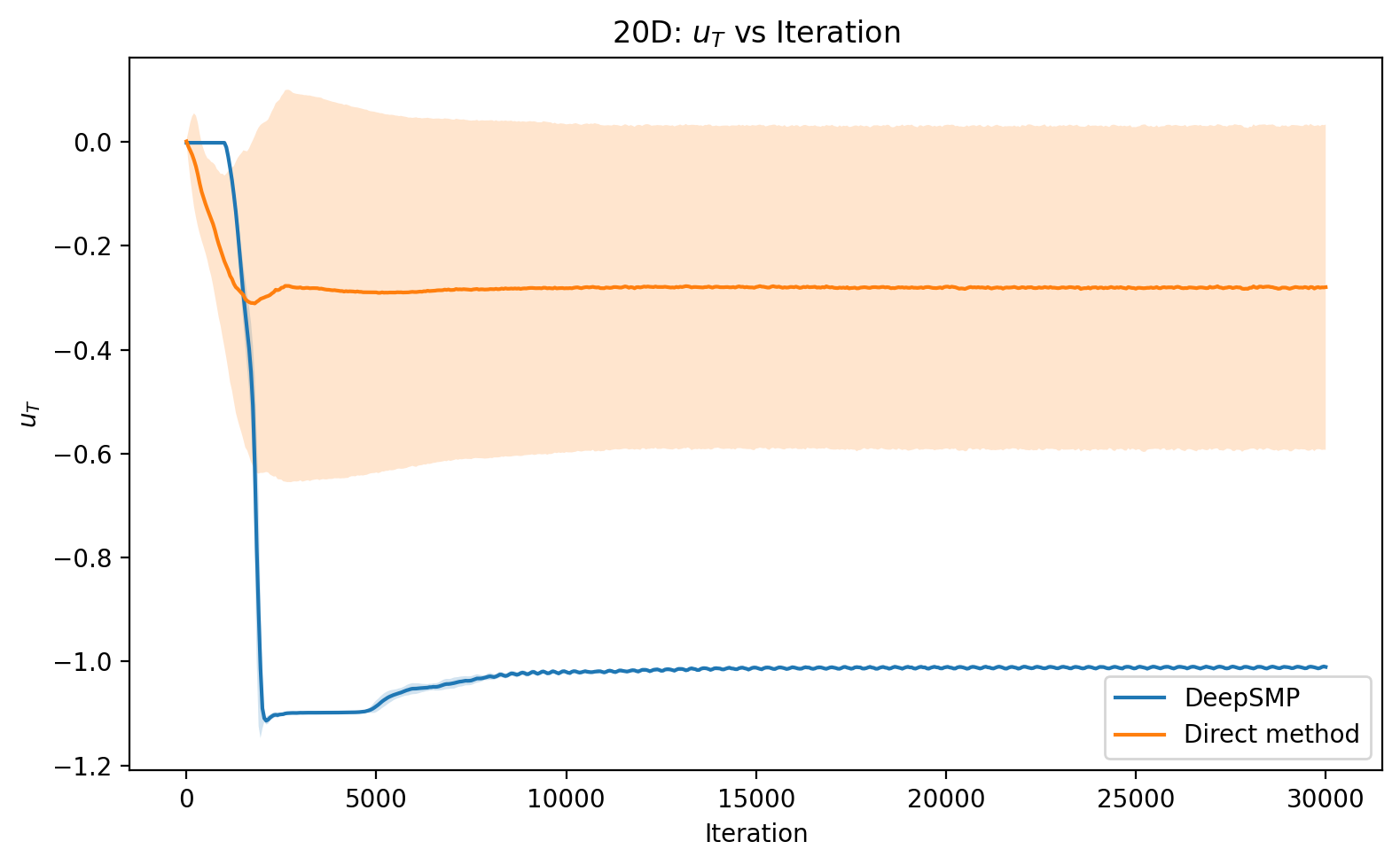}
            \caption{Terminal control $u_T$.}
            \label{fig:comparison_20d_uT}
        \end{subfigure}

    \end{minipage}

    \captionsetup{
        font=small,
        width=0.88\textwidth,
        skip=4pt
    }
    \caption{
        Optimization trajectories of DeepSMP and the direct method in the
        twenty-dimensional setting. The solid curves represent the means
        over five random seeds, while the shaded regions represent one
        standard deviation. 
    }
    \label{fig:comparison_20d_curves}
\end{figure}

\begin{table}[htbp]
    \centering
    \caption{
        Performance comparison of DeepSMP and the direct method under five
        random seeds in the twenty-dimensional setting. 
    }
    \label{tab:comparison_20d_results}

    \setlength{\tabcolsep}{4pt}
    \renewcommand{\arraystretch}{1.15}

    \resizebox{\textwidth}{!}{%
    \begin{tabular}{l c cc cc cc cc}
        \toprule
        \multirow{2}{*}{Method}
        & \multirow{2}{*}{Iter.}
        & \multicolumn{2}{c}{Cost}
        & \multicolumn{2}{c}{$u_0$}
        & \multicolumn{2}{c}{$u_T$}
        & \multicolumn{2}{c}{$\rho_{-}$} \\
        \cmidrule(lr){3-4}
        \cmidrule(lr){5-6}
        \cmidrule(lr){7-8}
        \cmidrule(lr){9-10}
        &
        & Mean & Std.
        & Mean & Std.
        & Mean & Std.
        & Mean & Std. \\
        \midrule
        DeepSMP (no BN)
        & $3\times10^{4}$
        & $4.0138$
        & $0.0073$
        & $-1.0935$
        & $8.5691\times10^{-5}$
        & $-1.0105$
        & $0.0027$
        & $0.9999$
        & $1.8103\times10^{-5}$ \\
        Direct method (BN)
        & $10^{5}$
        & $5.6233$
        & $0.0696$
        & $0.8976$
        & $1.6497\times10^{-4}$
        & $-0.2833$
        & $0.3093$
        & $0.5375$
        & $0.0230$ \\
        \bottomrule
    \end{tabular}%
    }
\end{table}

Figure~\ref{fig:comparison_20d_curves} and
Table~\ref{tab:comparison_20d_results} show that the performance gap between
the two methods persists, and becomes more pronounced, in the
twenty-dimensional coupled setting. After the policy update becomes active,
DeepSMP rapidly enters a low-cost regime and stabilizes at a mean cost of
$4.0138$ with a cross-seed standard deviation of $0.0073$ after
$3\times10^{4}$ iterations. Its dimension-averaged controls converge to
$u_0=-1.0935$ and $u_T=-1.0105$, while $\rho_{-}=0.9999$, indicating an
almost uniform selection of the negative low-diffusion action region across
time, validation paths, and control coordinates. By contrast, the direct
method converges more slowly and, even after $10^{5}$ iterations, attains a
higher mean cost of $5.6233$ with a standard deviation of $0.0696$. Its
initial control concentrates near the positive mode, with $u_0=0.8976$,
whereas $u_T=-0.2833$, $\rho_{-}=0.5375$, and the large terminal-control
standard deviation of $0.3093$ reveal a heterogeneous mixed-action
configuration. Thus, DeepSMP achieves lower cost, markedly stronger
cross-seed stability, and a substantially more coherent control selection
under a considerably smaller optimization budget.

\subsection{Comparison with the modified MSA}
\label{subsec:comparison-modified-msa}
We next compare DeepSMP with a modified-MSA-type neural baseline. The
purpose of this experiment is to isolate the directional effect of the
second-order adjoint correction in a problem with control-dependent
diffusion. In particular, the example is constructed so that two diffusion
perturbations have the same Frobenius norm but act along state directions
with substantially different terminal penalties. It therefore distinguishes
the scalar isotropic augmentation employed by modified MSA from the
matrix-weighted correction used by DeepSMP.

Let \(T=1\), \(X_0=0\), and let
\[
    W_t=(W_t^1,W_t^2)^\top
\]
be a two-dimensional standard Brownian motion. The scalar control takes
values in
\[
    U=[-2,2].
\]
For any admissible control \(u\), the state process satisfies
\[
    dX_t^u
    =
    AX_t^u\,dt+\Sigma(u_t)\,dW_t,
    \qquad
    X_0^u=0,
\]
where
\[
    A=
    \begin{pmatrix}
        -\lambda & \varepsilon\\
        0 & -\lambda
    \end{pmatrix},
    \qquad
    \lambda=0.20,
    \qquad
    \varepsilon=0.05.
\]
Define the smooth actuator gates
\[
    \phi_+(u)=\exp\{-\kappa(u-1)^2\},
    \qquad
    \phi_-(u)=\exp\{-\kappa(u+1)^2\},
    \qquad
    \kappa=20.
\]
The controlled diffusion matrix is
\[
    \Sigma(u)
    =
    \sigma_0 I_2
    +
    a\,\phi_+(u)E_{12}
    +
    a\,\phi_-(u)E_{21},
    \qquad
    \sigma_0=0.01,
    \qquad
    a=0.34,
\]
where \(E_{ij}\) denotes the matrix whose \((i,j)\)-entry is one and whose
remaining entries are zero. Thus, the action region near \(u=1\) routes
Brownian noise primarily into the first state component, whereas the region
near \(u=-1\) routes it primarily into the second state component.

The running and terminal costs are
\[
    f(u)
    =
    R\bigl(1-\phi_+(u)-\phi_-(u)\bigr)
    +
    \alpha u^2,
    \qquad
    R=0.02,
    \qquad
    \alpha=5\times10^{-4},
\]
and
\[
    \Phi(x)
    =
    \frac12x^\top Gx,
    \qquad
    G=
    \begin{pmatrix}
        q_1 & \gamma\\
        \gamma & q_2
    \end{pmatrix},
    \qquad
    q_1=10,
    \qquad
    q_2=0.1,
    \qquad
    \gamma=0.03.
\]
The objective functional is therefore
\[
    J(u)
    =
    \mathbb E
    \left[
        \Phi(X_T^u)
        +
        \int_0^T f(u_t)\,dt
    \right].
\]

For completeness, the augmented Hamiltonian of modified MSA contains the
three scalar-weighted terms
\[
\begin{aligned}
    \widetilde{\mathcal H}_{\rho}
    (t,x,p,q,\bar u,a)
    ={}&
    H(t,x,p,q,a)\\
    &+
    \frac{\rho}{2}
    \left|
        b(t,x,a)-b(t,x,\bar u)
    \right|^2\\
    &+
    \frac{\rho}{2}
    \left\|
        \Sigma(t,x,a)-\Sigma(t,x,\bar u)
    \right\|_F^2\\
    &+
    \frac{\rho}{2}
    \left|
        H_x(t,x,p,q,a)
        -
        H_x(t,x,p,q,\bar u)
    \right|^2.
\end{aligned}
\]
In the present example, \(b(x,a)=Ax\) is independent of the control.
Moreover, since \(\Sigma\) and \(f\) are independent of the state variable,
\[
    H_x(t,x,p,q,a)=A^\top p
\]
is also independent of \(a\). Hence the first and third augmentation terms
vanish identically, and the modified-MSA update reduces exactly to
\[
    \widetilde H_{\rho}
    (t,x,p,q,\bar u,a)
    =
    H(t,x,p,q,a)
    +
    \frac{\rho}{2}
    \left\|
        \Sigma(a)-\Sigma(\bar u)
    \right\|_F^2.
\]
Thus, the baseline used below retains the original modified-MSA
augmentation; its reduction to a single diffusion-difference term is a
consequence of the coefficient structure of this example rather than an
additional simplification of the algorithm.

The discriminating feature of the example is the direction of the
diffusion perturbation rather than its magnitude alone. Taking a control
near zero as the reference, the two active action regions generate, up to
exponentially small gate terms,
\[
    \Delta\Sigma_+
    :=
    \Sigma(1)-\Sigma(0)
    \simeq
    aE_{12},
    \qquad
    \Delta\Sigma_-
    :=
    \Sigma(-1)-\Sigma(0)
    \simeq
    aE_{21}.
\]
These perturbations have the same Frobenius norm:
\[
    \|\Delta\Sigma_+\|_F
    =
    \|\Delta\Sigma_-\|_F
    \simeq a.
\]
Consequently, the modified-MSA augmentation assigns approximately the same
penalty to the two directions:
\[
    \frac{\rho}{2}
    \|\Delta\Sigma_+\|_F^2
    \simeq
    \frac{\rho}{2}
    \|\Delta\Sigma_-\|_F^2
    \simeq
    \frac{\rho a^2}{2}.
\]
The scalar coefficient \(\rho\) controls the overall size of the
augmentation but does not distinguish between equal-norm directions in the
space of diffusion matrices.

DeepSMP instead evaluates a candidate action through the second-order
corrected Hamiltonian
\[
    \mathcal H^{\bar u}
    (t,x,p,q,P;a)
    =
    H(t,x,p,q,a)
    +
    \frac12
    \operatorname{Tr}
    \left[
        \Delta\Sigma(a;\bar u)^\top
        P
        \Delta\Sigma(a;\bar u)
    \right],
\]
where
\[
    \Delta\Sigma(a;\bar u)
    =
    \Sigma(a)-\Sigma(\bar u).
\]
For the two active diffusion directions,
\[
\begin{aligned}
    \frac12
    \operatorname{Tr}
    \left[
        \Delta\Sigma_+^\top
        P_t
        \Delta\Sigma_+
    \right]
    &\simeq
    \frac12a^2P_{11}(t),\\
    \frac12
    \operatorname{Tr}
    \left[
        \Delta\Sigma_-^\top
        P_t
        \Delta\Sigma_-
    \right]
    &\simeq
    \frac12a^2P_{22}(t).
\end{aligned}
\]
In this example the second-order adjoint is deterministic and satisfies
\[
    -\dot P_t
    =
    A^\top P_t+P_tA,
    \qquad
    P_T=G,
\]
so that
\[
    P_t
    =
    e^{A^\top(T-t)}
    G
    e^{A(T-t)}.
\]
Since \(q_1\gg q_2\), the strong directional asymmetry of the terminal
matrix \(G\) is retained by \(P_t\), and in particular
\(P_{11}(t)\gg P_{22}(t)\) throughout the time interval. The two
equal-Frobenius-norm diffusion perturbations are therefore assigned
different values by the \(P_t\)-weighted quadratic form.

The example thus compares two distinct update geometries. The modified-MSA
augmentation applies a scalar isotropic penalty in the space of diffusion
perturbations, whereas the DeepSMP correction applies a direction-dependent
matrix weighting determined by the second-order adjoint. When the
uncorrected Hamiltonian only weakly separates the two active action regions,
the scalar augmentation provides no additional discrimination between their
equal-norm diffusion perturbations. The \(P_t\)-weighted correction, by
contrast, supplies an explicit mechanism for distinguishing the two
noise-routing directions according to their different effects on the
state-dependent objective.

\begin{table}[H]
\centering
\caption{Comparison with the modified-MSA-type baseline at \(3\times10^4\) iterations.}
\label{tab:comparison-modified-msa}
\scriptsize
\setlength{\tabcolsep}{3pt}
\begin{tabular}{lcccc}
\toprule
Method & Target cost & \(u_0\) & \(u_T\) & \(\rho_-\) \\
\midrule
Modified MSA-type baseline
& \(0.020420934944\pm1.84{\times}10^{-10}\)
& \(-3.49{\times}10^{-4}\)
& \(9.29{\times}10^{-5}\)
& \(0.519177\pm0.126021\) \\
DeepSMP
& \(0.006980263\pm8.06{\times}10^{-7}\)
& \(-1.0183\)
& \(-1.0225\)
& \(1.000000\pm0.000000\) \\
\bottomrule
\end{tabular}

\vspace{0.5em}
\begin{minipage}{0.95\textwidth}
\footnotesize
All quantities are evaluated using the target policy. The reported standard
deviations are cross-seed sample standard deviations over five independent
runs. The quantity \(\rho_-\) denotes the occupation rate of the negative
noise-routing region \(\{u<0\}\).
\end{minipage}
\end{table}

The numerical results are consistent with this mechanism. The modified-MSA-type
baseline is extremely stable across seeds, but it stabilizes at a near-zero
control: \(u_0\approx -3.49\times10^{-4}\) and
\(u_T\approx 9.29\times10^{-5}\). Its target cost is almost identical across the five seeds, which shows that the iteration has reached a stable plateau. However, this plateau corresponds to the inactive region rather than to the lower-cost noise-routing direction.

DeepSMP behaves differently on the same problem. It learns a target policy with
\(u_0\approx-1.0183\), \(u_T\approx-1.0225\), and \(\rho_-=1\) for all five
seeds. Its target cost is about \(6.98\times10^{-3}\), substantially smaller
than the modified-MSA-type baseline. This supports the interpretation that the
\(P\)-weighted correction injects curvature information into the Hamiltonian
update and enables a directional discrimination that is not available to an
isotropic proximal correction.

\appendix
\section{The twenty-dimensional control problem}
\label{app:20d-setting}

For the experiment in Section~\ref{subsubsec:20d-stress-test}, we set
\[
    n=k=d=20,\qquad T=1,\qquad
    U=[-2,2]^{20},\qquad x_0=0\in\mathbb R^{20}.
\]
For an admissible control \(u\in\mathcal U\), the state equation and
objective are
\[
\begin{aligned}
    dX_t^u
    &=b(t,X_t^u,u_t)\,dt+\sigma(t,X_t^u,u_t)\,dW_t,
    \qquad X_0^u=0,\\
    J(u)
    &=\mathbb E\left[\Phi(X_1^u)
        +\int_0^1 f(t,X_t^u,u_t)\,dt\right],
    \qquad \inf_{u\in\mathcal U}J(u).
\end{aligned}
\]
All coefficients below are independent of time. The time discretization
uses \(N=40\) uniform intervals, with \(\Delta t=1/40\).

\paragraph{Cyclic coupling matrices.}
Let \(I_{20}\) denote the identity matrix, \(e_i\) the \(i\)-th
standard basis vector of \(\mathbb R^{20}\), and
\(\mathbf 1_{20}=(1,\ldots,1)^\top\).
For \(\ell=1,2\), define \(C_\ell\in\mathbb R^{20\times20}\) by
\[
    (C_\ell)_{ij}
    =\begin{cases}
        1,&j\equiv i+\ell\ \text{or}\ i-\ell\pmod{20},\\
        0,&\text{otherwise},
    \end{cases}
    \qquad i,j=1,\ldots,20.
\]
The state and control-to-drift matrices are
\[
\begin{aligned}
    A&=-0.72I_{20}+0.035C_1+0.012C_2,\\
    B_+&=\operatorname{Diag}\left(
        1.25+0.030\sin\frac{2\pi(i-1)}{20}
        \right)_{i=1}^{20}
        +0.035C_1+0.010C_2,\\
    B_-&=\operatorname{Diag}\left(
        1.30+0.025\sin\frac{2\pi(i-1)}{20}
        \right)_{i=1}^{20}
        +0.030C_1+0.008C_2,\\
    b_0&=-0.05\mathbf 1_{20}.
\end{aligned}
\]

\paragraph{Drift and diffusion.}
For a scalar \(z\in[-2,2]\), define
\[
\begin{aligned}
    \psi_+(z)&=\exp\{-5(z-0.9)^2\},
    &\psi_-(z)&=\exp\{-6(z+1.1)^2\},\\
    \chi_+(z)&=\exp\{-10(z-0.9)^2\},
    &\chi_-(z)&=\exp\{-6(z+1.1)^2\}.
\end{aligned}
\]
For \(a=(a_1,\ldots,a_{20})^\top\in U\), these functions act
componentwise, so that
\[
    \psi_\pm(a)=(\psi_\pm(a_i))_{i=1}^{20},
    \qquad
    \chi_\pm(a)=(\chi_\pm(a_i))_{i=1}^{20}.
\]
The drift is
\[
    b(t,x,a)=Ax+B_+\psi_+(a)+B_-\psi_-(a)+b_0.
\]
The diffusion matrices are
\[
    S_+=0.010C_1+0.0030C_2,
    \qquad S_-=0.0020C_1+0.0005C_2,
\]
and
\[
\begin{aligned}
    D(a)&=\operatorname{Diag}\left(
        \max\{0.08,\,0.20+0.12\chi_+(a_i)-0.08\chi_-(a_i)\}
        \right)_{i=1}^{20},\\
    \sigma(t,x,a)
    &=D(a)+\operatorname{Diag}(\chi_+(a))S_+
        +\operatorname{Diag}(\chi_-(a))S_-.
\end{aligned}
\]
The lower cutoff in \(D\) is inactive, since
\(0.20+0.12\chi_+(z)-0.08\chi_-(z)\geq0.12\).
In particular, \(\sigma\) is independent of the state.

\paragraph{Running and terminal costs.}
The target vector is \(x_{\rm tar}=((x_{\rm tar})_i)_{i=1}^{20}\), where
\[
    (x_{\rm tar})_i
    =1+0.15\sin\frac{2\pi(i-1)}{20}
       +0.07\cos\frac{4\pi(i-1)}{20}.
\]
The symmetric positive definite weight matrices are
\[
\begin{aligned}
    M_{\rm run}
    &=I_{20}+0.025C_1+0.006C_2
      -0.0125(e_1e_{20}^\top+e_{20}e_1^\top),\\
    M_{\rm term}
    &=I_{20}+0.040C_1+0.010C_2
      -0.020(e_1e_{20}^\top+e_{20}e_1^\top).
\end{aligned}
\]
Thus, the coupling between coordinates \(1\) and \(20\) in each cost
matrix is half the other nearest-neighbor coupling coefficients.
The costs are
\[
\begin{aligned}
    f(t,x,a)
    &=0.40(x-x_{\rm tar})^\top M_{\rm run}(x-x_{\rm tar})
      +0.010|a|^2,\\
    \Phi(x)
    &=5(x-x_{\rm tar})^\top M_{\rm term}(x-x_{\rm tar}).
\end{aligned}
\]

\paragraph{Second-order adjoint simplification.}
Under the minimization convention used in this paper,
\(b_x=A\), \(b_{xx}=0\), \(\sigma_x=0\),
\(f_{xx}=0.80M_{\rm run}\), and \(\Phi_{xx}=10M_{\rm term}\).
Consequently, for every reference control \(u\), the second-order adjoint
is the deterministic solution of
\[
    \frac{dP_t^u}{dt}
    =-\bigl(A^\top P_t^u+P_t^uA+0.80M_{\rm run}\bigr),
    \qquad P_1^u=10M_{\rm term},
\]
and \(Q_t^{u,i}\equiv0\) for \(i=1,\ldots,20\).

\renewcommand{\refname}{References}
\bibliographystyle{plain}
\bibliography{references}

\end{document}

%% file: preprint-formatting.tex
\usepackage{amsfonts,amssymb,amsthm,mathtools,mathrsfs,bm}
\usepackage[a4paper,margin=1in]{geometry}
\usepackage{float,indentfirst,anyfontsize}
\usepackage{booktabs,multirow,multicol}
\usepackage[shortlabels,inline]{enumitem}
\usepackage{appendix}
\usepackage[dvipsnames]{xcolor}
\usepackage{graphicx,subcaption,listings}
\graphicspath{{./figure/}{./figures/}{./image/}{./images/}{./graphic/}{./graphics/}{./picture/}{./pictures/}}
\usepackage[hidelinks]{hyperref}
\theoremstyle{plain}
\newtheorem{theorem}{Theorem}[section]
\newtheorem{lemma}{Lemma}[section]
\newtheorem{proposition}{Proposition}[section]
\newtheorem{corollary}[theorem]{Corollary}
\theoremstyle{definition}
\newtheorem{assumption}{Assumption}[section]
\theoremstyle{remark}
\newtheorem{remark}{Remark}[section]